\documentclass{article}

\newcommand{\mc}[1]{\mathcal{#1}}

\usepackage{subcaption}

\usepackage{indentfirst}
\usepackage{xcolor}
\usepackage{verbatim}
\usepackage{tikz}
\usetikzlibrary{decorations.pathreplacing}
\usetikzlibrary{calc}

\usepackage{hyperref}
\usepackage{bm}
\usepackage{stmaryrd}
\usepackage[a4paper, total={6in, 8in}]{geometry}
\usepackage{amsmath}
\usepackage{amsthm}	
\usepackage{amsfonts}	
\usepackage{amssymb, bbm, units}
\usepackage{enumerate, accents}
\usepackage[nobysame, alphabetic, initials]{amsrefs}
\usepackage{ulem} 

\numberwithin{equation}{section}
\numberwithin{figure}{section}
 \usepackage[nodayofweek]{datetime}

\theoremstyle{plain}
\newtheorem{theorem}{Theorem}[section]
\newtheorem{lemma}[theorem]{Lemma}
\newtheorem{proposition}[theorem]{Proposition}
\newtheorem{corollary}[theorem]{Corollary}

\newtheorem{definition}[theorem]{Definition}
\newtheorem{remark}[theorem]{Remark}

\newtheorem{assumption}[theorem]{Assumption}

\newcommand{\cB}{\mathcal{B}}

\newcommand{\cF}{\mathcal{F}}
\newcommand{\cG}{\mathcal{G}}

\newcommand{\cK}{\mathcal{K}}
\newcommand{\cL}{\mathcal{L}}
\newcommand{\cM}{\mathcal{M}}

\newcommand{\cP}{\mathcal{P}}

\newcommand{\cR}{\mathcal{R}}
\newcommand{\cS}{\mathcal{S}}
\newcommand{\cT}{\mathcal{T}}

\newcommand{\AAA}{\mathbb{A}}

\newcommand{\CC}{\mathbb{C}}
\newcommand{\DD}{\mathbb{D}}
\newcommand{\EE}{\mathbb{E}}
\newcommand{\FF}{\mathbb{F}}
\newcommand{\GG}{\mathbb{G}}
\newcommand{\II}{\mathbb{I}}
\newcommand{\KK}{\mathbb{K}}

\newcommand{\NN}{\mathbb{N}}
\newcommand{\PP}{\mathbb{P}}
\newcommand{\RR}{\mathbb{R}}
\newcommand{\SSS}{\mathbb{S}}
\newcommand{\XX}{\mathbb{X}}
\newcommand{\ZZ}{\mathbb{Z}}

\newcommand{\mf}[1]{\mathbf{#1}}

\newcommand{\tA}{\tilde{A}}
\newcommand{\tD}{\tilde{D}}
\newcommand{\tM}{\tilde{M}}
\newcommand{\tQ}{\tilde{Q}}
\newcommand{\tX}{\tilde{X}}

\newcommand{\Lip}[1]{\mbox{Lip}_{#1}}
\newcommand{\Lipbar}[1]{\ensuremath{\overline{\mathrm{Lip}}_{#1}}}

\newcommand{\brac}[1]{\left(#1\right)}

\newcommand\numeq[1]%
  {\stackrel{\scriptscriptstyle(\mkern-1.5mu#1\mkern-1.5mu)}{=}}

\makeatletter
\newsavebox{\@brx}
\newcommand{\llangle}[1][]{\savebox{\@brx}{\(\m@th{#1\langle}\)}%
  \mathopen{\copy\@brx\mkern2mu\kern-0.9\wd\@brx\usebox{\@brx}}}
\newcommand{\rrangle}[1][]{\savebox{\@brx}{\(\m@th{#1\rangle}\)}%
  \mathclose{\copy\@brx\mkern2mu\kern-0.9\wd\@brx\usebox{\@brx}}}
\makeatother

\newcommand{\toP}{\stackrel{p}{\rightarrow}}
\newcommand{\tof}{\stackrel{f}{\rightarrow}}

\definecolor{darkgreen}{rgb}{0,0.35,0}

\newcommand{\Pol}[1]{\mathrm{#1}}

\newcommand{\LISF}{\Pol{LISF}}     
\newcommand{\TB}{\Pol{TB}}         

\newcommand{\FSL}{\Pol{FSL}}       
\newcommand{\SSL}{\Pol{SSL}}       

\newcommand{\JFSQ}{\Pol{JFSQ}}     

\newcommand{\JSSQ}{\Pol{JSSQ}}     

\newcommand{\RJSQ}{\Pol{RR}}     

\newcommand{\JFIQ}{\Pol{JFIQ}}     

\usetikzlibrary{arrows.meta, positioning, calc, fit, backgrounds}

\definecolor{cGray}{HTML}{E5E5E5}
\definecolor{cGrayBorder}{HTML}{555555}

\definecolor{cTeal}{HTML}{D8EEF7}
\definecolor{cTealBorder}{HTML}{0072B2}

\definecolor{cPink}{HTML}{FCE1C2}
\definecolor{cPinkBorder}{HTML}{E69F00}

\definecolor{cBlue}{HTML}{DDF0D7}
\definecolor{cBlueBorder}{HTML}{009E73}

\definecolor{cCoral}{HTML}{F7D8D8}
\definecolor{cCoralBorder}{HTML}{D55E00}

\definecolor{cGreen}{HTML}{E6DDF3}
\definecolor{cGreenBorder}{HTML}{6A3D9A}

\definecolor{cAmber}{HTML}{FFF1C7}
\definecolor{cAmberBorder}{HTML}{B8860B}

\definecolor{cProjBorder}{HTML}{777777}

\tikzset{
  base/.style={
    draw,
    rounded corners=4pt,
    align=center,
    font=\small,
    minimum height=1.2cm,
    minimum width=3.4cm,
    line width=0.5pt,
  },
  assumption/.style={base, fill=cGray, draw=cGrayBorder},
  prelim/.style={base, fill=cTeal, draw=cTealBorder},
  fairness/.style={base, fill=cPink, draw=cPinkBorder},
  proof/.style={base, fill=cBlue, draw=cBlueBorder},
  mainresult/.style={
    base,
    fill=cCoral,
    draw=cCoralBorder,
    minimum width=6.5cm,
    font=\small\bfseries
  },
  corollary/.style={base, fill=cGreen, draw=cGreenBorder},
  stationary/.style={base, fill=cAmber, draw=cAmberBorder},
  projected/.style={
    base,
    draw=cProjBorder,
    densely dashed,
    fill=none,
    text=cProjBorder,
    font=\small\itshape
  },
  arr/.style={-{Stealth[length=5pt]}, line width=0.5pt},
  darr/.style={
    -{Stealth[length=5pt]},
    line width=0.5pt,
    densely dashed,
    draw=black!55
  },
  separatorline/.style={gray, densely dash dot, line width=0.4pt},
  legendbox/.style={
    draw=cGrayBorder,
    rounded corners=3pt,
    fill=cGray!15,
    inner sep=6pt
  },
}
\tikzset{imported/.style={densely dashed, line width=0.9pt}}

\tikzset{
  arrowbase/.style={
    -{Stealth[length=5pt]},
    line width=0.65pt,
    line cap=round,
    line join=round
  },
  directarrow/.style={
    arrowbase
  },
  indirectarrow/.style={
    arrowbase,
    densely dashed
  },
  arrassumption/.style={
    directarrow,
    draw=cGrayBorder
  },
  arrprelim/.style={
    directarrow,
    draw=cTealBorder
  },
  arrfair/.style={
    directarrow,
    draw=cPinkBorder
  },
  arrproof/.style={
    directarrow,
    draw=cBlueBorder
  },
  arrcorollary/.style={
    directarrow,
    draw=cGreenBorder
  },
  arrstationary/.style={
    directarrow,
    draw=cAmberBorder
  },
  arrmain/.style={
    directarrow,
    draw=cCoralBorder
  },
  darrassumption/.style={
    indirectarrow,
    draw=cGrayBorder
  },
  darrprelim/.style={
    indirectarrow,
    draw=cTealBorder
  },
  darrfair/.style={
    indirectarrow,
    draw=cPinkBorder
  },
  darrproof/.style={
    indirectarrow,
    draw=cBlueBorder
  },
  darrcorollary/.style={
    indirectarrow,
    draw=cGreenBorder
  },
  darrstationary/.style={
    indirectarrow,
    draw=cAmberBorder
  },
  darrmain/.style={
    indirectarrow,
    draw=cCoralBorder
  },
  arr/.style={
    directarrow,
    draw=black
  },
  darr/.style={
    indirectarrow,
    draw=black!55
  },
  darrbg/.style={
    darrprelim
  },
  darrcorr/.style={
    darrcorollary
  }
}

\usepackage{booktabs}

\title{Load Balancing with Individually Heterogeneous Servers}
\author{Burak B\"{u}ke$^*$  \and Arpan Mukhopadhyay$^\dag$ \and \"{O}zge Tekin$^*$}
\date{%
    $^*$School of Mathematics, The University of Edinburgh\\
    $^\dag$Department of Computer Science, University of Warwick\\
}

\begin{document}

\maketitle
\begin{abstract}
In this paper, we analyze the diffusion limit of Join-the-Shortest-Queue (JSQ) load balancing policies for a system with many parallel queues where each queue is being served by a server with a potentially different service rate. Prior asymptotic analyses of JSQ policies assumed servers to have service rates from a finite set which does not capture scenarios in modern data centers where service rates can vary at the level of individual servers.
For systems with individually heterogeneous servers, tracking the empirical queue length distribution for each possible service rate becomes infeasible. To overcome this difficulty, we develop a framework based on measure-valued processes and provide a unified analysis of all JSQ-based load balancing policies under general tie-breaking rules. Our analysis identifies two key objects that distinguish the performance of different tie-breaking rules in the Halfin--Whitt regime, namely, the {\em limiting fairness process} which describes how idle servers are distributed across different service rates and the {\em limiting routing measure} which describes how arriving jobs are assigned across different service rates. In addition to characterizing the diffusion limits for different tie-breaking rules, we identify the tie-breaking rule which asymptotically minimizes the steady-state distributions of the diffusion-scaled total number of jobs and the diffusion-scaled number of waiting jobs. In proving these results, we develop crucial coupling-based sample-path comparisons which provide both policy-independent steady-state bounds and lower bounds to prove asymptotic optimality.
\end{abstract}
\section{Introduction}
\label{sec:Intro}

Load balancing is a central problem in modern data centers, which underpin cloud services and have become economically significant infrastructure for the broader digital economy \cite{ChenMoinzadehSongZhong2023}. A recent study \cite{McKinseyDataCenters2025} projects that meeting future data-center demand could require nearly \$7 trillion in worldwide investment by 2030. In the data-center environment, a stream of incoming jobs requiring service must be routed to one of many parallel servers each having a dedicated queue. The routing rule used to dispatch jobs balances the load across different servers and directly affects key performance measures such as delay and resource utilization. As these systems scale in size, designing effective routing/load balancing policies becomes both practically important and mathematically challenging.

A large part of the load balancing literature studies systems in which all servers are identical. In this homogeneous setting, the Join-the-Shortest-Queue (JSQ) policy, where each incoming job is routed to the server with the shortest queue, is known to have strong optimality properties \cite{Winston1977, Weber1978}. Since exact JSQ requires knowledge of all queue lengths, scalable alternatives such as the power-of-$d$-choices policy \cite{VDK96,Mitz96} and Join-the-Idle-Queue \cite{Lu2011,Stolyar2015} have been developed to reduce information requirements. We refer the reader to \cite{vdB2022} for a comprehensive survey. 

Despite the optimality guarantees of JSQ in the homogeneous setting, characterizing the performance of JSQ in the heavy-traffic regimes where the system load approaches its capacity has remained an important direction of research as it provides crucial insights into dimensioning the system to achieve target performance.
The Halfin--Whitt regime, first introduced in \cite{HalfinWhitt1981}, is an important example of heavy-traffic regimes, where both the number $n$ of servers and the total system load $n\lambda^n$ vary such that the gap between capacity and the load scales as $O(n^{1/2})$. In this traffic regime, Eschenfeldt and Gamarnik \cite{eschenfeldt2018join} established a process-level diffusion limit for the JSQ occupancy process over finite time intervals. They showed that a scaled process counting the number of idle servers and queues of length exactly two weakly converges to a two-dimensional reflected Ornstein--Uhlenbeck process, while the processes counting longer queues converge to a deterministic system decaying to zero. Mukherjee et al.~\cite{mukherjee2016universality} showed that the same diffusion limit is shared by a wider class of load balancing policies.  Using a generator-expansion framework based on Stein's method, Braverman \cite{Braverman2020} proved exponential ergodicity of the limiting diffusion and established the tightness of the diffusion-scaled pre-limit stationary distributions, which in turn implies the convergence of the stationary distributions of the pre-limit processes to that of the limiting diffusion. The steady-state convergence rate was later quantified in \cite{BraSSY2023}, where an $O(1/\sqrt n)$ bound was obtained for the finite-buffer JSQ system using the pre-limit generator comparison approach. Banerjee and Mukherjee \cite{BM19,BM20} analyzed the stationary behavior of the limiting diffusion, including tail asymptotics, scaling of extrema, and sensitivity with respect to the heavy-traffic parameter, and Hurtado-Lange and Maguluri \cite{HLM2022} studied other related heavy-traffic scaling regimes for load balancing systems. This line of work provides a powerful analytical foundation, but it relies on the simplifying assumption that servers are homogeneous.

Since real service systems are seldom composed of identical servers, a substantial literature has investigated load balancing under heterogeneous service rates. A line of work, in particular, has studied many-server systems where the servers can be grouped into a finite number of pools, each characterized by a different service rate, with all servers sharing jobs from a single queue. In this setting, Armony \cite{Armony2005} analyzed the fastest-server-first routing policy, where each job is assigned to the fastest server that becomes available earliest. The optimality of this policy was established in both fluid and Halfin--Whitt regimes. Armony and Ward \cite{AW2010} and Ward and Armony \cite{WA2013} further studied fairness-efficiency tradeoffs in such systems. To the best of our knowledge, the earliest work to study a setting in which each individual server could potentially have a different service rate was by Atar \cite{Atar2008}, who established a Halfin--Whitt diffusion limit for a shared-queue many-server system with i.i.d. random service rates. Atar and Shwartz \cite{AtarShwartz2008} showed that asymptotically efficient routing can be achieved using partial sampling of service-rate information. Atar, Shaki, and Shwartz \cite{AtarShakiShwartz2011} developed a blind routing policy that equalizes cumulative idleness across heterogeneous server pools. 

Load balancing systems with heterogeneous servers have been studied in a series of papers under the assumption of pool-based heterogeneity. In this setting, Bhambay and Mukhopadhyay \cite{BM2022} analyzed the Join-the-Fastest-of-the-Shortest-Queue (JFSQ) policy, which they refer to as the {\em speed-aware JSQ (SA-JSQ)} policy. Under this policy, each job is routed to the fastest server among those with the shortest queue. They showed that JFSQ asymptotically minimizes the average response time of jobs in the fluid limit. Most recently, Bhambay et al. \cite{bhambay2025asymptotic} studied the same setting in the Halfin--Whitt regime. For the JFSQ policy, they established a two-dimensional reflected Ornstein--Uhlenbeck diffusion limit, proved convergence of the stationary distributions, and established the asymptotic optimality of the JFSQ policy in minimizing both the diffusion-scaled total number of jobs and the diffusion-scaled number of waiting jobs. Cao and Zhong \cite{CaoZhong2025} proved asymptotic optimality under a long-run average delay-cost criterion in multi-station systems with statistically identical servers per station. Gardner et al. \cite{GHJM2021} designed heterogeneity-aware power-of-$d$ schemes, while Mukhopadhyay and Mazumdar \cite{MukhopadhyayMazumdar2016} showed that heterogeneity-unaware policies can suffer significantly degraded performance and even reduced stability. Together, these works show that heterogeneity is not merely a modeling refinement and the interaction between service rates and routing decisions can have a first-order effect on system performance. At the same time, they share a common structural restriction, namely that service rates take only finitely many distinct values. This assumption, while being analytically powerful, does not capture settings in which service rates vary at the level of individual servers.

This individual-level variability is increasingly visible in the large-scale systems that motivate load-balancing models. At the system level, modern data center platforms contain machines with different processor types, capabilities, and performance characteristics, and increasingly combine CPUs with specialized accelerators such as GPUs and FPGAs \cite{Verma2015Borg, CloudNativeSilicon2023, Duato2010, Huang2016, Um2024Metis, Subramanya2023Sia}. This kind of variability naturally motivates pool-based models, in which servers can be grouped into a finite number of classes. At the individual-processor level, however, effective service rates may differ even across nominally identical processors because of fabrication variability, workload-dependent frequency scaling, thermal throttling, and power-management constraints \cite{IntelTBM3_2025, Romanescu2008, IntelAVX_2025, IntelRAPLGuidance2025}. Empirical studies confirm substantial variation across such processors, with reported performance differences of up to 20\% and further amplification under power constraints \cite{Marathe2017}. Similar considerations also arise in human-operated service systems, where effective service rates may differ across agents because of experience, specialization, learning, or fatigue \cite{Gans2010}. These observations motivate modeling heterogeneity through the empirical distribution of service rates rather than through a finite collection of pool rates.

The presence of individual-level heterogeneity changes the analysis of the system in a fundamental way. With non-identical service rates, the total service capacity at each queue-length level depends not only on how many servers are present there, but also on the distribution of their service rates. Aggregate occupancy counts therefore no longer suffice to determine the system dynamics, and the analysis must instead track how queue lengths are distributed across service rates. This naturally leads to a measure-valued state descriptor. This shift from finite-dimensional occupancy processes to measure-valued dynamics is precisely what makes diffusion-limit analysis technically challenging. 

B\"uke \cite{Buke2022} identified diffusion-limit analysis for load-balancing systems with individual-level heterogeneity as an important open problem. B\"uke and Qin \cite{buke2023many} subsequently developed a general framework for many-server shared queue systems with heterogeneous service rates based on measure-valued stochastic processes. Central to their framework, and essential for the present paper, is the fairness process, which records how cumulative idleness is distributed across service rates under a given routing policy. They also characterize the limiting fairness process under common routing rules such as fastest-server-first, slowest-server-first, longest-idle-server-first, and random routing, showing how different policies lead to different idleness allocations across service rates. Within this framework, the fairness process records how cumulative idleness is allocated across service rates and provides the additional information required to describe the limiting system after state-space collapse. For the routing policies considered in \cite{buke2023many}, the limiting diffusion is one-dimensional and only the first moment of the limiting fairness process appears as a multiplier in the diffusion limit.  Building on this framework, B\"uke, dos Reis, and Platonov \cite{BdRP2025} extended the measure-valued approach to systems with strategic servers via mean-field game ideas. 

The work most closely related to the present paper is that of Liu and Ying \cite{LiuYing2025}, who studied a load balancing system with individual-server heterogeneity under the JFSQ policy and established asymptotically zero waiting time and waiting probability in heavy traffic. Their analysis covers both the sub- and super-Halfin--Whitt regimes and provides explicit steady-state bounds that depend on the system size and the degree of heterogeneity. While their focus is on the steady-state delay performance of the JFSQ policy, our work develops a general framework for analyzing both the transient and stationary behavior of load-balancing systems at the diffusion scale under general tie-breaking policies. In particular, our framework enables the comparison of routing policies in terms of diffusion-scaled queue lengths and the total number of jobs in the system.

In this paper, we study load-balancing systems with individual-level heterogeneous service rates in the Halfin--Whitt regime. Each server has its own service rate, and the empirical distribution of these rates is assumed to converge weakly to a limiting distribution. We develop a diffusion-limit framework for load-balancing systems with individual-level heterogeneous service rates in the Halfin--Whitt regime under a broad class of tie-breaking rules. The limiting system consists of a reflected stochastic differential equation for the diffusion-scaled idle-server process coupled with a family of measure-valued integral equations for the higher queue levels. The routing policy enters the limit only through two policy-dependent objects: the limiting fairness process, which describes how cumulative idleness is distributed across service rates, and the limiting routing measure, which describes how arriving jobs are assigned across service rates in the limit. In addition to the transient diffusion limit, we establish stationary approximation results by proving policy-uniform steady-state bounds and convergence of the stationary distributions of the pre-limit processes to that of the limiting diffusion. The resulting framework extends the homogeneous JSQ diffusion limit of \cite{eschenfeldt2018join} to systems with individual-level heterogeneity under general tie-breaking rules, generalizes the pool-based speed-aware analysis of \cite{bhambay2025asymptotic}, and extends the measure-valued methodology of \cite{buke2023many} to dedicated-queue load balancing systems.

\subsection{Contributions}
The contributions of this paper are summarized as follows.
\begin{enumerate}
    \item[(C1)] \textbf{A general diffusion limit framework for heterogeneous JSQ systems}\\
    We establish a general framework for deriving diffusion limits for heterogeneous JSQ systems under a broad class of tie-breaking policies. The key structural feature is that the diffusion limit depends on the routing policy only through two policy-dependent objects: the limiting fairness process and the limiting routing measure. Once the limiting fairness process and the limiting routing measure associated with a tie-breaking policy have been identified, the corresponding diffusion limit follows directly from our general limiting equations. A notable structural feature is that these two objects decouple, allowing diffusion limits to be derived in a mix-and-match fashion when a policy uses different rules for idle and busy servers. This enables a wide range of load-balancing policies to be analyzed within a unified framework, rather than requiring each policy to be studied from scratch. As applications of the general result, we derive explicit diffusion limits for JFSQ, JSQ with random tie-breaking and Join-the-Fastest-Idle-Queue (JFIQ), and we show that the pool-based formulation of \cite{bhambay2025asymptotic} is recovered as a special case whenever the limiting service-rate distribution has finite support.
    \item[(C2)] \textbf{Policy-uniform steady-state bounds and interchange of limits}\\
    We establish policy-uniform bounds on the stationary moments of the diffusion-scaled queue-length processes (Theorem~\ref{thm:stationary_bounds}). These bounds enable the stationary behavior of general tie-breaking policies to be analyzed within a single framework and provide the key ingredient for our stationary approximation results and the interchange of limits. In particular, we show that under any tie-breaking rule, the expected scaled number of idle servers and the expected scaled number of servers with at least two jobs remain $O(1)$, while the expected scaled number of servers with at least three jobs remains $O(n^{-1/2})$, as long as the dispatching policy routes each incoming job to the shortest queue. This extends Theorem~2 of \cite{bhambay2025asymptotic} from JFSQ to arbitrary tie-breaking policies and sharpens the higher-level stationary moment bound to $O(n^{-1/2})$.
    
    \item[(C3)] \textbf{Asymptotic optimality of JFSQ under individual-server heterogeneity}\\
We prove that JFSQ is asymptotically optimal in steady state under individual-level heterogeneity (Theorem~\ref{thm:asymptotical_optimality}). This shows that the optimality of routing to the fastest server among those with the shortest queue extends beyond finite-pool models to arbitrary limiting service-rate distributions. Specifically, the stationary diffusion-scaled total number of jobs and the stationary diffusion-scaled number of waiting jobs under JFSQ are asymptotically optimal in the Halfin--Whitt regime. This extends the pool-based optimality result of \cite{bhambay2025asymptotic} to arbitrary limiting service-rate distributions, establishing the asymptotic optimality of JFSQ for fully heterogeneous load-balancing systems.
\item[(C4)] \textbf{Auxiliary bounding systems}\\
As a central methodological tool, we develop a coupling-based sample-path comparison using the auxiliary systems Join-the-Shortest-Queue-Fastest-Serves-the-Longest (JSQ-FSL) and Join-the-Shortest-Queue-Slowest-Serves-the-Longest (JSQ-SSL), where servers and queues are decoupled and servers may move between queues in a preemptive fashion. While JSQ-FSL was introduced in \cite{bhambay2025asymptotic} to obtain sample-path lower bounds for pool-based systems, we introduce the complementary JSQ-SSL system, which provides sample-path upper bounds for the relevant queue-length processes. The comparison holds at the sample-path level for each fixed $n$ and is independent of the Halfin--Whitt scaling, yielding direct stochastic boundedness without requiring separate fluid-limit arguments or truncated reflected systems. These auxiliary systems provide the comparison framework underlying both the stationary bounds and the asymptotic optimality analysis.

\item[(C5)] \textbf{Fixed-point arguments via weighted Kantorovich--Rubinstein norms}\\
To establish the existence and uniqueness of solutions to the limiting measure-valued equations, we develop a contraction mapping argument based on switching between weighted Kantorovich--Rubinstein norms. This technique is motivated by the measure-valued setting, where the standard contraction argument in \cite{eschenfeldt2018join} no longer applies directly. Interestingly, the same norm-switching argument also simplifies the original proof of \cite{eschenfeldt2018join} by establishing contraction directly using the system equations, avoiding the introduction of an auxiliary system of equations and the subsequent uniqueness proof.
\end{enumerate}

The remainder of the paper is organized as follows. Section~2 introduces the notation and topological framework used throughout the paper. Section~3 describes the heterogeneous JSQ model and introduces the associated occupancy, fairness, routing, and departure processes. Section~4 states the main transient and stationary results. The subsequent sections develop the auxiliary comparison systems and establish the steady-state bounds, asymptotic optimality, and diffusion-limit results. Supporting technical arguments are collected in the appendices. For ease of reference, Appendix~\ref{app:dependency_map} provides a dependency map summarizing the logical structure of the transient and stationary analyses and the connections among the main results and supporting lemmas.

\section{Notation}
\label{sec:notation}

We use $\RR$, $\RR_+$, and $\RR_-$ to denote the set of real numbers, non-negative real numbers, and non-positive real numbers, respectively. Similarly, we use $\ZZ$, $\ZZ_+$, and $\NN$ to denote the set of integers, non-negative integers, and positive integers, respectively. 

For a real interval $[a,b]\subset\RR$ and a metric space $(\XX,d_{\XX})$, let $\CC_{\XX}[a,b]$, $\CC_{\XX}^b[a,b]$, $\DD_{\XX}[a,b]$, and $\GG_{\XX}[a,b]$ denote the spaces of $\XX$-valued functions that are continuous, bounded continuous, right-continuous, and left-continuous, respectively. We endow $\CC_{\XX}[0,\infty)$ and $\DD_{\XX}[0,\infty)$ with the topology of uniform convergence on compact sets and the Skorokhod $J_1$ topology, respectively. For all $b<\infty$, we equip the space $\GG_{\XX}[a,b]$ with the Skorokhod $J_1$ topology by mapping each $f\in\GG_{\XX}[a,b]$ to $g_f\in\DD_{\XX}[0,b-a]$, where $g_f(t):=f(b-t)$.

For a real-valued function $f$, we denote its supremum over a compact set $\KK$ by $|f|_\KK^*$. If $\KK=[0,T]$, we use the shorthand $|f|_T^*$. For $\RR^m$ valued functions $\mf{f}=(f_1,f_2,\ldots,f_m)$, we define
\[
|\mf{f}|_T^* := \sum_{i=1}^m |f_i|_T^*.
\]
For infinite dimensional vectors $\mf{x}=(x_1,x_2,\ldots)\in\RR^\infty$, we define $|\mf x|_\rho= \sum_{i=1}^\infty \rho^i|x_i|$ and for $\RR^\infty$ valued functions $\mf{f}=(f_1,f_2,\ldots)$, we use the notation
\[
|\mf{f}|_{\rho,T}^* := \sum_{i=1}^\infty \rho^i |f_i|_T^*, \quad 0 < \rho < 1.
\]

We also define the set of all $\XX$ valued $L$-Lipschitz functions on $[a,b] \subset \RR_+$ by
\begin{align*}
\Lip{\XX, L}[a,b] := \left\{ f \in \CC_{\XX}[a,b] : d_{\XX}(f(s),f(t)) \leq L|t-s| \ \text{for all } s,t \in [a,b] \right\}.
\end{align*}
For real valued functions, we further define
\[
\Lipbar{\RR, L}[a,b] := \Lip{\RR, L}[a,b] \cap \left\{ f \in \CC_{\RR}^b[a,b] : |f|_{[a,b]}^* \leq L \right\}.
\]

In this work, we assume that service rates take values in the interval $\SSS = [\mu_{\min}, \mu_{\max}]$, where $0 < \mu_{\min}\leq 1\leq \mu_{\max} < \infty$ with $\mu_{\min}<\mu_{\max}$. We consider measures on the measurable space $(\SSS,\cB)$ where $\cB$ corresponds to the Borel $\sigma$-algebra on $\SSS$. We write $\cM$ for the set of all measures, $\cM_f$ for the set of all measures with finite total mass, $\cM_s$ for the set of all signed finite measures.
We use the inner product notation for the integration of a measurable function 
$f$ with respect to a measure $\alpha \in \cM$, namely
\[
\langle f, \alpha \rangle = \int_{\SSS} f(x)\, d\alpha(x).
\]
We denote the zero measure and the Dirac measure assigning unit mass at $\mu$ by $0$ and $\delta_\mu$, respectively. The indicator function $\II(p)$ takes the value $1$ if proposition $p$ is true and $0$ otherwise, while $\iota(\cdot)$ denotes the identity function. Hence, for any measure $\alpha\in\cM$, $\langle \iota,\alpha\rangle$ corresponds to the first moment of $\alpha$. 

Weak convergence of finite measures on $\SSS$ can be metrized by the Kantorovich--Rubinstein metric (cf.~Chapter 8.3 of \cite{bogachev2007measure}). For any $L>0$ and $\alpha,\beta\in\cM$, we define the scaled Kantorovich--Rubinstein metric by
\begin{equation}
    \label{eq:measure_metric_definition}
    d_{\cM,L}(\alpha, \beta) = \sup\left\{|\langle f, \alpha \rangle - \langle f, \beta \rangle|:f\in \Lipbar{\RR, L} (\SSS)\right\}.
\end{equation}
The standard Kantorovich-Rubinstein norm assumes $L=1$ and $ d_{\cM,L}(\alpha, \beta) = L d_{\cM,1}(\alpha, \beta)$. We define the set of $\cM$-valued $L$-Lipschitz functions using the standard Kantorovich-Rubinstein norm as 
\[
\Lip{\cM,L}[a,b] = \left\{\alpha\in \CC_{\cM}[a,b]: d_{\cM,1}(\alpha(s),\alpha(t))\leq L|t-s| \mbox{ for all }s,t\in [a,b]\right\}.
\]
Allowing arbitrary values of $L$ provides additional flexibility in the contraction arguments developed later in the paper. For any $\alpha\in \DD_{\cM_s}[0,T]$, we also define $|\alpha|_{TV,t}^*:=\sup_{0\leq s\leq t}|\alpha(s)|_{TV}$ with $|\cdot|_{TV}$ denoting the total variation norm of a measure. The above definitions are extended to finite signed measures naturally. 

All random quantities in this paper are defined on a common probability space $(\Omega, \cF,\PP)$ unless stated otherwise. We use ``$\Rightarrow$'' to denote weak convergence of measures and convergence in distribution of random quantities, and ``$\toP$'' to denote convergence in probability. For any positive recurrent Markov process $X=\{X(t):t\geq 0\}$, we use $X(\infty)$ to denote a random variable with the stationary distribution of $X$.

\section{The Model Description and Related Stochastic Processes}\label{sec:model}

We consider a sequence of queueing systems, where the $n$th system receives external arrivals according to a Poisson process $A^n(t)$ with rate $n\lambda^n$ and consists of $n$ servers each serving its own queue of jobs in the first-come-first-served (FCFS) fashion. Upon arrival, each job is immediately routed to a server using a join-the-shortest-queue (JSQ) policy, where ties are broken according to a pre-specified rule $\pi$. The service times for jobs at server $k$ are i.i.d.~exponential random variables with rate $\mu_k^n \in \SSS$. For each $n\in \NN$, we denote the service rate vector by $\boldsymbol{\mu}^n = (\mu_k^n:1\leq k\leq n)$  and the departure process from the system by $D^{\pi,n}(t)$. We assume that arrival and service rates satisfy the following assumption.

\begin{assumption}\label{asm:arrival_service_rates}
Define the empirical service rate distribution of the $n$th system with service rate vector $\boldsymbol{\mu}^n$ by
\begin{equation*}
F_n(\mu) := n^{-1}\sum_{k=1}^n \II(\mu_k^n\leq \mu)\mbox{ and the sample mean by }\bar{\mu}^n:=n^{-1}\sum_{k=1}^n \mu_k^n.
\end{equation*}
There exists a probability measure with cumulative distribution function $F$ with support $\SSS$, mean $1$ and variance $\sigma_F^2$, together with a random variable $\nu$ with finite mean and variance such that $F_n\Rightarrow F$, $\nu^n:=n^{1/2}(\bar{\mu}^n-1)\Rightarrow \nu$ and $\EE[|\nu^n|]\to \EE[|\nu|]$. We also assume that $\varsigma^n := n^{1/2}(1-\lambda^n)\to \varsigma>0$ deterministically. 
\end{assumption}

Assumption~\ref{asm:arrival_service_rates} is sufficiently general and applies to a wide class of models. For example, the pool-level heterogeneity model considered in \cite{bhambay2025asymptotic} with $M$ pools corresponds to the case where $F$ is a discrete distribution with $M$ point masses and $\nu=0$ w.p.~1. This assumption also allows us to consider the case where service rates are i.i.d.~samples from a common distribution $F$ with variance $\sigma_F^2$, in which case the central limit theorem implies that $\nu$ is normally distributed with mean 0 and variance $\sigma_F^2$. In the remainder of the paper, we abuse the notation and use $F$ to denote both the cumulative distribution and the corresponding probability measure. 

For the $n$th system, we represent the state as $\mf{Q}^{\pi,n}(t):=(Q_{k,i}^{\pi,n}(t):1\le k\le n,\ i\in\NN)$ for all $t\ge0$, where
\[
Q_{k,i}^{\pi,n}(t):=
\begin{cases}
1, & \text{if there are $i$ or more jobs at server $k$ at time $t$,}\\
0, & \text{otherwise.}
\end{cases}
\]
We also define $Q_i^{\pi,n}(t):=\sum_{k=1}^n Q_{k,i}^{\pi,n}(t)$ for all $i\in\NN$ and $t\ge0$, and let $(\cF_t^n:t\ge0)$ denote the natural filtration associated with the system. To study the weak convergence of the system, we define the scaled processes
\begin{equation}
\label{eq:scaled_system}
X_{k,1}^{\pi,n}(t):= n^{-1/2}(Q_{k,1}^{\pi,n}(t) - 1), X_1^{\pi,n}(t) := \sum_{k=1}^n X_{k,1}^{\pi,n}(t), \text{ and } X_i^{\pi,n}(t) := n^{-1/2}Q_{i}^{\pi,n}(t),
\end{equation}
for all $1\leq k \leq n$ and $i\geq 2$. The process $X_1^{\pi,n}(t)$ denotes the negative of the diffusion-scaled number of idle servers in the $n$th system, whereas for $i\geq 2$, $X_i^{\pi,n}(t)$ denotes the scaled number of servers with $i$ or more jobs. We define the sequence $\mf{X}^{\pi,n}(t) := (X_i^{\pi,n}(t): i\in \NN)$. In this work the service rate vector $\boldsymbol{\mu}^n$ is allowed to be random. We assume that the service rates are observed at time 0 and remain constant over time. This is captured by the following assumption.
\begin{assumption}
    The service rate vector $\boldsymbol{\mu}^n$ is $\cF_0^n$-measurable. 
\end{assumption}



The aggregate quantities  $X_i^{\pi,n}(t)$ are insufficient to characterize the limiting dynamics in heterogeneous systems, since they do not retain information about the service-rate composition of the servers contributing to these counts. B\"{u}ke and Qin~\cite{buke2023many} propose using measure-valued processes to model the individual differences between servers and in this work, we adopt a similar approach. Two measure-valued processes play a central role in our analysis. The first is the fairness process introduced in~\cite{buke2023many}, which records how cumulative idleness is distributed across service rates and is needed to describe the effect of tie-breaking among idle servers. The second records the service-rate distribution of servers at higher queue levels and provides the additional information needed to describe the evolution of the queue-length process beyond the idle-server component. We define $\bar{\SSS}:=\SSS\cup\{0\}$ and let $\bar{\cB}$ be the Borel $\sigma$-algebra on $\bar{\SSS}$. By a slight abuse of notation, we also use $\cM_f$ to denote the space of finite measures on $(\bar{\SSS},\bar{\cB})$ whenever the underlying measurable space is clear from the context. For all $\epsilon\geq 0$, define
\begin{equation*}
    \tau_\epsilon^{\pi,n} = \inf\left\{t:\int_0^t X_1^{\pi,n}(s)ds <-\epsilon\right\}.
\end{equation*}
Then, for every $t\geq0$ and $\AAA_1\in\bar{\cB}$, and for every $i\geq2$ and $\AAA_2\in\cB$, define
\begin{align*}
    \eta^{\pi,n}(t)(\AAA_1) = \begin{cases}
       \displaystyle \frac{\sum_{k=1}^n\delta_{\mu_k^n}(\AAA_1)\int_0^t X_{k,1}^{\pi,n}(s)ds}{\int_0^t X_1^{\pi,n}(s)ds} &t> \tau_0^{\pi,n}\\
       \delta_0(\AAA_1) &t\leq\tau_0^{\pi,n}
    \end{cases}
    \mbox{ and }\xi_i^{\pi,n}(t)(\AAA_2) = n^{-1/2}\sum_{k=1}^n \delta_{\mu_k^n}(\AAA_2) Q_{k,i}^{\pi,n}(t).
\end{align*}
The definitions of these two measure-valued processes are structurally different. The probability measure-valued process $\eta^{\pi, n}\in \GG_{\cM_f}[0,\infty)$ is the fairness process defined in \cite{buke2023many} and corresponds to the \emph{self-normalized} distribution of the \emph{cumulative} idleness experienced by time $t$ among servers with different rates. In contrast, the process $\xi_i^{\pi, n}\in \DD_{\cM_f}[0,\infty)$ is non-normalized and takes values in the space of finite measures $\cM_f$ and corresponds to the \emph{instantaneous} distribution of servers with $i$ or more jobs. To simplify the notation, we define $\boldsymbol{\xi}^{\pi,n}(t)=(\xi_i^{\pi,n}(t): i\in \NN)$ with $\xi_1^{\pi,n}(t)=0$ and have the following usual assumption regarding the initial system state: 
\begin{assumption}\label{asm:xi_initial}
There exist random quantities $X_1(0)\in \RR_{-}$ and $\boldsymbol{\xi}(0)\in \cM_f^\infty$ such that $X_1^{\pi,n}(0)\Rightarrow X_1(0)$ and $\boldsymbol{\xi}^{\pi,n}(0)\Rightarrow \boldsymbol{\xi}(0)$ as $n\to \infty$, which in turn imply $\mf{X}^{\pi,n}(0)\Rightarrow \mf{X}(0)$. We also assume that there are initially finitely many jobs in the system, i.e., $\sum_{i=1}^\infty Q_i^{\pi,n}(0)<\infty$ for all $n\in \NN$.
\end{assumption}

The fairness process, $\eta^{\pi,n}$, has been studied in~\cite{buke2023many} in detail. B\"uke and Qin~\cite{buke2023many} show that the self-normalization causes a singularity at $\tau_0^{\pi,n}$ that prevents the processes $\eta^{\pi,n}$ from converging weakly when $\GG_{\cM_f}[0,\infty)$ is equipped with any of the four topologies introduced in~\cite{sko56}. To overcome this problem, they suggest using a modified definition for the limiting fairness process as follows. 
\begin{definition}\label{def:limiting_fairness}
    For any $\epsilon>0$, define the $\epsilon$-shifted fairness process as 
    \begin{equation}
    \cS_\epsilon\eta^{\pi,n}(t)(\AAA) = \begin{cases}
        \eta^{\pi,n}(t)(\AAA) &\mbox{if }t\geq \tau_{\epsilon}^{\pi,n}\\
        \delta_0(\AAA) &\mbox{if }t<\tau_\epsilon^{\pi,n}.
    \end{cases}\label{eq:shifted_process_definition}
    \end{equation}
    A probability measure-valued process $\eta^\pi \in \GG_{\cM_f}[0,\infty)$ is said to be the limiting fairness process for $(\eta^{\pi,n}:n\in \NN)$, if for all $\epsilon>0$, $\tau_\epsilon^{\pi,n}\Rightarrow \tau_\epsilon^\pi$ and $\cS_\epsilon\eta^{\pi,n}\Rightarrow \cS_\epsilon\eta^\pi$ as $n\to\infty$, where $\cS_\epsilon\eta^\pi$ is defined by replacing $\tau^{\pi,n}_\epsilon$ with $\tau_\epsilon^\pi$ in \eqref{eq:shifted_process_definition}. We denote this mode of convergence by $\eta^{\pi,n} \tof \eta^\pi$.
\end{definition}

To analyze the convergence of fairness processes under any tie-breaking rule $\pi$, we need the stochastic boundedness result in Lemma~\ref{lem:stochastic_boundedness}, which is also required for our main result. The proof of Lemma~\ref{lem:stochastic_boundedness} is provided in Appendix~\ref{app:proof_stochastic_boundedness}.

\begin{lemma}\label{lem:stochastic_boundedness}
    The sequence of processes $(\mf{X}^{\pi,n}:n\in \NN)$ is stochastically bounded under any tie-breaking rule $\pi$.
\end{lemma}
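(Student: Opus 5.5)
We prove stochastic boundedness of $\mf{X}^{\pi,n}$ in the product topology, which amounts to showing that for every $T>0$ and every $i\in\NN$ the family $\{|X_i^{\pi,n}|_T^*\}_n$ is tight. (If needed, the same argument gives tightness of $|\mf{X}^{\pi,n}|_{\rho,T}^*$.) The plan is to sandwich the JSQ system, under any tie-breaking rule $\pi$, between simpler systems on the same probability space via a sample-path coupling. Each bounding system should be driven by the common arrival process $A^n$ and by unit-rate Poisson processes $N_k$ attached to the servers, so that server $k$ sees potential departures at the times of $N_k(\mu_k^n\,\cdot)$. Because $\mu_k^n\in[\mu_{\min},\mu_{\max}]$ and $\bar\mu^n=1+n^{-1/2}\nu^n$ with $\nu^n$ tight, all rates are uniformly controlled, and the $\pi$-dependence is confined to which idle or shortest queue receives each arrival.

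\textbf{Lower level, $X_1^{\pi,n}$.} We bound $X_1^{\pi,n}\le 0$ trivially. For the bound from below, note that the number of busy servers $Q_1^{\pi,n}$ satisfies
\[
Q_1^{\pi,n}(t)=Q_1^{\pi,n}(0)+A^n(t)-\big(\text{jobs routed to nonidle servers}\big)-\big(\text{departures leaving a server idle}\big),
\]
and departures of the latter kind occur at total rate at most $\mu_{\max}Q_1$ minus those from servers with at least two jobs. The cleanest route is a direct lower-tail estimate. Centering by the compensators and using the Halfin--Whitt scaling, we get
\[
X_1^{\pi,n}(t)\ \ge\ X_1^{\pi,n}(0)+M^n(t)-\varsigma^n t-\int_0^t c\,X_1^{\pi,n}(s)\,ds-\text{(nonneg. terms from }X_2\text{)}+n^{-1/2}\text{-errors}.
\]
Here $M^n$ is a martingale whose quadratic variation is $O(t)$ uniformly in $\pi$, since the total event rate is at most $n(\lambda^n+\mu_{\max})$. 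The Doob inequality gives tightness of $|M^n|_T^*$. However, the $X_2$ term enters with the wrong sign for a lower bound, so we couple instead. Let $Y^n$ be the $n$-server system in which every arrival joins an idle server if one exists (otherwise it is discarded), and whose servers serve at rate $\mu_{\max}$, or which equivalently uses the slowest-serves-the-longest preemptive auxiliary system. This is the step I would organise through the auxiliary JSQ-SSL/JSQ-FSL systems mentioned in (C4). By induction over event times, the idle count under $\pi$ is dominated by the idle count of an $M/M/n$-type loss-like system whose scaled idle process is a tight Ornstein--Uhlenbeck-type process, by the standard Halfin--Whitt martingale/Gronwall argument.

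\textbf{Higher levels, $X_i^{\pi,n}$ for $i\ge2$.} Under JSQ, a job is routed to a queue of length at least one only if no server is idle. Consequently,
\[
Q_2^{\pi,n}(t)\le Q_2^{\pi,n}(0)+\int_0^t \II(Q_1^{\pi,n}(s)=n)\,dA^n(s).
\]
We also subtract departures, which occur at rate at least $\mu_{\min}Q_2$. To obtain an upper bound that is uniform in $\pi$, we couple with the JSQ-SSL system, in which servers detach from their queues and the slowest servers are assigned to the longest queues. Checking jump by jump that each $Q_i$ ($i\ge2$) of the $\pi$-system is dominated by that of JSQ-SSL under the common driving Poisson processes gives the upper bound. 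In JSQ-SSL the number of waiting jobs is a birth--death process reflected through the idle count, and its scaled version is bounded via the martingale decomposition plus Gronwall. The inputs here are the bound from the previous step on the local time at $X_1=0$, which is $O(1)$ because $\int_0^t\II(X_1=0)\,dA^n/\sqrt n$ is controlled by the Skorokhod regulator of $X_1$, together with the tightness of $\nu^n$ and $X_2^{\pi,n}(0)$. Finally, $Q_i\le Q_2$ for $i\ge 2$ gives all levels at once, and the weighted sum $\sum_i\rho^i|X_i|_T^*$ is handled using the total job count, which is bounded by $n+\sqrt n\,(\text{waiting-job scaled process})$.

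\textbf{Main obstacle.} The main obstacle is verifying the monotone coupling with JSQ-SSL for arbitrary, possibly state-dependent, tie-breaking. The difficulty is that service rates attached to queues differ, so the usual argument that ``the shortest queue stays shortest'' fails. I would first attempt a majorization argument: order the queue-length vectors decreasingly and show that the partial sums $\sum_{j\le m}$ of the $\pi$-system are dominated by those of SSL after every arrival and every potential departure. Departures would be coupled by rank, assigning the $r$-th smallest rate potential departure clock in SSL to the $r$-th longest queue.
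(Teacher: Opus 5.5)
\textbf{The bound for $X_1^{\pi,n}$ has a concrete gap.} Your comparison system (idle servers filled first, excess arrivals discarded, every server working at rate $\mu_{\max}$) does dominate the idle count, but its scaled idle process is not tight.
\begin{itemize}
\item Since $F$ has mean $1$ and support $[\mu_{\min},\mu_{\max}]$ with $\mu_{\min}<\mu_{\max}$, we have $\mu_{\max}>1$.
\item With $I$ idle servers, that system creates idleness at rate $\mu_{\max}(n-I)$ against arrivals at rate $n\lambda^n$. So $n^{-1/2}I$ has drift $n^{1/2}(\mu_{\max}-\lambda^n)-\mu_{\max}n^{-1/2}I$ and reaches order $n^{1/2}$ in $O(1)$ time.
\item The JSQ--SSL variant is not equivalent and does not help. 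Proposition~\ref{prop:upper_bound_system} orders only the cumulative counts $Q_{+i}$. Neither the FSL nor the SSL bound controls $Q_1^{\pi,n}$ from below, which is what an upper bound on idleness requires.
\end{itemize}
The missing idea is to dominate only the \emph{total} rate of idle-creating departures,
\[
\sum_k\mu_k^n\bigl(Q_{k,1}^{\pi,n}-Q_{k,2}^{\pi,n}\bigr)\le\sum_k\mu_k^n=n\lambda^n+n^{1/2}(\varsigma^n+\nu^n).
\]
The paper does this by coupling $n-Q_1^{\pi,n}$ with a single Markovian queue fed by the potential-departure process $S^n$ (rate $n\lambda^n+K_{\mu,\epsilon}n^{1/2}$) and drained by $A^n$. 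Upcrossings of $[K/2,K]$ by this queue on the diffusion scale are controlled by the tight quantity $n^{-1/2}|S^n-A^n|_T^*$.

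The direct route you abandoned also works, because your sign objection is mistaken. In the decomposition of $X_1^{\pi,n}$, the terms $-\int_0^t\sum_k\mu_k^nX_{k,1}^{\pi,n}(s)\,ds$ and $+\int_0^t\langle\iota,\xi_2^{\pi,n}(s)\rangle\,ds$ are both nonnegative, so they only help a lower bound. The sole downward push is $n^{-1/2}A_1^{\pi,n}$, which increases only when $X_1^{\pi,n}=0$. A last-visit-to-zero argument then gives, uniformly in $\pi$,
\[
|X_1^{\pi,n}|_T^*\le |X_1^{\pi,n}(0)|+n^{-1/2}+2|M^n|_T^*+|\varsigma^n+\nu^n|T,\qquad M^n:=M_A^n-M_{D,1}^n+M_{D,2}^n .
\]

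\textbf{The step for $X_2^{\pi,n}$ rests on an unproved domination.} You use a jump-by-jump bound $Q_i^{\pi,n}\le\tilde{Q}_i^{\SSL,n}$ and leave it open; it is not what the coupling delivers.
\begin{itemize}
\item The paper's sample-path comparison yields only $Q_{+i}^{\pi,n}\le\tilde{Q}_{+i}^{\SSL,n}$. For integer vectors this is equivalent to the weak majorization your partial-sum idea targets.
\item Under the natural threshold coupling, the individual $Q_2$'s can cross. Start from $\tilde{Q}_2^{\SSL,n}=Q_2^{\pi,n}$ and $\tilde{Q}_3^{\SSL,n}>Q_3^{\pi,n}$, with the $\pi$-system's long queues at fast servers. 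A potential departure can then hit a buffer with exactly two jobs in JSQ--SSL while hitting a queue with three or more jobs in the $\pi$-system.
\item Even with the cumulative bound, you would still need stochastic boundedness of JSQ--SSL itself. You would also need tightness of $n^{-1/2}Q_{+2}^{\pi,n}(0)$, which Assumption~\ref{asm:xi_initial} does not provide.
\end{itemize}

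The detour is unnecessary. Your own inequality $X_2^{\pi,n}(t)\le X_2^{\pi,n}(0)+n^{-1/2}A_1^{\pi,n}(t)$ can be combined with the balance identity
\[
n^{-1/2}A_1^{\pi,n}(t)=X_1^{\pi,n}(0)-X_1^{\pi,n}(t)+M^n(t)-(\varsigma^n+\nu^n)t-\int_0^t\sum_k\mu_k^nX_{k,1}^{\pi,n}(s)\,ds+\int_0^t\langle\iota,\xi_2^{\pi,n}(s)\rangle\,ds .
\]
This gives $X_2^{\pi,n}(t)\le C_n+\mu_{\max}\int_0^tX_2^{\pi,n}(s)\,ds$, with $C_n$ tight once $X_1^{\pi,n}$ is, and Gronwall closes it. The regulator is not $O(1)$ on its own precisely because of the $\xi_2$ term.

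The paper avoids even Gronwall. It splits servers into $\KK_n=\{k:Q_{k,2}^{\pi,n}(0)=1\}$ and $\KK_n^c$, and compares $A^n$ with departures from $\KK_n^c$. Their compensator differs from $n\lambda^nt$ by $O(n^{1/2})$ terms controlled by $\varsigma^n+\nu^n$, $X_2^{\pi,n}(0)$ and $|X_1^{\pi,n}|_T^*$.

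Finally, all higher levels follow from $X_i^{\pi,n}\le X_2^{\pi,n}$ for $i\ge2$. Handling the weighted sum through the total job count would again require tightness of $\sum_{i\ge2}X_i^{\pi,n}(0)$.
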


We next review the results on fairness processes from~\cite{buke2023many}. Although originally derived for $M/M/n$ systems with a single global queue, these results extend naturally to JSQ systems.
The stochastic boundedness of $(X_1^{\pi,n}:n\in \NN)$ naturally yields the following tightness result. The proof follows the same lines as in the proof of Theorem 2 in~\cite{buke2023many} and is provided in Appendix~\ref{app:fairness_measure_results} for completeness. 

\begin{lemma}[cf. Theorem 2 in~\cite{buke2023many}]\label{lem:fairness_tightness}
Under any tie-breaking policy $\pi$ among idle servers, the sequence of fairness processes $(\eta^{\pi,n}:n\in \NN)$ is tight in the sense of Definition~\ref{def:limiting_fairness}, i.e., for any subsequence, there exists a further subsequence that possesses a limiting fairness process.
\end{lemma}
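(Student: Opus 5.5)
We reduce the tightness of the fairness processes to that of two simpler objects: the cumulative idleness integrals, and the per-rate cumulative idleness measures. Define, for $t\ge 0$ and $\AAA\in\cB$,
\[
I^{\pi,n}(t):=-\int_0^t X_1^{\pi,n}(s)\,ds\ge 0,\qquad \zeta^{\pi,n}(t)(\AAA):=-\sum_{k=1}^n\delta_{\mu_k^n}(\AAA)\int_0^t X_{k,1}^{\pi,n}(s)\,ds .
\]
On $\{t>\tau_0^{\pi,n}\}$ we then have $\eta^{\pi,n}(t)=\zeta^{\pi,n}(t)/I^{\pi,n}(t)$.

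\textbf{Step 1: tightness of $(I^{\pi,n},\zeta^{\pi,n})$.} By Lemma~\ref{lem:stochastic_boundedness}, $|X_1^{\pi,n}|_T^*$ is stochastically bounded. Since $\zeta^{\pi,n}(t)$ is a nonnegative measure, we get for all $0\le s\le t\le T$
\[
d_{\cM,1}\big(\zeta^{\pi,n}(t),\zeta^{\pi,n}(s)\big)\le |\zeta^{\pi,n}(t)-\zeta^{\pi,n}(s)|_{TV} = I^{\pi,n}(t)-I^{\pi,n}(s)\le |X_1^{\pi,n}|_T^*\,(t-s).
\]
Hence, with probability at least $1-\epsilon$, both processes lie in the set of $L$-Lipschitz paths with $L=|X_1^{\pi,n}|_T^*$ bounded. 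Moreover, $\zeta^{\pi,n}(t)$ has total mass at most $LT$ and is supported on the compact set $\SSS$. By Prohorov's theorem, bounded-mass measures on a compact space form a relatively compact set, so Arzel\`a--Ascoli gives relative compactness of $(I^{\pi,n},\zeta^{\pi,n})$ in $\CC_{\RR\times\cM_f}[0,\infty)$. We may therefore pass to a subsequence along which $(I^{\pi,n},\zeta^{\pi,n})\Rightarrow(I,\zeta)$. By Skorokhod representation, we take this convergence to be almost sure, uniformly on compacts.

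\textbf{Step 2: convergence of $\tau_\epsilon^{\pi,n}$.} We have $\tau_\epsilon^{\pi,n}=\inf\{t: I^{\pi,n}(t)>\epsilon\}$, a first-passage time of a nondecreasing continuous path. Set $\tau_\epsilon:=\inf\{t:I(t)>\epsilon\}$. The hitting map is continuous at $I$ whenever $I$ does not stay flat at level $\epsilon$. Since $I$ is nondecreasing, the set of levels at which it is flat is at most countable, so this fails for at most countably many $\epsilon$. For those exceptional $\epsilon$, we argue by monotonicity in $\epsilon$: sandwich $\epsilon$ between good levels $\epsilon'<\epsilon<\epsilon''$ and use right-continuity of the hitting times. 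This is the main technical obstacle, since Definition~\ref{def:limiting_fairness} requires convergence for every $\epsilon>0$. If the sandwich argument fails, I would follow the approach of~\cite{buke2023many} and either use a random-time-change argument or show that $I$ is strictly increasing. For strict increase, the natural route is to exploit that $X_1$ cannot stay at $0$ over an interval in the limit, because arrivals occur at rate below capacity.

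\textbf{Step 3: convergence of $\cS_\epsilon\eta^{\pi,n}$.} On $[\tau_\epsilon^{\pi,n},\infty)$ we have $I^{\pi,n}\ge\epsilon$, so the map $(I,\zeta)\mapsto \zeta/I$ is Lipschitz there in the metric $d_{\cM,1}$. Consequently $\cS_\epsilon\eta^{\pi,n}$ converges to $\cS_\epsilon\eta$ with $\eta:=\zeta/I$, uniformly on compacts away from the jump time. Combined with Step 2, this gives $J_1$ convergence in the left-continuous path space $\GG$, which admits a single jump at $\tau_\epsilon$ from $\delta_0$. Finally, $\eta$ is probability-valued, since each $\eta^{\pi,n}(t)$ is. Together these establish $\eta^{\pi,n}\tof\eta$ along the subsequence, which is the claim of Lemma~\ref{lem:fairness_tightness}.
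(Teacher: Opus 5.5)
\textbf{Gap in Step~2.} Your Steps~1 and~3 are sound. Building the limit as $\eta=\zeta/I$ from the unnormalized pair is a clean route, and it makes consistency across different $\epsilon$ automatic. The problem is Step~2, which is exactly the part of Definition~\ref{def:limiting_fairness} that goes beyond path regularity. Uniform convergence $I^{\pi,n}\to I$ only yields
\[
\sigma_\epsilon:=\inf\{t:I(t)\ge\epsilon\}\le\liminf_{n}\tau_\epsilon^{\pi,n}\le\limsup_{n}\tau_\epsilon^{\pi,n}\le\inf\{t:I(t)>\epsilon\}.
\]
Your sandwich cannot sharpen this. As $\epsilon'\uparrow\epsilon$, the times $\inf\{t:I(t)>\epsilon'\}$ increase to $\sigma_\epsilon$, not to $\inf\{t:I(t)>\epsilon\}$, so you recover the same window. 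On the event that $I$ is flat at level $\epsilon$, the times $\tau_\epsilon^{\pi,n}$ can genuinely oscillate across the flat stretch. The set of flat levels is also random: what is true is that only countably many deterministic $\epsilon$ satisfy $\PP(I\mbox{ is flat at level }\epsilon)>0$, and these still have to be handled.

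\textbf{The fallback is unproved and, as sketched, circular.} Strict increase of $I$ is a substantive claim, and the reason you suggest is not the operative one. First, $\varsigma+\nu$ may be negative, since $\nu$ is random. Second, even with a favourable drift, the term $\int_0^t\langle\iota,\xi_2\rangle\,ds$ can push $X_1$ against the barrier at $0$. What keeps $X_1$ off $0$ is the nondegenerate Brownian part, as in the proof of Corollary~\ref{cor:tau_0_continuity}. That proof, however, rests on Theorem~\ref{thm:process_level_convergence}, whose hypothesis includes $\eta^{\pi,n}\tof\eta^\pi$. Using it here is circular unless you first derive, independently, a semimartingale decomposition for subsequential limits of $X_1^{\pi,n}$ (e.g.\ via Lemma~\ref{lem:martingale_convergence}).

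\textbf{How the paper avoids this, and a short repair.} The paper never identifies the limit of $\tau_\epsilon^{\pi,n}$ with a passage time of $I$. Instead it proves tightness of $\cS_\epsilon\eta^{\pi,n}$ itself in $\DD_{\cP(\bar{\SSS})}[0,T]$ via Jakubowski's criterion:
\begin{itemize}
\item on $\{|X_1^{n}|_T^*\le K\}$ one has $\tau_\epsilon^{n}\ge\epsilon/K$;
\item $|\langle f,\eta^n(t)\rangle-\langle f,\eta^n(s)\rangle|\le 2\sup|f|\,K\epsilon^{-1}(t-s)$ for $t>s\ge\tau_\epsilon^n$;
\item $\tau_\epsilon^n$ is used as a partition point in the modulus $w'$.
\end{itemize}
Since $\tau_\epsilon^n=\inf\{t:\cS_\epsilon\eta^n(t)\neq\delta_0\}$, the jump at that time goes from $\delta_0$ to a probability measure on $\SSS$, which is bounded away from $0$. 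Its size is therefore bounded below, so $J_1$ convergence carries the jump times along, and no non-flatness of $I$ is needed.

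In your framework the same idea gives a short repair. Adjoin $(\tau^{\pi,n}_{1/m}:m\in\NN)$, which is automatically tight in the compact space $[0,\infty]$, to $(I^{\pi,n},\zeta^{\pi,n})$, and extract a jointly convergent subsequence. Since $I^{\pi,n}(\tau^{\pi,n}_\epsilon)=\epsilon$ whenever $\tau^{\pi,n}_\epsilon<\infty$, uniform convergence gives $I(\tau_\epsilon)=\epsilon$ on $\{\tau_\epsilon<\infty\}$. Hence $I\ge\epsilon$ on $[\tau_\epsilon,\infty)$, which is all your Step~3 uses.

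This covers a countable family of levels, just as the paper's diagonal argument over $\epsilon=1/m$ does, and that family is all the proof of Theorem~\ref{thm:process_level_convergence} needs. To reach every $\epsilon>0$, also include in the extraction the countably many levels with $\PP(I\mbox{ is flat at level }\epsilon)>0$. Your continuity argument then handles all remaining levels.
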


Lemma~\ref{lem:fairness_tightness} acts as the first step in obtaining limits of fairness processes. The existence of a unique limiting fairness process follows if all subsequences converge to the same limit. The following two propositions from~\cite{buke2023many} characterize the limiting fairness processes for some common tie-breaking rules. The proofs extend with only minor modifications to JSQ systems and are therefore omitted.

\begin{proposition}[cf. Theorems 3 and 4 in~\cite{buke2023many}]\label{prop:priority_idleness}
    The limiting fairness processes for the JSQ systems, where ties among idle servers are broken by routing to the fastest server (JFSQ) or the slowest server (JSSQ), respectively, are given by
    \[
    \eta^{\JFSQ}(t)(\AAA) = \begin{cases}
        \delta_{\mu_{\min}}(\AAA) &\mbox{if }t> \tau_0^{\JFSQ}\\
        \delta_0(\AAA) &\mbox{if }t \leq \tau_0^{\JFSQ}
    \end{cases} \mbox{ and } 
    \eta^{\JSSQ}(t)(\AAA) = \begin{cases}
        \delta_{\mu_{\max}}(\AAA) &\mbox{if }t> \tau_0^{\JSSQ}\\
        \delta_0(\AAA) &\mbox{if }t\leq \tau_0^{\JSSQ}
    \end{cases}.
    \]
\end{proposition}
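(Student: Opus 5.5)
By Lemma~\ref{lem:fairness_tightness}, every subsequence has a further subsequence with some limiting fairness process. It therefore suffices to show that every subsequential limit equals the claimed measure. I treat JFSQ; JSSQ follows by reversing the order on $\SSS$. Fix $\epsilon>0$ and a test function $f\in\Lipbar{\RR,1}(\SSS)$. Since a probability measure on $\SSS$ is determined by its values on all sets $[\mu_{\min},\mu_{\min}+\delta]$, it suffices to prove the following claim. For every $\delta>0$ and $T>0$,
\[
\sup_{\tau_\epsilon^{\JFSQ,n}\le t\le T}\eta^{\JFSQ,n}(t)\big((\mu_{\min}+\delta,\mu_{\max}]\big)\toP 0 .
\]
Together with the convergence $\tau_\epsilon^{n}\Rightarrow\tau_\epsilon$, which comes from the tightness lemma along subsequences, this identifies $\cS_\epsilon\eta$ as $\delta_{\mu_{\min}}$ on $[\tau_\epsilon,\infty)$. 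Letting $\epsilon\downarrow0$ then gives the stated form, with $\delta_0$ on $[0,\tau_0]$.

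To prove the claim, write the cumulative idleness of fast servers as
\[
I_\delta^n(t):=n^{-1/2}\sum_{k:\mu_k^n>\mu_{\min}+\delta}\int_0^t(1-Q^{n}_{k,1}(s))\,ds .
\]
The numerator of $\eta^n(t)((\mu_{\min}+\delta,\mu_{\max}])$ is $-I_\delta^n(t)$ and its denominator is $\int_0^t X_1^n(s)\,ds$. On $\{t\ge\tau_\epsilon^n\}$ the denominator has absolute value at least $\epsilon$, so it is enough to show that $\sup_{t\le T}I^n_\delta(t)\toP0$. The key observation concerns JFSQ: an arrival finds an idle server and is sent to the fastest one. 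Hence a fast server can be idle while slow servers are busy, but a fast server can never be idle while an arrival is routed to a slower idle server. As in \cite{buke2023many}, I plan a comparison argument. Let $G_\delta^n(t)$ be the number of idle servers with rate $>\mu_{\min}+\delta$. This count increases only at departures from fast servers, and every arrival that occurs while $G_\delta^n>0$ decreases it by one. Thus, whenever $G_\delta^n>0$, it is dominated by a birth–death process with birth rate at most $\mu_{\max}\cdot\#\{\text{fast busy servers}\}\le n\mu_{\max}(1-F_n(\mu_{\min}+\delta))$ and death rate $n\lambda^n$.

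The main obstacle is that this crude birth rate can exceed the death rate $n\lambda^n\approx n$, so the fast idle count need not stay $O(1)$ on its own. I would handle this by using the total idleness constraint. Stochastic boundedness (Lemma~\ref{lem:stochastic_boundedness}) gives that the total number of idle servers is $O_P(n^{1/2})$. Consequently the fast servers are almost all busy, and their total departure rate is $n\langle\iota\,\II(\iota>\mu_{\min}+\delta),F_n\rangle+O_P(n^{1/2})$. Meanwhile the slow servers, those with rate $\le\mu_{\min}+\delta$, keep generating idle periods at rate of order $n$. For this I need $F([\mu_{\min},\mu_{\min}+\delta])>0$, which holds because $F$ has support $\SSS$.

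The idea is a time-scale separation. The total number of idle servers is a slow, diffusion-scale quantity, and it is split among fast and slow servers by a fast $O(n)$-rate process. Whenever at least one fast server is idle, arrivals deplete fast idle servers first. Fast idle servers are created by fast departures at rate at most about $n\bar\mu_{\text{fast}}$, while all arrivals, at rate $n\lambda^n$, go to them. More precisely, consider the net drift of $G_\delta^n$ when it is positive. This drift is $n\,\mu\text{-mass of fast busy servers}-n\lambda^n$. Because total busy capacity is about $n\bar\mu^n\approx n\lambda^n$ and the slow busy servers contribute positively, with mass at least $c_\delta n$ minus the $O_P(n^{1/2})$ idle ones, this drift is $\le -c_\delta' n$ on $\{G^n_\delta>0\}$. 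Hence $G^n_\delta$ behaves like an $M/M/1$ queue with $O(n)$ rates and a strictly negative drift. Its time integral over $[0,T]$ is then $O_P(1)$, after a short initial transient used to clear $G^n_\delta(0)=O_P(n^{1/2})$, whose contribution is $O_P(1)$ as well. Dividing by $n^{1/2}$ gives $I_\delta^n\toP0$ uniformly on $[0,T]$. Making this domination rigorous is the core step: I would localize on the event that $|X_1^n|^*_T\le K$ and then apply a standard reflected-random-walk bound, for example Doob's inequality applied to the compensated counting processes.
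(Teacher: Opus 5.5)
Your argument is correct. It is essentially the comparison argument that the paper takes from \cite{buke2023many} without reproducing it: under JFSQ, the number of idle servers with rate above $\mu_{\min}+\delta$ is dominated by an $M/M/1$ queue whose arrival rate falls short of $n\lambda^n$ by order $n$ (because $F$ charges every neighbourhood of $\mu_{\min}$), so the fast servers' cumulative idleness is $O_P(1)=o_P(n^{1/2})$ and their share of $\eta^{n}$ vanishes after $\tau_\epsilon^{n}$. The detour through total idleness and the localization on $|X_1^n|_T^*\le K$ are unnecessary, because the birth rate is bounded deterministically by $\sum_{k:\mu_k^n>\mu_{\min}+\delta}\mu_k^n = n\bar{\mu}^n-\sum_{k:\mu_k^n\le\mu_{\min}+\delta}\mu_k^n \le n\lambda^n+n^{1/2}(\varsigma^n+\nu^n)-\mu_{\min}\,nF_n(\mu_{\min}+\delta)$, which already gives a pathwise thinning coupling with a fixed-rate $M/M/1$ queue.
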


\begin{proposition}[cf. Theorems 6, 7 and 8 in~\cite{buke2023many}]\label{prop:totally_blind_idleness}
For $j\in \NN$, let $\tilde{\theta}_j^{\pi,n}$ denote the $j$th departure epoch of $D^{\pi,n}(t)$, and let $\phi_j^{\pi,n}$ denote the duration for which the server completing service at time $\tilde{\theta}_j^{\pi,n}$ remains idle. Similarly, for any $1\leq k \leq n$, define $\phi_{-k}^{\pi,n}$ to be the amount of time that server $k$ remains idle after time $0$ if it is initially idle, and $0$ otherwise. We say that a tie-breaking rule $\pi$ among idle servers is \emph{totally blind} if, for every $T>0$, it satisfies the following two properties:
\begin{enumerate}
    \item The conditional expectation of the scaled idle duration $\phi_j^{\pi,n}$ becomes asymptotically insensitive to the specific server, i.e.,
    \[
    \sup_{1\leq j\leq D^{\pi,n}(T)} n^{1/2}\,\EE\!\left[\left|\EE\!\left[\phi_j^{\pi,n}\mid\cF_{\tilde{\theta}_j^{\pi,n}-}^n\right]-\EE\!\left[\phi_j^{\pi,n}\mid\cF_{\tilde{\theta}_j^{\pi,n}}^n\right]\right|\bigm|\cF_{\tilde{\theta}_j^{\pi,n}-}^n\right] \stackrel{p}{\longrightarrow} 0.
    \]
    \item The unscaled expected residual idle time of any server becomes negligible:
    \[
    \sup_{0\leq t \leq T}\sup_{-n\leq j\leq D^{\pi,n}(t)} \EE\!\left[(\phi_j^{\pi,n}-t+\tilde{\theta}_j^{\pi,n})^+\mid\cF_{t}^n\right] \stackrel{p}{\longrightarrow} 0.
    \]
\end{enumerate}
The limiting fairness process under any totally blind tie-breaking rule is given as
\begin{align}\label{eq:totally_blind_fairness}
\eta^{\TB}(t)(\AAA) = \begin{cases}
        \frac{\int_\AAA \mu dF(\mu)}{\int_\SSS \mu dF(\mu)} &\mbox{if }t> \tau_0^{\TB}\\
        \delta_0(\AAA) &\mbox{if }t\leq \tau_0^{\TB}
    \end{cases}.
\end{align}
Moreover, the longest-idle-server-first (LISF) policy, which routes an arriving job to the server that has been idle the longest, and the random routing (RR) policy, which assigns jobs uniformly at random among idle servers, are both totally blind.
\end{proposition}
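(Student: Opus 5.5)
The statement to be justified is Proposition~\ref{prop:totally_blind_idleness}: the limiting fairness process under any totally blind rule is the size-biased law $\mu\,dF(\mu)/\int\mu\,dF$, and LISF and RR are totally blind. I follow the route of Theorems~6--8 in \cite{buke2023many}. The only model-specific inputs are two facts: under JSQ a departure leaves its server idle only if that server had exactly one job, and arrivals are routed to idle servers whenever $X_1^{\pi,n}<0$.

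By Lemma~\ref{lem:fairness_tightness}, every subsequence of $(\eta^{\pi,n})$ has a further subsequence with a limiting fairness process. It therefore suffices to show that, for every $\epsilon>0$ and every bounded Lipschitz $f$ on $\SSS$, the following holds on $[\tau_\epsilon^{\pi,n},T]$:
\[
\frac{\sum_k f(\mu_k^n)\int_0^t X_{k,1}^{\pi,n}(s)\,ds}{\int_0^t X_1^{\pi,n}(s)\,ds}-\frac{\langle f\iota,F\rangle}{\langle\iota,F\rangle}\toP 0 \quad\text{uniformly in } t.
\]
On this interval the denominator is at least $\epsilon$ in absolute value, so it is enough to control the numerator error at order $o_P(1)$ after the $n^{-1/2}$ scaling.

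The decomposition is as follows. Cumulative idleness up to $t$ is $\sum_j$ (idle period of the server freed at $\tilde\theta_j$, truncated at $t$), plus the initial idle periods $\phi_{-k}$. The server freed at $\tilde\theta_j$ has rate $\mu$ with conditional intensity proportional to $\mu$ times the number of rate-$\mu$ servers holding exactly one job. I write the numerator as
\[
n^{-1/2}\sum_{-n\le j\le D^{\pi,n}(t)} f(\mu_{(j)})\min\bigl(\phi_j^{\pi,n},\,t-\tilde\theta_j^{\pi,n}\bigr)
\]
and split it into three pieces: a martingale part $\phi_j-\EE[\phi_j\mid\cF_{\tilde\theta_j}]$, a blindness part $\EE[\phi_j\mid\cF_{\tilde\theta_j}]-\EE[\phi_j\mid\cF_{\tilde\theta_j-}]$, and a predictable part. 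The blindness part is handled by property~1: summed over $O(n)$ departures with weight $n^{-1/2}$, it is $o_P(1)$. The truncation at $t$ is handled by property~2.

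The predictable part has compensator
\[
n^{-1/2}\int_0^t \sum_k f(\mu_k)\,\mu_k\,\bigl(Q_{k,1}-Q_{k,2}\bigr)(s)\,\EE[\phi\mid\cF_{s-}]\,ds.
\]
Here Lemma~\ref{lem:stochastic_boundedness} shows $n^{-1}\sum_k\delta_{\mu_k}(Q_{k,1}-Q_{k,2})\to F$ uniformly, since $Q_1-Q_2=n-O_P(n^{1/2})$. Hence this term equals $\langle f\iota,F\rangle\cdot G^n(t)+o_P(1)$, where $G^n$ does not depend on $f$. Taking $f\equiv1$ identifies $G^n(t)=\int_0^tX_1\,ds/\langle\iota,F\rangle+o_P(1)$, and the ratio gives the claimed limit.

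For the martingale part, its quadratic variation is $n^{-1}\sum_j \EE[\phi_j^2]$. I expect the main obstacle to be the lack of an a priori bound on $\phi_j$, which need not be small uniformly: idle periods before $\tau_\epsilon$ can be long. I would first truncate $\phi_j$ at a level $K$ and use $n^{-1/2}\sum_j\phi_j\mathbb I(\phi_j>K)\le$ total scaled idleness restricted to long idle periods. Stochastic boundedness of $X_1$ together with property~2 (expected residual idle time $\to0$) should show that this restricted idleness vanishes. Since the quadratic variation of the truncated part is $O(K^2 n^{-1}\cdot n)\cdot n^{-1}$ in the diffusion scaling, Doob's inequality then gives $o_P(1)$.

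To show that LISF and RR are totally blind, note that in both policies the idle duration of a freed server depends on the past only through the idle count and queue (for RR, through its uniform position), not on the server's rate. This makes property~1 hold identically zero for LISF and asymptotically for RR. For property~2, with idle count $O(n^{1/2})$ and arrival rate $n\lambda^n$, whenever $X_1<0$ an idle server is selected within time $O_P(n^{-1/2})$ under LISF. Under RR, it is selected at rate $n\lambda^n/(\text{idle count})\gtrsim n^{1/2}$, so the residual idle time tends to $0$ in conditional mean, uniformly on $[0,T]$ by stochastic boundedness.
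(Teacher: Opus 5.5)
Your architecture is the natural adaptation of the B\"uke--Qin argument that the paper defers to, and most of its steps are sound: decomposing cumulative idleness over idle periods, using property~1 to pass from $\EE[\phi_j\mid\cF^n_{\tilde\theta_j}]$ to the predictable $\EE[\phi_j\mid\cF^n_{\tilde\theta_j-}]$, compensating the size-biased departure marks, using property~2 for the truncation at $t$, and the ratio trick with $f\equiv1$ to eliminate $G^n$.

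\textbf{The gap is the ``martingale part''.} The terms $Z_j:=\phi_j-\EE[\phi_j\mid\cF^n_{\tilde\theta_j}]$ are not martingale differences, either in $j$ or in time, because idle periods overlap. At every departure epoch roughly $n^{1/2}|X_1^{\pi,n}|$ earlier idle periods are still running. So for $j<j'$ the variable $\phi_j$ is typically not $\cF^n_{\tilde\theta_{j'}}$-measurable, and
\[
\EE[Z_jZ_{j'}]=\EE\bigl[\mathrm{Cov}(\phi_j,\phi_{j'}\mid\cF^n_{\tilde\theta_{j'}})\bigr],
\]
which is nonzero in general. For example, under LISF the idle times of all currently idle servers are driven by the same future arrival epochs. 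Hence ``quadratic variation $=n^{-1}\sum_j\EE[\phi_j^2]$'' and Doob's inequality are not available, and these cross terms are exactly what must be controlled.

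Your truncation also mixes two levels. The count $O(K^2n^{-1}\cdot n)\cdot n^{-1}$ corresponds to truncating at $Kn^{-1/2}$. At that level, property~2 gives only $o(1)$ residuals, not $O(n^{-1/2})$ ones, so it does not make the idleness in long periods vanish. At a fixed level $K$ the long part does vanish, but then the crude bound on the overlap terms is only $O(K)$.

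\textbf{A repair.} Truncate at a deterministic level $c_n\to0$ that decays more slowly than the supremum $r_n$ in property~2, i.e.\ $r_n/c_n\toP0$. Localize on $\{|X_1^{\pi,n}|^*_{T+1}\le K\}$.
\begin{itemize}
\item Long part: since $\EE[(\phi_j-c_n)^+\mid\cF^n_{\tilde\theta_j+c_n}]\le r_n$ and $\II(\phi_j>c_n)\le(\phi_j\wedge c_n)/c_n$, this part is $\lesssim n^{-1/2}r_nc_n^{-1}\sum_j(\phi_j\wedge c_n)\lesssim r_nKT/c_n$.
\item Cross terms: for the centred truncated terms $Z^c_j$, which are bounded by $c_n$, $\EE[Z^c_jZ^c_{j'}]=0$ unless period $j$ is still running at $\tilde\theta_{j'}$. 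There are at most $n^{1/2}K$ such $j$ for each $j'$, so the cross terms are bounded by $n^{-1}\cdot n^{1/2}Kc_n\cdot 2\,\EE\sum_{j'}(\phi_{j'}\wedge c_n)\lesssim K^2Tc_n$.
\item Diagonal: it is at most $n^{-1}c_n\EE\sum_j(\phi_j\wedge c_n)\lesssim c_nKTn^{-1/2}$.
\end{itemize}
Uniformity in $t$ does not need Doob. Both numerator and denominator have time-derivatives bounded by $\sup_\mu|f(\mu)|\,|X_1^{\pi,n}|_T^*$, so convergence on a finite grid suffices.

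\textbf{Two smaller points.}
\begin{itemize}
\item The initial periods $\phi_{-k}$ are not size-biased and must be shown negligible separately; property~2 at $t=0$ does this.
\item Under JSQ, property~1 for LISF is not identically zero. The freed server may have held two or more jobs, in which case $\phi_j=0$. The conditional $L^1$ discrepancy is $O(n^{-1})$, which after the $n^{1/2}$ scaling still tends to zero.
\end{itemize}
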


To understand Proposition~\ref{prop:totally_blind_idleness}, note that the cumulative idleness of servers in the set $\AAA$ up to time $t$ depends both on the number of times departures leave these servers idle and on the idling duration at each such epoch.  Totally blind policies are designed to equalize the idle times once a server becomes idle as follows: The main difference between $\cF_{\tilde{\theta}_j^{\pi,n}-}^n$ and $\cF_{\tilde{\theta}_j^{\pi,n}}^n$ is that the latter contains information about the service rate of the server completing a job at time $\tilde{\theta}_j^{\pi,n}$. Hence, condition~1 implies that the idling time of a server is asymptotically insensitive to this additional information. In other words, the policy does not substantially favor or penalize servers based on their rate after they become idle, motivating the term totally blind. Condition~2 ensures that the residual idle time is negligible and hence, instantaneous variations do not contribute significantly to the cumulative idleness distribution.

For example, under the LISF policy, any server becoming idle at time $\tilde{\theta}_j^{\LISF,n}$ remains idle until exactly $n-Q_{1}^{\LISF,n}(\tilde{\theta}_j^{\LISF,n})$ jobs have arrived, regardless of its service rate. In this case, we have
\begin{align*}
\EE\!\left[\phi_j^{\LISF,n} \mid \cF_{\tilde{\theta}_j^{\LISF,n}-}^n\right]
= \EE\!\left[\phi_j^{\LISF,n} \mid \cF_{\tilde{\theta}_j^{\LISF,n}}^n\right]
&= (n\lambda^n)^{-1} (n-Q_{1}^{\LISF,n}(\tilde{\theta}_j^{\LISF,n}))\\
&= n^{-1/2} (\lambda^n)^{-1} |X_1^{\LISF,n}(\tilde{\theta}_j^{\LISF,n})|,
\end{align*}
and
\begin{align*}
\sup_{0 \leq t \leq T} \sup_{-n \leq j \leq D^{\LISF,n}(t)} 
\EE\!\left[(\phi_j^{\LISF,n} - t + \tilde{\theta}_j^{\LISF,n})^+ \mid \cF_t^n\right]
\leq n^{-1/2} (\lambda^n)^{-1} |X_1^{\LISF,n}|_T^*.
\end{align*}
Hence, one expects servers that become idle more often to receive more idleness and the cumulative idleness of each server to be roughly proportional to its service rate. This property is reflected in the limiting fairness process in~\eqref{eq:totally_blind_fairness}, which possesses a density proportional to $\mu$ with respect to $F$ after $\tau_0^{\LISF}=\tau_0^{\TB}$. 

As a further observation, an interesting consequence of Propositions~\ref{prop:priority_idleness} and~\ref{prop:totally_blind_idleness} is that the limiting fairness processes depend only on the tie-breaking rule among idle servers and are independent of how ties are resolved when there are no idle servers. This observation enables us to characterize the limiting fairness processes using the tools developed for conventional $M/M/n$ queues.

To describe the evolution of the measure-valued state descriptors introduced above, we next define the measure-valued routing processes. To characterize the limiting dynamics of $\xi_i^{\pi,n}$, it is not sufficient to track only the number of arrivals routed to servers with $i$ or more jobs immediately before arrival. We must also record the service-rate distribution of the servers receiving these arrivals. This information is captured by the measure-valued routing processes $\alpha_i^{\pi,n}(t)$ for $i\in \NN$. We first define the counting processes of jobs routed to servers with $i$ or more jobs:
\[
A_i^{\pi,n}(t) := \int_{0}^t \II(Q_{i}^{\pi,n}(s-) = n)\, dA^n(s), \qquad i \in\NN,
\]
with the convention $A_0^{\pi, n}(t):= A^n(t)$. Let $\hat{A}_i^{\pi,n}(t):=n^{-1/2}A_i^{\pi,n}(t)$, and let $\tilde{\mu}_{i,j}^{\pi,n}$ denote the service rate of the server to which the job arriving at the $j$th arrival epoch of $A_i^{\pi,n}(t)$ is routed. We then define  $\alpha_i^{\pi,n}$ as follows. For any $m\in \ZZ_+$,
\[
\alpha_i^{\pi,n}(m)(\AAA) := \sum_{j=1}^{m} \delta_{\tilde{\mu}_{i,j}^{\pi,n}}(\AAA),
\]
which corresponds to the number of the first $m$ jobs routed to servers with $i$ or more jobs that are assigned to servers whose service rates are in $\AAA$. We extend $\alpha_i^{\pi,n}(t)$ to all $t>0$ by linear interpolation between integer times:
\begin{align}\label{eq:alpha_definition}
\alpha_i^{\pi,n}(t)(\AAA)
:= \sum_{j=1}^{\lfloor t\rfloor} \delta_{\tilde{\mu}_{i,j}^{\pi,n}}(\AAA)
+ (t-\lfloor t\rfloor)\delta_{\tilde{\mu}_{i,\lceil t \rceil}^{\pi,n}}(\AAA),
\end{align}
where $\sum_{j=1}^{0} \delta_{\tilde{\mu}_{i,j}^{\pi,n}}(\AAA)$ is understood as the zero measure. We also define the scaled process $\hat{\alpha}_i^{\pi,n}(t)=n^{-1/2}\alpha_i^{\pi,n}(t)$. We use the following Lipschitz continuity and tightness result to prove our main weak convergence result. 

\begin{lemma}\label{lem:alpha_continuity}
For any $i, n\in \NN$, $\alpha_i^{\pi,n} \in \Lip{\cM_f,1}[0,\infty)$. Moreover, the collection of scaled processes $(\hat{\alpha}_1^{\pi,n}(n^{1/2}\cdot):n\in \NN)$ is tight in $\CC_{\cM_f}[0,\infty)$.
\end{lemma}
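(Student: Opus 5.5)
The statement has two parts. The first is deterministic and pathwise; the second is a tightness claim. For the Lipschitz property, I will work directly with the interpolation formula \eqref{eq:alpha_definition}. Fix $s<t$ and any test function $f\in\Lipbar{\RR,1}(\SSS)$, so that $|f|\le 1$ on $\SSS$. The difference $\langle f,\alpha_i^{\pi,n}(t)\rangle-\langle f,\alpha_i^{\pi,n}(s)\rangle$ is a sum of contributions $c_j f(\tilde\mu_{i,j}^{\pi,n})$. Here $c_j\in[0,1]$ is the fraction of the unit interval $[j-1,j]$ that is covered by $[s,t]$. Since $\sum_j c_j=t-s$ and $|f|\le1$, we get $|\langle f,\alpha_i^{\pi,n}(t)-\alpha_i^{\pi,n}(s)\rangle|\le t-s$. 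Taking the supremum over $f$ gives $d_{\cM,1}(\alpha_i^{\pi,n}(s),\alpha_i^{\pi,n}(t))\le |t-s|$. Continuity in the weak topology follows at once, because $d_{\cM,1}$ metrizes it. One technicality must be handled: if $A_i^{\pi,n}$ has only finitely many points, then $\tilde\mu_{i,\lceil t\rceil}^{\pi,n}$ may be undefined. I would fix a convention, for example setting the undefined marks to an arbitrary point of $\SSS$. The bound is unaffected because it uses only $|f|\le 1$.

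For the tightness of $\hat\alpha_1^{\pi,n}(n^{1/2}\cdot)$ in $\CC_{\cM_f}[0,\infty)$, I will use the standard criterion for continuous processes with values in a Polish space. It has two ingredients: compact containment of the marginals, and a uniform modulus of continuity. The modulus is handled by the first part. After scaling we have
\[
d_{\cM,1}\bigl(\hat\alpha_1^{\pi,n}(n^{1/2}s),\hat\alpha_1^{\pi,n}(n^{1/2}t)\bigr)\le n^{-1/2}\,n^{1/2}|t-s|=|t-s|,
\]
so every path is deterministically $1$-Lipschitz and the modulus condition holds trivially. For compact containment, I note that at time $t$ the measure $\hat\alpha_1^{\pi,n}(n^{1/2}t)$ is a nonnegative measure on the compact set $\SSS$ with total mass exactly $t$. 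The set $\{\beta\in\cM_f(\SSS):\beta(\SSS)\le T\}$ is compact in the weak topology by Prokhorov's theorem, since tightness is automatic on a compact space. Hence on $[0,T]$ all marginals lie in a fixed compact set. Moreover, $\hat\alpha_1^{\pi,n}(0)=0$.

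Combining these, the family of paths on each interval $[0,T]$ lies in the set of $1$-Lipschitz maps into a fixed compact set that start at $0$. By Arzel\`a--Ascoli this set is compact in $\CC_{\cM_f}[0,T]$. A diagonal argument over $T\in\NN$ then gives compactness in the topology of uniform convergence on compacts. Tightness follows because all the laws are supported on a single compact set.

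I expect the only real obstacle to be bookkeeping. Specifically, it lies in confirming that the metric $d_{\cM,1}$ on nonnegative measures of bounded mass indeed metrizes weak convergence, as in \cite{bogachev2007measure}, and that Arzel\`a--Ascoli applies for this metric space. No probabilistic estimate is needed, because the time scaling by $n^{1/2}$ exactly cancels the spatial scaling by $n^{-1/2}$.
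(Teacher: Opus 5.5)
Your proof is correct. The Lipschitz part coincides with the paper's: both read off from \eqref{eq:alpha_definition} that $\alpha_i^{\pi,n}(t)-\alpha_i^{\pi,n}(s)$ is a nonnegative measure of mass $t-s$. Your convention for undefined marks also fills in a detail the paper leaves implicit.

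The tightness part takes a different route. The paper applies Jakubowski's criterion (Theorem~\ref{thm:jakubowski}). This reduces the claim to tightness of the real-valued projections $\langle f,\hat{\alpha}_1^{\pi,n}(n^{1/2}\cdot)\rangle$ for $f\in\CC_{\RR}^b(\SSS)$. It then verifies, for each projection, compact containment (bound $K_fT$) and a uniform modulus of continuity ($\delta<\epsilon/K_f$).

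You instead observe that every sample path lies, deterministically, in a single compact subset of $\CC_{\cM_f}[0,\infty)$. This set consists of the $1$-Lipschitz paths started at $0$ whose values on $[0,T]$ have mass at most $T$, and it is compact by Arzel\`a--Ascoli plus a diagonal argument.

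Your route is more elementary and gives more:
\begin{itemize}
\item the laws are supported on one compact set;
\item the argument applies verbatim to every $i$ and to any rescaling $c^{-1}\alpha_i^{\pi,n}(c\,\cdot)$;
\item it avoids adapting a criterion the paper states for $\DD_{\cP(E)}$-valued paths to finite-measure-valued paths in $\CC$.
\end{itemize}
Its only external input is that $d_{\cM,1}$ metrizes weak convergence of nonnegative measures (cf.\ Chapter~8.3 of \cite{bogachev2007measure}). This is exactly what makes the mass-bounded set a compact metric space, as Arzel\`a--Ascoli requires.

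The paper's route, in turn, needs only scalar estimates. It also reuses the same machinery the paper employs for the fairness processes in Proposition~\ref{prop:shifted_fairness_tight}.
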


\begin{proof}
For any $0\leq s\leq t$ and $f\in \Lipbar{\RR, 1}(\SSS)$
\begin{align*}
\left|
\langle f,\alpha_i^{\pi,n}(t)\rangle
-
\langle f,\alpha_i^{\pi,n}(s)\rangle
\right|
&=
\left|
\sum_{j=\lfloor s\rfloor+1}^{\lfloor t\rfloor}
\left\langle f,\delta_{\tilde{\mu}_{i,j}^{\pi,n}}\right\rangle
+
(t-\lfloor t\rfloor)
\left\langle f,\delta_{\tilde{\mu}_{i,\lceil t\rceil}^{\pi,n}}\right\rangle
-
(s-\lfloor s\rfloor)
\left\langle f,\delta_{\tilde{\mu}_{i,\lceil s\rceil}^{\pi,n}}\right\rangle
\right|\leq t-s.
\end{align*}
which shows $\alpha_1^{\pi,n}\in \Lip{\cM_f, 1}[0,\infty)$. To prove the tightness in $\CC_{\cM_f}[0,\infty)$, we need to show that for any $T>0$,  $(\hat{\alpha}_1^{\pi,n}(n^{1/2}\cdot):n\in \NN)$ is tight in $\CC_{\cM_f}[0,T]$, where the total masses of the measures are uniformly bounded by $T$ on $[0,T]$. Hence, using Jakubowski's criteria~\cite{Jakubowski1986} (cf. Theorem~\ref{thm:jakubowski}), we only need to prove that for any bounded continuous function $f\in \CC_{\RR}^b(\SSS)$, $(\langle f, \hat{\alpha}_1^{\pi,n}(n^{1/2}\cdot)\rangle: n\in \NN)$ is tight, i.e., for any $\epsilon>0$,
\begin{enumerate}
    \item  there exists a $K_\epsilon$ such that $\PP(|\langle f, \hat{\alpha}_1^{\pi,n}(n^{1/2}\cdot)\rangle|_T^*>K_\epsilon)<\epsilon$ for all $n\in \NN$
    \item there exists a $\delta>0$ such that $\PP\left(\sup_{\substack{0\leq t-s\leq \delta\\ 0\leq s\leq t\leq T}}n^{-1/2}|\langle f, \alpha_1^{\pi,n}(n^{1/2}t)\rangle - \langle f, \alpha_1^{\pi, n}(n^{1/2}s)\rangle|>\epsilon\right)\leq \epsilon$ for all $n\in \NN$.
\end{enumerate}
As $f$ is a bounded function with a bound $K_f$, the first condition is seen to be satisfied by choosing $K_\epsilon=K_fT$. Using the Lipschitz continuity of $\alpha_1^{\pi, n}$ established above, any $\delta<\epsilon/K_f$ satisfies the second condition. Hence, for every $T>0$, the collection $(\hat{\alpha}_1^{\pi,n}(n^{1/2}\cdot):n\in\NN)$ is tight in $\CC_{\cM_f}[0,T]$. Since $T>0$ was arbitrary, it is tight in $\CC_{\cM_f}[0,\infty)$.
\end{proof}

The balance equations also require us to keep track of departures according to service rate. We therefore introduce the following measure-valued departure processes. We define $D_i^{\pi,n}(t)$ as the number of departures up to time $t$ from servers with $i$ or more jobs, and its scaled version by $\hat{D}_i^{\pi,n}(t) := n^{-1/2} D_i^{\pi,n}(t)$. We further let $\hat{\mu}_{i,j}^{\pi,n}$ denote the service rate of the server completing service at the $j$th departure epoch of $D_i^{\pi,n}(t)$. Then, we define $\Delta_i^{\pi,n}(t)(\AAA)$ as the number of departures up to time $t$ from servers whose rates are in $\AAA$ when these servers have $i$ or more jobs, that is,
\begin{align}
    \Delta_i^{\pi,n}(t)(\AAA) := \sum_{j=1}^{D_i^{\pi,n}(t)} \delta_{\hat{\mu}_{i,j}^{\pi,n}}(\AAA).
    \label{eq:delta_definition}
\end{align}
We also define the diffusion-scaled departure measure $\hat{\Delta}_i^{\pi,n}(t) := n^{-1/2}\Delta_i^{\pi,n}(t)$ for $t \ge 0$ and $i \in \ZZ_+$. We emphasize that the processes $\Delta_i^{\pi,n}(t)$ are  right continuous, whereas the processes $\alpha_i^{\pi,n}(t)$ are Lipschitz continuous, as established in Lemma~\ref{lem:alpha_continuity}.

It is important to stress that the processes $\alpha_i^{\pi,n}$ are not indexed by the natural time of the system. Instead, they are initially indexed by the number of jobs routed to servers with queue length $i$ and are then converted into continuous-time processes by interpolation. This contrasts with all other processes considered in this work. For example, $\Delta_i^{\pi,n}$ is indexed by natural time, since the sum on the right-hand side of \eqref{eq:delta_definition} is taken over all departures that occur up to time $t$. This plays an important role in the argument for the existence and uniqueness of the fixed point in the proof of Theorem~\ref{thm:process_level_convergence}.

We now have all the ingredients needed to formulate the evolution equations. Based on the definitions above, we can write the following balance equations for the scaled total number of jobs at servers with $i$ or more jobs, $X_i^{\pi,n}(t)$, as:
\begin{align}\label{eq:aggregate_balance}
    X_i^{\pi,n}(t) &= X_i^{\pi,n}(0) + \left( \hat{A}_{i-1}^{\pi,n}(t) - \hat{A}_{i}^{\pi,n}(t) \right)- \left(\hat{D}_{i}^{\pi,n}(t) -\hat{D}_{i+1}^{\pi,n}(t)\right), \mbox{ for all }i\in \NN. 
\end{align}
In a similar fashion, we can also obtain the following balance equations in the weak form for the measure-valued processes $\xi_i^{\pi, n}(t)$:
\begin{align}\label{eq:measure_balance}
    \langle f, \xi_i^{\pi,n}(t)\rangle &= \langle f,\xi_i^{\pi,n}(0)\rangle + \langle f,\hat{\alpha}_{i-1}^{\pi,n}(A_{i-1}^{\pi,n}(t))\rangle-\langle f,\hat{\alpha}_{i}^{\pi,n}(A_{i}^{\pi,n}(t))\rangle-\langle f, \hat{\Delta}_{i}^{\pi,n}(t)\rangle + \langle f, \hat{\Delta}_{i+1}^{\pi,n}(t)\rangle
\end{align}
for all $i\geq 2$ and $f\in \Lipbar{\RR,L}(\SSS)$ for all $L\geq 0$.

The preceding discussion focuses on the transient evolution of load-balancing systems with finitely many servers. Since we also address stationary behavior, we conclude this section by establishing the positive recurrence of pre-limit load-balancing systems when there is excess capacity, i.e., $\varsigma^n + \nu^n>0$. We provide the proof in Appendix~\ref{app:positive_recurrent_finite}.

\begin{lemma}\label{lem:positive_recurrent_finite}
    Suppose $\nu^n+\varsigma^n>0$. Then, under any tie-breaking policy $\pi$ that makes $(\mf{Q}^{\pi,n}(t):t\geq 0)$ a Markov process, the JSQ system is positive recurrent.
\end{lemma}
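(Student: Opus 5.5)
Since $\nu^n+\varsigma^n>0$ means $n\lambda^n<\sum_{k=1}^n\mu_k^n=n\bar{\mu}^n$, I will build a Foster--Lyapunov function for the (countable-state, for fixed $n$) Markov chain $\mf{Q}^{\pi,n}$ that does not depend on the tie-breaking rule $\pi$. The state is the vector of queue lengths $(q_1,\dots,q_n)$, with $q_k=\sum_i Q_{k,i}^{\pi,n}$, possibly augmented by whatever finite auxiliary information $\pi$ needs. Under Assumption~\ref{asm:xi_initial} the chain starts in a state with finitely many jobs. It is non-explosive, because the total jump rate is bounded by $n\lambda^n+n\mu_{\max}$. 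The state $\mathbf 0$ is reachable from every state by a sequence of departures, so irreducibility onto the communicating class containing $\mathbf 0$ is immediate. Foster's criterion then reduces positive recurrence to a negative drift outside a finite set.

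For the Lyapunov function I will use the weighted quadratic
\[
V(\mf q)=\sum_{k=1}^n \frac{q_k^2}{\mu_k^n}.
\]
The weighting by $1/\mu_k^n$ makes the departure drift at a busy server not depend on its rate. An arrival routed to server $k^*$ changes $V$ by $(2q_{k^*}+1)/\mu_{k^*}^n$. A departure at a busy server $k$, which occurs at rate $\mu_k^n$, contributes $\mu_k^n(-2q_k+1)/\mu_k^n=-2q_k+1$. The generator therefore satisfies
\[
\mathcal{L}V(\mf q)\le n\lambda^n\frac{2\min_k q_k+1}{\mu_{k^*}^n}-2\sum_{k}q_k+n .
\]
Here I use that JSQ routes to a queue of minimal length, whatever the tie-break. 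The arrival term is at most $2n\lambda^n\min_k q_k/\mu_{\min}+C$. So the drift is bounded by
\[
\mathcal L V\le -2\Bigl(\sum_k q_k-\tfrac{n\lambda^n}{\mu_{\min}}\min_k q_k\Bigr)+C'.
\]

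The main obstacle is that $\lambda^n/\mu_{\min}$ may exceed $1$. In that case $\sum_k q_k-\frac{n\lambda^n}{\mu_{\min}}\min_k q_k$ is not obviously positive when all queues are nearly equal, so the crude bound is insufficient. My plan is to refine the weighting to reflect the actual capacity comparison $n\lambda^n<\sum_k\mu_k^n$. I would use instead
\[
V(\mf q)=\Bigl(\sum_k q_k\Bigr)^2+c\sum_k (q_k-\min_j q_j)^2,
\]
or, more simply, a two-regime argument. First, when all servers are busy, the total $\sum_k q_k$ has drift $n\lambda^n-n\bar\mu^n=-n^{1/2}(\nu^n+\varsigma^n)<0$. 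Second, when some server is idle, the JSQ structure forces every queue length to be at most $\max_k q_k$, and that maximum cannot grow, since arrivals only join idle servers. Hence the imbalance $\max_k q_k-\min_k q_k$ is controlled.

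Concretely, I would take $V=\sum_k q_k+c\,(\max_k q_k-\min_k q_k)$ in a fluid/multistep Foster criterion. Starting from a large state, within a bounded time $T$ either the total decreases at linear rate, because all servers are busy, or the state is one in which the minimum is $0$ while the maximum is nonincreasing and drains. In both cases $\EE[V(\mf Q(T))-V(\mf Q(0))]\le-\epsilon$ outside a finite set. Checking this multistep drift uniformly in $\pi$, using only the rule "arrive to a shortest queue," is the step I expect to require the most care.
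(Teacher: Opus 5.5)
Your proposal has a genuine gap: the negative-drift estimate, which is the entire content of the lemma, is never established. You correctly show that $V=\sum_k q_k^2/\mu_k^n$ fails. The replacements you then list are left unverified: $(\sum_k q_k)^2+c\sum_k(q_k-\min_j q_j)^2$, or $\sum_k q_k+c(\max_k q_k-\min_k q_k)$ under a multistep/fluid criterion. The last one cannot be handled by a one-step computation, for any $c>0$:
\begin{itemize}
\item Take a state where two servers share a large maximal queue length and the rest are idle. A single departure does not lower $\max_k q_k$, while $\sum_k q_k$ grows at rate close to $n\lambda^n$.
\item Take a state where all queues are equal. Every event raises $\max_k q_k-\min_k q_k$.
\end{itemize}
Making the multistep version rigorous amounts to a fluid-stability argument for heterogeneous JSQ, uniform in $\pi$, and you have not carried that out.

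The missing idea is that the $1/\mu_k^n$ weighting is what breaks the argument. It strips the rate dependence from the departure term, which is exactly where the capacity condition $\sum_k\mu_k^n>n\lambda^n$ must enter. It also moves an uncontrolled factor $1/\mu_{k^*}^n$ onto the arrival term.

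The unweighted quadratic works directly. The paper uses $V=\sum_i i\,Q_i$, where $Q_i$ counts servers with at least $i$ jobs; in your per-server notation this is $\tfrac12\sum_k q_k(q_k+1)$. Write $m:=\min_k q_k$.
\begin{itemize}
\item An arrival joins a queue of length $m$ and raises $V$ by $m+1$.
\item A departure from server $k$ lowers $V$ by $q_k$.
\end{itemize}
The drift is therefore $n\lambda^n(m+1)-\sum_k\mu_k^n q_k$, which can be bounded in two ways:
\begin{itemize}
\item Using $\sum_k\mu_k^n q_k\ge m\sum_k\mu_k^n$, the drift is at most $-n^{1/2}(\nu^n+\varsigma^n)(m+1)+\sum_k\mu_k^n$. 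This is negative once $m$ is large.
\item Using $\sum_k\mu_k^n q_k\ge\mu_{\min}\max_k q_k$, the drift is at most $n\lambda^n(m+1)-\mu_{\min}\max_k q_k$. This is negative when $m$ is bounded and $\max_k q_k$ is large.
\end{itemize}
Hence the drift is below $-\epsilon$ outside $\{\max_k q_k\le K\}$, a finite set for fixed $n$. Foster's criterion then applies simultaneously for every tie-breaking rule, with no multistep argument needed.
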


\section{Main Results}\label{sec:main_results}

In this section, we present our main results, postponing the proofs to later sections. Our first result provides a general framework for deriving diffusion limits under arbitrary tie-breaking rules. Once the limiting fairness process $\eta^{\pi}$ and the weak limit of the routing processes $\alpha_1^{\pi}$ have been identified, the diffusion limit follows as the unique solution to a reflected stochastic differential equation coupled with a family of measure-valued integral equations. We first state this general result in Theorem~\ref{thm:process_level_convergence} and then illustrate its use by deriving diffusion limits for several specific tie-breaking policies. For all our process-level convergence results, we define $W(t)$ to be a standard Brownian motion.

\begin{theorem}\label{thm:process_level_convergence}
Suppose that under tie-breaking policy $\pi$, $\eta^{\pi, n}\tof \eta^{\pi}$ and $\hat{\alpha}_1^{\pi, n}(n^{1/2}\cdot)\Rightarrow \alpha_1^{\pi}$ jointly. Then $(X_1^{\pi, n}, \boldsymbol{\xi}^{\pi, n})\Rightarrow (X_1^{\pi},\boldsymbol{\xi}^{\pi}),$ where $(X_1^{\pi},\boldsymbol{\xi}^{\pi})$ is the unique solution of
\begin{align}
    \label{eq:main_diffusion_x1} X_1^{\pi}(t) & = X_1(0) - (\varsigma+ \nu) t + \sqrt{2}W(t) - \langle \iota, \eta^{\pi}(t)\rangle\int_0^t X_1^{\pi}(s)ds + \int_0^t \langle \iota, \xi_2^{\pi}(s)\rangle ds - U_1^{\pi}(t),\\
    \label{eq:main_diffusion_xi2}
    \langle f, \xi_2^{\pi}(t)\rangle & = \langle f, \xi_2(0)\rangle + \langle f, \alpha_1^{\pi}(U_1^{\pi}(t))\rangle - \int_0^t\langle f\times \iota, \xi_2^{\pi}(s)\rangle ds + \int_0^t \langle f\times\iota, \xi_3^{\pi}(s)\rangle ds ,\\
    \label{eq:main_diffusion_xi3}
     \langle f, \xi_i^{\pi}(t)\rangle & = \langle f, \xi_i(0)\rangle - \int_0^t\langle f\times \iota, \xi_i^{\pi}(s)\rangle ds +  \int_0^t \langle f\times\iota, \xi_{i+1}^{\pi}(s)\rangle ds,\mbox{ for all }i\geq 3, 
\end{align}
for any fixed $L>0$ and all $f\in\Lipbar{\RR,L}(\SSS)$, where $W$ is a standard Brownian motion defined on the same probability space as $\eta^\pi$ and $\alpha_1^\pi$, and may depend on these processes. Moreover, $U_1^\pi\in\DD_{\RR_+}[0,\infty)$ is a non-decreasing function with $U_1^\pi(0)=0$ and satisfies
\begin{align}\label{eq:main_reflection}
\int_0^\infty \II\bigl(X_1^\pi(s)<0\bigr) dU_1^\pi(s)=0.
\end{align}
\end{theorem}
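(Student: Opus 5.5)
The proof follows the standard martingale-decomposition / continuous-mapping route: (i) decompose the pre-limit balance equations \eqref{eq:aggregate_balance}--\eqref{eq:measure_balance}; (ii) show the resulting map from driving processes to solutions is well defined and continuous; (iii) identify the limit.

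\textbf{Step 1: decomposing the departures.} Departures from servers with $i$ or more jobs occur, at server $k$, with intensity $\mu_k^n Q_{k,i}^{\pi,n}(s)$. So I would write
\[
\hat\Delta_i^{\pi,n}(t)=\int_0^t n^{-1/2}\sum_k \mu_k^n Q_{k,i}^{\pi,n}(s)\delta_{\mu_k^n}\,ds+\hat M_i^{\pi,n}(t).
\]
For $i\ge2$ the compensator is $\int_0^t \iota\,\xi_i^{\pi,n}(s)ds$.

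For $i=1$, the compensator of $\hat D_1^{\pi,n}$ is $n^{-1/2}\sum_k\mu_k^n Q_{k,1}(s)$. Centred at $n\bar\mu^n$, this equals
\[
n^{1/2}\bar\mu^n+\sum_k\mu_k^nX_{k,1}^{\pi,n}(s).
\]
Integrating the second term in time produces
\[
\langle\iota,\eta^{\pi,n}(t)\rangle\int_0^tX_1^{\pi,n}(s)ds,
\]
by the definition of the fairness process. This is exactly how $\eta$ enters \eqref{eq:main_diffusion_x1}.

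The arrival term $\hat A^n(t)-n^{1/2}\lambda^n t$ is a scaled Poisson martingale. Combined with $n^{1/2}(\lambda^n-\bar\mu^n)t=-(\varsigma^n+\nu^n)t$, this gives the drift. The reflection term is $U_1^{\pi,n}:=\hat A_1^{\pi,n}$, the scaled count of arrivals finding no idle server, which increases only when $X_1^{\pi,n}=0$.

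Lemma~\ref{lem:stochastic_boundedness} bounds the predictable quadratic variations. The martingale FCLT then gives $\hat A^n-n^{1/2}\lambda^n\cdot$ minus $\hat D_1$'s martingale $\Rightarrow\sqrt2W$, while the measure-valued martingales $\hat M_i^{\pi,n}$ for $i\ge2$ vanish. Their variance is $O(n^{-1/2}\langle\mu,\xi_i\rangle)$ after scaling, which is $O(n^{-1/2})$, and $\langle\iota,\xi_2^{\pi,n}\rangle$ is stochastically bounded. The term $\hat D_2$ enters \eqref{eq:aggregate_balance} for $i=1$ and yields $\int\langle\iota,\xi_2\rangle$.

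Likewise $\hat A_2^{\pi,n}\to0$, since no arrival joins a queue of length $\ge2$ while idle servers exist; JSQ together with $X_2=O(1)$ makes such arrivals rare. Finally $\hat\alpha_1^{\pi,n}(A_1^{\pi,n}(t))=\hat\alpha_1^{\pi,n}(n^{1/2}U_1^{\pi,n}(t))$, which is where the time change in \eqref{eq:main_diffusion_xi2} comes from.

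\textbf{Step 2: solution map and fixed point.} The prelimit system has the same form as \eqref{eq:main_diffusion_x1}--\eqref{eq:main_diffusion_xi3}, driven by inputs $(X_1(0),\boldsymbol\xi(0),\text{noise},\eta,\alpha_1)$ plus vanishing errors. I would show that this system defines a map $\Phi$ with a unique solution, continuous in the inputs at limit points; the continuous mapping theorem then gives convergence.

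$X_1$ is expressed through the one-sided Skorokhod map at $0$, which is Lipschitz in the sup norm. The $\boldsymbol\xi$ equations are handled in the weighted norm $\sum_i\rho^i\sup_{f\in\Lipbar{\RR,L}}|\langle f,\xi_i\rangle|_T^*$. Here $f\times\iota\in\Lipbar{\RR,L'}$ with $L'=L\mu_{\max}+L$, so I would switch between $d_{\cM,L}$ and $d_{\cM,L'}$ and absorb the constant by choosing $\rho$ small and $T$ short. This gives a contraction on short intervals, iterated to all of $[0,T]$ (per C5).

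The term $\alpha_1(U_1(t))$ is Lipschitz in $U_1$ because $\alpha_1$ is $1$-Lipschitz (Lemma~\ref{lem:alpha_continuity}, preserved in the limit). Meanwhile $U_1$ is Lipschitz in the input via the Skorokhod map. Hence the coupled map remains a contraction.

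$\eta$ multiplies $\int X_1$ and may jump at $\tau_0$. However $\int_0^tX_1=0$ before $\tau_0$, so the product is continuous. I would handle it with the $\cS_\epsilon$ shifted version and let $\epsilon\downarrow0$, using $|\int_0^t X_1|\le\epsilon$ on $[0,\tau_\epsilon]$.

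\textbf{Step 3: identifying the limit.} Use tightness (Lemma~\ref{lem:stochastic_boundedness} together with Step 1), take subsequential limits jointly with $\eta^\pi,\alpha_1^\pi,W$, and pass to the limit in the equations. Uniqueness from Step 2 then identifies every subsequential limit, giving full convergence.

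\textbf{Main obstacle.} I expect Step 2 to be the hardest: proving existence and uniqueness, and continuity of the solution map under the measure-valued coupling with the time change $\alpha_1(U_1)$. The contraction requires controlling $\int\langle f\iota,\xi_{i+1}\rangle$ by $\xi_{i+1}$ in a weaker-Lipschitz norm at each level. My first attempt would be the weighted norm with geometric factors $\rho^i$ and Lipschitz constants growing like $(L\mu_{\max})^i$ across levels.
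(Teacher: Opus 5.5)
Your architecture matches the paper's. You use the same decomposition of \eqref{eq:aggregate_balance}--\eqref{eq:measure_balance}: the compensator of $\hat D_1^{\pi,n}$ produces $\langle\iota,\eta^{\pi,n}(t)\rangle\int_0^tX_1^{\pi,n}(s)ds$ and the drift $-(\varsigma^n+\nu^n)t$. You also share the regulator $U_1^{\pi,n}=\hat A_1^{\pi,n}$, with $\hat\alpha_1^{\pi,n}(A_1^{\pi,n}(t))$ supplying the time change, the one-sided reflection map, short-interval contractions, and the $\cS_\epsilon$ device with $\epsilon\downarrow0$ for the fairness term. Your ``continuity of the solution map at limit points'' is what the paper carries out concretely: Skorokhod representation, common time changes $\Lambda^n$, and a Gronwall bound on $|V_1^n-V_1\circ\Lambda^n|_T^*+|d_{\cM,1/4}(\xi_2^n,\xi_2\circ\Lambda^n)|_T^*$. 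Your Step~3 is therefore unnecessary, and it would need its own $\DD$-tightness argument for $\boldsymbol\xi^{\pi,n}$.

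The substantive difference is the fixed-point formulation for $(X_1,\xi_2)$. The paper (Lemma~\ref{lem:modified_reflection_equations}) iterates on the unreflected pair $(w_1,\xi_2)$ and inserts $\alpha(\psi(\tilde w_1)(t))$, computed from the \emph{input}, into the $\xi_2$-component. That coupling carries no factor $T$, so the Lipschitz property of $\alpha$ only gives an $O(1)$ off-diagonal constant. The paper therefore measures $\xi_2$ in $d_{\cM,\tilde L}$ with $\tilde L=(4L)^{-1}$, pays $1/\tilde L$ on the time-integrated term $\int_0^t\langle\iota,\xi_2\rangle ds$, and then shrinks $T$; this is the norm switch of (C5).

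Suppose instead you iterate on $(X_1,\boldsymbol\xi)$ and set $X_1=\phi(z)$, $U_1=\psi(z)$ for the freshly computed netput
$z(t)=X_1(0)-(\varsigma+\nu)t+\sqrt2W(t)-\langle\iota,\eta(t)\rangle\int_0^t\tilde X_1(s)ds+\int_0^t\langle\iota,\tilde\xi_2(s)\rangle ds$.
Then $z$ depends on the unknowns only through time integrals, so $|U_1-U_1'|_T^*\le|z-z'|_T^*=O(T)$. Every entry of the Lipschitz matrix is then $O(T)$, a plain short-time contraction works, and the norm switch becomes unnecessary. That is a genuine simplification over the paper.

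You must state this reason explicitly. ``$\alpha_1$ and $\psi$ are Lipschitz'' does not by itself give a contraction, and under the paper's parametrization your sentence ``hence the coupled map remains a contraction'' would be false.

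Conversely, the obstacle you single out is not one. For $f\in\Lipbar{\RR,L}(\SSS)$ we have $f\times\iota\in\Lipbar{\RR,(1+\mu_{\max})L}(\SSS)$, and $d_{\cM,cL}=c\,d_{\cM,L}$. So the level-to-level terms contribute a factor $T(1+\mu_{\max})(1+\rho^{-1})$ for any fixed $\rho$, with no level-dependent Lipschitz constants (cf.\ the proof of Lemma~\ref{lem:reflection_equations2}).

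Finally, Banach's theorem needs a complete space. As in the paper, first derive an a priori total-variation bound by Gronwall. Then work on a TV-bounded set such as $\FF_{T_0}$, on which the Kantorovich--Rubinstein metric is complete.
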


An important feature of Theorem~\ref{thm:process_level_convergence} is that it combines an aggregate description for the idle-server process with a measure-valued description for the higher queue levels. The reason for this distinction lies in the different time scales on which these processes evolve. We see that $X_{1}^{\pi,n}(t)$ changes either when an arrival is routed to an idle server or when there is a departure from one of the servers with only one job. Both of these events occur at an $O(n)$ rate, implying that the distribution of idle servers changes at the same order. Hence, we need to use cumulative idleness, as in the definition of $\eta^{\pi,n}(t)$, to achieve the desired smoothness in the limit. On the other hand, this rapid evolution of the distribution allows us to obtain limiting fairness processes that admit simple expressions, as seen in Propositions~\ref{prop:priority_idleness} and \ref{prop:totally_blind_idleness}. In contrast, the events that change the state of servers with two or more jobs occur at an $O(n^{1/2})$ rate, and hence, we need to work with the evolution equations of the instantaneous distribution of jobs.

In contrast to the many-server systems with a single queue, Theorem~\ref{thm:process_level_convergence} has an interesting consequence on the fairness process. As initial condition implies that $n-Q_2^{n,\pi}(0)=O(n)$ servers with one or zero jobs, the singularity point $\tau_0^{\pi}$ for the limiting fairness process is observed at time 0 almost surely.

\begin{corollary}\label{cor:tau_0_continuity}
    Under any tie-breaking policy $\pi$, $\PP(\tau_0^{\pi}=0) =1$.
\end{corollary}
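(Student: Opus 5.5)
We need to show that the limiting $\tau_0^\pi$ equals $0$ almost surely. Recall that $\tau_\epsilon^{\pi,n}=\inf\{t:\int_0^t X_1^{\pi,n}(s)ds<-\epsilon\}$ and that, by Definition~\ref{def:limiting_fairness}, $\tau_\epsilon^\pi$ is the weak limit of $\tau_\epsilon^{\pi,n}$. We take $\tau_0^\pi$ to be the monotone limit $\lim_{\epsilon\downarrow 0}\tau_\epsilon^\pi$, or equivalently the first time $\int_0^t X_1^\pi(s)ds<0$, using the limit $X_1^\pi$ from Theorem~\ref{thm:process_level_convergence}. The plan is to show that $X_1^\pi$ is strictly negative on a set of times accumulating at $0$, almost surely. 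Then $\int_0^t X_1^\pi(s)ds<0$ for every $t>0$, which gives $\tau_0^\pi=0$.

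\textbf{Step 1 (limit satisfies the SDE).} By Theorem~\ref{thm:process_level_convergence}, $X_1^\pi$ satisfies \eqref{eq:main_diffusion_x1} with the reflection condition \eqref{eq:main_reflection} and $X_1^\pi\le 0$. The drift terms are locally bounded. Specifically, $\langle\iota,\eta^\pi(t)\rangle\le\mu_{\max}$, and $\langle\iota,\xi_2^\pi(s)\rangle\le\mu_{\max}\,\xi_2^\pi(s)(\SSS)$, which is locally bounded because $\xi_2^\pi$ is a well-defined limit in $\DD_{\cM_f}$ (equivalently, $X_2^\pi$ is finite by Lemma~\ref{lem:stochastic_boundedness}). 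Near time $0$, $X_1^\pi$ therefore behaves like $X_1(0)+\sqrt2 W(t)+O(t)$, reflected at $0$ from above.

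\textbf{Step 2 (case split on $X_1(0)$).} If $X_1(0)<0$, continuity of $X_1^\pi$ gives $X_1^\pi<0$ on a neighbourhood $[0,\delta)$. Hence the integral is strictly negative for all small $t>0$, and $\tau_0^\pi=0$. If $X_1(0)=0$, write $Y(t)=X_1(0)+\sqrt2W(t)+B(t)$, where $B$ is the absolutely continuous drift part with locally bounded derivative. Then $X_1^\pi=\Gamma(Y)$ is the one-sided Skorokhod reflection, $X_1^\pi(t)=Y(t)-\sup_{s\le t}(Y(s))^+$. Since the drift of $Y$ itself depends on $X_1^\pi$, I would apply this identity pathwise, treating $B$ as a given continuous path of bounded variation.

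Suppose $X_1^\pi\equiv 0$ on some $[0,\delta]$. Then $Y(t)=\sup_{s\le t}Y(s)^+$ there, so $Y$ is nondecreasing on $[0,\delta]$. That would make $\sqrt2W=Y-X_1(0)-B$ of bounded variation on $[0,\delta]$, which is a null event for Brownian motion. This step holds even though $W$ may depend on $\eta^\pi$ and $\alpha_1^\pi$, because $W$ is still a standard Brownian motion. Consequently, for almost every $\omega$ and every $\delta>0$, there is an $s<\delta$ with $X_1^\pi(s)<0$. By continuity, $X_1^\pi<0$ on a small interval around that $s$, so $\int_0^t X_1^\pi<0$ for every $t>0$. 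Combined with $X_1^\pi\le0$, this gives $\tau_0^\pi=0$ almost surely.

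\textbf{Step 3 (link to the prelimit $\tau$'s).} It remains to identify $\tau_0^\pi$ with the hitting time for the limit. The map $x\mapsto\inf\{t:\int_0^tx<-\epsilon\}$ is continuous at continuous paths whose integral crosses $-\epsilon$ strictly. This holds here because, as Step 2 shows, the integral is strictly decreasing wherever $X_1^\pi<0$. The continuous mapping theorem then gives $\tau_\epsilon^{\pi,n}\Rightarrow\tau_\epsilon^\pi=\inf\{t:\int_0^tX_1^\pi<-\epsilon\}$, and Step 2 yields $\tau_\epsilon^\pi\downarrow0$ as $\epsilon\downarrow0$.

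The \textbf{main obstacle} is the reflected case $X_1(0)=0$, specifically ruling out that $X_1^\pi$ sticks at $0$. The bounded-variation argument for Brownian paths is the tool I would use first. An alternative is to compare with a reflected Brownian motion with bounded drift via Girsanov, whose occupation time at $0$ has Lebesgue measure zero.
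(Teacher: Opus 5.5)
Your argument is the paper's: on $\{\tau_0^\pi>t\}$ one has $X_1^\pi\equiv 0$ on $[0,t]$, so the $\eta^\pi$-term drops out and $\sqrt2W$ would equal a finite-variation process, which is a null event (your Skorokhod-map phrasing just says that $U_1^\pi$ is nondecreasing there, and the case split on $X_1(0)$ is harmless but unnecessary). The one loose spot is Step 3: full continuity of $x\mapsto\inf\{t:\int_0^t x<-\epsilon\}$ at $X_1^\pi$ would require that $X_1^\pi$ vanish on no interval at all, which your justification does not give (Step 2 only handles intervals starting at $0$, and on $[a,b]$ with $a>0$ the term $\langle\iota,\eta^\pi(u)\rangle\int_0^u X_1^\pi(s)ds$ does not drop out), but you only need the one-sided bound $\limsup_n\tau_\epsilon^{\pi,n}\le\inf\{t:\int_0^t X_1^\pi(s)ds<-\epsilon\}$ under an almost-sure coupling, which holds at every path and, after letting $\epsilon\downarrow 0$, transfers Step 2 to $\tau_0^\pi$.
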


\begin{remark} 
The theorem does not impose any independence assumption between the Brownian motion $W$, the limiting fairness process $\eta^\pi$, and the limiting routing process $\alpha^\pi_1$. If $\eta^\pi$ and $\alpha^\pi_1$ are random, care must be taken to establish the joint convergence of these processes with the stochastic fluctuations that give rise to $W$. In all the examples considered below, $\eta^\pi$ and $\alpha^\pi_1$ are deterministic, and hence this joint convergence issue does not arise.\end{remark}

As a first application of the general framework established in Theorem~\ref{thm:process_level_convergence}, we consider the join-the-fastest-shortest-queue (JFSQ) policy, also referred to as the speed-aware JSQ system. JFSQ was studied by Bhambay et al.~\cite{bhambay2025asymptotic} in the context of pool-level heterogeneity. Corollary~\ref{cor:SA-JSQ} extends their diffusion limit to the setting of individual server-level heterogeneity specified in Assumption~\ref{asm:arrival_service_rates}, illustrating one of the main advantages of the measure-valued framework developed in this paper.



\begin{corollary}\label{cor:SA-JSQ}
    For the JFSQ system, $\eta^{\JFSQ,n}\tof \eta^{\JFSQ}$ as given in Proposition~\ref{prop:priority_idleness} and $\hat{\alpha}_1^{\JFSQ,n}(n^{1/2}\cdot)\Rightarrow \iota(\cdot)\times \delta_{\mu_{\max}}$. Moreover, if $\xi_i(0) = \delta_{\mu_{\max}}X_i(0)$ for all $i \geq 2$, then $\mf{X}^{\JFSQ, n} \Rightarrow \mf{X}^{\JFSQ}$, where $\mf{X}^{\JFSQ}$ is the unique solution of
    \begin{align}
        \nonumber X_1^{\JFSQ}(t) & = X_1(0) - (\varsigma + \nu)t + \sqrt{2} W(t)\\
        &\qquad\qquad\qquad- \mu_{\min} \int_0^t X_1^{\JFSQ}(s) \, ds + \mu_{\max} \int_0^t X_2^{\JFSQ}(s) \, ds - U_1^{\JFSQ}(t), \\
        X_2^{\JFSQ}(t) & = X_2(0) + U_1^{\JFSQ}(t) - \mu_{\max} \int_0^t X_2^{\JFSQ}(s) \, ds + \mu_{\max} \int_0^t X_3^{\JFSQ}(s) \, ds, \\
        X_i^{\JFSQ}(t) & = X_i(0) - \mu_{\max} \int_0^t X_i^{\JFSQ}(s) \, ds + \mu_{\max} \int_0^t X_{i+1}^{\JFSQ}(s) \, ds, \quad \text{for all } i \geq 3,
    \end{align}
    where $U_1^{\JFSQ}\in \DD_{\RR_+}[0,\infty)$ is a non-decreasing function with $U_1^{\JFSQ}(0)=0$ and satisfies 
    \begin{align*}
        \int_0^\infty \II(X_1^{\JFSQ}(s)<0)\,dU_1^{\JFSQ}(s) = 0.
    \end{align*}
\end{corollary}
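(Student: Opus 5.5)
The plan is to apply Theorem~\ref{thm:process_level_convergence} with the two policy-dependent inputs identified, and then show the resulting measure-valued system reduces to the scalar system in the statement.

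The convergence $\eta^{\JFSQ,n}\tof\eta^{\JFSQ}$ is exactly Proposition~\ref{prop:priority_idleness}. By Corollary~\ref{cor:tau_0_continuity} we have $\tau_0^{\JFSQ}=0$ a.s., so $\langle\iota,\eta^{\JFSQ}(t)\rangle=\mu_{\min}$ for all $t>0$.

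For the routing measure, recall that under JFSQ every job arriving when all servers are busy is sent to the fastest server among those holding exactly one job. Hence $\tilde\mu_{1,j}^{\JFSQ,n}$ is the largest rate among servers with $Q_{k,2}=0$ at that epoch. Fix $\epsilon>0$ and let $\SSS_\epsilon=[\mu_{\max}-\epsilon,\mu_{\max}]$. Since $\SSS$ is the support of $F$, we have $F(\SSS_\epsilon)>0$, and $F_n\Rightarrow F$ gives roughly $nF(\SSS_\epsilon)$ servers with rates in $\SSS_\epsilon$. By Lemma~\ref{lem:stochastic_boundedness}, $|X_2^{\JFSQ,n}|_T^*$ is stochastically bounded, so at most $O_p(n^{1/2})$ servers have two or more jobs on $[0,T]$. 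Therefore, with probability tending to one, at every epoch on $[0,T]$ some server in $\SSS_\epsilon$ holds at most one job. In particular every $\tilde\mu_{1,j}^{\JFSQ,n}$ with $j\le A_1^{\JFSQ,n}(T)$ lies in $\SSS_\epsilon$.

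The routing processes, however, are indexed by job count up to $n^{1/2}t$, not by time. I would handle this by continuing the system past $T$, or equivalently by noting that $A_1^{\JFSQ,n}(T)=O_p(n^{1/2})$ (from \eqref{eq:aggregate_balance} and stochastic boundedness), which lets the index range be taken large. The cleanest route is to take $T$ large enough that $A_1^{\JFSQ,n}(T)\ge n^{1/2}t_0$ with high probability; if instead the limit on the index range $[0,t_0]$ is only needed up to $U_1$, which is what enters \eqref{eq:main_diffusion_xi2}, it suffices to work on the range actually used.

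For $f\in\Lipbar{\RR,1}$ this gives
\[
\bigl|\langle f,\hat\alpha_1^{\JFSQ,n}(n^{1/2}t)\rangle - t f(\mu_{\max})\bigr|\le \epsilon t,
\]
so $d_{\cM,1}\bigl(\hat\alpha_1^{\JFSQ,n}(n^{1/2}t),\,t\delta_{\mu_{\max}}\bigr)\le\epsilon t$. Letting $\epsilon\downarrow0$ yields $\hat\alpha_1^{\JFSQ,n}(n^{1/2}\cdot)\Rightarrow\iota(\cdot)\times\delta_{\mu_{\max}}$. Both limits are deterministic, so joint convergence is automatic. This indexing issue is the main obstacle, and it should be handled via the tightness in Lemma~\ref{lem:alpha_continuity}.

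Theorem~\ref{thm:process_level_convergence} now gives $(X_1^{\JFSQ,n},\boldsymbol\xi^{\JFSQ,n})\Rightarrow(X_1,\boldsymbol\xi)$, the unique solution of \eqref{eq:main_diffusion_x1}--\eqref{eq:main_diffusion_xi3} with $\alpha_1(u)=u\,\delta_{\mu_{\max}}$. To reduce it, solve the scalar system in the statement, which is a reflected linear SDE coupled with a linear ODE chain with a unique solution. Then set $\xi_i=X_i\,\delta_{\mu_{\max}}$ for $i\ge2$ and check that it satisfies \eqref{eq:main_diffusion_xi2}--\eqref{eq:main_diffusion_xi3}. This is direct: $\langle f\times\iota,\xi_i\rangle=\mu_{\max}f(\mu_{\max})X_i$ and $\langle\iota,\xi_2\rangle=\mu_{\max}X_2$, while the initial condition $\xi_i(0)=\delta_{\mu_{\max}}X_i(0)$ matches. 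Uniqueness in Theorem~\ref{thm:process_level_convergence} identifies this as the limit. Finally, $X_i^{\JFSQ,n}=\langle 1,\xi_i^{\JFSQ,n}\rangle$ and $\langle 1,\cdot\rangle$ is continuous, so the continuous mapping theorem gives $\mf X^{\JFSQ,n}\Rightarrow\mf X^{\JFSQ}$.
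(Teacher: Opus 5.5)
Your proposal has a genuine gap, located at the step you yourself flag as the main obstacle: the passage from time to job index. What you prove is that, with high probability, every busy-routed job arriving in $[0,T]$ goes to a server in $\SSS_\epsilon$. That is, you control $\tilde\mu_{1,j}^{\JFSQ,n}$ only for $j\le A_1^{\JFSQ,n}(T)$. The claim $\hat\alpha_1^{\JFSQ,n}(n^{1/2}\cdot)\Rightarrow\iota(\cdot)\times\delta_{\mu_{\max}}$ concerns the index range $j\le \lceil n^{1/2}t_0\rceil$ for every fixed $t_0$. You therefore need a \emph{lower} bound: for all $t_0,\delta>0$, a horizon $T$ independent of $n$ with $\PP(A_1^{\JFSQ,n}(T)<n^{1/2}t_0)<\delta$. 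None of your remedies gives this:
\begin{itemize}
\item The bound $A_1^{\JFSQ,n}(T)=O_p(n^{1/2})$ is an upper bound and points the wrong way.
\item The tightness in Lemma~\ref{lem:alpha_continuity} only gives subsequential limits. It says nothing about where jobs with index beyond $A_1^{\JFSQ,n}(T)$ are sent.
\item Continuing past $T$ requires stochastic boundedness of $X_2^{\JFSQ,n}$ up to the random epoch of the $\lceil n^{1/2}t_0\rceil$th busy routing, which is the same missing fact.
\item Working only on the range up to $U_1$ is neither the hypothesis of Theorem~\ref{thm:process_level_convergence} nor the first assertion of the corollary.
\end{itemize}
The paper fills this gap with Lemma~\ref{lem:A_1_bound}, a random-walk coupling showing that $\tau_s^n=\inf\{t:A_1^n(t)>n^{1/2}s\}$ is tight in $n$, combined with a counting argument over the fast servers.

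A simpler repair avoids time altogether. The count $Q_2^{\JFSQ,n}$ increases only when a job joins a server holding exactly one job. Under JSQ this can happen only when no server is idle, i.e.\ at an epoch of $A_1^{\JFSQ,n}$. Hence, just before the $j$th such epoch, $Q_2^{\JFSQ,n}\le n^{1/2}X_2^{\JFSQ,n}(0)+j-1$. Under JFSQ, the $j$th busy-routed job goes to a server with rate at most $\mu_{\max}-\epsilon$ only if every server with rate in $G_\epsilon:=(\mu_{\max}-\epsilon,\mu_{\max}]$ already holds two or more jobs. A misrouting among the first $\lceil n^{1/2}t_0\rceil$ busy-routed jobs therefore forces $n^{1/2}(X_2^{\JFSQ,n}(0)+t_0)\ge nF_n(G_\epsilon)$. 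This event has vanishing probability by Assumption~\ref{asm:xi_initial} and the portmanteau bound $\liminf_n F_n(G_\epsilon)\ge F(G_\epsilon)>0$. (Use the relatively open set $G_\epsilon$ here. For the closed set $\SSS_\epsilon$, weak convergence only gives $\limsup_n F_n(\SSS_\epsilon)\le F(\SSS_\epsilon)$, so your ``roughly $nF(\SSS_\epsilon)$ servers'' needs this adjustment.) This yields your estimate $d_{\cM,1}\bigl(\hat\alpha_1^{\JFSQ,n}(n^{1/2}t),t\delta_{\mu_{\max}}\bigr)\le\epsilon t$ simultaneously for all $t\in[0,t_0]$, with no time horizon and no appeal to Lemma~\ref{lem:stochastic_boundedness}. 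With this step repaired, the rest of your argument is sound: $\langle\iota,\eta^{\JFSQ}(t)\rangle=\mu_{\min}$ for $t>0$, the reduction via $\xi_i=X_i\delta_{\mu_{\max}}$ together with uniqueness, and continuous mapping with $f\equiv 1$.
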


As our second application of Theorem~\ref{thm:process_level_convergence}, we consider the random-JSQ system, where ties are broken uniformly at random. Unlike the JFSQ policy, whose limiting dynamics collapse to the fastest service rate, the random-JSQ policy retains the full service-rate distribution in the limiting state descriptor. Hence, the limiting dynamics cannot be described solely through aggregate queue-length processes and instead require the full measure-valued framework developed in this paper.

\begin{corollary}\label{cor:JSQ_random}
    For the random-JSQ system, $\eta^{\RJSQ, n}\tof \eta^{\TB}$ as given in Proposition~\ref{prop:totally_blind_idleness} and $\hat{\alpha}_1^{\RJSQ, n}(n^{1/2}\cdot)\Rightarrow \iota(\cdot)\times F$. Consequently, $(X_1^{\RJSQ, n},\boldsymbol{\xi}^{\RJSQ,n})\Rightarrow (X_1^{\RJSQ}, \boldsymbol{\xi}^{\RJSQ})$, where $(X_1^{\RJSQ}, \boldsymbol{\xi}^{\RJSQ})$ is the unique solution of 
    \begin{align}
    \nonumber X_1^{\RJSQ}(t) & = X_1(0) - (\varsigma+ \nu) t + \sqrt{2}W(t) \\
    &\qquad\qquad\quad- \frac{1+\sigma_F^2}{\bar{\mu}}\int_0^t X_1^{\RJSQ}(s)ds + \int_0^t \langle \iota, \xi_2^{\RJSQ}(s)\rangle ds - U_1^{\RJSQ}(t), \label{eq:JSQ_random1}\\
    \langle f, \xi_2^{\RJSQ}(t)\rangle & = \langle f, \xi_2(0)\rangle + \langle f, F\rangle U_1^{\RJSQ}(t) - \int_0^t\langle f\times \iota, \xi_2^{\RJSQ}(s)\rangle ds + \int_0^t \langle f\times\iota, \xi_3^{\RJSQ}(s)\rangle ds, \label{eq:JSQ_random2}\\
    \langle f, \xi_i^{\RJSQ}(t)\rangle & = \langle f, \xi_i(0)\rangle - \int_0^t\langle f\times \iota, \xi_i^{\RJSQ}(s)\rangle ds +  \int_0^t \langle f\times\iota, \xi_{i+1}^{\RJSQ}(s)\rangle ds,\mbox{ for all }i\geq 3, \label{eq:JSQ_random3}
\end{align}
for any fixed $L>0$ and all $f\in \Lipbar{\RR, L}(\SSS)$. Moreover $U_1^{\RJSQ}\in \DD_{\RR_+}[0,\infty)$ is a non-decreasing function with $U_1^{\RJSQ}(0)=0$ and satisfies
\begin{align*}
    \int_0^\infty \II(X_1^{\RJSQ}(s)<0)dU_1^{\RJSQ}(s) = 0.
\end{align*}
\end{corollary}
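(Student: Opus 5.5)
Our plan is to apply Theorem~\ref{thm:process_level_convergence}. This requires three things: identify the limiting fairness process, identify the limit of the routing process $\hat{\alpha}_1^{\RJSQ,n}(n^{1/2}\cdot)$, and establish joint convergence. Since both limits will be deterministic, joint convergence follows automatically from marginal convergence.

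\textbf{Fairness process.} Random tie-breaking among idle servers is exactly the RR rule. By Proposition~\ref{prop:totally_blind_idleness>}, RR is totally blind, so $\eta^{\RJSQ,n}\tof\eta^{\TB}$. Corollary~\ref{cor:tau_0_continuity} gives $\tau_0=0$ almost surely, so for $t>0$
\[
\langle\iota,\eta^{\TB}(t)\rangle=\frac{\int\mu^2dF}{\int\mu dF}=\frac{1+\sigma_F^2}{\bar\mu},
\]
using that $F$ has mean $1$ and writing $\bar\mu=\int \mu\, dF=1$. This yields the coefficient in \eqref{eq:JSQ_random1}.

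\textbf{Routing process.} We must show $\hat{\alpha}_1^{\RJSQ,n}(n^{1/2}t)\Rightarrow tF$. When there are no idle servers, an arrival routed to a server with at least one job is, under random-JSQ, sent uniformly at random among the servers with exactly one job (or, if $Q_2=n$, among all servers at the minimal level). We expect this conditional distribution to be close to $F_n$. The reason is that by Lemma~\ref{lem:stochastic_boundedness} only $O(n^{1/2})$ servers have two or more jobs, and no servers are idle at those epochs. So the set of candidate servers differs from the full set by $O(n^{1/2})$ servers, and the conditional law of $\tilde\mu_{1,j}$ given the past is within total variation $O(n^{-1/2})\,|X_2^{n}|_T^*$ of $F_n$.

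We would then decompose, for $f\in\Lipbar{\RR,L}$,
\[
n^{-1/2}\langle f,\alpha_1(n^{1/2}t)\rangle=n^{-1/2}\sum_{j\le n^{1/2}t}\Big(f(\tilde\mu_{1,j})-\EE[f(\tilde\mu_{1,j})\mid\mathcal{G}_{j-1}]\Big)+n^{-1/2}\sum_{j\le n^{1/2}t}\EE[f(\tilde\mu_{1,j})\mid\mathcal{G}_{j-1}],
\]
where $\mathcal{G}_{j}$ is the filtration generated up to the $j$th arrival epoch of $A_1$.

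The martingale term has quadratic variation $O(n^{-1/2})$. By Doob's inequality it vanishes uniformly on compacts. The compensator term equals $t\langle f,F_n\rangle+o_P(1)$, which converges to $t\langle f,F\rangle$ by Assumption~\ref{asm:arrival_service_rates}. Together with the tightness from Lemma~\ref{lem:alpha_continuity} and a countable convergence-determining class of $f$, this identifies the limit as $\iota(\cdot)\times F$. The resulting term is $\langle f,\alpha_1(U_1(t))\rangle=\langle f,F\rangle U_1(t)$, which gives \eqref{eq:JSQ_random2}.

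\textbf{Main obstacle.} The main obstacle is controlling the compensator over the $O(n^{1/2})$ arrival indices. The index $j$ runs in arrival count rather than real time, so we need the bound on $X_2^n$ over the random real-time horizon covering the first $n^{1/2}T$ routed arrivals. We would handle this by noting that arrivals occur at rate $n\lambda^n$. Hence these arrivals occur within real time $O(n^{-1/2})$ beyond some time with high probability, or more crudely, $A_1(t)\le A^n(t)$ and the horizon is finite with high probability. Stochastic boundedness of $X_2^n$ on $[0,T']$ from Lemma~\ref{lem:stochastic_boundedness} then applies. Uniqueness and the remaining equations are inherited directly from Theorem~\ref{thm:process_level_convergence}.
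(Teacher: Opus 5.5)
Your overall architecture matches the paper's. You get the fairness limit from Proposition~\ref{prop:totally_blind_idleness}, with $\tau_0=0$ giving the coefficient $(1+\sigma_F^2)/\bar\mu$. You identify the routing limit, use the tightness from Lemma~\ref{lem:alpha_continuity}, and then apply Theorem~\ref{thm:process_level_convergence}. Your martingale/compensator decomposition is a legitimate substitute for the paper's acceptance--rejection coupling. There, the first uniform picks $\tilde\mu_{1,j}^{n,0}$ are i.i.d.\ with law $F_n$, and a mismatch occurs with conditional probability at most $Q_2^n/n$.

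The gap is in the step you flag yourself. The index $j$ counts only arrivals with $Q_1^n(s-)=n$. All arrivals occur at rate $n\lambda^n$, but on bounded horizons only an $O_P(n^{-1/2})$ fraction of them are of this type. Moreover, $\hat A_1^n$ behaves like the regulator $U_1$, which accumulates $S$ units on an $O(1)$ time scale. So there is no reason for the first $n^{1/2}S$ such arrivals to fall within $O(n^{-1/2})$ of any particular time. Your cruder alternative goes the wrong way: $A_1^n\le A^n$ only gives a \emph{lower} bound on $\tau_s^n=\inf\{t:A_1^n(t)>n^{1/2}s\}$. What you need is that $\tau_S^n$ is bounded \emph{above} with high probability, uniformly in $n$. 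Without that, Lemma~\ref{lem:stochastic_boundedness}, which controls $X_2^n$ only on deterministic horizons $[0,T]$, says nothing about $Q_2^n$ at the relevant arrival epochs. Your total-variation bound on the compensator is then unjustified.

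The paper closes this hole with Lemma~\ref{lem:A_1_bound}, which is genuinely nontrivial. It couples the system with a state-dependent random walk whose hitting of a level forces $A_1^n>n^{1/2}s$. It then bounds the mean hitting time through an excursion decomposition (gambler's ruin and mean return times). This lets the paper work on the event $\{\tau_{t+1}^n\le K_1,\ |X_2^n|_{K_1}^*\le K_2\}$.

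You could instead bypass real time altogether, as follows.
\begin{itemize}
\item Under any JSQ rule, $Q_2^n$ increases only when an arrival joins a server with exactly one job, which requires $Q_1^n=n$. Hence $Q_2^n(t)\le Q_2^n(0)+A_1^n(t)$.
\item Let $\theta_j$ be the epoch of the $j$th arrival counted by $A_1^n$. Then $n^{-1/2}Q_2^n(\theta_j-)\le X_2^n(0)+S$ for all $j\le n^{1/2}S$.
\item With probability tending to one, this also forces the minimal queue level at $\theta_j$ to be exactly one.
\item Conditioning on $\cF^n_{\theta_j-}$, your compensator error is at most $2L(S+n^{-1/2})(X_2^n(0)+S)n^{-1/2}$. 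This tends to $0$ in probability by tightness of $X_2^n(0)$.
\end{itemize}
You need to supply one of these two arguments; with it in place, the rest of your proof goes through.
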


When the limiting service rate distribution is finite discrete, we can express the measure-valued integral equation in Corollary~\ref{cor:JSQ_random} as classical integral equations. A special case is the JSQ-system with pool-level heterogeneity where the ties are broken uniformly at random among all servers with the shortest queue regardless of their pool.

\begin{corollary}\label{cor:JSQ_random_pool} Suppose there exist $(\mu_1, \ldots, \mu_m)$ such that the limiting service rate distribution and the limiting initial distribution can be expressed as $F(\AAA) = \sum_{j=1}^m \gamma_j \delta_{\mu_j}(\AAA)$, and $\xi_i(0)(\AAA) = \sum_{j=1}^m X_{j,i}(0) \delta_{\mu_j}(\AAA)$, respectively, for all $\AAA\in \cB$ and $i\geq 2
$. For all $1\leq j\leq m$, let $\AAA_j$ be an open interval such that $\mu_j\in \AAA_j$ and $\mu_{j'}\notin\AAA_j$ for $j'\neq j$, and define $X_{j,i}^{\RJSQ,n}(t) = n^{-1/2}\sum_{k=1}^n\delta_{\mu_k^n}(\AAA_j)Q_{k,i}^{\RJSQ, n}(t)$. Then, for the random-JSQ system $(X_1^{\RJSQ, n}, X_{j,i}^{\RJSQ,n}:i\geq 2, 1\leq j\leq m)\Rightarrow (X_1^{\RJSQ}, X_{j,i}^{\RJSQ}:i\geq 2, 1\leq j\leq m)$ which is the unique solution of 
    \begin{align}
    \label{eq:JSQ_random_pool1}
    \nonumber X_1^{\RJSQ}(t) & = X_1(0) - (\varsigma+ \nu) t + \sqrt{2}W(t) \\
    &\qquad\qquad\quad- \frac{1+\sigma_F^2}{\bar{\mu}}\int_0^t X_1^{\RJSQ}(s)ds + \sum_{j=1}^m\mu_j\int_0^t X_{j,2}^{\RJSQ}(s) ds - U_1^{\RJSQ}(t),\\
    \label{eq:JSQ_random_pool2}X_{j,2}^{\RJSQ}(t) & = X_{j,2}(0) + \gamma_j U_1^{\RJSQ}(t) - \mu_j\int_0^t X_{j,2}^{\RJSQ}(s) ds + \mu_j\int_0^t X_{j,3}^{\RJSQ}(s)ds ,\mbox{ for all }1\leq j\leq m,\\
     \label{eq:JSQ_random_pool3} X_{j,i}^{\RJSQ}(t) & = X_{j,i}(0)  - \mu_j\int_0^t X_{j,i}^{\RJSQ}(s) ds + \mu_j\int_0^t X_{j,i+1}^{\RJSQ}(s)ds ,\mbox{ for all }1\leq j\leq m, i\geq 3,
    \end{align}  
where $U_1^{\RJSQ}\in \DD_{\RR_+}[0,\infty)$ is a non-decreasing function  with $U_1^{\RJSQ}(0)=0$ and satisfies
\begin{align*}
    \int_0^\infty \II(X_1^{\RJSQ}(s)<0)\,dU_1^{\RJSQ}(s) = 0.
\end{align*}    

\end{corollary}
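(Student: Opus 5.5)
The plan is to derive Corollary~\ref{cor:JSQ_random_pool} from Corollary~\ref{cor:JSQ_random} by testing the measure-valued equations against functions that isolate each atom $\mu_j$. Since $F$ is supported on $\{\mu_1,\ldots,\mu_m\}$, the atoms are separated by a positive distance. So we can choose, for each $j$, a Lipschitz bump $f_j\in\Lipbar{\RR,L}(\SSS)$ with $f_j(\mu_j)=1$, $0\le f_j\le 1$, $f_j=0$ outside $\AAA_j$, and $f_j(\mu_{j'})=0$ for $j'\neq j$ (taking $L$ large enough). Then $\langle f_j\times\iota,\xi\rangle=\mu_j\langle f_j,\xi\rangle$ for any measure $\xi$ supported on the atoms, and $\langle f_j,F\rangle=\gamma_j$.

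The first step is to show that every limit $\xi_i^{\RJSQ}(t)$ is supported on $\{\mu_1,\ldots,\mu_m\}$. The limit equations \eqref{eq:JSQ_random2}--\eqref{eq:JSQ_random3} are linear, the initial data $\xi_i(0)$ and the input term $F\,U_1^{\RJSQ}$ are atomic, and the solution is unique by Theorem~\ref{thm:process_level_convergence}. The solution can therefore be built by positing $\xi_i^{\RJSQ}(t)=\sum_j x_{j,i}(t)\delta_{\mu_j}$, where $(X_1,x_{j,i})$ solves the finite-per-level system \eqref{eq:JSQ_random_pool1}--\eqref{eq:JSQ_random_pool3}. One checks that this ansatz satisfies \eqref{eq:JSQ_random1}--\eqref{eq:JSQ_random3} for every $f$, using $\langle \iota,\xi_2\rangle=\sum_j\mu_j x_{j,2}$. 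Uniqueness for Corollary~\ref{cor:JSQ_random} then identifies the two solutions. Existence and uniqueness for the pool system itself follows either from this correspondence or from a standard Lipschitz fixed-point argument with the reflection map in the weighted norm $|\cdot|_{\rho,T}^*$.

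The second step transfers the convergence to the pool coordinates. The issue is that $X_{j,i}^{\RJSQ,n}=\langle \II_{\AAA_j},\xi_i^{\RJSQ,n}\rangle$ uses a discontinuous indicator rather than $f_j$. I would handle this by sandwiching. Pick a continuous $g_j$ with $0\le g_j\le\II_{\AAA_j}$ and $g_j(\mu_j)=1$; the difference $\langle \II_{\AAA_j}-g_j,\xi_i^{\RJSQ,n}(t)\rangle$ is bounded by $n^{-1/2}\#\{k:\mu_k^n\in \AAA_j\setminus\{g_j=1\}\}$-type mass. This needs care, since that count is $O(n)$ if only $F_n\Rightarrow F$ holds. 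So I should not bound it by $F_n$. Instead, bound it by the mass of $\xi_i^{\RJSQ,n}(t)$ on a region $B_j$ that avoids all atoms. Using a continuous function $h\ge \II_{B_j}$ vanishing at the atoms, the weak convergence of $\langle h,\xi_i^{\RJSQ,n}\rangle$ to $\langle h,\xi_i^{\RJSQ}\rangle=0$ gives uniform-on-compacts convergence in probability to $0$ (the limit is continuous, so $J_1$ convergence to $0$ is uniform). Since the $\AAA_j$ are disjoint open sets covering all atoms, choosing $g_j$ equal to $1$ on a neighborhood of $\mu_j$ and applying the continuous mapping theorem to $(X_1^{\RJSQ,n},\langle g_j,\xi_i^{\RJSQ,n}\rangle)$ yields the claimed joint weak convergence. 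Joint convergence over countably many $i$ is in the product topology.

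The main obstacle is this discontinuous-test-function step. It requires the map $\xi\mapsto \langle\cdot,\xi\rangle$ to be continuous in $\DD_{\cM_f}$ jointly over all $i$. It also requires control of mass near the boundaries of $\AAA_j$, which comes only from the limit charging no mass off the atoms; the uniqueness-based support argument of the first step is exactly what makes this possible.
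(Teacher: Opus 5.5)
Your identification step is the paper's argument. The paper also checks ``by inspection'' that $\xi_i=\sum_j X_{j,i}\delta_{\mu_j}$ solves the system of Corollary~\ref{cor:JSQ_random} and then invokes uniqueness. The only difference is that it imports well-posedness of \eqref{eq:JSQ_random_pool1}--\eqref{eq:JSQ_random_pool3} from Proposition~2 of \cite{bhambay2025asymptotic} instead of deriving it.

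If you use the ``correspondence'' route for existence, you first need atomicity of the measure-valued solution. You can get it, for example, because the contraction $\cT$ preserves the closed set of measures carried by $\{\mu_1,\ldots,\mu_m\}$; otherwise use your fixed-point alternative.

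Your second step passes from convergence of $\xi_i^{\RJSQ,n}$ to convergence of $\langle \II_{\AAA_j},\xi_i^{\RJSQ,n}\rangle$. You sandwich with $g_j$ and dominate the remainder by $\langle h,\xi_i^{\RJSQ,n}\rangle\to 0$, rather than controlling it through $F_n$. The paper's one-line proof does not address this step at all, and your treatment is the right one.

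One point must be made explicit. A continuous $h\ge \II_{B_j}$ vanishing at every atom exists only if no $\mu_{j'}$, $j'\neq j$, lies in $\overline{\AAA_j}$. If $\mu_{j'}$ is an endpoint of $\AAA_j$, then $g_j(\mu_{j'})=0$ forces points of $\AAA_j$ near $\mu_{j'}$ into $B_j$, so $h(\mu_{j'})\ge 1$.

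You get around this by asserting that the $\AAA_j$ are disjoint. The corollary only assumes $\mu_{j'}\notin\AAA_j$, which permits overlapping intervals with atoms as endpoints, and in that case the conclusion genuinely fails. For example:
\begin{itemize}
\item Take two atoms $\mu_1<\mu_2$ with $\gamma_1=\gamma_2=\tfrac12$.
\item Let $\AAA_1=(\mu_1-1,\mu_2)$ and $\AAA_2=(\mu_1,\mu_2+1)$.
\item Put half the servers at rate $\mu_1+1/n$ and half at rate $\mu_2-1/n$.
\end{itemize}
Every server then lies in $\AAA_1\cap\AAA_2$, so $X_{1,i}^{\RJSQ,n}=X_{2,i}^{\RJSQ,n}$. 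Their common limit is $X_{1,i}^{\RJSQ}+X_{2,i}^{\RJSQ}$, not $X_{1,i}^{\RJSQ}$. The extra term is not identically zero for $i=2$, since it is driven by $\gamma_2 U_1^{\RJSQ}$.

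So state the hypothesis as $\mu_{j'}\notin\overline{\AAA_j}$ for $j'\neq j$; pairwise disjointness of the $\AAA_j$ implies it. The statement itself needs this amendment, and with it your proof is complete.
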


A notable consequence of Theorem~\ref{thm:process_level_convergence} is that, in general, the limiting fairness process $\eta$ and the limiting routing process $\alpha_1$ decouple. This allows diffusion limits to be derived in a modular mix-and-match fashion when one tie-breaking rule is used while servers are idle and another is used when all servers are busy. Corollaries~\ref{cor:JSQ_random} and \ref{cor:JSQ_random_pool} therefore also hold when ties among idle servers are broken according to the LISF policy. As an example, Corollary~\ref{cor:JFIQ} presents the diffusion limit under the join-the-fastest-idle-server (JFIQ) policy, where jobs are routed to the fastest idle server when idle servers are available and ties are broken uniformly at random when all servers are busy. 

\begin{corollary}\label{cor:JFIQ}
    For the JFIQ system, $\eta^{\JFIQ, n}\tof \eta^{\JFSQ}$ as given in Proposition~\ref{prop:priority_idleness} and $\hat{\alpha}_1^{\JFIQ, n}(n^{1/2}\cdot)\Rightarrow \iota(\cdot)\times F$. Consequently, $(X_1^{\JFIQ, n},\boldsymbol{\xi}^{\JFIQ, n})\Rightarrow (X_1^{\JFIQ}, \boldsymbol{\xi}^{\JFIQ})$, where $(X_1^{\JFIQ}, \boldsymbol{\xi}^{\JFIQ})$ is the unique solution of 
    \begin{align}
    \label{eq:JSQ_JFIQ1}
    \nonumber
    X_1^{\JFIQ}(t) & = X_1(0) - (\varsigma+ \nu) t + \sqrt{2}W(t) \\
    &\qquad\qquad\qquad- \mu_{\min}\int_0^t X_1^{\JFIQ}(s)ds + \int_0^t \langle \iota, \xi_2^{\JFIQ}(s)\rangle ds - U_1^{\JFIQ}(t),\\
    \label{eq:JSQ_JFIQ2}
    \nonumber \langle f, \xi_2^{\JFIQ}(t)\rangle & = \langle f, \xi_2(0)\rangle + \langle f, F\rangle U_1^{\JFIQ}(t) \\
    &\qquad \qquad \qquad \qquad- \int_0^t\langle f\times \iota, \xi_2^{\JFIQ}(s)\rangle ds + \int_0^t \langle f\times\iota, \xi_3^{\JFIQ}(s)\rangle ds ,\\
     \label{eq:JSQ_JFIQ3}\langle f, \xi_i^{\JFIQ}(t)\rangle & = \langle f, \xi_i(0)\rangle - \int_0^t\langle f\times \iota, \xi_i^{\JFIQ}(s)\rangle ds +  \int_0^t \langle f\times\iota, \xi_{i+1}^{\JFIQ}(s)\rangle ds,\mbox{ for all }i\geq 3, 
\end{align}
for any fixed $L>0$ and all $f\in \Lipbar{\RR, L}(\SSS)$. Moreover $U_1^{\JFIQ}\in \DD_{\RR_+}[0,\infty)$ is a non-decreasing function with $U_1^{\JFIQ}(0)=0$ and satisfies
\begin{align*}
    \int_0^\infty \II(X_1^{\JFIQ}(s)<0)\,dU_1^{\JFIQ}(s) = 0.
\end{align*}
\end{corollary}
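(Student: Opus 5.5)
The plan is to apply Theorem~\ref{thm:process_level_convergence}. This means identifying the two policy-dependent objects for JFIQ, verifying joint convergence, and then substituting them into \eqref{eq:main_diffusion_x1}--\eqref{eq:main_diffusion_xi3}.

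\emph{Fairness process.} Under JFIQ, when idle servers exist, the arriving job goes to the fastest idle server. The fairness process $\eta^{\JFIQ,n}$ records cumulative idleness, and idleness changes only through this idle-server tie-breaking rule. So the argument behind Proposition~\ref{prop:priority_idleness} (Theorem~3 of \cite{buke2023many}) applies verbatim, since it depends only on the rule among idle servers, as remarked after Proposition~\ref{prop:totally_blind_idleness}. Together with the stochastic boundedness of Lemma~\ref{lem:stochastic_boundedness} and the tightness of Lemma~\ref{lem:fairness_tightness}, this gives $\eta^{\JFIQ,n}\tof\eta^{\JFSQ}$. By Corollary~\ref{cor:tau_0_continuity}, $\tau_0=0$ almost surely, so $\langle\iota,\eta^{\JFIQ}(t)\rangle=\mu_{\min}$ for all $t>0$. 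This produces the coefficient $\mu_{\min}$ in \eqref{eq:JSQ_JFIQ1}.

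\emph{Routing measure.} This is the main step. When all servers are busy and $Q_2^{n}<n$, the job is routed uniformly at random among the servers with exactly one job. Conditional on the past, the $j$th such routing selects a rate with law
\[
\rho_j^n:=\frac{n^{1/2}(F_n\cdot n^{1/2})-\xi_2^{\JFIQ,n}}{n-Q_2^{\JFIQ,n}}\Big|_{\text{at the routing epoch}},
\]
that is, the empirical rate law of the servers holding one job. I would decompose
\[
\langle f,\hat\alpha_1^n(n^{1/2}t)\rangle = n^{-1/2}\sum_{j\le n^{1/2}t}\bigl(f(\tilde\mu_{1,j})-\langle f,\rho_j^n\rangle\bigr)+n^{-1/2}\sum_{j\le n^{1/2}t}\langle f,\rho_j^n\rangle .
\]
The first sum is a martingale with bounded increments. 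By Doob's inequality its supremum on $[0,T]$ has second moment $O(n^{-1/2}T)$, so it vanishes.

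For the second sum, I would use $|\rho_j^n-F_n|_{TV}\le 2|\xi_2^n|_{TV}n^{-1/2}/(1-n^{-1/2}X_2^n)$ at the routing epoch, together with the bound $|\xi_2^n|_{TV}=X_2^n$. Up to index $n^{1/2}T$, these routings occur before time $\sigma_n$, the time of the $\lceil n^{1/2}T\rceil$th routing. I expect the main obstacle to be controlling $X_2^n$ up to $\sigma_n$, since $\alpha_1$ is indexed by routing count rather than time. My plan for this is as follows. The number of routings to busy servers by time $t$ is $n^{1/2}U_1^n(t)$, where $U_1^n$ is stochastically bounded on compacts by Lemma~\ref{lem:stochastic_boundedness} and \eqref{eq:aggregate_balance}. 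Moreover, $U_1^n(t)\to\infty$ cannot occur faster than arrivals. I would therefore bound $X_2^n$ on $[0,\sigma_n]$ through $X_2^n(t)\le X_2^n(0)+\hat A_1^n(t)$, which uses the routing count itself: $\hat A_1^n\le T$ before $\sigma_n$. This gives $\sup X_2^n\le X_2^n(0)+T$ for all indices up to $n^{1/2}T$, which is tight.

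Hence the drift term equals $t\langle f,F_n\rangle+O_p(n^{-1/2})$ uniformly. Combined with $F_n\Rightarrow F$, this gives $\langle f,\hat\alpha_1^n(n^{1/2}\cdot)\rangle\to\iota(\cdot)\langle f,F\rangle$ for each bounded Lipschitz $f$. Tightness from Lemma~\ref{lem:alpha_continuity} then upgrades this to $\hat\alpha_1^{\JFIQ,n}(n^{1/2}\cdot)\Rightarrow\iota(\cdot)\times F$. Both limits are deterministic, so joint convergence is automatic.

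\emph{Conclusion.} Theorem~\ref{thm:process_level_convergence} now applies. Substituting $\langle\iota,\eta^{\JFIQ}(t)\rangle=\mu_{\min}$ and $\langle f,\alpha_1^{\JFIQ}(U)\rangle=\langle f,F\rangle U$ into \eqref{eq:main_diffusion_x1}--\eqref{eq:main_diffusion_xi3} yields \eqref{eq:JSQ_JFIQ1}--\eqref{eq:JSQ_JFIQ3} directly. Uniqueness and the reflection property of $U_1^{\JFIQ}$ are inherited from the theorem.
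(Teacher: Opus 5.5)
Your proof is correct. Its skeleton matches the mix-and-match reasoning the paper relies on for JFIQ: the fairness limit comes from Proposition~\ref{prop:priority_idleness}, since the limiting fairness process depends only on the idle-server rule; the routing measure comes from the random busy-server rule; and both are substituted into Theorem~\ref{thm:process_level_convergence}. The paper gives no separate proof and inherits the routing step from Corollary~\ref{cor:JSQ_random}.

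Where you differ is in identifying the routing measure.
\begin{itemize}
\item The paper realizes uniform selection by acceptance--rejection and compares $\alpha_1^n$ with the i.i.d.\ first proposals via the law of large numbers. It bounds each rejection probability by $K_2n^{-1/2}$ on the event $\{\tau_{t+1}^n\le K_1,\ |X_2^n|_{K_1}^*\le K_2\}$. This requires Lemma~\ref{lem:A_1_bound}, a random-walk/excursion argument, to convert the routing count $n^{1/2}t$ into a bounded physical time horizon on which the stochastic boundedness of $X_2^n$ applies.
\item You instead center each routing at its exact conditional law $\rho_j^n$ and remove the martingale part with Doob's inequality. You then control $\rho_j^n-F_n$ through the pathwise bound $X_2^n(t)\le X_2^n(0)+\hat A_1^n(t)$ from \eqref{eq:aggregate_balance}, which bounds $X_2^n$ directly in routing-count time.
\end{itemize}
Your route makes Lemma~\ref{lem:A_1_bound} unnecessary, as well as your remarks on the stochastic boundedness of $U_1^n$. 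It also gives an explicit $O_p(n^{-1/2})$ error, and the same pathwise bound would shorten the paper's rejection count too. The paper's route, in exchange, reuses a lemma it needs anyway for JFSQ. It also reduces everything to a coupling with an i.i.d.\ sequence without writing down the conditional law.

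One minor slip: the count measure of one-job servers is $nF_n-n^{1/2}\xi_2^n$. The numerator of your $\rho_j^n$ should therefore carry $n^{1/2}\xi_2^n$ rather than $\xi_2^n$. The total-variation bound you then state is the correct one for the corrected formula.
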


\begin{figure}[t]
    \centering
    \begin{subfigure}[t]{0.49\linewidth}
        \centering
        \includegraphics[height=5.8cm]{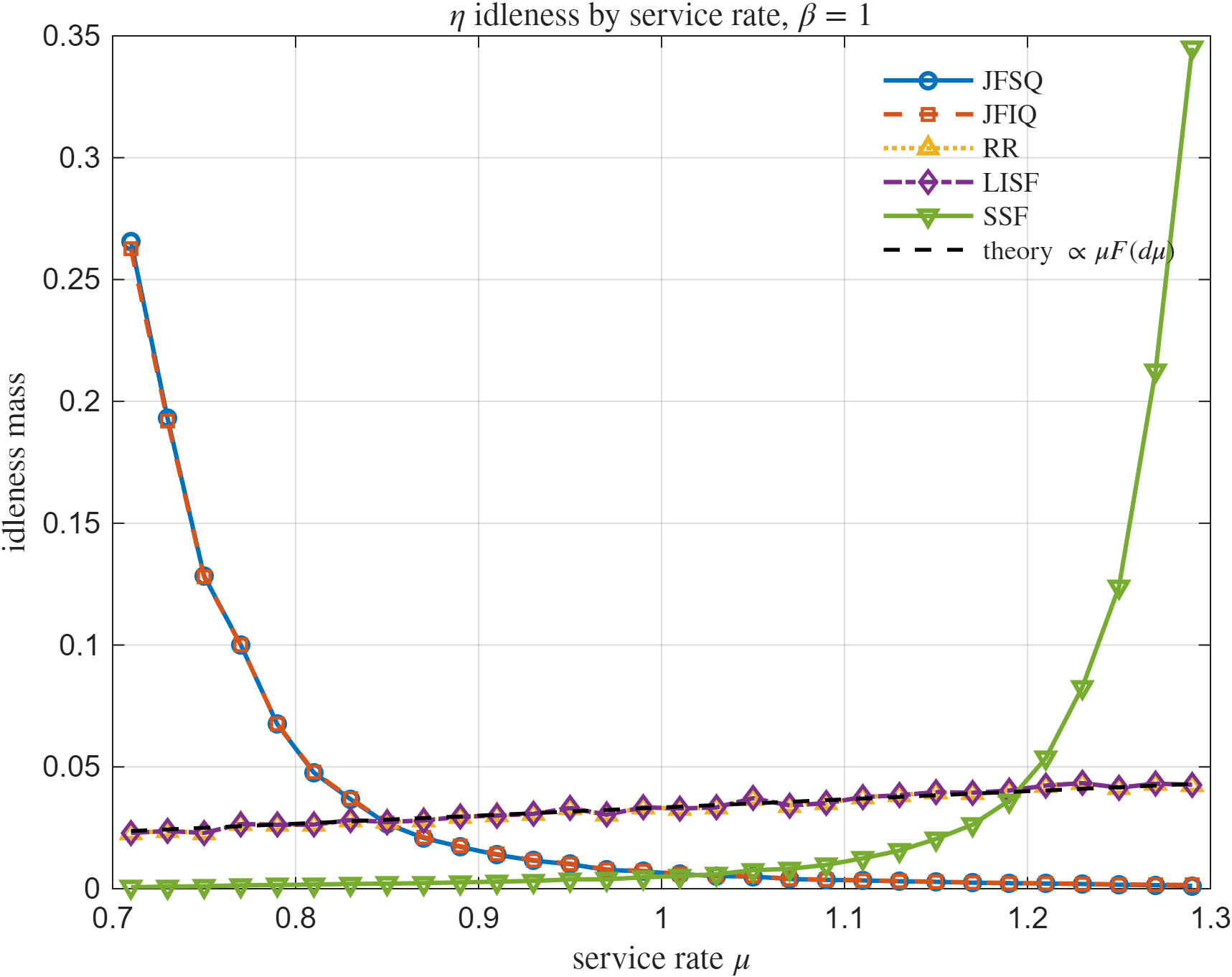}
        \caption{Limiting fairness measures.}
        \label{fig:eta_profile}
    \end{subfigure}
    \hfill
    \begin{subfigure}[t]{0.49\linewidth}
        \centering
        \includegraphics[height=5.8cm]{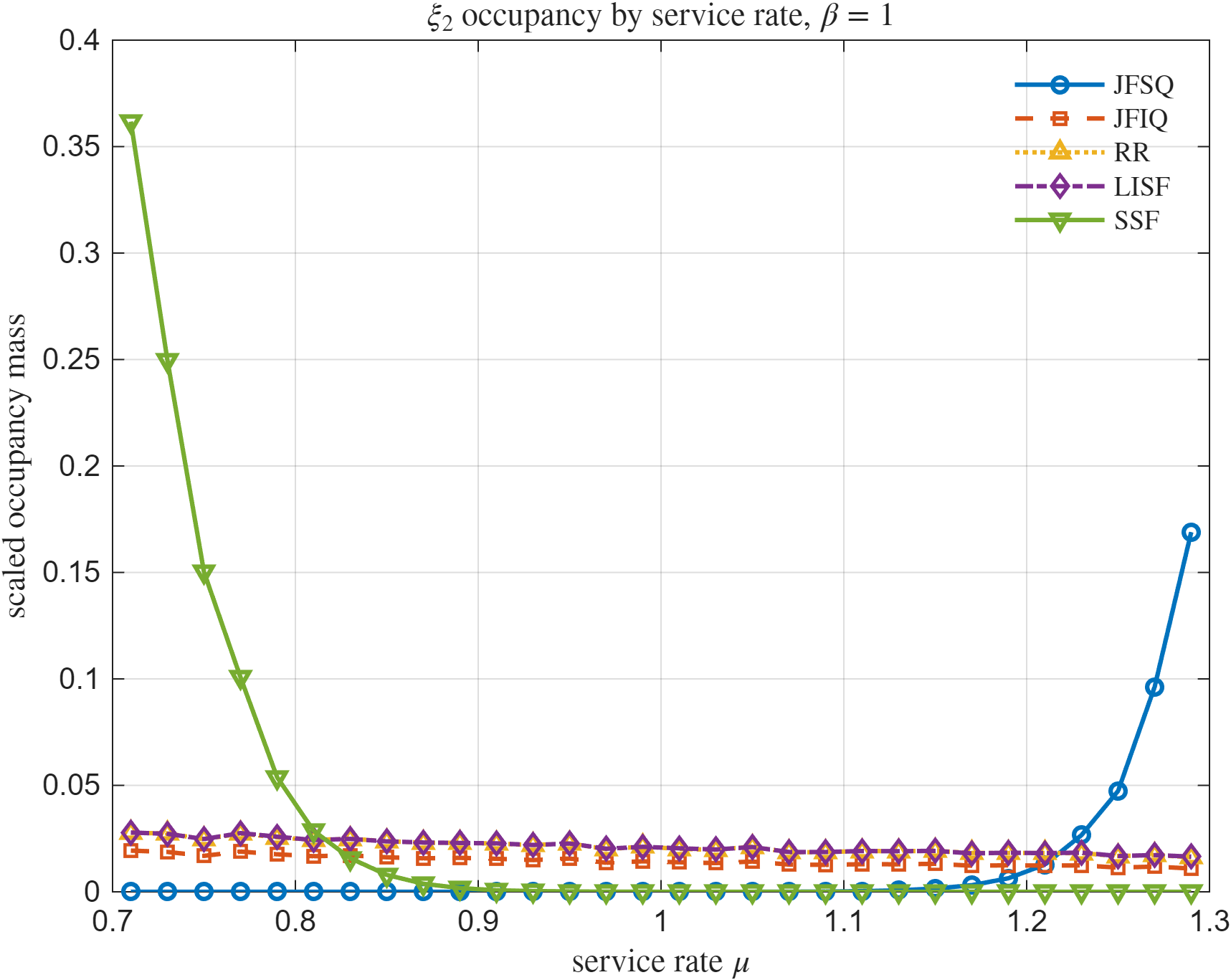}
        \caption{Limiting $\xi_2$ measures.}
        \label{fig:xi2_profile}
    \end{subfigure}
    \caption{Empirical measure-valued profiles for $\varsigma+\nu=1.00$.}
    \label{fig:measure_valued_profiles}
\end{figure}

Figure~\ref{fig:measure_valued_profiles} illustrates the limiting fairness measures and the stationary occupancy measure $\xi_2(\infty)$  across service rates at $\varsigma + \nu =1.00$. The simulations are based on $n=500$ servers whose service rates are sampled i.i.d. from the uniform distribution on ($[1-\epsilon, 1+\epsilon]$) with $\epsilon= 0.3$. The simulation horizon is $T= 3000$, with a warm-up period of $T_{\mathrm{wu}}=300$, and all reported averages are exact time averages over the post warm-up interval.

Figure~\ref{fig:eta_profile} shows the post-warm-up fairness measure $\eta$. JFSQ and JFIQ have almost identical idleness profiles, with idleness concentrated on the slowest servers. This is expected because both policies use the same idle-server rule which selects the fastest idle server. Conversely, JSSQ leaves more idleness on fast servers. RR and LISF are close to the totally blind benchmark, whose limiting idleness profile is proportional to $\mu dF(\mu)$. 

On the other hand, Figure~\ref{fig:xi2_profile} illustrates the 
time-averaged scaled occupancy profile $\xi_2$, which records the mass of servers with queues of length at least two across service-rate bins. Under JFSQ, this mass is concentrated near the fastest servers, while under JSSQ it is concentrated near the slowest servers. The JFIQ, RR and LISF profiles are more spread out, since these policies use random tie-breaking when all servers are busy.

Theorem~\ref{thm:process_level_convergence} establishes process-level approximations for heterogeneous load-balancing systems under general tie-breaking rules. The next question is whether these diffusion limits also characterize the corresponding steady-state behavior. A load-balancing system is positive recurrent only when there is excess capacity, and is null recurrent or transient otherwise. Since the service-rate vector may be random, the pre-limit system is positive recurrent only on service-rate environments with positive excess capacity. In the stationary results below, all stationary quantities are therefore considered conditionally on the event
$\{\varsigma^n+\nu^n>\varepsilon\}$.
Assuming such uniform excess capacity, we next establish uniform bounds on the steady-state number of idle servers and on the number of queues with $i$ or more jobs. These bounds justify the interchange of the many-server and steady-state limits, thereby connecting the process-level convergence results with steady-state performance approximations. Theorem~\ref{thm:stationary_bounds} not only generalizes Theorem~2 in \cite{bhambay2025asymptotic}, originally proved for JFSQ, to arbitrary tie-breaking policies, but also improves the bound on $\EE[X_i^{\pi,n}(\infty)]$ for $i\geq 3$ to $O(n^{-1/2})$. 

\begin{theorem}\label{thm:stationary_bounds}
Suppose that each server in the system has a finite buffer of length $b$ and Assumption~\ref{asm:arrival_service_rates} holds. Also assume that for $\varepsilon>0$ there exists $\varrho>0$ such that $\PP(\varsigma^n+\nu^n>\varepsilon)>\varrho$ for all $n\in \NN$. Then there exist constants $C_{1,\varepsilon},C_{2,\varepsilon},C_{3,\varepsilon}$, uniform in $n$ and over all tie-breaking policies where a stationary version exists, such that
\begin{align}
\label{eq:stationary_bound1}
\EE\left[|X_1^{\pi,n}(\infty)|\mid \varsigma^n+\nu^n>\varepsilon\right]&<C_{1,\varepsilon},\\
\label{eq:stationary_bound2}
\EE\left[|X_2^{\pi,n}(\infty)|\mid \varsigma^n+\nu^n>\varepsilon\right]&<C_{2,\varepsilon},\\
\label{eq:stationary_bound3}
\EE\left[|X_3^{\pi,n}(\infty)|\mid \varsigma^n+\nu^n>\varepsilon\right]&<C_{3,\varepsilon}n^{-1/2},
\end{align}
for all $n\in\NN$.
\end{theorem}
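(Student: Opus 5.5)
We will prove the three bounds through the sample-path comparison with the auxiliary systems JSQ-FSL and JSQ-SSL described in contribution (C4), together with stationary drift (Lyapunov/balance) identities. Throughout we work conditionally on the $\cF_0^n$-measurable event $\{\varsigma^n+\nu^n>\varepsilon\}$. On this event the service-rate vector is fixed and Lemma~\ref{lem:positive_recurrent_finite} gives positive recurrence, so every bound is derived for a fixed rate vector. Its constants will depend only on $\varepsilon$, $\mu_{\min}$, $\mu_{\max}$ and $b$, and averaging then yields the conditional statements.

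\textbf{Step 1: a policy-free upper system.} In JSQ-SSL, servers are decoupled from queues and the slowest servers preemptively serve the longest queues. By a coupling of the arrival process and the potential-departure Poisson clocks (uniformization at rate $n\mu_{\max}$ with thinning), we will show that for every tie-breaking rule $\pi$ the total number of jobs is dominated, and the upper tail counts $\sum_{j\ge i}Q_j$ are sample-path dominated, by those of JSQ-SSL started from the same state. JSQ-SSL is symmetric enough that its occupancy vector $(Q_1,\dots,Q_b)$ is a Markov chain once rates are sorted. The induction over events is the delicate part: arrivals join the shortest queue in both systems, and a departure in the SSL system always occurs at a level at least as high as in the coupled $\pi$-system. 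Stationary bounds for JSQ-SSL then transfer to every $\pi$ for the quantities $X_2$ and $X_3$, and for $X_1$ via the total job count.

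\textbf{Step 2: moment bounds for JSQ-SSL.} We will use the stationary rate-balance identity: arrival rate equals the departure rate. Writing $I=n-Q_1$ and letting $\mu_{(k)}$ be the sorted rates,
\[
n\lambda^n=\EE\Big[\sum_{k=1}^{Q_1}\mu_{(k)}\Big]\quad\Longrightarrow\quad \EE\Big[\sum_{k=Q_1+1}^{n}\mu_{(k)}\Big]=n\bar\mu^n-n\lambda^n=n^{1/2}(\varsigma^n+\nu^n),
\]
which gives $\EE[I]\le n^{1/2}(\varsigma^n+\nu^n)/\mu_{\min}$. This identity holds for any $\pi$, so \eqref{eq:stationary_bound1} is immediate, except for the unbounded factor $\nu^n$, which requires care. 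We will bound it via $\EE|\nu^n|$ uniformly, or alternatively through a drift for $I$ in the SSL system.

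For $X_2$ we use the test function $\sum_{j\ge2}Q_j$. Its drift is $+n\lambda^n\II(I=0)$ minus departures from the servers at level $\ge2$, which is at least $\mu_{\min}Q_2$. The probability $\PP(I=0)$ is controlled by a Lyapunov function $V=(Q_2-\kappa n^{1/2})^+$, or $e^{\theta n^{-1/2}Q_2}$, in the spirit of Braverman's argument. Whenever $Q_2\ge\kappa n^{1/2}$, the extra departure capacity at level 2 exceeds the arrival surplus. Combining this drift with Step 1 gives $\EE[X_2]\le C_{2,\varepsilon}$.

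For $X_3$, a job reaches level 3 only on arrivals that see $Q_2=n$, which requires $I=0$ and also all servers at level $\ge2$. This event occurs at rate at most $n\lambda^n\PP(Q_2=n)$. Balancing flow across level 3 gives
\[
\mu_{\min}\EE[Q_3]\le n\lambda^n\PP(Q_2=n),
\]
so it suffices to show $\PP(Q_2=n)=O(n^{-1})$. This follows from Markov's inequality applied to an exponential moment of $X_2$, namely $\PP(X_2\ge n^{1/2})\le e^{-\theta n^{1/2}}\EE e^{\theta X_2}$, which is far smaller than $n^{-1}$. The finite buffer $b$ guarantees all moments are finite so that the drift identities are legitimate.

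\textbf{Main obstacle.} The main obstacle is Step 1, the monotone coupling for arbitrary (possibly non-Markov-in-rates) tie-breaking rules under individual heterogeneity. The second difficulty is obtaining exponential-moment control of $X_2$ in the SSL system, uniformly in $n$, which is needed for the $n^{-1/2}$ rate in \eqref{eq:stationary_bound3}. For the latter, we would first try the exponential Lyapunov function with $\theta$ small, depending only on $\varepsilon$ and $\mu_{\min}$.
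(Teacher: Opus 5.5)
Your architecture (the JSQ--SSL upper bound for the cumulative counts $Q^{\pi,n}_{+i}$, plus stationary drift estimates for the auxiliary system) matches the paper's. However, two of the three bounds do not go through as proposed.

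\textbf{The bound on $X_1$.} The theorem assumes a finite buffer $b$. Arrivals that find all buffers full are therefore lost, and the stationary balance for a general $\pi$ reads $n\lambda^n\bigl(1-\PP(Q^{\pi,n}_b=n)\bigr)=\EE\bigl[\sum_k\mu_k^nQ^{\pi,n}_{k,1}\bigr]$, not $n\lambda^n=\EE[\cdots]$. The loss term has the unfavourable sign, so you only get $\mu_{\min}\EE|X_1^{\pi,n}(\infty)|\le\varsigma^n+\nu^n+n^{1/2}\lambda^n\PP(Q^{\pi,n}_b=n)$. You therefore need the blocking probability to be $O(n^{-1/2})$.
\begin{itemize}
\item For $b\ge 2$ this can be patched by Markov's inequality, $\PP(Q^{\pi,n}_b=n)\le\PP(Q^{\pi,n}_{+2}\ge n)\le\EE[Q^{\pi,n}_{+2}]/n$, once the $X_{+2}$ bound is in hand.
\item For $b=1$ nothing in your plan controls it. This is a loss system whose blocking probability is of order $n^{-1/2}$, as for Erlang-B in the Halfin--Whitt regime, so the correction is $O(1)$ but must be proved.
\end{itemize}
Your alternative, ``$X_1$ via the total job count'', cannot work: SSL dominates job counts from above, which gives no upper bound on idle servers. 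The paper handles $X_1$ separately (Lemma~\ref{lem:x_1_lower_bound}). It couples the idle-server count with a birth--death process whose birth rate drops to $n\lambda^n-n^{1/2}(\varsigma^n+\nu^n)$ above $2n^{1/2}(\varsigma^n+\nu^n)/\mu_{\min}$. This uses only that arrivals join an idle server whenever one exists, so it is insensitive to $b$.

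Your Step~1 heuristic is also reversed. Under the common-uniform coupling, an SSL departure occurs at a level \emph{at most} that of the $\pi$-system when the counts are ordered. That is what makes SSL an upper bound.

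\textbf{The bound on $X_2$.} The mechanism you describe does not close. Any test function of $Q_2$ alone, such as $(Q_2-\kappa n^{1/2})^+$ or $e^{\theta n^{-1/2}Q_2}$, has a drift containing $+n\lambda^n$ on $\{I=0\}$. Moreover, $\{I=0\}$ becomes typical precisely when $\mu_{\min}Q_2\gtrsim n^{1/2}(\varsigma^n+\nu^n)$. The only restoring force is then the drift $-n^{1/2}(\varsigma^n+\nu^n)$ of the total job count while the idle count sits at zero. So $I$ and $Q_2$ must be handled jointly.

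The missing idea is the two-dimensional test function of Lemma~\ref{lem:solution_PDE}. Writing $Y_2=\tilde{Q}_2/n$, $f^*$ solves $\cL f^*=-(y_2-\kappa n^{-1/2})_+$ with $f^*_1=f^*_2$ on $\{y_1=0\}$. This does three things:
\begin{itemize}
\item the boundary condition cancels the term $-\lambda^n\II(y_1=0)(f_1-f_2)$ in the generator expansion;
\item $f^*_1\ge0$ and $f^*_1-f^*_2\le0$ make the heterogeneity error terms (those with $\mu_k^n-\mu_{\max}$ and $\mu_k^n-\mu_{\min}$) non-positive;
\item the $O(n^{1/2})$ second-derivative bounds make the Taylor remainders $O(n^{-1/2})\PP(Y_2\ge\kappa n^{-1/2}-1/n)$.
\end{itemize}

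\textbf{The bound on $X_3$.} Your level-3 flow balance is the paper's. However, the uniform exponential moment of $X_2$ you invoke is an extra, harder, unproven estimate, and it is unnecessary. Re-applying the same tail inequality with $\kappa=\tilde\kappa n^{1/2}$, then using Markov's inequality with the first-moment bound, gives $\EE[(Y_2-\tilde\kappa)^+]=O(n^{-1})$. Hence $\PP(\tilde{Q}_2=n)=O(n^{-1})$, which is all the flow balance needs.
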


The bounds provided in Theorem~\ref{thm:stationary_bounds} do not naturally yield the interchangeability of limits. Our construction allows general tie-breaking policies, under which $\mf{Q}$ need not even be Markov. The next result establishes the tightness result required to prove the interchangeability of limits for specific policies.

\begin{corollary}\label{cor:interchange_limits}
  Suppose that the assumptions of Theorem~\ref{thm:stationary_bounds} hold. 
  Under any tie-breaking rule $\pi$ that yields a Markov 
  $(\mf{Q}^n(t), t\geq 0)$ for all $n$ and a Markov diffusion limit 
  \eqref{eq:main_diffusion_x1}--\eqref{eq:main_reflection}, the following hold 
  conditional on the event $\{\varsigma^n+\nu^n>\varepsilon\}$ where $\varepsilon>0$ is such that $\PP(\varsigma + \nu=\varepsilon) = 0$:
  \begin{enumerate}
      \item The sequence $((X_1^{\pi,n}(\infty),\mf{\xi}^{\pi,n}(\infty)),\, n\in\NN)$ is tight. 
      Moreover, if along a subsequence $n_k\to\infty$, 
      $(X_1^{\pi,n_k}(\infty),\mf{\xi}^{\pi,n_k}(\infty))
      \Rightarrow (X_1^\pi(0),\mf{\xi}^\pi(0))$, then the solution 
      $(X_1^\pi(t),\mf{\xi}^\pi(t))$ of 
      \eqref{eq:main_diffusion_x1}--\eqref{eq:main_reflection} with initial 
      condition $(X_1^\pi(0),\mf{\xi}^\pi(0))$ is stationary. In particular, 
      the law of $(X_1^\pi(0),\mf{\xi}^\pi(0))$ is an invariant distribution 
      of the limiting diffusion.
      \item $\xi_i^{\pi,n}(\infty)\toP 0$ for all $i\geq 3$.
  \end{enumerate}
  Hence, if the limiting diffusion 
  \eqref{eq:main_diffusion_x1}--\eqref{eq:main_reflection} has a unique 
  stationary distribution, then the stationary distributions of the pre-limit 
  systems converge to this distribution, and the steady-state and many-server 
  limits can be interchanged.
\end{corollary}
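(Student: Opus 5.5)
The plan is the standard interchange-of-limits argument: tightness of stationary laws from the moment bounds of Theorem~\ref{thm:stationary_bounds}, followed by stationarity of any limit point via Theorem~\ref{thm:process_level_convergence}.

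\emph{Step 1: tightness.} Work throughout on the event $\{\varsigma^n+\nu^n>\varepsilon\}$. Conditioning by it is harmless, since its probability is bounded below by $\varrho$, and the choice $\PP(\varsigma+\nu=\varepsilon)=0$ makes its indicator converge in distribution jointly with $\nu^n$. Markov's inequality and \eqref{eq:stationary_bound1} give tightness of $X_1^{\pi,n}(\infty)$. For $\xi_2^{\pi,n}(\infty)$, the measures live on the compact set $\SSS$ and have total mass $X_2^{\pi,n}(\infty)$. This mass is tight by \eqref{eq:stationary_bound2}, so Prokhorov's theorem on finite measures over a compact space gives tightness of $\xi_2^{\pi,n}(\infty)$ in $\cM_f$.

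For $i\ge3$, the total mass of $\xi_i^{\pi,n}(\infty)$ is at most $X_3^{\pi,n}(\infty)$. By \eqref{eq:stationary_bound3} its expectation is $O(n^{-1/2})$, so Markov's inequality yields $\langle 1,\xi_i^{\pi,n}(\infty)\rangle\toP0$. This proves part~2, since $d_{\cM,1}(\xi,0)\le \langle 1,\xi\rangle$. Tightness of the whole sequence $\boldsymbol{\xi}^{\pi,n}(\infty)$ in the product topology then follows coordinatewise.

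\emph{Step 2: stationarity of limit points.} Take a subsequence along which $(X_1^{\pi,n_k}(\infty),\boldsymbol{\xi}^{\pi,n_k}(\infty))\Rightarrow(X_1^\pi(0),\boldsymbol{\xi}^\pi(0))$. Start the $n_k$th system in its stationary distribution, so that for every $t$ the law of $(X_1^{\pi,n_k}(t),\boldsymbol{\xi}^{\pi,n_k}(t))$ equals that at time $0$.

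Next I check the hypotheses of Theorem~\ref{thm:process_level_convergence}. Assumption~\ref{asm:xi_initial} holds with the limit point as initial condition. Finitely many jobs initially holds because the buffer is finite. The convergences $\eta^{\pi,n}\tof\eta^\pi$ and of $\hat\alpha_1^{\pi,n}$ are part of the standing hypothesis that the policy has the Markov diffusion limit \eqref{eq:main_diffusion_x1}--\eqref{eq:main_reflection}. The finite buffer $b$ does not affect the limit, since $\xi_i\to0$ for $i\ge3$ anyway.

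Theorem~\ref{thm:process_level_convergence} then gives process-level convergence to the solution started at $(X_1^\pi(0),\boldsymbol{\xi}^\pi(0))$. By Corollary~\ref{cor:tau_0_continuity} and continuity of the limit, the projection at any fixed $t$ is a.s.\ continuous. Hence the prelimit time-$t$ marginals converge to the limit's time-$t$ marginal. These prelimit marginals all equal the stationary law, whose limit is the law of the initial condition. So the limiting solution has time-invariant one-dimensional marginals. Since the limit is Markov, this makes the process stationary and its initial law invariant.

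\emph{Step 3: conclusion.} If the invariant distribution of the limiting diffusion is unique, every subsequential limit coincides with it. Combined with tightness, this gives convergence of the full sequence of stationary distributions.

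\emph{Main obstacle.} I expect the main difficulty to be Step~2's use of Theorem~\ref{thm:process_level_convergence} with a random, policy-dependent stationary initial condition. This requires the joint convergence of $\eta^{\pi,n}$ and $\hat\alpha_1^{\pi,n}$ together with the initial state under the conditioning event, and the hypothesis in the corollary statement is what must supply it. I would handle this by passing to a further subsequence on which everything converges jointly, using Skorokhod representation.
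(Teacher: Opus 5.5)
Your proposal is correct and follows the paper's proof essentially step for step: tightness comes from Theorem~\ref{thm:stationary_bounds} plus compactness of $\SSS$, part~2 from Markov's inequality on $X_3^{\pi,n}(\infty)$, and invariance of subsequential limits from running the stationary prelimit through Theorem~\ref{thm:process_level_convergence}; you also make explicit points the paper leaves implicit, such as the conditional law of $\nu^n$, continuity of the time-$t$ projection, and the use of the Markov property. One side remark needs correcting: for $b\ge2$ the buffer is irrelevant on compact time intervals because blocking requires $Q_b^{\pi,n}=n$ whereas $Q_b^{\pi,n}\le n^{1/2}X_2^{\pi,n}=O_p(n^{1/2})$, not because $\xi_i\to0$ for $i\ge3$; for $b=1$, arrivals are lost whenever all servers are busy, so the transient limit has $\xi_2\equiv0$ and is not \eqref{eq:main_diffusion_x1}--\eqref{eq:main_reflection}, and that case must be excluded through the hypothesis, a point the paper's proof also passes over.
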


Our final result builds on the preceding transient and steady-state analysis to establish the asymptotic optimality of the JFSQ policy under individual server heterogeneity. This extends the corresponding result established for pool-level heterogeneity to the more general setting studied in this paper. We define
\begin{align}\label{eq:cumulative_Q}
Q_{+i}^{\pi,n}(t) = \sum_{j=i}^\infty Q_{j}^{\pi,n}(t) \mbox{ and }X_{+i}^{\pi,n}(t) = \sum_{j=i}^\infty X_{j}^{\pi,n}(t) \mbox{ for all $i,n\in \NN$ and $t\geq 0$}.
\end{align}
Specifically, $Q_{+1}^{\pi,n}(t)$ and $Q_{+2}^{\pi,n}(t)$ denote the total number of jobs in the system and the total number of jobs waiting in the queue, respectively, in the $n$th system at time $t\geq 0$.

\begin{theorem}\label{thm:asymptotical_optimality}
    Suppose that the assumptions of Theorem~\ref{thm:stationary_bounds} hold. Then, for any tie-breaking policy $\pi$ where the stationary  $(X_1^{\pi,n}(\infty), \boldsymbol{\xi}^{\pi,n}(\infty))$ exists for all $n\in \NN$, and for all $y>0$, we have 
    \begin{align}
       \label{eq:asymptotic_optimality1} \lim_{n\to \infty} \PP(X_{+1}^{\JFSQ, n}(\infty)>y\mid \varsigma^n+\nu^n>\varepsilon)&\leq \liminf_{n\to \infty} \PP(X_{+1}^{\pi, n}(\infty)>y\mid \varsigma^n+\nu^n>\varepsilon),\\
        \label{eq:asymptotic_optimality2}
        \lim_{n\to \infty} \PP(X_{+2}^{\JFSQ, n}(\infty)>y\mid \varsigma^n+\nu^n>\varepsilon)&\leq \liminf_{n\to \infty} \PP(X_{+2}^{\pi, n}(\infty)>y\mid \varsigma^n+\nu^n>\varepsilon).
    \end{align}
      conditional on the event $\{\varsigma^n+\nu^n>\varepsilon\}$ where $\varepsilon>0$ is such that $\PP(\varsigma + \nu=\varepsilon) = 0$.
\end{theorem}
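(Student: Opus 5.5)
Our plan follows the pool-based argument of \cite{bhambay2025asymptotic}, with the auxiliary JSQ-FSL system (contribution (C4)) as the central comparison device. First, for each fixed $n$ and service-rate vector $\boldsymbol{\mu}^n$, we construct a coupling between the system under an arbitrary tie-breaking policy $\pi$ and the JSQ-FSL system. Both are driven by the same arrival process $A^n$ and the same family of rate-$\mu_{\max}$ Poisson potential-service clocks, thinned according to each server's rate. In JSQ-FSL, servers are decoupled from queues and are preemptively reassigned so that the fastest servers always work on the longest queues. We will prove, by induction over event epochs, the sample-path inequalities
\[
Q_{+i}^{\FSL,n}(t)\le Q_{+i}^{\pi,n}(t)\quad \text{for } i=1,2 \text{ and all } t\ge 0,
\]
given ordered initial conditions. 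The induction checks that an arrival increases both sides in the same way. For a departure, the key point is that at each level the service capacity allocated to jobs in JSQ-FSL dominates that in system $\pi$: fastest-serves-the-longest maximizes the total service rate applied to jobs at level at least $i$, subject to the constraint that at most $Q_i$ servers are used at each level. This yields the stationary comparison $X_{+i}^{\FSL,n}(\infty)\le_{st}X_{+i}^{\pi,n}(\infty)$, conditional on $\{\varsigma^n+\nu^n>\varepsilon\}$.

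Second, we will show that JSQ-FSL and JFSQ share the same diffusion limit in steady state. The idea is that JFSQ and JSQ-FSL differ only in how busy servers of different rates are matched to queues of length at least two. Under JFSQ, Corollary~\ref{cor:SA-JSQ} gives $\xi_2$ concentrated at $\delta_{\mu_{\max}}$ and an idle-server drift $\mu_{\min}$, which is exactly the fluid allocation produced by fastest-serves-the-longest. We will derive the diffusion limit of JSQ-FSL using the same machinery as in Theorem~\ref{thm:process_level_convergence}: its fairness limit is $\delta_{\mu_{\min}}$, and the servers at level at least two have rates near $\mu_{\max}$, since only $O(n^{1/2})$ such servers are needed and $F$ has support reaching $\mu_{\max}$. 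We then show that this limit coincides with the JFSQ limit of Corollary~\ref{cor:SA-JSQ}.

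Third, we combine these steps with Theorem~\ref{thm:stationary_bounds} and Corollary~\ref{cor:interchange_limits}. The bounds there are policy-uniform; the JSQ-FSL analogue is obtained the same way, since the comparison in (C4) is what produces them. Together these give tightness of the stationary distributions and $X_i(\infty)\to 0$ for $i\ge 3$. The interchange of limits then shows that $X_{+1}^{\JFSQ,n}(\infty)$ and $X_{+1}^{\FSL,n}(\infty)$ converge to the same limit $X_1(\infty)+X_2(\infty)$ of the JFSQ diffusion, and likewise for $X_{+2}$. Uniqueness of the stationary distribution of the two-dimensional reflected OU limit follows as in \cite{Braverman2020}. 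At $y>0$ this limit law has no atom, so the $\lim$ on the left side of \eqref{eq:asymptotic_optimality1} and \eqref{eq:asymptotic_optimality2} exists and equals the FSL limit. The stochastic ordering from the first step then gives
\[
\PP(X_{+i}^{\FSL,n}(\infty)>y)\le \PP(X_{+i}^{\pi,n}(\infty)>y).
\]
Taking $\liminf$ yields the claim. Conditioning on $\{\varsigma^n+\nu^n>\varepsilon\}$ is harmless because $\PP(\varsigma+\nu=\varepsilon)=0$.

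The main obstacle is the first step, the sample-path monotonicity for $i=2$. With individually distinct rates, a departure at a given level in system $\pi$ may come from a server that has no counterpart at the same level in JSQ-FSL. We would handle this by coupling departures through uniform marks: a mark $u$ on a rate-$\mu_{\max}$ clock is assigned to the $k$th fastest server busy at level $\ge i$ whenever the cumulative rate of the faster servers at that level is below $u$. We then verify that, whenever $Q_{+i}^{\FSL}=Q_{+i}^{\pi}$, a departure from level $\ge i$ in system $\pi$ forces one in JSQ-FSL. This holds because the fastest-serves-the-longest allocation majorizes the rates applied at each level. A secondary difficulty is identifying the JSQ-FSL diffusion limit when $F$ has no atom at $\mu_{\max}$, which requires showing that the rate-weighted mass of the servers at level at least two converges to $\mu_{\max}$ times their number.
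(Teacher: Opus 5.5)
Your architecture matches the paper's proof. It uses a sample-path lower bound by JSQ-FSL and identifies the stationary limits of JFSQ and JSQ-FSL via Corollary~\ref{cor:interchange_limits}, Proposition~\ref{prop:diffusion_limit_FSL_SSL} and uniqueness of the stationary law of the two-dimensional reflected diffusion. It then concludes using the absence of atoms at $y>0$. Comparing with JSQ-FSL at the same buffer $b$, instead of passing to $b=2$ via Lemma~\ref{lem:jsq_ssl_monotonicity}, also works. The reason is that $\tilde{Q}^{\FSL,n}_{b,+3}(\infty)\le_{st}Q^{\JFSQ,n}_{+3}(\infty)\le b\,Q^{\JFSQ,n}_{3}(\infty)$, so Theorem~\ref{thm:stationary_bounds} makes levels $i\ge 3$ negligible; the case $b=1$ still needs the one-dimensional limit, as in the paper.

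The gap is in Step~1, whose sketched verification is wrong in two places.
\begin{itemize}
\item An arrival does not act identically on both sides when $i=2$. If every JSQ-FSL buffer is nonempty while $\pi$ has an idle server, the arrival raises $Q_{+2}^{\FSL}$ but not $Q_{+2}^{\pi}$.
\item The claim that $Q_{+i}^{\FSL}=Q_{+i}^{\pi}$ forces a JSQ-FSL departure from level $\ge i$ whenever $\pi$ has one is false. Take $n=2$, $\mu_1^n<\mu_2^n$, queue lengths $(1,1)$ in $\pi$ and buffer lengths $(2,0)$ in JSQ-FSL. Both have $Q_{+1}=2$, yet the level-$\ge 1$ departure rate is $\mu_2^n$ in JSQ-FSL against $\mu_1^n+\mu_2^n$ in $\pi$.
\end{itemize}
The majorization you invoke compares the fastest $\tilde{Q}_i$ rates with the rates of the $Q_i$ busy servers of $\pi$. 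It only helps if $\tilde{Q}_i\ge Q_i$, and this follows from equality at level $i$ only together with $Q_{+(i+1)}^{\FSL}\le Q_{+(i+1)}^{\pi}$. In the example, it is level $2$ that is already violated.

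So the induction cannot be confined to $i=1,2$. It must run over all levels $i\le b$ at once, and since one event can break several levels simultaneously, it has to be organized as in the proof of Proposition~\ref{prop:upper_bound_system}:
\begin{enumerate}
\item Take the largest level $i^*$ violated before $T$.
\item Use the intact ordering at level $i^*+1$ to get $\tilde{Q}_{i^*}\ge Q_{i^*}$, which rules out a departure as the violating event.
\item Conclude the event is an arrival sent to level $\ge i^*$ in JSQ-FSL but not in $\pi$. This forces a violation at level $i^*-1$ strictly earlier.
\item Descend to level $1$, where the common arrival stream gives a contradiction.
\end{enumerate}

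The coupling must also make ``departure from level $\ge i$'' monotone in the level-$\ge i$ service rate simultaneously for every $i$. Clocks tied to server identities, or a separate rank assignment at each level, do not achieve this, because JSQ-FSL relocates servers and one potential departure must count at every level up to its queue length. A single clock of rate $\sum_k\mu_k^n$ with one uniform mark compared against the nested rate fractions, as in \eqref{eq:u_coupling_JSQ_SSL}, does achieve it.

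Since Proposition~\ref{prop:upper_bound_system} already gives $\tilde{Q}^{\FSL,n}_{b,+i}(t)\le_{st}Q^{\pi,n}_{+i}(t)$ for all $i$, you can simply invoke it. With that, the rest of your argument goes through as in the paper.
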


Theorem~\ref{thm:asymptotical_optimality} complements the results of \cite{LiuYing2025} in the Halfin--Whitt regime. Liu and Ying~\cite{LiuYing2025} showed that JFSQ is asymptotically optimal with respect to the probability of waiting, which is implied by \eqref{eq:asymptotic_optimality1}. In addition, Theorem~\ref{thm:asymptotical_optimality} shows that JFSQ asymptotically minimizes the steady-state number of jobs in the system and the steady-state number of jobs waiting in the queue in the sense of stochastic ordering.

\begin{figure}[!ht]
    \centering
    \begin{subfigure}{\linewidth}
        \centering
        \includegraphics[width=\linewidth]{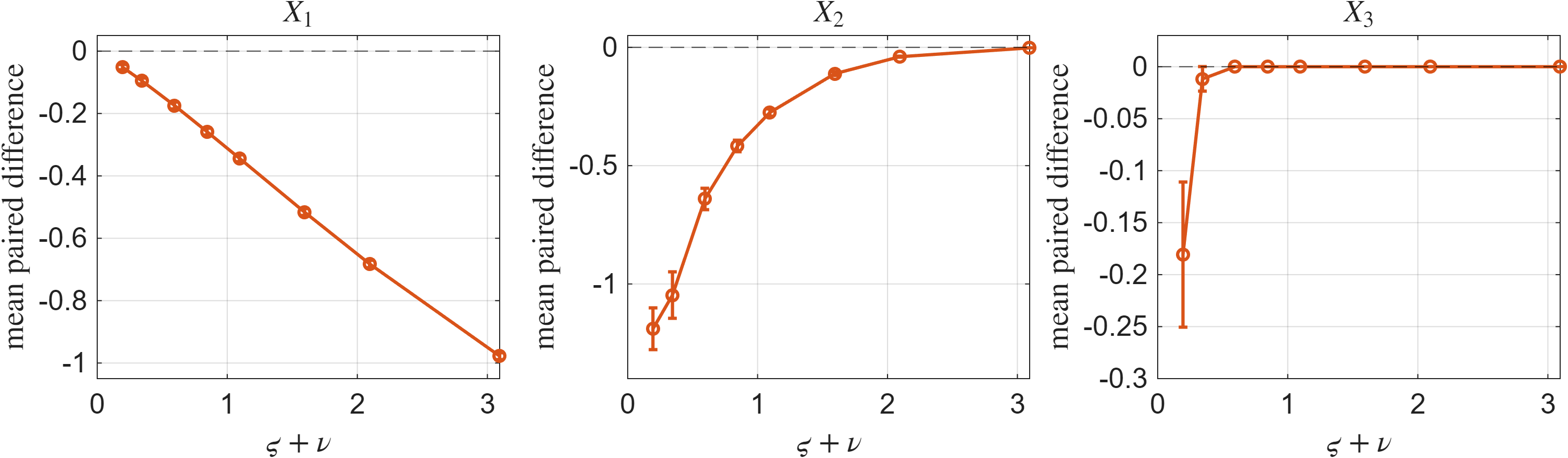}
        \caption{$\EE[|X_i^{\mathrm{JFSQ}}(\infty)|]-\EE[|X_i^{\mathrm{RR}}(\infty)|]$ for $i=1,2,3$.}
    \end{subfigure}

    \vspace{0.8em}

    \begin{subfigure}{\linewidth}
        \centering
        \includegraphics[width=0.72\linewidth]{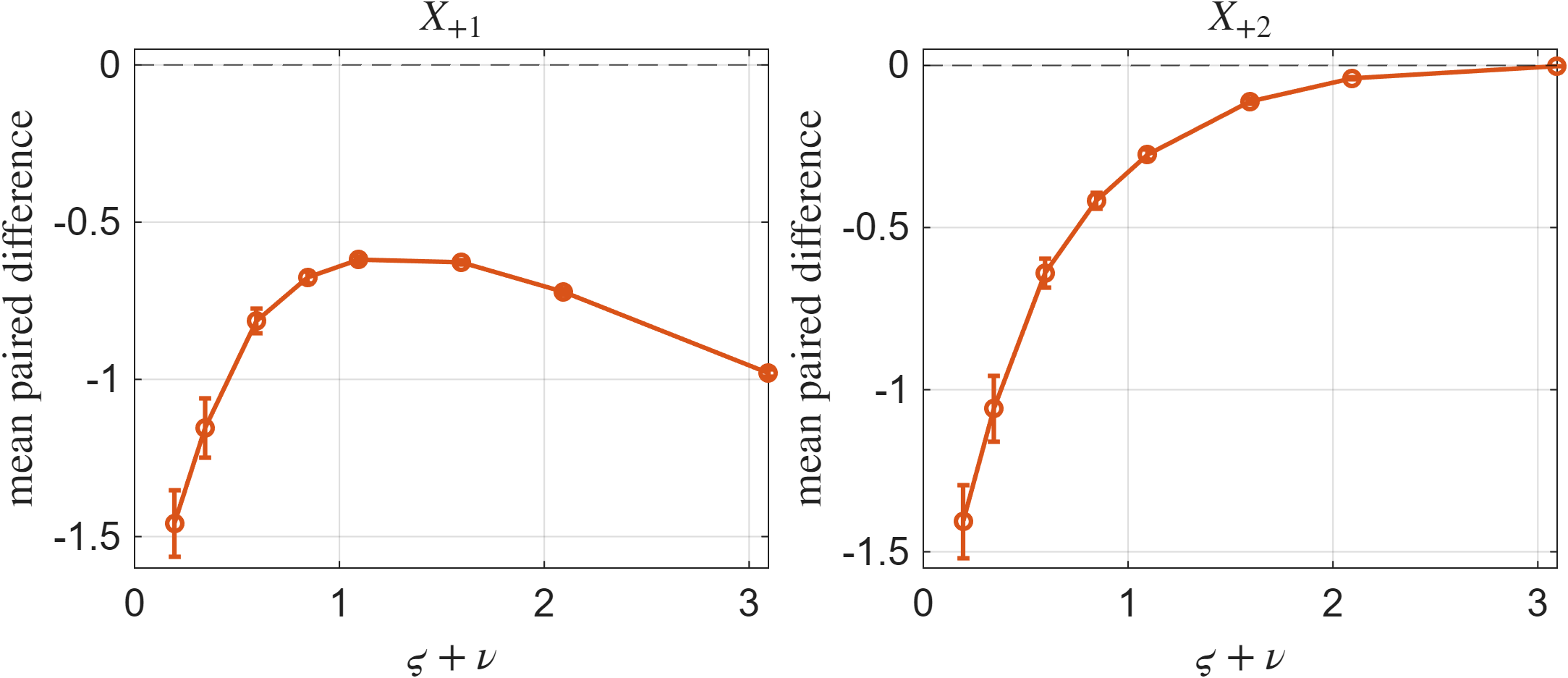}
        \caption{$\EE[|X_{+i}^{\mathrm{JFSQ}}(\infty)|]-\EE[|X_{+i}^{\mathrm{RR}}(\infty)|]$ for $i=1,2$.}
    \end{subfigure}

    \caption{Comparison of JFSQ and RR policies.}
    \label{fig:exact_JFSQ_minus_RR}
\end{figure}

Figure~\ref{fig:exact_JFSQ_minus_RR} shows the paired differences between the JFSQ and RR policies, estimated using simulation with common random variates to approximate steady-state behavior. Across all values of the excess capacity parameter $\varsigma+\nu$, the JFSQ policy yields a larger average number of idle servers, smaller average values of $X_2$ and $X_3$, a smaller average system size ($X_{+1}$), and a smaller average total queue length ($X_{+2}$) than the RR policy. The advantage of JFSQ in terms of the number of idle servers increases as the excess capacity increases, whereas its advantage in terms of $X_2$, $X_3$, and the average total queue length decreases. The behavior of $X_{+1}$ is slightly different. Since $X_{+1}=X_1+X_{+2}$, the paired difference in $X_{+1}$ reflects two competing effects: differences in idleness and differences in waiting. For small values of $\varsigma+\nu$, the waiting component dominates, whereas for larger values of $\varsigma+\nu$, $X_{+2}$ becomes negligible and the idleness component becomes more pronounced. This explains the non-monotone shape of the paired-difference curve for $X_{+1}$.

\section{Auxiliary Systems for Performance Bounds}
\label{sec:auxiliary_systems}
In this section, we discuss two auxiliary systems to provide upper and lower bounds for performance measures of the original load-balancing system. The first auxiliary system, which we refer to as the JSQ-fastest-serves-the-longest (JSQ-FSL), is introduced in \cite{bhambay2025asymptotic} to provide lower bounds on the system size and the total number of jobs in the queue in a JSQ system with pool-level heterogeneity under any tie-breaking policy. 

The JSQ-FSL system corresponding to the $n$th load-balancing system receives external arrivals according to the same Poisson process, $A^n(t)$, as the original system and consists of the same $n$ servers where the $k$th server serves with rate $\mu_k^n$. As in the original system, the arriving jobs are routed to one of the shortest buffers in the system upon arrival according to a pre-specified tie-breaking rule. The main difference between the JSQ-FSL system and the original system is that, in the JSQ-FSL system, servers and buffers are decoupled. After each arrival or departure, servers are allowed to move flexibly between buffers in a preemptive manner so that faster servers always serve longer queues. Specifically, if $\mu_k^n < \mu_{k'}^n$, then the length of the buffer served by server $k$ is \emph{less than or equal} to the length of the buffer served by server $k'$ for all $t \ge 0$. Hence, the behavior of the JSQ-FSL system does not depend on the tie-breaking rule for routing arrivals. Unlike \cite{bhambay2025asymptotic}, which restricts the buffer capacity to 2, we allow arbitrary buffer capacities. Throughout this section, each buffer in both the original JSQ system and the corresponding JSQ-FSL system has capacity $1 \leq b \leq \infty$. In a similar fashion, we also define the JSQ-slowest-serves-the-longest (JSQ-SSL) system, where after each arrival and departure, the servers arrange themselves so that slower servers always serve longer buffers, i.e., if $\mu_k^n < \mu_{k'}^n$, then the length of the buffer served by server $k$ is \emph{greater than or equal} to the length of the buffer served by server $k'$ for all $t \ge 0$.

Let $(\tilde{\mf{Q}}_b^{\mathrm{FSL},n}(t) : t \ge 0)$ and $(\tilde{\mf{Q}}_b^{\mathrm{SSL},n}(t) : t \ge 0)$ represent the state of the $n$th JSQ-FSL and JSQ-SSL systems with buffer capacity $b$, respectively, where
$\tilde{\mf{Q}}_b^{\mathrm{FSL},n}(t) = (\tilde{Q}_{b,i}^{\mathrm{FSL},n}(t) : i \in \mathbb{N})$,
and $\tilde{Q}_{b,i}^{\mathrm{FSL},n}(t)$ denotes the number of servers that serve buffers with $i$ or more jobs at time $t$. We assume that the JSQ-FSL system starts from the same initial condition as the original system, i.e., $\tilde{\mf{Q}}_b^{\FSL,n}(0) = \mf{Q}^{\pi, n}(0)$. The processes $\tilde{\mf{Q}}_b^{\mathrm{SSL},n}(t)$ and $\tilde{Q}_{b,i}^{\mathrm{SSL},n}(t)$ are defined analogously. As in \eqref{eq:cumulative_Q}, we also introduce 
\[
\tilde{Q}_{b,+i}^{\mathrm{FSL},n}(t) := \sum_{j=i}^\infty \tilde{Q}_{b,j}^{\mathrm{FSL},n}(t), \mbox{ and }
\tilde{Q}_{b,+i}^{\mathrm{SSL},n}(t) := \sum_{j=i}^\infty \tilde{Q}_{b,j}^{\mathrm{SSL},n}(t),
\]
to refer to the number of jobs that are in positions $i$ or higher in their buffers, i.e., for all $i\geq 1$. The next proposition establishes that the JSQ-FSL and JSQ-SSL systems act as stochastic lower and upper bounding systems, respectively, for the original JSQ system.

\begin{proposition}
\label{prop:upper_bound_system}
Under any tie-breaking rule $\pi$ for the JSQ system, where each buffer has capacity $1\leq b\leq \infty$, we have 
\[
\tilde{Q}_{b,+i}^{\FSL,n}(t)\leq_{st}Q_{+i}^{\pi,n}(t) \leq_{st} \tilde{Q}_{b,+i}^{\SSL,n}(t) \mbox{ for all $t \geq 0$ and $i \in \NN$}.
\]
\end{proposition}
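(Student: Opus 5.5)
We plan to prove the two inequalities with a single coupling on a common probability space, establishing them pathwise and showing that the ordering is preserved at every event epoch. All three systems (the original JSQ system under $\pi$, JSQ-FSL and JSQ-SSL) share the arrival process $A^n$, and we drive them with common service clocks. Take $n$ independent Poisson processes $N_1,\dots,N_n$ with rates $\mu_{(1)}^n\le\dots\le\mu_{(n)}^n$, attached to the \emph{ranks} of the servers. In each system, a potential departure from the $r$th slowest server is triggered by the points of a thinning of these processes. We use the standard uniformization: let $N$ be a Poisson process of rate $n\mu_{\max}$ with i.i.d.\ marks $U\sim\mathrm{Unif}[0,n\mu_{\max}]$. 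A mark falling in the interval of length $\mu^n_{(r)}$ assigned to rank $r$ triggers a service completion at the rank-$r$ server, provided that server is busy. The natural state to compare is the sorted vector of queue lengths. Because all three systems route arrivals to a shortest queue, an arrival always raises the smallest entry, so arrivals act identically on the aggregate quantities $Q_{+i}$.

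The key device is to write each $Q_{+i}$ through the tail sums of the sorted queue-length vector and to prove a majorization-type invariant. For FSL versus $\pi$, with the state descriptor $\mf Q$, we claim pathwise
\[
\tilde Q^{\FSL,n}_{b,+i}(t)\le Q^{\pi,n}_{+i}(t)\quad\text{for all }i,t.
\]
To keep this inductive, we expect to strengthen it to a statement about partial sums over the level sets, for instance $\sum_{j\ge i}\min(Q_j,m)$ type quantities, or equivalently to the condition that the vector of queue lengths sorted by server speed is dominated in an appropriate weak-submajorization order. At an arrival, both systems add a job at level $\min+1$. Using the shortest-queue rule and the buffer cap $b$, we check that $Q_{+i}$ increases by one for $i\le$ (new length), and that the invariant is preserved; this uses only that the increment goes to the lowest level, together with the induction hypothesis at lower levels. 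At a service event, we couple the $j$th departure clock so that in FSL the fastest servers sit at the longest queues. Hence, whenever the original system has a departure from level $\ge i$, the coupled FSL clock also produces a departure from level $\ge i$, at least when the totals are equal, which is the only case that could break the inequality. This requires matching the clocks through a pairing of servers in the original system with ranks in FSL, re-chosen at every event, which is legitimate by the memoryless property.

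For SSL the argument is symmetric: slow servers sit at long queues, so departures from high levels are least likely, which yields the reverse inequality. Equality of the initial states gives the base case. Since the coupling pathwise preserves the marginal laws of each system, the pathwise inequalities imply $\le_{st}$ at each fixed $t$.

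The main obstacle is constructing the level-dependent coupling of departures for general rates and for all $i$ simultaneously. In the boundary case where $\tilde Q_{+i}=Q_{+i}$, we must show that the total rate of departures reducing $\tilde Q_{+i}$ in FSL is at least the corresponding rate in the original system, and that one coupling works for all $i$ at once. I would first attempt an explicit rank-matching coupling: pair the server holding the $r$th longest queue in FSL, which is the $r$th fastest server, with the server holding the $r$th longest queue in the original system. Departures are then coupled via nested intervals, so that the faster server's departure interval contains the slower one's. The fallback is to prove the generator inequality $\mathcal{L}^{\FSL}g\le\mathcal{L}^{\pi}g$ for increasing functionals on the ordered state space and to invoke a standard Markov comparison theorem (Massey's criterion). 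Non-Markov tie-breaking would be handled by conditioning on the routing decisions.
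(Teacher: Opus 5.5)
Your overall architecture matches the paper's, and the invariant $\tilde Q^{\FSL,n}_{b,+i}(t)\le Q^{\pi,n}_{+i}(t)$ for all $i$ needs no strengthening. That architecture is: a common arrival stream, coupled service events, and this pathwise invariant (reversed for SSL), checked at the epoch where it first fails.

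The gap is the step you yourself call the main obstacle. You never construct a departure coupling that works, and the explicit one you propose fails. Pairing the $r$th longest queue in JSQ-FSL (served by the $r$th fastest server) with the $r$th longest queue in the original system, and nesting intervals pair by pair, needs the pairwise rates to be ordered. They are not.

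Take $n=2$, rates $1<2$, and both systems with queue lengths $\{2,1\}$, so $\tilde Q_{+i}=Q_{+i}$ for all $i$. Let the slow server hold the longer queue in the original system.
\begin{itemize}
\item In the second pair, the original's server (rate $2$) is faster than its FSL partner (rate $1$). So at rate $1$ the original loses a job from its length-one queue while FSL has no departure, giving $\tilde Q^{\FSL}_{+1}=3>2=Q^{\pi}_{+1}$.
\item Your rank-attached clocks $N_1,\dots,N_n$ fail on the same example: a ring of the slow server's clock lowers $Q_{+2}$ but not $\tilde Q^{\FSL}_{+2}$.
\end{itemize}

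Only an aggregate comparison is true. If $\tilde Q_i\ge Q_i$, then the sum of the $\tilde Q_i$ largest rates, which is FSL's departure rate from level $\ge i$, dominates $\sum_k\mu_k^nQ_{k,i}$. The paper exploits exactly this with a level-based quantile coupling. It uses one potential-departure Poisson process of rate $\sum_k\mu_k^n$ and one uniform $\tilde U_j$ per epoch. The epoch is a departure from level $\ge i$ in a given system if and only if $\tilde U_j$ lies below that system's normalized rate from level $\ge i$. These thresholds are nested in $i$, so a single uniform fixes the departure level in both systems for every $i$ simultaneously. The departing server is then chosen by splitting within the level, which preserves each marginal law.

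The boundary analysis also needs more than you give.
\begin{itemize}
\item Equality $\tilde Q_{+i}=Q_{+i}$ alone does not yield $\tilde Q_i\ge Q_i$. You also need the invariant at level $i+1$ just before the event, which is available at the first violation epoch. The paper organizes this through the level-wise times $\tilde\tau_i$ and the largest violated index $i^*$.
\item Arrivals do not act identically on the $Q_{+i}$, because the two systems can have different minimal queue lengths. Instead, an arrival violating the inequality at level $i\ge2$ forces $\tilde Q_{i-1}=n>Q_{i-1}$. Hence $\tilde Q_{+(i-1)}>Q_{+(i-1)}$ just before the event, contradicting the invariant at level $i-1$.
\item This is the paper's descending chain $\tilde\tau_1<\cdots<\tilde\tau_{i^*}$. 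It ends in a contradiction at $i=1$, since both systems see the same arrivals and, with equal totals, one rejects exactly when the other does.
\end{itemize}

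Finally, the Massey-criterion fallback does not bypass any of this. The original system is not Markov on an aggregated ordered state space: its rates depend on which servers hold which queues, and $\pi$ may be history dependent. Verifying such a criterion therefore amounts to building this same level-based coupling.
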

\begin{proof}
The proof is based on a coupling argument in the spirit of Proposition~3 in \cite{bhambay2025asymptotic}. However, to account for the possibility of infinite buffer capacity, the argument differs substantially and requires an induction scheme based on additional stopping times rather than a direct comparison. We prove the inequality $Q_{+i}^{\pi,n}(t) \leq_{st} \tilde{Q}_{b,+i}^{\SSL,n}(t)$ and the proof of the other inequality follows the same lines. Without loss of generality, we assume that $\mu_1^n \leq \mu_2^n \leq \cdots \leq \mu_n^n$. 

As in the JSQ system, we denote by $\tilde{A}_i^n(t)$ the number of arrivals routed to buffers with $i$ or more jobs by time $t \geq 0$, and we define $\tilde{A}_0^n(t) = A^n(t)$, so that both the JSQ and JSQ-SSL systems observe the same arrival stream. When $b<\infty$, we interpret $\tilde{A}_b^n(t)$ as the number of rejected jobs and let $\tilde{A}_i^n(t)=0$ for all $i>b$. Similarly, we denote by $\tilde{D}_i^n(t)$ the number of departures from servers serving buffers with $i$ or more jobs by time $t$. The departure rates from buffers with $i$ or more jobs at time $t$ are given by  
\[
\sum_{k=1}^n \mu_k^n Q_{k,i}^{\pi,n}(t) \text{ and }  \sum_{k=1}^{\tilde{Q}_{b,i}^{\SSL,n}(t)} \mu_k^n,
\]  
for the JSQ and the corresponding JSQ--SSL systems, respectively.  We note that
\begin{equation}
    \sum_{k=1}^{\tilde{Q}_{b,i}^{\SSL,n}(t)} \mu_k^n \leq \sum_{k=1}^{Q_i^{\pi,n}(t)} \mu_k^n \leq \sum_{k=1}^n \mu_k^n Q_{k,i}^{\pi,n}(t) \quad \text{ if } \quad \tilde{Q}_{b,i}^{\SSL,n}(t) \leq Q_i^{\pi,n}(t).
    \label{eq:slower_depart_JSQ_SSL}
\end{equation}

To couple the departures, we define a potential departure process $S^n(t)$ as a Poisson process with rate $\sum_{k=1}^n \mu_k^n$, and let $(\tilde{U}_j : j \in \NN)$ be a sequence of i.i.d.\ Uniform(0,1) random variables. A potential departure occurring at the $j$th epoch $\tilde{\theta}_j^n$ is realized as a departure from queues of size $i$ or more in the JSQ and JSQ--SSL systems if  
\begin{equation}
\tilde{U}_j \leq \frac{\sum_{k=1}^n \mu_k^n Q_{k,i}^{\pi,n}(\tilde{\theta}_j^n-)}{\sum_{k=1}^n \mu_k^n}\text{ and } \tilde{U}_j \leq \frac{\sum_{k=1}^{\tilde{Q}_{b,i}^{\SSL,n}(\tilde{\theta}_j^n-)} \mu_k^n}{\sum_{k=1}^n \mu_k^n},\label{eq:u_coupling_JSQ_SSL}
\end{equation} 
respectively. The server from which the departure occurs is then determined using a splitting argument (see, e.g.,~\eqref{eq:splitting_departures}). 

For any $i\geq 1$ and $0\leq t \leq T$, we can write 
\begin{align}
    \tilde{Q}_{b,i}^{\SSL,n}(t) &= Q_i^{n}(0) + \tilde{A}_{i-1}^n (t) - \tilde{A}_{i}^n(t) -\tilde{D}_{i}^n(t) + \tilde{D}_{i+1}^n(t) \label{eq:modified_jssq_balance}\\
    \tilde{Q}_{b,+i}^{\SSL,n}(t) &= Q_{+i}^{n}(0) + \tilde{A}_{i-1}^n (t) - \tilde{D}_{i}^n(t).  \label{eq:aggregate_modified_jssq_balance}
\end{align}

Let $\tilde{\tau}_i^n = \inf\{t: \tilde{Q}_{b,+i}^{\SSL,n}(t)<Q_{+i}^{\pi,n}(t)\}$. Using \eqref{eq:aggregate_modified_jssq_balance}, $\tilde{Q}_{b,+i}^{\SSL,n}(t)<Q_{+i}^{\pi,n}(t)$ if and only if $\tilde{A}_{i-1}^n (t) - \tilde{D}_{i}^n(t) < A_{i-1}^n (t) - D_{i}^n(t)$. Hence, if $\tilde{\tau}_i^n<\infty$, then we have 
\begin{align}
    \tilde{Q}_{b,+i}^{\SSL,n}(\tilde{\tau}_{i}^n-) &= Q_{+i}^{\pi,n}(\tilde{\tau}_{i}^n-)\label{eq:ordering_Q}\\
    \tilde{A}_{i-1}^n (\tilde{\tau}_{i}^n-) - \tilde{D}_{i}^n(\tilde{\tau}_{i}^n-) &= A_{i-1}^n (\tilde{\tau}_{i}^n-) - D_{i}^n(\tilde{\tau}_{i}^n-)\label{eq:prior_equality}\\
    \tilde{A}_{i-1}^n (t) - \tilde{D}_{i}^n(t) &\geq  A_{i-1}^n (t) - D_{i}^n(t) \mbox{ for all }0\leq t<\tilde{\tau}_{i}^n. \label{eq:prior_inequality}
\end{align}

Assume that there exists a $T$ and $i$ such that $\tilde{\tau}_i^n < T$ and define $i^{*} =\max\{i: \tilde{\tau}_i^n<T\}$. As the system starts with a finite number of jobs in the system and $A^n(T)<\infty$, it is clear that $i^{*} <\infty$. We need to have 
\begin{equation*}
    \tilde{Q}_{b,i^{*}}^{\SSL,n}(\tilde{\tau}_{i^{*}}^n-) \leq Q_{i^{*}}^{\pi,n}(\tilde{\tau}_{i^{*}}^n-),\label{eq:Q_i_ordering}
\end{equation*} 
as otherwise \eqref{eq:ordering_Q} implies $\tilde{Q}_{b,+(i^{*}+1)}^{\SSL,n}(\tilde{\tau}_{i^{*}}^n-) < Q_{+(i^{*}+1)}^{\pi,n}(\tilde{\tau}_{i^{*}}^n-)$, contradicting the maximality of $i^{*}$. Hence, \eqref{eq:slower_depart_JSQ_SSL} and \eqref{eq:u_coupling_JSQ_SSL} imply that the event occurring at $\tilde{\tau}_{i^{*}}^n$ must be an arrival that is routed to servers with $i^{*}$ or more jobs in the JSQ system, but is routed to servers with fewer than $i^{*}$ jobs in the JSQ-SSL system. Together with \eqref{eq:modified_jssq_balance} and \eqref{eq:prior_equality}, this implies
\begin{align}
\tilde{Q}_{b,i^{*}-1}^{\SSL,n}(\tilde{\tau}_{i^{*}}^n-) &< Q_{i^{*}-1}^{\pi,n}(\tilde{\tau}_{i^{*}}^n-) = n,\\
\tilde{A}_{i^{*}-2}^n (\tilde{\tau}_{i^{*}}^n-) - \tilde{D}_{i^{*}-1}^n(\tilde{\tau}_{i^{*}}^n-) &< A_{i^{*}-2}^n (\tilde{\tau}_{i^{*}}^n-) - D_{i^{*}-1}^n(\tilde{\tau}_{i^{*}}^n-),
\end{align}
which implies that $\tilde{\tau}_{i^{*}-1}^n  < \tilde{\tau}_{i^{*}}^n$. Repeating the same argument with $T = \tilde{\tau}_{i^{*}}^n$, we obtain
\[
0 < \tilde{\tau}_{1}^n < \tilde{\tau}_{2}^n < \cdots < \tilde{\tau}_{i^{*}}^n,
\]
and the events causing the transitions at these epochs must be arrivals. At $\tilde{\tau}_{1}^n$, this implies that the arrival must be observed in the original system but not in the modified system. Since the arrival process is the same for both systems, this leads to a contradiction.
\end{proof}

We now derive the diffusion limits for the JSQ-FSL and JSQ-SSL systems with individual-server heterogeneity in Proposition~\ref{prop:diffusion_limit_FSL_SSL} whose proof is given in Appendix~\ref{sec:proof_JSQSSL}. Similar to \eqref{eq:scaled_system}, we define the scaled versions of these systems as $\tilde{\mf{X}}_b^{\pi,n}(t) = (\tilde{X}_{b,i}^{\pi,n}(t):i\in \NN)$ for $\pi\in \{\FSL, \SSL\}$, where
\[
\tilde{X}_{b,1}^{\pi,n}(t) = n^{-1/2}(\tilde{Q}_{b,1}^{\pi,n}(t)-n), \tilde{X}_{b,i}^{\pi,n}(t) = n^{-1/2}\tilde{Q}_{b,i}^{\pi,n}(t) \mbox{ for all }i\geq 2, n\geq 1 \mbox{ and } t\geq 0. 
\]
\begin{proposition}\label{prop:diffusion_limit_FSL_SSL}
    For $\pi \in \{ \FSL, \SSL\}$, $\tilde{\mf{X}}_b^{\pi,n} \Rightarrow \tilde{\mf{X}}^\pi$, where $\tilde{\mf{X}}^\pi$ is the unique solution of
        \begin{align}
            \label{eq:JSQ_SSL_dif1} X_1^\pi(t) & = X_1(0) - (\varsigma + \nu)t + \sqrt{2} W(t) - \mu_{1}^{\pi} \int_0^t X_1^\pi(s) \, ds + \mu_{2}^\pi \int_0^t X_2^\pi(s) \, ds - U_1^{\pi}(t), \\
            \label{eq:JSQ_SSL_dif2}X_2^\pi(t) & = X_2(0) + U_1^{\pi}(t) - \mu_{2}^\pi \int_0^t X_2^\pi(s) ds + \mu_{2}^\pi \int_0^t X_3^\pi(s) \, ds, \\
        \label{eq:JSQ_SSL_dif3}X_i^\pi(t) & = X_i(0) - \mu_{2}^\pi \int_0^t X_i^\pi(s) ds + \mu_{2}^\pi \int_0^t X_{i+1}^\pi(s) \, ds, \quad \text{for all } 3\leq i\leq b,
    \end{align}
    where $\mu_1^{\FSL}=\mu_2^{\SSL} = \mu_{\min}$, $\mu_2^{\FSL}=\mu_1^{\SSL} = \mu_{\max}$ and $U_1^{\pi}(t)\in \DD_{\RR_+}[0,\infty)$ is a non-decreasing function and satisfies
    \begin{align*}
        \int_0^\infty \II(X_1^{\pi}(s)<0)dU_1^{\pi}(s) = 0.
        \end{align*}
\end{proposition}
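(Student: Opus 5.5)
We would view the JSQ-FSL and JSQ-SSL systems as special cases of the general framework rather than prove this from scratch. Both are JSQ-type systems with the same Poisson arrival stream, except that servers rearrange themselves after each event. The aim is to reuse the structure of the proof of Theorem~\ref{thm:process_level_convergence}. The key observation is that in the JSQ-SSL system the occupied buffers are always served by the slowest servers. Hence, at any time, the set of servers serving buffers of length at least $i$ is the set of the $\tilde Q_{b,i}^{\SSL,n}(t)$ slowest servers, and the corresponding measure-valued descriptor $\tilde\xi_i^{\SSL,n}(t)$ is a deterministic function of the scalar $\tilde Q_{b,i}^{\SSL,n}(t)$ and the sorted rate vector. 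The same holds for JSQ-FSL with the fastest servers.

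\textbf{Step 1: departure rates.} We first derive the semimartingale decomposition of \eqref{eq:modified_jssq_balance} after scaling. Arrivals give $n^{-1/2}(A^n(t)-n\lambda^n t)$. Departures from level $i$ occur at rate $\sum_{k\le \tilde Q_{b,i}}\mu_k^n$. For $i\ge2$ this is $O(n^{1/2})$ servers out of $n$, all lying in the extreme $O(n^{-1/2})$-quantile of $F_n$. Since $F$ has support $\SSS=[\mu_{\min},\mu_{\max}]$ and $F_n\Rightarrow F$, the average rate of these servers converges to $\mu_{\min}$ for SSL and to $\mu_{\max}$ for FSL. This yields the drift $\mu_2^\pi\int_0^t X_i^\pi\,ds$.

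For level $1$ we write the departure rate as
\[
\sum_{k=1}^n\mu_k^n-\sum_{k>\tilde Q_{b,1}}\mu_k^n .
\]
The first sum gives $n\bar\mu^n=n+n^{1/2}\nu^n$, which produces the $-\nu t$ term. Together with $n\lambda^n=n-n^{1/2}\varsigma^n$, this gives $-(\varsigma+\nu)t$. The idle servers are the fastest ones under SSL and the slowest under FSL, so the second sum contributes $-\mu_1^\pi\int_0^t X_1^\pi\,ds$. Here we need a uniform-in-$t$ bound $|X_1|\le K$ on compacts, which follows from the stochastic boundedness of Lemma~\ref{lem:stochastic_boundedness}. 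The proof of that lemma goes through for these systems, since by Proposition~\ref{prop:upper_bound_system} they bracket any JSQ system. We then have to show uniformly that
\[
n^{-1/2}\sum_{k\text{ extreme},\,|\cdot|\le Kn^{1/2}}(\mu_k^n-\mu_{\min})\to0 ,
\]
and similarly for $\mu_{\max}$. This holds because the extreme empirical quantiles converge to the endpoints of the support. We expect this to be the main technical point: the empirical quantile at level $Kn^{-1/2}$ must tend to $\mu_{\min}$. That requires only that $F$ put positive mass in every neighbourhood of $\mu_{\min}$, which is guaranteed by the support assumption together with weak convergence.

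\textbf{Step 2: limit equations.} The martingale parts converge to $\sqrt2W$ by the martingale FCLT, with total quadratic variation rate $n\lambda^n+n\bar\mu^n$ scaled by $1/n$, which tends to $2$. The routing term gives $U_1^\pi$ via $\hat A_1^n$, exactly as in \eqref{eq:aggregate_balance}. Tightness follows from stochastic boundedness and the Lipschitz drifts. Any limit point then satisfies \eqref{eq:JSQ_SSL_dif1}--\eqref{eq:JSQ_SSL_dif3} with the complementarity condition, obtained by the standard argument that $\hat A_1^n$ increases only when $X_1^{n}=0$.

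\textbf{Step 3: uniqueness and convergence.} This system is the finite-pool / JFSQ-type system of Corollary~\ref{cor:SA-JSQ}, with constants $(\mu_1^\pi,\mu_2^\pi)$. Uniqueness therefore follows from the Skorokhod map combined with the contraction argument in the weighted norm $|\cdot|_{\rho,T}^*$ used for Theorem~\ref{thm:process_level_convergence}. Equivalently, one can apply Theorem~\ref{thm:process_level_convergence} directly with $\eta=\delta_{\mu_1^\pi}$ and $\alpha_1=\iota\times\delta_{\mu_2^\pi}$. For finite $b$, truncate at level $b$ (the $\tilde A_b^n$ term is rejection and does not appear for levels $\le b$). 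Uniqueness together with tightness then gives $\tilde{\mf X}_b^{\pi,n}\Rightarrow\tilde{\mf X}^\pi$.
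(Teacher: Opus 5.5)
Your argument is correct in substance, and its core estimates are the paper's: the same split of the departure rates, the same extreme-quantile bound, and the martingale FCLT. The paper controls the errors by $n^{-1/2}T\sum_{j\le K_\rho n^{1/2}}(\mu_j^n-\mu_{\min})$ and its $\mu_{\max}$ analogue, using exactly that $F$ charges every neighbourhood of the endpoints of $\SSS$.

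The difference is in the closing step. The paper collects all error terms into inputs $\tilde Y_i^n$ and shows $\tilde{\mf Y}^n\Rightarrow(-(\varsigma+\nu)t+\sqrt2W,0,0,\ldots)$. It then concludes by continuous mapping through the solution maps of Lemma~\ref{lem:two-dim-sys} (with $B=\infty$ and regulator $u_1=n^{-1/2}\tilde A_1^n$) and Lemma~\ref{lem:higher_queue}. The regulator and the complementarity condition therefore come out of the map.

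Your compactness-plus-uniqueness route needs only uniqueness of the limit equations. In exchange, you must prove $C$-tightness of $n^{-1/2}\tilde A_1^n$ and pass complementarity to the limit. ``Lipschitz drifts'' does not give this tightness, because the regulator can grow at rate of order $n^{1/2}$ while $\tilde X_1^n=0$. You need the pre-limit identity $n^{-1/2}\tilde A_1^n=\psi(V_1^n)$ with $V_1^n=\tilde X_1^n+n^{-1/2}\tilde A_1^n$, which is exactly the structure the continuous-mapping argument exploits.

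Some of your justifications need repair.

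\textbf{Stochastic boundedness.} Boundedness of $\tilde{\mf X}_b^{\pi,n}$ does not follow from Proposition~\ref{prop:upper_bound_system}. That result compares the cumulative counts $Q_{+i}$, and the JSQ system bounds JSQ--SSL only from below. So it yields no upper bound for JSQ--SSL and no control of the idle servers in either auxiliary system. You must rerun the proof of Lemma~\ref{lem:stochastic_boundedness} directly; the paper also only asserts this. The idle-server domination by a Markovian queue uses only two facts: arrivals join an empty buffer whenever one exists, and the total departure rate is at most $\sum_k\mu_k^n$. Then $\tilde X_2^n(t)\le\tilde X_2^n(0)+n^{-1/2}\tilde A_1^n(t)$ is controlled through the level-one balance equation and Gronwall's inequality.

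\textbf{Applying Theorem~\ref{thm:process_level_convergence}.} It cannot be applied ``directly''. Its pre-limit is a dedicated-queue JSQ system, while in JSQ--FSL/SSL the servers are reassigned after every event. Consequently \eqref{eq:measure_balance} fails: a completion by server $j$ removes the mass of the boundary server in the sorted order, not $\delta_{\mu_j^n}$. Only its uniqueness lemmas transfer to the limit system.

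\textbf{Vanishing routing terms.} State explicitly that $n^{-1/2}\tilde A_i^n\to0$ for all $i\ge2$. This includes the rejection count $\tilde A_b^n$, which does enter the level-$b$ equation. Such arrivals require all $n$ buffers to hold at least two jobs, and stochastic boundedness of $\tilde X_2^n$ rules this out on $[0,T]$ for large $n$.
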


We conclude this section with a monotonicity property of the JSQ--SSL system with respect to the buffer capacity. The proof is deferred to Appendix~\ref{app:jsq_ssl_proofs}.

\begin{lemma}\label{lem:jsq_ssl_monotonicity}
For any $1\leq b_1<b_2\leq \infty$, all $i\in\NN$ and $\pi\in \{FSL, SSL\}$,
\[
\tilde{Q}_{b_1,i}^{\pi,n}(\infty)\leq_{st} \tilde{Q}_{b_2,i}^{\pi,n}(\infty).
\]
\end{lemma}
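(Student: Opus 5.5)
We will prove Lemma~\ref{lem:jsq_ssl_monotonicity} by a sample-path coupling of the two systems with capacities $b_1<b_2$, followed by passage to stationarity. The key structural fact is that in both JSQ-FSL and JSQ-SSL the state is fully described by the occupancy vector $(\tilde{Q}_{b,i}^{\pi,n}:i\in\NN)$. Once the occupancy counts are known, the assignment of servers to buffers is forced by the ordering rule, since faster (resp.\ slower) servers sit at longer buffers. Hence each system is a Markov chain on occupancy vectors, with arrival rate $n\lambda^n$ to level $\min\{i:\tilde{Q}_{b,i}<n\}$ (rejection if that level exceeds $b$). The departure rate from level $i$ is $\sum_{k=n-\tilde{Q}_{b,i}+1}^{n}\mu_k^n$ for FSL and $\sum_{k=1}^{\tilde{Q}_{b,i}}\mu_k^n$ for SSL, with the rates sorted increasingly. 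We drive both systems by the same arrival process $A^n$ and the same potential-departure process $S^n$ with uniforms $\tilde{U}_j$. A potential departure is realized at level $\ge i$ if $\tilde U_j$ falls below the normalized departure rate of buffers with $i$ or more jobs, exactly as in \eqref{eq:u_coupling_JSQ_SSL}.

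We will show that, started from the same initial state, $\tilde{Q}_{b_1,+i}^{\pi,n}(t)\le \tilde{Q}_{b_2,+i}^{\pi,n}(t)$ holds for all $t$ and $i$. The argument mirrors the stopping-time induction in the proof of Proposition~\ref{prop:upper_bound_system}. Let $\tau_i$ be the first time the inequality fails at level $i$, and take $i^*$ maximal among the levels failing before a given $T$. Just before $\tau_{i^*}$ the cumulative counts at level $i^*$ coincide. Maximality then forces $\tilde Q_{b_1,i^*}\ge \tilde Q_{b_2,i^*}$ to fail in the wrong direction, namely we get $\tilde Q_{b_1,i^*}\le\tilde Q_{b_2,i^*}$. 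Under the coupling the departure rate from level $\ge i^*$ is monotone in $\tilde Q_{\cdot,i^*}$ for both FSL and SSL, because it is a partial sum of nonnegative rates taken from a fixed end. So the offending event cannot be a departure, and it must be an arrival routed to level $\ge i^*$ in the $b_1$ system but not in the $b_2$ system. For $i^*\le b_1$ this forces a failure at level $i^*-1$ at an earlier time, and descending gives a contradiction at level $1$ as in Proposition~\ref{prop:upper_bound_system}. Rejections in the $b_1$ system only remove jobs from the smaller system, so they help the inequality, and levels $i>b_1$ are trivially zero in the $b_1$ system.

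The main obstacle is that the lemma asks for ordering of $\tilde Q_{b,i}$ itself, not of the tail sums $\tilde Q_{b,+i}$, and the coupling above naturally yields only the cumulative ordering. We will first try to prove directly the stronger componentwise ordering $\tilde Q_{b_1,i}(t)\le\tilde Q_{b_2,i}(t)$ for every $i$. The argument is that in these systems occupancy vectors are nonincreasing in $i$. Arrivals fill the lowest non-full level in both systems, so an arrival cannot increase $\tilde Q_{b_1,i}$ at a level $i$ where the two systems tie unless it also increases $\tilde Q_{b_2,i}$. Departures at a tied level $i$ are coupled via the same uniform. Since the departure rates from level $\ge i+1$ are ordered in the correct direction, the coupled departure at level $i$ either occurs in both systems, or occurs only in the $b_2$ system from a higher level, which leaves $\tilde Q_{b_2,i}$ unchanged. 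We will check this case analysis carefully, in particular whether a departure from level exactly $i$ can occur in $b_1$ but not in $b_2$ at a tie, which needs the refined uniform thresholds for levels $i$ and $i+1$ simultaneously. If this fails, we fall back on using the cumulative ordering together with the coupling of the splitting uniforms.

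Finally, for $b_1<\infty$ the chains are finite and irreducible on reachable states, hence positive recurrent. For $b_2=\infty$ we invoke Lemma~\ref{lem:positive_recurrent_finite}-type reasoning under excess capacity. Starting both systems empty, the pathwise ordering gives $\tilde Q_{b_1,i}(t)\le_{st}\tilde Q_{b_2,i}(t)$ for all $t$. Letting $t\to\infty$ and using convergence to stationarity yields $\tilde Q_{b_1,i}^{\pi,n}(\infty)\le_{st}\tilde Q_{b_2,i}^{\pi,n}(\infty)$.
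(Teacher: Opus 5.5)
Your core argument is the paper's. You couple the two capacities through the common arrival stream and the threshold rule \eqref{eq:u_coupling_JSQ_SSL}, show that the componentwise ordering $\tilde Q_{b_1,i}\le\tilde Q_{b_2,i}$ cannot break at a first violation, and let $t\to\infty$. The ordering survives because, at a tied level, an arrival cannot raise only the $b_1$ count and a departure cannot lower only the $b_2$ count.

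The one genuine difference is the passage to stationarity. You start both systems empty, so the ordering holds from time $0$ and you only need each marginal chain to converge: the $b_1$ chain is finite, and the $b_2=\infty$ chain follows from the usual Lyapunov drift. The paper instead proves positive recurrence of the coupled pair with $V=\sum_i i(q_{b_1,i}+q_{b_2,i})$, and uses that the first time $\tilde\tau$ at which the ordering holds is a.s.\ finite. Since the stationary laws do not depend on the initial state, your version suffices and is simpler. The paper's version handles arbitrary initial configurations, at the price of analysing the coupled chain.

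You should, however, drop the hedge and the detour, because two auxiliary pieces are wrong as written.
\begin{itemize}
\item \textbf{Tail-sum paragraph.} Equality of $\tilde Q_{\cdot,+i^*}$ together with maximality gives $\tilde Q_{b_1,i^*}\ge\tilde Q_{b_2,i^*}$, not $\le$. It is this direction that rules out a departure at level $\ge i^*$ in the $b_2$ system alone; the inequality you wrote would permit exactly that.
\item \textbf{Fallback.} Ordered tail sums do not order coordinates. For example, $(n,0,0,\dots)$ and $(n-1,1,0,\dots)$ have ordered tail sums but reversed first coordinates.
\end{itemize}

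Nothing remains to be checked in the direct case analysis either. The case you flag, a departure from exactly level $i$ in the $b_1$ system only, just lowers $\tilde Q_{b_1,i}$. The case to exclude is the reverse, and the thresholds exclude it. Let $R=\sum_k\mu_k^n$, and let $r^{(m)}_\ell$ be the total rate of the servers at buffers with at least $\ell$ jobs in system $m$; this is a partial sum from a fixed end, hence strictly increasing in the count. A tie at $i$ under the ordering gives $r^{(1)}_i=r^{(2)}_i$ and $r^{(1)}_{i+1}\le r^{(2)}_{i+1}$. Therefore $r^{(2)}_{i+1}<R\tilde U_j\le r^{(2)}_i$ forces $r^{(1)}_{i+1}<R\tilde U_j\le r^{(1)}_i$, so a departure from exactly level $i$ in the $b_2$ system is also one in the $b_1$ system. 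Levels with strict inequality need no argument, since each event changes every coordinate by at most one, in the same direction in both systems.
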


\section{Process-Level Limits for Heterogeneous JSQ Systems}
\label{sec:jsq-process-level-limits}

The presence of measure-valued integral equations introduces substantial technical challenges in establishing process-level convergence. At a high level, our proof follows the general structure of the approach for homogeneous JSQ systems in~\cite{eschenfeldt2018join}. However, the details differ significantly and require new ideas. We first summarize the approach in~\cite{eschenfeldt2018join} and then contrast it with ours.

The proof for homogeneous JSQ systems begins by considering a truncated system in which each server queue has a buffer limit of two jobs, with a reflection occurring due to rejected arrivals when the system reaches capacity. After centering the arrival and departure processes to obtain martingales, one shows that the truncated system equations possess a unique solution. Hence, the system equations can be viewed as a mapping that takes these martingales as inputs, and this mapping is continuous in the space of right-continuous functions. By the martingale central limit theorem, the martingales converge to Brownian motions, and the continuous mapping theorem then yields convergence of the system equations to the desired diffusion limit for the truncated system. Establishing this step requires proving convergence of the optional quadratic variations of martingales, which can be done using the fluid limit of the system. This, in turn, implies stochastic boundedness of the scaled processes. Finally, one shows that the limits of the original and truncated systems coincide on any finite time interval $[0,T]$ as a consequence of this boundedness.

For JSQ systems with individual-server level heterogeneity, considering an analog of the truncated system in~\cite{eschenfeldt2018join} requires including a reflection term in \eqref{eq:main_diffusion_xi2} which is particularly challenging as this equation concerns measure-valued processes. The stochastic boundedness in Lemma~\ref{lem:stochastic_boundedness}, proved using direct methods in Appendix~\ref{app:proof_stochastic_boundedness}, allows us to ensure that the probability of an incoming arrival being routed to a server with two or more jobs is negligible in any interval $[0,T]$ for large enough $n$. This allows us to eliminate the need to introduce a reflection in~\eqref{eq:main_diffusion_xi2}. The stochastic boundedness also allows us to prove the convergence of martingales without resorting to the fluid limit. 

To analyze the uniqueness of the solution to the system equations, we adopt a fixed-point approach as in~\cite{eschenfeldt2018join}. Similarly, we begin with a related system in which the reflection term is replaced by the one-dimensional reflection mapping. In the homogeneous case, it is not straightforward to work directly with the right-hand side of these equations. Therefore, rather than working with the related system itself, Eschenfeldt and Gamarnik~\cite{eschenfeldt2018join} introduce an additional system to define their contraction mapping. For heterogeneous systems, however, this second modification is difficult to handle due to the presence of measure-valued terms. Using a trick based on switching between equivalent norms, we can instead work directly with the mapping defined by the related system equations. 

This approach is also applicable to the case of homogeneous systems and allows a significantly shortened proof by eliminating the need to introduce an additional system of equations as in~\cite{eschenfeldt2018join}. As a result, the uniqueness of the fixed point implies the uniqueness of the desired solution without requiring further arguments. Finally, the non-standard definition of the limiting fairness measure prevents us from using the continuous mapping theorem directly. Instead, we take a step back and prove our weak convergence result using the Skorokhod representation theorem. 

\subsection{Uniqueness and Existence of Solutions to the Integral Equations}

We start our proof by showing the existence and uniqueness of a solution to \eqref{eq:main_diffusion_x1}-\eqref{eq:main_reflection}. We consider the equations for $(X_1, \xi_2)$ and $(\xi_i:i\geq 3)$ separately. Even though we work with right-continuous functions in the pre-limit, we use the supremum norm to prove our uniqueness and existence results. 

\begin{lemma}
\label{lem:reflection_equations}
    Consider the set of equations
    \begin{align}
    \label{eq:reflection_mappings_original1}
    x_1(t) &= b_1 + y_1(t) - c(t)\int_0^tx_1(s)ds + \int_0^t\langle \iota, \xi_2(s)\rangle ds -u_1(t)\\
    \label{eq:reflection_mappings_original2}
    \langle f, \xi_2(t)\rangle &= \langle f, \gamma\rangle + \langle f, \alpha(u_1(t))\rangle + \langle f, \beta(t)\rangle - \int_0^t \langle f\times \iota, \xi_2(s)\rangle ds
\end{align}
 for all $f\in \Lipbar{\RR, L}(\SSS)$ and $u_1\in\DD_{\RR_+}[0,\infty)$ is nondecreasing and satisfies
\begin{align}
    \label{eq:reflection_mappings_original3}
    \int_0^\infty \II(x_1(s)<0)du_1(s) = 0,
\end{align}
where $b_1\in \RR_-$, $\gamma\in \cM_f$, $y_1\in \DD_\RR[0,\infty)$, $c\in \GG_{\bar{\SSS}}[0,\infty)$,  $\alpha\in \Lip{\cM_f, L}[0,\infty)$ and $\beta\in \DD_{\cM_s}[0,\infty)$ are given such that $|\alpha|_{TV,t}^*<\infty$ and $|\beta|_{TV,t}^*<\infty$ for all $t\geq 0$. Then \eqref{eq:reflection_mappings_original1}-\eqref{eq:reflection_mappings_original3} has a unique solution $(x_1, u_1, \xi_2)\in \DD_{\RR_-\times \RR_+\times\cM_s}[0,\infty)$.
\end{lemma}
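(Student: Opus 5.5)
We will prove the lemma with a fixed-point argument on a finite horizon $[0,T]$, and then extend the solution to $[0,\infty)$ by concatenation. The starting point is the one-dimensional Skorokhod reflection map at zero from above. For $z\in\DD_\RR[0,\infty)$ we set $\Psi(z)(t)=\sup_{0\le s\le t}(z(s))^+$ and $\Phi(z)=z-\Psi(z)$. Then, for any given drift term, $(x_1,u_1)=(\Phi(z),\Psi(z))$ is the unique pair with $x_1\le 0$, $u_1$ nondecreasing, $u_1(0)=0$ (using $b_1\le 0$), and complementarity \eqref{eq:reflection_mappings_original3}. Moreover $|\Psi(z)-\Psi(z')|_T^*\le |z-z'|_T^*$ and $|\Phi(z)-\Phi(z')|_T^*\le 2|z-z'|_T^*$.

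Next we define a map $\Gamma$ on pairs $(x_1,\xi_2)$. Here $x_1\in\DD_\RR[0,T]$ is bounded, and $\xi_2\in\DD_{\cM_s}[0,T]$ has bounded total variation. First set
\[
z(t)=b_1+y_1(t)-c(t)\int_0^t x_1(s)ds+\int_0^t\langle\iota,\xi_2(s)\rangle ds,
\]
and take $\tilde x_1=\Phi(z)$, $\tilde u_1=\Psi(z)$. Then define $\tilde\xi_2$ through the right-hand side of \eqref{eq:reflection_mappings_original2}, with $u_1$ replaced by $\tilde u_1$ and the integrand replaced by the old $\xi_2$:
\[
\tilde\xi_2(t)=\gamma+\alpha(\tilde u_1(t))+\beta(t)-\int_0^t \iota\,\xi_2(s)ds.
\]
This is a well-defined signed measure, and it is right-continuous because $\tilde u_1$ is right-continuous and $\alpha$ is Lipschitz. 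A fixed point of $\Gamma$ is exactly a solution of \eqref{eq:reflection_mappings_original1}--\eqref{eq:reflection_mappings_original3}, and conversely. So existence and uniqueness both reduce to showing that $\Gamma$ is a contraction.

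For the estimates, take two inputs and write $\delta x=x_1-x_1'$ and $\delta\xi=\xi_2-\xi_2'$. Since $c$ takes values in $\bar\SSS$, we have $|c|\le\mu_{\max}$. Because $\iota\in\Lipbar{\RR,\mu_{\max}\vee1}(\SSS)$, we get $|\langle\iota,\delta\xi\rangle|\le d_{\cM,\mu_{\max}\vee1}(\xi_2,\xi_2')$. This gives
\[
|\delta z|_t^*\le \mu_{\max}\int_0^t|\delta x|_s^*ds+\int_0^t d_{\cM,\mu_{\max}\vee 1}(\xi_2(s),\xi_2'(s))ds .
\]
For the measure part, fix $f\in\Lipbar{\RR,L}$. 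The Lipschitz property of $\alpha$ in $d_{\cM,1}$ gives
\[
|\langle f,\alpha(\tilde u_1(t))-\alpha(\tilde u_1'(t))\rangle|\le L\cdot L_\alpha|\delta z|_t^* .
\]
The product $f\times\iota$ lies in $\Lipbar{\RR,L'}$ with $L'=L(\mu_{\max}+\mu_{\max}\vee1)$. This is where the norms must be switched: the $\xi$-equation maps $d_{\cM,L}$ into $d_{\cM,L'}$, and the $x$-equation needs $d_{\cM,\mu_{\max}\vee1}$. All of these are equivalent to $d_{\cM,1}$ because $d_{\cM,L}=L\,d_{\cM,1}$, so every constant can be written in terms of the single norm $d_{\cM,1}$.

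Putting this together, with the constant $K=K(L,\mu_{\max},L_\alpha)$, the quantity
\[
\Delta(t)=|\delta x|_t^*+\sup_{s\le t}d_{\cM,1}(\xi_2(s),\xi_2'(s))
\]
satisfies
\[
\Delta_\Gamma(t)\le K\int_0^t\Delta(s)ds .
\]
We then handle the possibly large constant with one of two equivalent devices. The first is the exponentially weighted norm $\sup_{t\le T}e^{-2Kt}\Delta(t)$, under which $\Gamma$ has Lipschitz constant $1/2$. The second is to iterate, giving $\Delta_{\Gamma^m}(T)\le (KT)^m/m!\,\sup\Delta$.

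For the contraction step to be legitimate, $\Gamma$ must map a complete space into itself. The space is $\DD$ with the uniform norm, together with measures of bounded total variation, which is preserved by $\Gamma$ because
\[
|\tilde\xi_2|_{TV,T}^*\le|\gamma|_{TV}+|\alpha|_{TV,\tilde u_1(T)}^*+|\beta|_{TV,T}^*+\mu_{\max}T|\xi_2|_{TV,T}^* .
\]
The main obstacle is completeness: $d_{\cM,1}$-Cauchy sequences of signed measures with bounded total variation need not converge to a signed measure. We would handle this by restricting $\Gamma$ to a closed ball in total variation. The ball is invariant on a short interval $[0,T_0]$ with $\mu_{\max}T_0<1/2$. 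On such a ball, $d_{\cM,1}$-limits exist and remain signed measures by Banach--Alaoglu on the compact set $\SSS$, with weak$^*$ lower semicontinuity of the total variation norm. Right-continuity passes to the uniform limit. The Banach fixed-point theorem then gives a unique solution on $[0,T_0]$.

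Because $T_0$ depends only on $\mu_{\max}$, we restart from time $T_0$ with updated data: the new $b_1$ is $x_1(T_0)\le0$, the new $\gamma$ is $\xi_2(T_0)$, and $y_1$, $\alpha$, $\beta$ and $c$ are shifted. Iterating this yields a unique solution on every $[0,T]$, hence on $[0,\infty)$.
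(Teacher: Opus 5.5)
Your proof is correct. Its overall architecture matches the paper's: Skorokhod reflection, a Banach fixed point on a total-variation ball where $d_{\cM,1}$ is complete, short intervals, and concatenation. The contraction step, however, is organized differently.

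The paper first proves Lemma~\ref{lem:modified_reflection_equations} for the unreflected pair $(w_1,\xi_2)$, where $w_1=x_1+u_1$. Its operator $\cT$ builds the measure component from $\psi(\tilde w_1)$ of the \emph{input}. This produces the non-integrated cross term $L\tilde L|\tilde w_1-\tilde w_1'|_T^*$ in \eqref{eq:reflection_eq_bound2}, which cannot be made small by shrinking $T$. The paper therefore measures $\xi_2$ in the down-weighted metric $d_{\cM,\tilde L}$ with $\tilde L=(4L)^{-1}$, and pays the factor $1/\tilde L$ only in a term multiplied by $T$. Lemma~\ref{lem:reflection_equations} is then recovered through $(\phi(w_1),\psi(w_1),\xi_2)$ and uniqueness of the reflection map.

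You instead iterate directly on $(x_1,\xi_2)$ and feed the freshly reflected $\tilde u_1$ into the $\alpha$-term. Every dependence of $\Gamma$ on its input then passes through a time integral, which gives the Volterra bound $\Delta_\Gamma(t)\le K\int_0^t\Delta(s)\,ds$. Contraction follows from an exponentially weighted sup-norm with no reweighting of the measure metric. The passage from the modified system to the original one is built into $\Gamma$, so no separate lemma is needed. What you call switching norms, $d_{\cM,L}=L\,d_{\cM,1}$, is only bookkeeping of constants. Your update ordering is what makes the paper's genuine reweighting unnecessary. The paper's form has its own advantage: $\cT$ is literally the right-hand side of the modified equations, and that is the form reused for $(V_1^n,\xi_2^n)$ in the proof of Theorem~\ref{thm:process_level_convergence}.

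Two small points need tightening.
\begin{enumerate}
\item Banach's theorem only gives uniqueness inside your ball. Add the observation that any two solutions are fixed points of $\Gamma$, so their difference satisfies $\Delta(t)\le K\int_0^t\Delta(s)\,ds$ with no ball restriction. Gronwall then gives $\Delta\equiv 0$, i.e.\ global uniqueness. The paper handles this differently, by sizing its ball with an a priori Gronwall bound valid for all solutions.
\item Invariance of the ball needs $T_0$ small relative to $L$ as well as $\mu_{\max}$, not $\mu_{\max}$ alone. The reason is that $|\alpha(\tilde u_1(t))|_{TV}\le|\alpha(0)|_{TV}+L\,\tilde u_1(t)$, and $\tilde u_1$ depends on the input's total variation through $\int_0^t\langle\iota,\xi_2\rangle\,ds$. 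Since $L$ is fixed, $T_0$ is still independent of the data and the restart goes through. When restarting, the new $y_1$ must also absorb $-\bigl(c(T_0+\cdot)-c(T_0)\bigr)\int_0^{T_0}x_1(s)\,ds$, because $c$ multiplies the whole integral; it is not a pure time shift.
\end{enumerate}
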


\begin{lemma}
\label{lem:reflection_equations2}
    Consider the set of equations
    \begin{align}
    \label{eq:reflection_mappings_original4}
    x_1(t) &= 0, \langle f, \xi_2(t)\rangle = 0\\
     \label{eq:reflection_mappings_original5}\langle f, \xi_i(t)\rangle & = \langle f, \xi_i(0)\rangle + \langle f, \beta_i(t)\rangle - \int_0^t\langle f\times \iota, \xi_i(s)\rangle ds +  \int_0^t \langle f\times\iota, \xi_{i+1}(s)\rangle ds,\mbox{ for all }i\geq 3,
\end{align}
for all $f\in \Lipbar{\RR, L}(\SSS)$ where $\boldsymbol{\beta}=(\beta_i:i\geq 3)\in \DD_{\cM_s^\infty}[0,\infty)$ is given such that d $|\beta_i|_{TV,t}^*<\infty$ for all $t\geq 0$ and $i\geq 3$. Then \eqref{eq:reflection_mappings_original4}-\eqref{eq:reflection_mappings_original5} has a unique solution $(x_1, \boldsymbol{\xi})\in \DD_{\RR_-\times\cM_s^\infty}[0,\infty)$.
\end{lemma}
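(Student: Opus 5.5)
We prove Lemma~\ref{lem:reflection_equations2} with a Picard/contraction argument on a weighted sequence space, using a weighted Kantorovich--Rubinstein norm so that the coupling term $\xi_{i+1}$ is dominated by a geometric factor.

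Note first that $x_1\equiv 0$ and $\xi_2\equiv 0$ are imposed directly by \eqref{eq:reflection_mappings_original4}, so only the infinite system \eqref{eq:reflection_mappings_original5} for $\boldsymbol{\xi}=(\xi_i:i\ge 3)$ remains.

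Fix $T>0$ and work on $[0,T]$. Consider the space of sequences $\boldsymbol{\xi}=(\xi_i:i\geq 3)$ with $\xi_i\in\DD_{\cM_s}[0,T]$. Equip it with the norm
\[
\|\boldsymbol{\xi}\|_{\rho,T}:=\sum_{i\ge 3}\rho^i\sup_{0\le t\le T} d_{\cM,L}(\xi_i(t),0),
\]
with $0<\rho<1$ to be chosen. Define the map $\Gamma$ by letting $\Gamma(\boldsymbol{\xi})_i(t)$ be the signed measure acting on $f$ as the right-hand side of \eqref{eq:reflection_mappings_original5}. This is a well-defined measure, since $g\mapsto \int_0^t\langle g,\xi_i(s)\rangle ds$ is a signed measure with total variation at most $t|\xi_i|^*_{TV,t}$.

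The key estimate is the norm switch. For $f\in\Lipbar{\RR,L}(\SSS)$, the product $f\times\iota$ has sup norm at most $L\mu_{\max}$ and Lipschitz constant at most $L(\mu_{\max}+1)$. Hence $f\times\iota\in\Lipbar{\RR,L'}(\SSS)$ with $L'=L(\mu_{\max}+1)$, and since $d_{\cM,L'}=(L'/L)d_{\cM,L}$,
\[
|\langle f\times\iota,\xi(s)-\xi'(s)\rangle|\le (\mu_{\max}+1)\,d_{\cM,L}(\xi(s),\xi'(s)).
\]
The constant here is independent of $L$ and of $i$. The terms $\langle f,\xi_i(0)\rangle$ and $\beta_i$ cancel in differences, so
\[
\|\Gamma\boldsymbol{\xi}-\Gamma\boldsymbol{\xi}'\|_{\rho,T}\le (\mu_{\max}+1)T\,(1+\rho^{-1})\,\|\boldsymbol{\xi}-\boldsymbol{\xi}'\|_{\rho,T}.
\]
Choosing $T_0$ small enough makes the factor less than $1$, so $\Gamma$ is a contraction on $[0,T_0]$.

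We then extend to $[0,\infty)$ by restarting at $T_0,2T_0,\dots$. The contraction constant does not depend on the initial data, so the same $T_0$ works at every step, and existence and uniqueness on all of $[0,\infty)$ follow by concatenation.

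The main obstacle is completeness and well-posedness, not the contraction estimate itself. Two issues arise.

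\begin{enumerate}
\item The KR norm on signed measures is not complete. We avoid this by restricting to a closed subset with uniformly bounded total variation, say $\sum_i\rho^i|\xi_i|^*_{TV,T}\le K$. We check that $\Gamma$ maps this set into itself for $T_0$ small, using the same geometric-weight bound in total variation together with the finiteness of $|\beta_i|^*_{TV,t}$ and of the initial data, assuming $\sum_i\rho^i|\xi_i(0)|_{TV}$ and $\sum_i\rho^i|\beta_i|^*_{TV,t}$ are finite. On $\SSS$ compact, TV-bounded sets are weakly compact and KR-closed, so the set is complete.
\item Right-continuity of $\Gamma(\boldsymbol{\xi})_i$ is inherited from $\beta_i$, since the integral terms are continuous.
\end{enumerate}

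Uniqueness in the class stated in the lemma then follows. Any solution has finite weighted total variation on compacts, so it lies in such a ball for a suitable $K$, where the fixed point is unique.
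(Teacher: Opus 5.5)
Your route is the paper's. The paper also iterates the right-hand side of \eqref{eq:reflection_mappings_original5} as a map $\cT_2$ and contracts in the $\rho^i$-weighted supremum of Kantorovich--Rubinstein distances, with a factor of order $T(1+\rho^{-1})$. It then gets completeness from TV-bounded sets and restarts on $[T_0,2T_0],[2T_0,3T_0],\dots$. Your factor $\mu_{\max}+1$ is the accurate Lipschitz bookkeeping for $f\times\iota$; the paper writes $\mu_{\max}$, which is immaterial. The contraction, the invariance of the weighted TV ball, and the concatenation are all fine once the data satisfy the weighted summability you add. That hypothesis is not in the lemma, but the paper's appeal to the completeness argument of Lemma~\ref{lem:modified_reflection_equations} tacitly needs it too. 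It holds wherever the lemma is applied.

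The step that fails is your last sentence. Finiteness of $|\xi_i|^*_{TV,T}$ for each $i$ does not give $\sum_i\rho^i|\xi_i|^*_{TV,T}<\infty$ for any $\rho$. Without such a bound, uniqueness is actually false. Take $\beta_i\equiv 0$ and $\xi_i(0)=0$. Let $g(t)=e^{-1/t}$ for $t>0$ and $g(t)=0$ for $t\le 0$, and set $\xi_i(t)=e^{-t}g^{(i-3)}(t)\,\delta_1$ for $i\ge 3$; note $1\in\SSS$. Since $\frac{d}{dt}\bigl(e^{-t}g^{(k)}\bigr)=-e^{-t}g^{(k)}+e^{-t}g^{(k+1)}$ and $g^{(k)}(0)=0$ for all $k$, this is a nonzero, TV-continuous solution of \eqref{eq:reflection_mappings_original5}. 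The identically zero $\boldsymbol{\xi}$ is another solution. Your own contraction estimate then forces $\sum_i\rho^i|\xi_i|^*_{TV,T}=\infty$ for every $\rho\in(0,1)$ and every $T>0$.

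What your argument (and the paper's) actually proves is the following. Existence holds when $\sum_i\rho^i\bigl(|\xi_i(0)|_{TV}+|\beta_i|^*_{TV,t}\bigr)<\infty$. Uniqueness holds among solutions with $\sum_i\rho^i|\xi_i|^*_{TV,t}<\infty$ for some $\rho\in(0,1)$ and all $t$; given two such solutions, take the smaller $\rho$ and the contraction gives equality. This restricted version is all that Theorem~\ref{thm:process_level_convergence} uses, since there the $\xi_i$ are nonnegative with total masses nonincreasing in $i$. So you should state uniqueness within that class rather than claim it for every solution.
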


As in \cite{eschenfeldt2018join}, we prove Lemma~\ref{lem:reflection_equations} by embedding \eqref{eq:reflection_mappings_original3} in \eqref{eq:reflection_mappings_original1}-\eqref{eq:reflection_mappings_original2} using the well-known one-dimensional reflection map with an upper barrier at 0 where $(\phi, \psi): \DD_{\RR}[0,\infty)\to \DD_{\RR}[0,\infty)\times \DD_{\RR}[0,\infty)$ defined as 
\[
\psi(x)(t) = |(x(\cdot))^+|_t^*, \mbox{ and } \phi(x)(t) = x(t) - \psi(x)(t).
\]
The functions $\psi$ and $\phi$ are known to be 1-Lipschitz and 2-Lipschitz with respect to the supremum norm, respectively, i.e., for all $T>0$, 
\begin{align*}
    |\psi(x)-\psi(x')|_T^*\leq |x-x'|_T^*\mbox{ and }|\phi(x)-\phi(x')|_T^*\leq 2|x-x'|_T^*.
\end{align*}

\begin{lemma}
\label{lem:modified_reflection_equations}
    Consider the set of equations
    \begin{align}
    w_1(t) &= b_1 + y_1(t) - c(t)\int_0^t\phi(w_1)(s)ds + \int_0^t\langle \iota, \xi_2(s)\rangle ds\label{eq:reflected_equation1}\\
    \langle f, \xi_2(t)\rangle &= \langle f, \gamma\rangle + \langle f, \alpha(\psi(w_1)(t))\rangle + \langle f, \beta(t)\rangle - \int_0^t \langle f\times \iota, \xi_2(s)\rangle ds \label{eq:reflected_equation2},
\end{align}
for all $f\in \Lipbar{\RR, L}(\SSS)$, where $b_1\in \RR_-$, $\gamma\in \cM_f$, $y_1\in \DD_\RR[0,\infty)$, $c\in \GG_{\bar{\SSS}}[0,\infty)$,  $\alpha\in \Lip{\cM_f, L}[0,\infty)$ and $\beta\in \DD_{\cM_s}[0,\infty)$ are given such that $|\alpha|_{TV,t}^*<\infty$ and $|\beta|_{TV,t}^*<\infty$ for all $t\geq 0$. Then \eqref{eq:reflected_equation1}-\eqref{eq:reflected_equation2} has a unique solution $(w_1, \xi_2)\in \DD_{\RR\times\cM_s}[0,\infty)$.
\end{lemma}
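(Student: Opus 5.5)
We prove Lemma~\ref{lem:modified_reflection_equations} by a Banach fixed-point argument on $[0,T]$ for each fixed $T>0$, and then patch the solutions together. Uniqueness on every $[0,T]$ makes the solutions consistent, which gives existence and uniqueness on $[0,\infty)$.

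Fix $T$. Let $\mathcal{E}_T$ be the set of pairs $(w_1,\xi_2)$ with $w_1\in\DD_\RR[0,T]$ and $\xi_2\in\DD_{\cM_s}[0,T]$ having finite $|\xi_2|_{TV,T}^*$. Define the map $\Gamma(w_1,\xi_2)=(\tilde w_1,\tilde\xi_2)$ by the right-hand sides of \eqref{eq:reflected_equation1}--\eqref{eq:reflected_equation2}. There is one issue with $\tilde\xi_2$: it appears on both sides of \eqref{eq:reflected_equation2} through $\int_0^t\langle f\times\iota,\xi_2(s)\rangle ds$. If we simply plug in the old $\xi_2$ on the right, the map is well defined, and it lands in $\DD_{\cM_s}$ with finite total variation because $\alpha$ is Lipschitz with bounded TV, $\beta$ has finite TV, and $\iota\le\mu_{\max}$.

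For the distances we use $|w_1-w_1'|_T^*$ and $\sup_{t\le T}d_{\cM,L'}(\xi_2(t),\xi_2'(t))$, where the weight $L'$ is still to be chosen. The point is that $\langle\iota,\cdot\rangle$ and $\langle f\times\iota,\cdot\rangle$ are controlled by a KR distance with a larger weight. If $f\in\Lipbar{\RR,L}$, then $f\iota$ is bounded by $L\mu_{\max}$ and Lipschitz with constant $L(1+\mu_{\max})$, so $f\iota\in\Lipbar{\RR,L(1+\mu_{\max})}$. Similarly, $\iota\in\Lipbar{\RR,\mu_{\max}}$.

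The estimates are then as follows, using $c\le\mu_{\max}$, the 1- and 2-Lipschitz properties of $\psi,\phi$, and $\alpha\in\Lip{\cM_f,L}$:
\begin{align*}
|\tilde w_1-\tilde w_1'|_t^*&\le 2\mu_{\max}\int_0^t|w_1-w_1'|_s^*ds+\int_0^t d_{\cM,\mu_{\max}}(\xi_2(s),\xi_2'(s))ds,\\
d_{\cM,L}(\tilde\xi_2(t),\tilde\xi_2'(t))&\le L^2|w_1-w_1'|_t^*+(1+\mu_{\max})\int_0^t d_{\cM,L}(\xi_2(s),\xi_2'(s))ds.
\end{align*}
In the second line the $\alpha$ term contributes $L\cdot L|\psi(w_1)-\psi(w_1')|$, after rescaling $d_{\cM,L}=L\,d_{\cM,1}$.

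The equivalence $d_{\cM,aL}=a\,d_{\cM,L}$ lets us switch between weights at the cost of constants. This is the norm-switching step. Because the $w$-to-$\xi$ coupling term $L^2|w_1-w_1'|_t^*$ carries no time integral, we cannot get a contraction in a plain sup norm over a short interval. Instead we use the weighted norm
\[
\|(w,\xi)\|_\kappa=\sup_{t\le T}e^{-\kappa t}\bigl(|w|_t^*+\epsilon\, d_{\cM,L}(\xi(t),0)\bigr)
\]
with a small weight $\epsilon$ on the measure component. The term $\epsilon L^2|w_1-w_1'|$ then has coefficient below $1/2$. Every remaining term is a time integral, so after multiplying by $e^{-\kappa t}$ it gains a factor $C/\kappa$. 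Choosing $\epsilon$ small and then $\kappa$ large makes $\Gamma$ a contraction.

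The main obstacle is completeness. The KR-metric space of signed measures is not complete, and limits in sup-KR need not stay càdlàg with bounded TV. We handle this by restricting $\Gamma$ to the closed invariant set
\[
\{\,|\xi_2|_{TV,T}^*\le M,\ |w_1|_T^*\le M\,\}.
\]
This set is invariant for suitable $M$ by a Gronwall-type a priori bound; alternatively, we can work on an interval short enough that the linear growth stays controlled and then iterate. On bounded-TV sets over the compact space $\SSS$, the KR metric is complete, since KR limits of TV-bounded measures are measures with TV at most $M$. Right-continuity passes to uniform limits. The unique fixed point therefore solves \eqref{eq:reflected_equation1}--\eqref{eq:reflected_equation2} for all $f\in\Lipbar{\RR,L}$.

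For uniqueness, any solution has finite TV on $[0,T]$ by Gronwall applied to \eqref{eq:reflected_equation2}, so it lies in such a set and must coincide with the fixed point.
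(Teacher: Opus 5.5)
Your argument is essentially the paper's. Since $\epsilon\,d_{\cM,L}=d_{\cM,\epsilon L}$, your small weight $\epsilon=1/(4L^2)$ on the measure component is exactly the paper's switch to $d_{\cM,\tilde L}$ with $\tilde L=(4L)^{-1}$. That switch turns the non-integral coupling into $\tfrac14|w_1-w_1'|_t^*$. Your Bielecki weight $e^{-\kappa t}$ plays the role of the paper's short interval $[0,T_1]$ for the integral terms. Your completeness device (KR-completeness of sets with bounded total variation) is the paper's $\FF_{T_0}$. Your two Lipschitz estimates are correct.

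The step that fails as written is the invariance of $\{|\xi_2|_{TV,T}^*\le M,\ |w_1|_T^*\le M\}$. A Gronwall bound controls fixed points, not images of arbitrary elements of this set.
\begin{itemize}
\item Over a long interval, $|\tilde w_1|_T^*$ can grow like $3\mu_{\max}TM$.
\item Even on a short interval, the $\alpha$ term alone contributes up to $|\alpha(0)|_{TV}+LM$ to $|\tilde\xi_2(t)|_{TV}$, with no small time factor. For instance, take $\gamma=\beta=0$, $\alpha(u)=Lu\,\delta_{\mu_{\max}}$, $w_1\equiv M$ and $\xi_2\equiv -M\delta_{\mu_{\max}}$; this gives $|\tilde\xi_2(t)|_{TV}=(L+\mu_{\max}t)M$.
\end{itemize}
So equal radii fail whenever $L\ge 1$, and your short-interval alternative does not rescue this. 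This includes the case $L=1$ needed for the routing limit $\alpha_1^\pi$.

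Use separate radii, as the paper does with $C_T$ and $2C_T$. With your weighted norm, the natural choice is the envelope set $|w_1|_t^*\le Ae^{Kt}$, $|\xi_2|_{TV,t}^*\le Be^{Kt}$ for $t\le T$, with
\begin{itemize}
\item $A\ge 2(|b_1|+|y_1|_T^*)$,
\item $B=2(|\gamma|_{TV}+|\alpha(0)|_{TV}+|\beta|_{TV,T}^*+LA)$,
\item $K$ large.
\end{itemize}
This set is invariant on all of $[0,T]$ because $|\alpha(u)|_{TV}\le|\alpha(0)|_{TV}+Lu$ for $\alpha\in\Lip{\cM_f,L}$. It is closed under your metric because the bounds survive uniform and KR limits.

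For uniqueness, it is simplest to apply your contraction estimate directly to the difference of two solutions. This uses only the weak form. It also avoids having to show beforehand that an arbitrary solution has locally integrable total variation.
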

\begin{proof}
    The proof follows a contraction mapping argument. Contrary to~\cite{eschenfeldt2018join}, we define our operator $\cT: \DD_{\RR\times\cM_s}\to \DD_{\RR\times\cM_s}$ directly using the right-hand side of equations \eqref{eq:reflected_equation1} and \eqref{eq:reflected_equation2} so that
    \begin{align*}
        \cT(\tilde{w}_1, \tilde{\xi}_2)_1(t) &=  b_1 + y_1(t) - c(t)\int_0^t\phi(\tilde{w}_1)(s)ds + \int_0^t\langle \iota, \tilde{\xi}_2(s)\rangle ds
    \end{align*}
    and $\cT(\tilde{w}_1, \tilde{\xi}_2)_2$ is the unique measure that satisfies
    \begin{align*}
        \langle f, \cT(\tilde{w}_1, \tilde{\xi}_2)_2(t)\rangle &= \langle f, \gamma\rangle + \langle f, \alpha(\psi(\tilde{w}_1)(t))\rangle + \langle f, \beta(t)\rangle - \int_0^t \langle f\times \iota, \tilde{\xi}_2(s)\rangle ds
    \end{align*}
for all $f\in \Lipbar{\RR, \tilde{L}}(\SSS)$, where $\tilde{L}>0$ is specified below. Any $(w_1,\xi_2)\in \DD_{\RR\times\cM_s}[0,\infty)$ solves \eqref{eq:reflected_equation1}-\eqref{eq:reflected_equation2} if and only if it is a fixed point of $\cT(\tilde{w}_1, \tilde{\xi}_2)$. Hence, the result follows from the Banach fixed point theorem (cf.~\cite{rudin1976principles}, pg 220) if we can show that $\cT(\tilde{w}_1, \tilde{\xi}_2)$ is a contraction with respect to the supremum norm. To simplify the notation, we use the shorthand notation $|\xi_i-
\xi_i'|_T^*$ to denote $\left|d_{\cM,\tilde{L}}\left(\tilde{\xi}_i,\tilde{\xi}_i'\right)\right|_T^*$.

For any $(\tilde{w}_1, \tilde{\xi}_2),(\tilde{w}_1', \tilde{\xi}_2') \in \DD_{\RR\times\cM_s}[0,\infty) $ and $T>0$, we can write
    \begin{align}
  \nonumber \left|\cT(\tilde{w}_1, \tilde{\xi}_2)_1 - \cT(\tilde{w}_1', \tilde{\xi}_2')_1\right|_T^* &\leq  T\left(\mu_{\max} \left|\phi(\tilde{w}_1)-\phi(\tilde{w}_1')\right|_T^* +\left|\langle \iota, \tilde{\xi}_2\rangle-\langle \iota, \tilde{\xi}_2'\rangle\right|_T^*\right)\\
  \label{eq:reflection_eq_bound1}&\leq T\mu_{\max}\left(2 \left|\tilde{w}_1-\tilde{w}_1'\right|_T^*  +\frac{1}{\tilde{L}}\left|\tilde{\xi}_2-\tilde{\xi}_2'\right|_T^*\right),
    \end{align}
where the second inequality follows from the assumption that $\mu_{\max}\geq 1$ and hence, $\tilde{L}\iota(\mu)/\mu_{\max}\in \Lip{\SSS, \tilde{L}}$ and is bounded by $\tilde{L}$. Similarly, we know that for any $f\in\Lip{\SSS,\tilde{L}}[0,\infty)$, $f\times \iota\in \Lip{\SSS, \mu_{\max}\tilde{L}}[0,\infty)$ and $\sup_{\mu\in \cS}|\mu f(\mu)|\leq \mu_{\max} \tilde{L}$ and hence,
\begin{align}
     \nonumber \left|\langle f, \cT(\tilde{w}_1, \tilde{\xi}_2)_2\rangle-  \langle f, \cT(\tilde{w}_1', \tilde{\xi}_2')_2\rangle\right|_T^*&\leq  \nonumber \left|\langle f, \alpha(\psi(\tilde{w}_1))\rangle - \langle f, \alpha(\psi(\tilde{w}_1'))\rangle\right|_T^* +
    T\left|\langle f\times \iota, \tilde{\xi}_2\rangle - \langle f\times \iota, \tilde{\xi}_2'\rangle\right|_T^*\\
     \label{eq:reflection_eq_bound2} &\leq L\tilde{L}\left|\tilde{w}_1- \tilde{w}_1'\right|_T^* + T\mu_{\max}\left|\tilde{\xi}_2-\tilde{\xi}_2'\right|_T^*.
\end{align}
Choosing $\tilde{L}=(4L)^{-1}$ and $T_1 < (\max\{4\mu_{\max}/\tilde{L},8\mu_{\max}\})^{-1}$, taking the supremum over all $f\in \Lipbar{\RR, \tilde{L}}(\SSS)$ in \eqref{eq:reflected_equation2} and summing with \eqref{eq:reflection_eq_bound1},
\begin{align*}
    \left|\cT(\tilde{w}_1, \tilde{\xi}_2) - \cT(\tilde{w}_1', \tilde{\xi}_2')\right|_T^* \leq \frac{1}{2}\left|(\tilde{w}_1,\tilde{\xi}_2) -(\tilde{w}_1', \tilde{\xi}_2')\right|_T^*.
\end{align*}
Hence, $\cT$ is a contraction mapping on $[0,T_1]$. To be able to use Banach fixed point theorem, we need to ensure the completeness of the underlying space. As the space $\Lipbar{\RR, L}(\SSS)$ separates finite signed measures on the compact space $\SSS$, the equation holds for any bounded measurable $f$. Hence, equations \eqref{eq:reflected_equation1} and \eqref{eq:reflected_equation2} imply
\begin{align}
        |w_1|_t^* &\leq |b_1| + |y_1|_t^* + 2|c|_t^*\int_0^t|w_1|_s^*ds + \mu_{\max}\int_0^t|\xi_2|_{TV,s}^* ds\label{eq:reflected_equation1_ineq}\\
    |\xi_2|_{TV,t}^* &\leq |\gamma|_{TV} + |w_1|_t^* + | \beta|_{TV,t}^* + \mu_{\max}\int_0^t |\xi_2|_{TV,s}^* ds \label{eq:reflected_equation2_ineq}.
\end{align}
 Multiplying \eqref{eq:reflected_equation1_ineq} by two, adding two equations and using Gronwall's inequality, we can find $C_T>0$ such that
\[
|b_1| + |y_1|_T^* + |\gamma|_{TV} + | \beta|_{TV,T}^*<\frac{C_T}{2} \mbox{ and }  |w_1|_T^* +  |\xi_2|_{TV,T}^* \leq C_T,
\]
for any solution to the equations \eqref{eq:reflected_equation1} and \eqref{eq:reflected_equation2}. Let $T_0=(4(|c|_T^*+\mu_{\max}))^{-1}\wedge T_1$ and define the set
\[
\FF_{T_0} = \{(w_1, \xi_2)\in \DD_{\RR\times\cM_s}[0,T_0]: |w_1|_{T_0}^*\leq C_T, |\xi_2|_{TV,T_0}^*\leq 2C_T\}.
\]
Operator $\cT$ is a contraction mapping and using the definition of the operator it can easily be seen using \eqref{eq:reflected_equation1_ineq} and \eqref{eq:reflected_equation2_ineq} that $\cT$ maps $\FF_{T_0}$ into itself on $[0,T_0]$. Moreover, as the set of signed measures on a compact space with uniformly bounded total variation is complete (cf \cite{bogachev2007measure}, Theorem 8.6.2, pg 202), $\FF_{T_0}$ is complete. Hence, using the Banach fixed-point theorem, $\cT$ has a unique fixed point on this interval. Repeating the same argument for intervals $[T_0,2T_0], [2T_0,3T_0], \ldots$ as in \cite{eschenfeldt2018join}, we conclude that there exists a unique fixed point of $\cT(\tilde{w}_1, \tilde{\xi}_2)$ which is also the unique solution to \eqref{eq:reflected_equation1}-\eqref{eq:reflected_equation2}.
\end{proof}

\begin{proof}[Proof of Lemma~\ref{lem:reflection_equations}] 
 Let $(w_1, \xi_2)$ be the unique solution of the modified equations \eqref{eq:reflected_equation1}-\eqref{eq:reflected_equation2}. Then, $(x_1, u_1,\xi_2)=(\phi(w_1),\psi(w_1), \xi_2)$ solves \eqref{eq:reflection_mappings_original1} - \eqref{eq:reflection_mappings_original3}. To see uniqueness, let $(\tilde{x}_1, \tilde{u}_1,\tilde{\xi}_2)$ be another solution. Setting $\tilde{w}_1 = \tilde{x}_1 + \tilde{u}_1$, and using the uniqueness of reflection mapping (cf. \cite{chen2001fundamentals}, Theorem 6.1), we have $\tilde{x}_1=\phi(\tilde{w}_1)$ and $\tilde{u}_1 = \psi(\tilde{w}_1)$. Then, $(\tilde{w}_1, \tilde{\xi}_2)$ also solves \eqref{eq:reflected_equation1}-\eqref{eq:reflected_equation2}, which contradicts the uniqueness in Lemma~\ref{lem:modified_reflection_equations}. Hence, we conclude that \eqref{eq:reflection_mappings_original1} - \eqref{eq:reflection_mappings_original3} has a unique solution.
\end{proof}

\begin{proof}[Proof of Lemma~\ref{lem:reflection_equations2}]
The proof follows the same lines as in the proof of Lemma 5 in \cite{eschenfeldt2018join} after the notation is modified to handle the measure-valued form. We define the operator $\cT_2:\DD_{\cM_s^{\infty}}\to \DD_{\cM_s^{\infty}}$ such  that $\cT_2(\boldsymbol{\xi})_1(t)= \cT_2(\boldsymbol{\xi})_2(t) = 0$ and $\cT_2(\boldsymbol{\xi})_i$ is given by
\begin{align*}
    \left\langle f,\cT_2(\boldsymbol{\xi})_i(t)\right\rangle &= \langle f, \xi_i(0)\rangle + \langle f, \beta_i(t)\rangle - \int_0^t\langle f\times \iota, \xi_i(s)\rangle ds +  \int_0^t \langle f\times\iota, \xi_{i+1}(s)\rangle ds,\mbox{ for all }i\geq 3
\end{align*}
for all $f\in  \Lipbar{\RR, \tilde{L}}(\SSS)$. For any two vectors $\boldsymbol{\xi},\boldsymbol{\xi}'\in\cM_s^{\infty}$,
\begin{align*}
    \left|\cT_2(\boldsymbol{\xi})-\cT_2(\boldsymbol{\xi}')\right|_T^*&\leq T\mu_{\max} \left(\sum_{i=1}^\infty\rho^{i}\left|\xi_i- \xi_i'\right|_T^* + \rho^{-1} \rho^{(i+1)}\left|\xi_{i+1}- \xi_{i+1}'\right|_T^*\right)\leq T\mu_{\max} (1+\rho^{-1}) \left|\boldsymbol{\xi}-\boldsymbol{\xi}'\right|_T^*.
\end{align*}
Choosing $T_0<(\mu_{\max}(1+\rho^{-1}))^{-1}$, $\cT_2$ is a contraction mapping on the interval $[0,T_0]$ and hence, has a unique fixed point following a completeness argument as in the proof of Lemma~\ref{lem:modified_reflection_equations}. Repeating the argument over $[T_0,2T_0],[2T_0, 3T_0],\ldots$, we prove the existence and uniqueness of a solution to \eqref{eq:reflection_mappings_original4}-\eqref{eq:reflection_mappings_original5}.

Now, suppose $(\boldsymbol{\xi}^n(0), \boldsymbol{\beta}^n)\to (\boldsymbol{\xi}(0), \boldsymbol{\beta})$ in the supremum norm, i.e., for any $\epsilon>0$, there exists an $n_\epsilon$ such that $n>n_{\epsilon}$ implies
\[
\sum_{i=3}^\infty\rho^{i}(|\xi_i^n(0)-\xi_i(0)| + |\beta_i^n-\beta_i|_T^*)<\epsilon.
\]
Then, for any such $n$, we have
\begin{align*}
    \left|\boldsymbol{\xi}^n- \boldsymbol{\xi}\right|_t^* &\leq \epsilon +\mu_{\max}(1+\rho^{-1})\int_0^t  \left|\boldsymbol{\xi}^n- \boldsymbol{\xi}\right|_s^*ds.
\end{align*}
By Gronwall's inequality, 
\begin{align*}
   \left|\boldsymbol{\xi}^n- \boldsymbol{\xi}\right|_T^*\leq \epsilon e^{\mu_{\max}(1+\rho^{-1})T}.
\end{align*}
This implies the continuity of the solution mapping. 
\end{proof}


\subsection{Martingale Representations}
\label{subsec:martingale-representations}

Our process-level convergence proof relies on the martingale central limit theorem as presented in~\cite{ptw07}. Let $\cF_t^n$ be the filtration generated by $\mf{Q}^n(t)$. The arrival processes $A^n(t)$ are Poisson processes with rate $n\lambda^n$, and hence the process
\begin{align*}
    M_A^n(t) &=n^{-1/2}\left(A^n(t) - n\lambda^nt\right)
\end{align*}
is an $\cF_t^n$-martingale. 

Let $D_{k,i}^n(t)$ denote the number of departures from server $k$ when the server has $i$ or more jobs. We define the potential departure process from server $k$, $S_k^n(t)$, as a Poisson process with rate $\mu_k^n$. Then, using a thinning argument, a potential departure occurring at time $t$ is considered an actual departure for $D_{k,i}^n(t)$ if $Q_{k,i}^n(t-)=1$. Hence, Lemma 3.4 in~\cite{ptw07} implies that, for all $k,i\in \NN$, the processes
\begin{align*}
    M_{D,k,i}^n(t) &=n^{-1/2}\left( D_{k,i}^n(t) - \int_0^t \mu_k^n Q_{k,i}^n(s-)ds\right)
\end{align*}
are $\cF_t^n$-martingales. As a consequence, for any $f\in \Lipbar{\RR, L}(\SSS)$, the processes 
\begin{align*}
    M_{D,i}^{f,n}(t) &= \langle f, \hat{\Delta}_{i}^n(t) \rangle - \int_0^t \langle f\times \iota, \xi_i^n(s-)\rangle ds = \sum_{k=1}^nf(\mu_k^n)M_{D,k,i}^n(t)
\end{align*}
and 
\begin{align}
    M_{D,i}^n(t) = \sum_{k=1}^n M_{D,k,i}^n(t)
\end{align}
are also $\cF_t^n$-martingales.

\begin{lemma}\label{lem:martingale_convergence}
    Suppose $W_1(t)$ and $W_2(t)$ are independent $\cF_t$-Brownian motions. Then, as $n\to \infty$, we have
    \begin{enumerate}
        \item $(M_A^n(t), M_{D,1}^n(t))\Rightarrow (W_1(t), W_2(t))$,
        \item $M_{D,i}^{f,n}\toP 0$ for all $i\geq 2$ uniformly for all $f\in \Lipbar{\RR,L}(\SSS)$. 
    \end{enumerate}
\end{lemma}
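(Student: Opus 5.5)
We will prove both parts with the martingale functional central limit theorem in the form of Theorem~2.1 of \cite{ptw07}. That result reduces convergence to two ingredients. The first is convergence in probability of the predictable quadratic variations (plus covariations). The second is asymptotic negligibility of the maximal jump; here every jump of $M_A^n$ and $M_{D,k,i}^n$ has size $n^{-1/2}$, so this holds trivially.

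\textbf{Part 1.} The arrival martingale has $\langle M_A^n\rangle(t)=\lambda^n t\to t$. Because departures from distinct servers are driven by independent Poisson clocks $S_k^n$, the martingales $M_{D,k,1}^n$ are orthogonal for distinct $k$ and orthogonal to $M_A^n$. Hence $\langle M_A^n,M_{D,1}^n\rangle\equiv 0$ and
\[
\langle M_{D,1}^n\rangle(t)=n^{-1}\sum_{k=1}^n\mu_k^n\int_0^t Q_{k,1}^n(s)\,ds=\bar{\mu}^n t+n^{-1/2}\int_0^t\sum_{k=1}^n\mu_k^n X_{k,1}^n(s)\,ds .
\]
The last term is bounded in absolute value by $n^{-1/2}\mu_{\max}t|X_1^n|_t^*$. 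By Lemma~\ref{lem:stochastic_boundedness}, $|X_1^n|_t^*$ is stochastically bounded, so this term tends to zero in probability. Since $\bar{\mu}^n\to1$ by Assumption~\ref{asm:arrival_service_rates} (indeed $\bar\mu^n=1+n^{-1/2}\nu^n$ with $\nu^n$ tight), the predictable quadratic variation converges to $t$. The martingale FCLT then gives joint convergence to a two-dimensional Brownian motion with identity covariance, i.e.\ to independent $W_1,W_2$.

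\textbf{Part 2.} For $i\ge2$ and $f\in\Lipbar{\RR,L}(\SSS)$, orthogonality gives
\[
\langle M_{D,i}^{f,n}\rangle(t)=n^{-1}\sum_{k=1}^n f(\mu_k^n)^2\mu_k^n\int_0^t Q_{k,i}^n(s)\,ds\le L^2\mu_{\max}\,n^{-1/2}\int_0^t X_i^n(s)\,ds\le L^2\mu_{\max}t\,n^{-1/2}|X_2^n|_t^*,
\]
since $X_i^n\le X_2^n$. By Lemma~\ref{lem:stochastic_boundedness} this bound tends to zero in probability. Doob's (or Lenglart's) inequality then yields $|M_{D,i}^{f,n}|_T^*\toP0$.

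\textbf{Uniformity in $f$.} This is the main obstacle: a pointwise-in-$f$ statement does not directly give a statement uniform over the (infinite) class $\Lipbar{\RR,L}(\SSS)$. The plan is to avoid a chaining argument over $f$ by dominating uniformly at the level of individual servers. We write $M_{D,i}^{f,n}=\sum_k f(\mu_k^n)M_{D,k,i}^n$ and bound
\[
\sup_f|M_{D,i}^{f,n}|_T^*\le L\,n^{-1/2}\sum_k\Big(D_{k,i}^n(T)+\mu_k^n\int_0^TQ_{k,i}^n\,ds\Big)=L\big(\hat D_i^n(T)+\textstyle\int_0^T\langle\iota,\xi_i^n(s)\rangle ds\big).
\]
This bound is only $O_p(1)$, not $o_p(1)$. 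Therefore we instead use compactness. The unit ball of $\Lipbar{\RR,L}(\SSS)$ is totally bounded in the supremum norm by Arzelà–Ascoli, so for $\delta>0$ pick a finite $\delta$-net $f_1,\dots,f_m$. Then
\[
\sup_f|M^{f,n}_{D,i}|_T^*\le\max_j|M^{f_j,n}_{D,i}|_T^*+\delta\big(\hat D_i^n(T)+\mu_{\max}T|X_2^n|_T^*\big).
\]
The maximum over the finitely many $f_j$ vanishes in probability by the pointwise result. The bracket is stochastically bounded: $\hat D_i^n(T)$ equals its compensator plus $M_{D,i}^n$, and the compensator is at most $\mu_{\max}T|X_2^n|_T^*$. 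Letting $\delta\downarrow0$ completes the argument.
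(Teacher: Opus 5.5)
Your proof is correct, and it differs from the paper's in two places.

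For Part~1, the paper writes $M_{D,1}^n$ as a randomly time-changed centered unit-rate Poisson process, $n^{-1/2}\bigl(S(I^n(t))-I^n(t)\bigr)$ with $I^n(t):=\sum_k\mu_k^n\int_0^tQ_{k,1}^n(s)\,ds$, and then combines the Poisson FCLT with the random time-change theorem. You instead apply the multivariate martingale FCLT directly: you compute $\langle M_A^n\rangle$ and $\langle M_{D,1}^n\rangle$, and use the absence of common jumps (with continuous compensators) to get $\langle M_A^n,M_{D,1}^n\rangle=0$. Both routes rest on the same input, namely $I^n(t)/n\to t$, which follows from Lemma~\ref{lem:stochastic_boundedness} and $\bar\mu^n\to1$. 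In your version, the independence of the limits $W_1,W_2$ follows directly from the vanishing cross-variation. The time-change route instead needs a joint construction in which $S$ is independent of $A^n$, since a distributional identity for $M_{D,1}^n$ alone does not give joint convergence with $M_A^n$.

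For Part~2, your quadratic-variation bound and Lenglart step are the paper's argument. The paper stops there, and because the bound is uniform in $f$, that only gives $\sup_f\PP(|M_{D,i}^{f,n}|_T^*>\epsilon)\to 0$. You add two things: an Arzel\`a--Ascoli finite net, and the crude bound $\|g\|_\infty\bigl(\hat D_i^n(T)+\int_0^T\langle\iota,\xi_i^n(s)\rangle\,ds\bigr)$ on $\sup_{t\le T}|\sum_k g(\mu_k^n)M_{D,k,i}^n(t)|$. Together these upgrade the conclusion to $\sup_f|M_{D,i}^{f,n}|_T^*\toP 0$. That stronger form is exactly what the paper later uses, as $\sup_f\cR^n\toP 0$ in the proof of Theorem~\ref{thm:process_level_convergence}. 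So your treatment of the uniformity supplies a step the paper leaves implicit.
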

\begin{proof}
    Defining $S(t)$ to be a unit rate Poisson process, we see that the martingale $M_{D,1}^n(t)$ is stochastically equivalent to 
    \begin{align}\label{eq:martingale_time_change}
    n^{-1/2}\left(S\left(\sum_{k=1}^n\mu_k^n\int_0^tQ_{k,1}^n(s)ds\right) - \sum_{k=1}^n\mu_k^n\int_0^tQ_{k,1}^n(s)ds\right).
    \end{align}
    We have 
    \begin{align*}
        \frac{\sum_{k=1}^n\mu_k^n\int_0^tQ_{k,1}^n(s)ds}{n} = \bar{\mu}^nt - \frac{\sum_{k=1}^n\mu_k^n\int_0^t(1-Q_{k,1}^n(s))ds}{n}\to t
    \end{align*}
    as Assumption~\ref{asm:arrival_service_rates} and the stochastic boundedness of $X_1^n(t)$ in Lemma~\ref{lem:stochastic_boundedness} imply that the first term converges to $t$  and the second term converges to 0. Using the functional central limit theorem for Poisson processes (cf. Theorem~4.2 in \cite{ptw07}), we have 
    \[\left(M_{A}^n(t),\frac{S(nt)-nt}{n^{1/2}},  \frac{\sum_{k=1}^n\mu_k^n\int_0^tQ_{k,1}^n(s)ds}{n}\right)\Rightarrow (W_1(t),W_2(t),t).\]
    Using the random-time change theorem (cf. Proposition 13.2.1 in \cite{whitt2002stochastic}) with \eqref{eq:martingale_time_change}, we obtain $(M_A^n(t), M_{D,1}^n(t))\Rightarrow (W_1(t), W_2(t))$.

    Concentrating on the predictable quadratic variation  $\llangle M_{D,i}^{f,n}(t)\rrangle$ of the martingale $M_{D,i}^{f,n}$, and again using Lemma~\ref{lem:stochastic_boundedness}, we have
    \begin{align*}
        \left|\llangle M_{D,i}^{f,n}(t)\rrangle\right|_T^* = \left|\frac{\int_0^t \langle f^2\times \iota^2, \xi_i^n(s-)\rangle ds}{n}\right|_T^* \leq \mu_{\max}^2L^2\frac{|X_i^n(t)|_T^*}{n^{1/2}}\to 0,
    \end{align*}
    for any $T>0$. Hence, Part 2 again follows from the martingale central limit theorem. 
\end{proof}

\subsection{Convergence of System Processes}
\label{subsec:convergence-system-processes}

We now have the necessary tools for the proof of Theorem~\ref{thm:process_level_convergence}. The assumptions of the theorem imply convergence in the Skorokhod-$J_1$ topology and hence, we use the relevant metric in this section.
\begin{proof}[Proof of Theorem~\ref{thm:process_level_convergence}] 
Let $\epsilon_k = 1/k$ and fix $f \in\Lipbar{\RR,L}(\SSS)$. Using Lemma~2 in \cite{buke2023many}, the assumptions of the theorem, and  Lemmas~\ref{lem:alpha_continuity} and \ref{lem:martingale_convergence}, we may assume without loss of generality that the vector
\begin{equation}\label{eq:converging_vector}
\left(\nu^n, X_1^n(0), \boldsymbol{\xi}^n(0),
    M_A^n(t),
    M_{D,1}^n(t),
    (M_{D,i}^{f,n}(t) : i \ge 2),
    (\alpha_i^n(t) : i \in \mathbb{N}),
    (\tau_{\epsilon_k}^n, \mathcal{S}_{\epsilon_k}\eta^n(t) : k \in \mathbb{N})
\right)
\end{equation}
converges almost surely in the product Skorokhod topology on $[0,T]$ for $T>0$. Hence, we may select a single sequence of Skorokhod time-change 
functions $\Lambda^n : [0,T] \to [0,T]$ such that $\Lambda^n$ are differentiable with derivative $\dot{\Lambda}^n(t)$ and $|\dot{\Lambda}^n(t) - 1|_T^*\toP 0$. 

Setting $\beta(t)$ in \eqref{eq:reflection_mappings_original5} to be the unique signed measure that satisfies $\langle f,\beta^n(t)\rangle=-M_{D,i}^{f,n}+M_{D,i+1}^{f,n}$ for all $f\in \Lipbar{\RR,L}(\SSS)$, the continuity of the mapping  in Lemma~\ref{lem:reflection_equations2} implies the convergence of $(\xi_{i}^n(t):i\geq 3)$ to the solution of \eqref{eq:main_diffusion_xi3}. To prove the convergence of $(X_1^n(t), \xi_2^n(t))$, we consider $(V_1^n(t)=X_1^n(t)+\hat{A}_1^n(t), \xi_2^n(t))$ which is also the unique solution of \eqref{eq:reflected_equation1}-\eqref{eq:reflected_equation2} with $y_1(t) = M_A^n(t) - M_{D,1}^n(t) + M_{D,2}^n(t)$, $c(t) = \langle \iota, \eta^n(t)\rangle$, $\gamma = \xi^n(0)$ and $\langle f,\beta(t)\rangle =-M_{D,2}^{f,n}+M_{D,3}^{f,n}$ i.e.,
\begin{align*}
    V_1^n(t) &= X_1^n(0)-(\varsigma^n+\nu ^n)t +M_{A}^n(t)-M_{D,1}^n(t)+M_{D,2}^n(t)-\langle \iota, \eta^n(t)\rangle\int_0^t \phi(V_1^n(s))ds\\&\qquad\qquad\qquad\qquad\qquad\qquad\qquad\qquad\qquad\qquad\qquad\qquad\qquad\qquad+\int_0^t \langle \iota, \xi_2^n(s)\rangle ds\\
     \langle f, \xi_2^n(t)\rangle &= \langle f, \xi_2^n(0)\rangle + \langle f, \hat{\alpha}^n(\psi(V_1^n)(t))\rangle-M_{D,2}^{f,n}(t)+M_{D,3}^{f,n}(t) - \int_0^t \langle f\times \iota, \xi_2^n(s)\rangle ds\\&\qquad\qquad\qquad\qquad\qquad\qquad\qquad\qquad\qquad\qquad\qquad\qquad\qquad\qquad+\int_0^t \langle f\times \iota, \xi_3^n(s)\rangle ds.
\end{align*}
Similarly, let $(X_1(t),U_1(t), \xi_2(t))$ be the solution of \eqref{eq:main_diffusion_x1} and \eqref{eq:main_diffusion_xi2} and $(V_1(t)=X_1(t)+U_1(t),\xi_2(t))$ to be the solution of \eqref{eq:reflected_equation1}-\eqref{eq:reflected_equation2} with $y_1(t) = W_1(t) - W_2(t)$, $c(t)=\langle \iota, \eta(t)\rangle$, $\gamma = \xi(0)$ and $\langle f,\beta(t)\rangle = 0$, i.e.,
\begin{align}
    \label{eq:reflection_replaced1}V_1(t) &= X_1(0) +W_1(t) - W_2(t) - \langle \iota, \eta(t)\rangle\int_0^t\phi(V_1(s))ds + \int_0^t\langle \iota, \xi_2(s)\rangle ds\\
     \label{eq:reflection_replaced2}\langle f, \xi_2(t)\rangle &= \langle f, \xi_2(0)\rangle + \langle f, \alpha(\psi(V_1)(t))\rangle - \int_0^t \langle f\times \iota, \xi_2(s)\rangle ds  +\int_0^t\langle f \times \iota, \xi_3(s)\rangle ds.
\end{align}
Our result follows if we can show that for any $\varepsilon, \rho>0$, there exists an $n_{\varepsilon, \rho}$ such that $n>n_{\varepsilon, \rho}$ implies
\[
     \PP\left(d_{D,\RR\times \cM,L}\left((V_1^n,\xi_2^n), (V_1, \xi_2\right))>\varepsilon\right) <\rho.
\]

Using the continuity of the solutions to \eqref{eq:main_diffusion_x1}--\eqref{eq:main_diffusion_xi2} and the stochastic boundedness in Lemma \ref{lem:stochastic_boundedness}, for any $\rho>0$ we can find $K_{\rho}>0$ such that
$|X_1(t)|_T^*\leq K_{\rho}$, $|X_2(t)|_T^*\leq K_{\rho}$ and, for all $n$,
\begin{align*}
\PP\Bigl(d_{D,\RR\times\mathcal{M},L}\bigl((V_1^n,\xi_2^n),(V_1,\xi_2)\bigr)>\varepsilon\Bigr)
&\le \PP\Bigl(d_{D,\RR\times\mathcal{M},L}\bigl((V_1^n,\xi_2^n),(V_1,\xi_2)\bigr)>\varepsilon,\ 
|X_i^n|_T^*<K_{\rho},i=1,2\Bigr)\\
&\qquad\qquad + \frac{\rho}{2}.
\end{align*}
Hence, without loss of generality, we assume that these terms are bounded by $K_{\rho}$ in the remainder of the proof. 

Applying the time change $\Lambda^n(t)$ and taking the difference, we obtain
\begin{align}
    \nonumber\left|V_1^n-V_1\circ\Lambda^n\right|_t^*&\leq |X_1^n(0)-X_1(0)| + \left|M_A^n-W_1\circ\Lambda^n\right|_t^*+ \left|M_{D,1}^n-W_2\circ\Lambda^n\right|_t^*+ \left|M_{D,2}^n(t)\right|_t^*\\
    \nonumber&\quad +\left|\langle \iota, \eta^n(t)\rangle\int_0^t\phi(V_1^n(s))ds - \langle \iota, \eta(\Lambda^n(t))\rangle\int_0^{\Lambda^n(t)}\phi(V_1(s))ds\right|_t^*\\
    &\quad +\left|\int_0^t \langle \iota, \xi_2^n(s)\rangle ds - \int_0^{\Lambda^n(t)}\langle \iota, \xi_2(s)\rangle ds \right|_t^*\label{eq:v_decomposition}
\end{align}
The uniqueness argument in the proof of Lemma~\ref{lem:reflection_equations} implies that $X_1^n=\phi\circ V_1^n$ and for any $\epsilon>0$, the definition of $\tau_\epsilon$ implies
\begin{align*}
    \left|\langle \iota, \eta^n\rangle\int_0^\cdot\phi(V_1^n(s))ds-\langle \iota, \cS_{\epsilon}\eta^n\rangle\int_{\tau_\epsilon^n}^\cdot\phi(V_1^n(s))ds\right|_T^*\leq \mu_{\max}\epsilon.
\end{align*}
Using a similar argument for $\eta$ and $V_1$, we can decompose the fifth term on the right-hand side of \eqref{eq:v_decomposition} for any $0\leq t\leq T$ as
\begin{align}
 \nonumber&\left|\langle \iota, \eta^n\rangle \int_{0}^\cdot\phi(V_1^n(s))ds - \langle \iota, \eta\circ\Lambda^n\rangle\int_{0}^{\Lambda^n(\cdot)}\phi(V_1(s))ds\right|_t^*\\
 \nonumber&\quad \leq 2\epsilon \mu_{\max} + \mu_{\max}K_\rho|\tau_\epsilon-\Lambda^n(\tau_\epsilon)|+\left|\langle \iota, \cS_{\epsilon}\eta^n\rangle\int_{\tau_\epsilon^n}^\cdot\phi(V_1^n(s))ds - \langle \iota, \cS_{\epsilon}\eta\circ\Lambda^n\rangle\int_{\Lambda^n(\tau_\epsilon)}^{\Lambda^n(\cdot)}\phi(V_1(s))ds\right|_t^*\\
 \nonumber&\quad \leq 2\epsilon \mu_{\max} + \mu_{\max}K_\rho|\tau_\epsilon-\Lambda^n(\tau_\epsilon)| + T K_\rho\left|\langle \iota, \cS_\epsilon\eta^n\rangle - \langle \iota, \cS_\epsilon\eta\circ\Lambda^n\rangle\right|_T^*\\
 \nonumber&\qquad + \mu_{\max}K_{\rho}\left|\int_{\tau_\epsilon^n}^\cdot\phi(V_1^n(s))ds -\int_{\tau_\epsilon}^{\cdot}\phi(V_1(\Lambda^n(s)))\dot{\Lambda}(s)ds\right|_T^*\\
  \nonumber&\quad \leq 2\epsilon \mu_{\max} + \mu_{\max}K_\rho|\tau_\epsilon-\Lambda^n(\tau_\epsilon)| + T K_\rho\left|\langle \iota, \cS_\epsilon\eta^n\rangle - \langle \iota, \cS_\epsilon\eta\circ\Lambda^n\rangle\right|_T^*  + \mu_{\max}K_\rho^2|\tau_\epsilon^n-\tau_\epsilon|\\
 \nonumber&\qquad + \mu_{\max}K_{\rho}\int_{\tau_\epsilon}^\cdot\left|\phi\circ V_1^n -\phi\circ (V_1\circ\Lambda^n)\times \dot{\Lambda}\right|_s^*ds \\
   \nonumber&\quad \leq 2\epsilon \mu_{\max} + \mu_{\max}K_\rho|\tau_\epsilon-\Lambda^n(\tau_\epsilon)| + T K_\rho\left|\langle \iota, \cS_\epsilon\eta^n\rangle - \langle \iota, \cS_\epsilon\eta\circ\Lambda^n\rangle\right|_T^*\\
 \nonumber&\qquad + \mu_{\max}K_{\rho}\left(\int_{\tau_\epsilon}^\cdot\phi(V_1(\Lambda^n(s)))\left|1-\dot{\Lambda}\right|_s^*ds + \int_{\tau_\epsilon}^\cdot\left|\phi\circ V_1^n -\phi\circ(V_1\circ\Lambda^n)\right|_s^*ds \right)\\
\nonumber&\quad \leq 2\epsilon \mu_{\max} + \mu_{\max}K_\rho|\tau_\epsilon-\Lambda^n(\tau_\epsilon)| + T K_\rho\left|\langle \iota, \cS_\epsilon\eta^n\rangle - \langle \iota, \cS_\epsilon\eta\circ\Lambda^n\rangle\right|_T^* + 2\mu_{\max}TK_{\rho}^2\left|1-\dot{\Lambda}\right|_s^*\\
&\qquad + 2\mu_{\max}K_{\rho}\int_{0}^\cdot\left|V_1^n - V_1\circ\Lambda^n\right|_s^*ds. \label{eq:fifth_term_decomposition}
\end{align}
Using a similar logic on the sixth term on the right-hand side of \eqref{eq:v_decomposition}, we obtain
\begin{align}
   \nonumber \left|\int_0^t \langle \iota, \xi_2^n(s)\rangle ds - \int_0^{\Lambda^n(t)}\langle \iota, \xi_2(s)\rangle ds \right|_t^*&\leq \int_0^t \left|\langle \iota, \xi_2^n\rangle - \langle \iota, \xi_2\circ\Lambda^n\rangle\times \dot{\Lambda}\right|_s^* ds\\
    \nonumber &\leq \mu_{\max}K_{\rho}|1-\dot{\Lambda}^n|_T^* + \int_0^t \left|\langle \iota, \xi_2^n\rangle - \langle \iota, \xi_2\circ\Lambda^n\rangle\right|_s^* ds\\
    &\leq \mu_{\max}K_{\rho}|1-\dot{\Lambda}^n|_T^* + 4\mu_{\max}\int_0^t \left|d_{\cM,1/4}(\xi_2^n, \xi_2\circ\Lambda^n)\right|_s^* ds \label{eq:sixth_term_decomposition}
\end{align}
We now concentrate on \eqref{eq:reflection_replaced2} and proceed in a similar fashion. For any $f\in \Lip{\RR,1/4}(\SSS)$ such that $\sup_{x\in S}|f(x)|\leq 1/4$
\begin{align}
   \nonumber \left|\langle f, \xi_2^n\rangle - \langle f, \xi_2\circ\Lambda^n\rangle\right|_t^* &\leq  \left|\langle f, \xi_2^n(0)\rangle - \langle f, \xi_2(0)\rangle\right| + \left|\langle f, \alpha^n\circ\psi(V_1^n)\rangle - \langle f, \alpha\circ\psi(V_1\circ\Lambda^n)\rangle\right|_t^*\\
   \nonumber&\quad +|M_{D,2}^{f,n}|_t^* + |M_{D,3}^{f,n}|_t^*  +\left\vert \int_0^\cdot \langle f \times \iota, \xi_3^n(s)\rangle ds-\int_0^{\Lambda^n(\cdot)} \langle f\times \iota, \xi_3(s)\rangle ds\right\vert_t^*\\
   \nonumber&\quad+ \int_0^t \left|\langle f\times \iota, \xi_2^n\rangle - \langle f\times \iota, \xi_2\circ \Lambda^n\rangle\right|_s^* ds\\
   \nonumber &\leq  d_{\cM, 1/4} (\xi_2^n(0), \xi_2(0)) + \frac{1}{2}\left|V_1^n- V_1\circ\Lambda^n\right|_t^*\\
   \nonumber &\quad+\cR^n +\mu_{\max}\int_0^t \left|d_{\cM,1/4}( \xi_2^n, \xi_2\circ \Lambda^n)\right|_s^* ds\\
   \left|d_{\cM,1/4}( \xi_2^n, \xi_2\circ \Lambda^n)\right|_t^*&\leq d_{\cM, 1/4} (\xi_2^n(0), \xi_2(0)) + \frac{1}{2}\left|V_1^n- V_1\circ\Lambda^n\right|_t^*+\cR^n \\&\quad+ \mu_{\max}\int_0^t \left|d_{\cM,1/4}( \xi_2^n, \xi_2\circ \Lambda^n)\right|_s^* ds.\label{eq:xi_decomposition}
\end{align}
where $\cR^n := \big|M_{D,2}^{f,n}\big|_T^* + \big|M_{D,3}^{f,n}\big|_T^*+ \mu_{\max}K_\rho\,|1-\dot\Lambda^n|_T^*+ 2\mu_{\max}\int_0^T \big|d_{\cM,1/4}(\xi_3^n,\xi_3\circ\Lambda^n)\big|_s^*\,ds$
collects the martingale increments and the level-three inflow, and satisfies $\sup_f\cR^n\toP 0$ by Lemma~\ref{lem:martingale_convergence}, \eqref{eq:converging_vector}, and the convergence of $(\xi_i^n:i\ge3)$.

Replacing \eqref{eq:fifth_term_decomposition} and \eqref{eq:sixth_term_decomposition} in \eqref{eq:v_decomposition}, multiplying with 3/2 and adding to the \eqref{eq:xi_decomposition} and finally using the convergence of the vector in \eqref{eq:converging_vector}, for every $\varepsilon>0$, we can choose $\epsilon>0$ and $n_\varepsilon\in \NN$ such that $n>n_{\varepsilon}$ implies
\begin{align}
    \nonumber \left|V_1^n-V_1\circ\Lambda^n\right|_t^* +  \left|d_{\cM,1/4}( \xi_2^n, \xi_2\circ \Lambda^n)\right|_t^* &\leq \varepsilon + 3\mu_{\max}K_{\rho}\int_{0}^t\left|V_1^n - V_1\circ\Lambda^n\right|_s^*ds\\&\quad + 7\mu_{\max}\int_0^t \left|d_{\cM,1/4}( \xi_2^n, \xi_2\circ \Lambda^n)\right|_s^* ds
\end{align}
Using the Gronwall's inequality (cf. Lemma 4.1 in \cite{ptw07}), we have
\[
\left|V_1^n-V_1\circ\Lambda^n\right|_T^* +  \left|d_{\cM,1/4}( \xi_2^n, \xi_2\circ \Lambda^n)\right|_T^*\leq \varepsilon e^{\mu_{\max}\max\{3K_\rho, 7\}T},
\]
which implies that $(V_1^n,\xi_2^n)\toP (V_1,\xi_2)$ in the Skorokhod topology and concludes the proof.
\end{proof}

\begin{proof}[Proof of Corollary~\ref{cor:tau_0_continuity}]
The continuous mapping theorem implies that if $\tau_0^\pi>t$, then for all $u\in [0,t]$, $X_1(s) = 0$ and this implies
\[
\sqrt{2}\,W(u)
=
- X_1(0)
+ (\varsigma+\nu)u
- \int_0^u \langle \iota,\xi_2^{\pi}(s)\rangle\,ds
+ U_1^{\pi}(u).
\]
The right-hand side is the sum of finite variation processes and hence, this is only possible if $W(t)$ has finite variation. Hence, we conclude that $\PP(\tau_0^\pi>t) = 0$ for all $t>0$ and the result follows. 
\end{proof}

By Theorem~\ref{thm:process_level_convergence}, it suffices to prove the convergence of $\eta^n$ and $\alpha_1^n$ to obtain the corollaries in Section~\ref{sec:main_results}. Since the convergence of $\eta^n$ is already established in Propositions~\ref{prop:priority_idleness} and \ref{prop:totally_blind_idleness}, we now focus on $\alpha_1^n$. The following lemma plays a key role in understanding the convergence of $\alpha_1^n$. 

\begin{lemma}\label{lem:A_1_bound}
Let $\tau_s^n:=\inf\{t:A_1^{\pi,n}(t)>n^{1/2}s\}$ for any $s\geq 0$. Then, for any $\epsilon>0$, there exists a $T_{s,\epsilon}$ such that $\PP(\tau_s^n>T_{s,\epsilon})<\epsilon$ for all $n\in \NN$.
\end{lemma}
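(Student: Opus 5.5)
We read $A_1^{\pi,n}$ as the number of arrivals routed to busy servers, that is, arrivals occurring while $Q_1^{\pi,n}(s-)=n$. The claim is that $n^{-1/2}A_1^{\pi,n}(t)\to\infty$ in probability as $t\to\infty$, uniformly in $n$. Equivalently, for each fixed $s$ we must find $T$ with $\PP(\hat A_1^{\pi,n}(T)\le s)<\epsilon$ for all $n$. We argue through the balance equation \eqref{eq:aggregate_balance} for $i=1$, written in martingale form:
\begin{align*}
X_1^{\pi,n}(t)=X_1^{\pi,n}(0)-(\varsigma^n+\nu^n)t+M_A^n(t)-M_{D,1}^n(t)+M_{D,2}^n(t)-\int_0^t\sum_{k}\mu_k^nX_{k,1}^{\pi,n}(s)\,ds+\int_0^t\langle\iota,\xi_2^{\pi,n}(s)\rangle ds-\hat A_1^{\pi,n}(t).
\end{align*}
Rearranging, and using $X_{k,1}^{\pi,n}\le 0$ and $X_1^{\pi,n}\le 0$, the idleness term $-\int\sum_k\mu_k^nX_{k,1}^{\pi,n}$ is nonnegative. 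This gives a lower bound of the form
\begin{align*}
\hat A_1^{\pi,n}(t)\ \ge\ X_1^{\pi,n}(0)-(\varsigma^n+\nu^n)t+M^n(t)+\mu_{\min}\int_0^t|X_1^{\pi,n}(s)|\,ds-X_1^{\pi,n}(t),
\end{align*}
where $M^n=M_A^n-M_{D,1}^n+M_{D,2}^n$ and we dropped the nonnegative $\xi_2$ term.

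Read this way, the drift $-(\varsigma^n+\nu^n)t$ is negative and could dominate. The useful observation is the converse: the process $X_1^{\pi,n}$ cannot stay away from $0$ for long, because when $X_1\le -K$ the restoring drift $\mu_{\min}K$ exceeds $\varsigma+|\nu|$. Each visit to $0$ then pushes $\hat A_1$ up by a diffusive amount. So we plan a regenerative/excursion argument rather than a direct linear bound.

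\textbf{Step 1: reduction.} On an event of probability $1-\epsilon/3$, uniformly in $n$, we have $|X_1^{\pi,n}(0)|\le K_0$ and $|\nu^n|\le K_0$, by Assumptions~\ref{asm:arrival_service_rates} and \ref{asm:xi_initial} together with tightness. Split $[0,T]$ into $m$ blocks of length $h$.

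\textbf{Step 2: uniform block lower bound.} We aim to show that on each block $[jh,(j+1)h]$, conditionally on $\cF_{jh}^n$, the increment $\hat A_1^{\pi,n}((j+1)h)-\hat A_1^{\pi,n}(jh)$ exceeds some $c>0$ with probability at least $p>0$. Here $p$ and $c$ are uniform in $n$ and in the state at $jh$, provided $|X_1^{\pi,n}(jh)|\le K$. We take $K$ large via Lemma~\ref{lem:stochastic_boundedness}, or better via a Lyapunov/drift bound showing that $X_1$ returns to $[-K,0]$ quickly.

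On the block, the difference $\hat A_1$ increment $+X_1^{\pi,n}((j+1)h)$ is bounded below by $X_1^{\pi,n}(jh)-(\varsigma^n+\nu^n)h+\Delta M^n+\mu_{\min}\int|X_1|$. Because $X_1\le0$, we have $\hat A_1$ increment $\ge -K-2K_0h+\Delta M^n$. The martingale increment $\Delta M^n$ converges to $\sqrt2$ times a Brownian increment by Lemma~\ref{lem:martingale_convergence} (part 1 for $M_A,M_{D,1}$; part 2 handles $M_{D,2}$, with $|X_2|$ controlled by stochastic boundedness). Its quadratic variation on a block of length $h$ is about $2h$, uniformly in the state. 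Hence $\PP(\Delta M^n>K+2K_0h+c\mid\cF^n_{jh})\ge p$ for a Gaussian-tail $p>0$ and all large $n$; small $n$ is handled separately since then $n^{1/2}s$ is bounded and $\tau_s^n$ is finite. Concretely, we would use a martingale functional CLT conditional lower bound, or a Paley–Zygmund-type bound using the explicit Poisson structure, with $M_A$ independent of the past.

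\textbf{Step 3: many successes.} The states at block starts lie in $[-K,0]$ on most blocks. By stochastic boundedness on $[0,T]$ this holds at all blocks with probability $1-\epsilon/3$ for the given horizon, but that probability depends on $T$. So the uniformity in $T$ is the main obstacle. To handle it, we first try a time-averaged bound. From the balance equation, $\mu_{\min}\EE\int_0^T|X_1|\le K_0+2K_0T+\EE\hat A_1(T)+\EE|X_1(T)|$, so if $\hat A_1(T)\le s$ then $\int_0^T|X_1|=O(T)$ and a positive fraction of the blocks start in $[-K,0]$. A Markov inequality then gives at least $\delta m$ good blocks with high probability. With conditional success probability $p$ on each good block, the number of successes dominates a binomial by a standard martingale (Freedman) argument. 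Choosing $m$ so that $\delta m p c/2>s$ yields $\PP(\hat A_1(T)\le s)<\epsilon$.

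The delicate point is making the fraction $\delta$ independent of $T$ while assuming $\hat A_1(T)\le s$. For this we use the monotone comparison with the JSQ-SSL bounding system (Proposition~\ref{prop:upper_bound_system}) to control $X_1$ from below uniformly in $n$ and $t$.
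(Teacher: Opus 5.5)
Your route differs from the paper's, and it can be made to work, but as written Steps~2 and~3 do not close. For comparison, the paper couples the idle-server count with a birth--death walk whose left rate drops from $n\lambda^n+K_\epsilon n^{1/2}$ to $n\lambda^n-K_\epsilon n^{1/2}$ below $q_{RW}^n$ (this is where the restoring drift enters). It shows via \eqref{eq:route_to_queue_bound} that once the walk reaches $4K_\epsilon n^{1/2}$ one already has $A_1^n\ge n^{1/2}s$, and bounds $\EE[\tau_s^{RW,n}]$ uniformly in $n$ by an excursion and gambler's-ruin computation plus Markov's inequality.

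\textbf{Step~3.} Your inequality is stated in expectation and contains $\EE[\hat A_1^{\pi,n}(T)]$, which the event $\{\hat A_1^{\pi,n}(T)\le s\}$ does not control. Even a pathwise bound on $\int_0^T|X_1^{\pi,n}(u)|\,du$ controls only the Lebesgue measure of $\{t\le T: X_1^{\pi,n}(t)<-K\}$, not the values at the deterministic block starts $jh$. The alternative, Markov's inequality on the number of bad grid points, needs $\sup_{t\ge 0}\EE|X_1^{\pi,n}(t)|$ bounded uniformly in $n$. Proposition~\ref{prop:upper_bound_system} cannot supply this. It is a fixed-$t$ stochastic ordering of the cumulative counts $Q_{+i}$; for $i=1$ that is the total number of jobs, not the number of idle servers. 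It gives no pathwise or uniform-in-time control of $X_1^{\pi,n}$.

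\textbf{Step~2.} Lemma~\ref{lem:martingale_convergence} is a weak-convergence statement. It gives no lower bound on $\PP(\Delta M^n>a\mid\cF^n_{jh})$ uniform in $n$ and in the state, because the increments of $M_{D,1}^n$ and $M_{D,2}^n$ depend on the path.

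\textbf{Repair.} Both holes can be filled with your own ingredients and no comparison system. Write \eqref{eq:aggregate_balance} for $i=1$ pathwise and drop $X_1^{\pi,n}(T)\le 0$ and $\hat D_2^{\pi,n}(T)\ge 0$. On $\{\hat A_1^{\pi,n}(T)\le s\}$ this gives
\[
\mu_{\min}\int_0^T|X_1^{\pi,n}(u)|\,du\le s+|X_1^{\pi,n}(0)|+(\varsigma^n+\nu^n)^+T+|M_A^n(T)-M_{D,1}^n(T)|.
\]
Since $\EE[(M_A^n(T)-M_{D,1}^n(T))^2]\le(\lambda^n+\bar\mu^n)T$, Chebyshev's inequality shows that, with probability close to one uniformly in $n$, the time spent below $-K$ in $[0,T]$ is at most $T/2$, for a $K$ independent of $T$.

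Then start blocks at the stopping times $\sigma_{j+1}=\inf\{t\ge\sigma_j+h: X_1^{\pi,n}(t)\ge -K\}$, so at least $T/(2h)-1$ blocks end before $T$. On each block, the increment form of \eqref{eq:route_to_queue_bound} gives $\hat A_1^{\pi,n}(\sigma_j+h)-\hat A_1^{\pi,n}(\sigma_j)\ge -K+n^{-1/2}\bigl(\Delta A^n-\Delta S^n\bigr)$. Here the increments are over $[\sigma_j,\sigma_j+h]$, and $S^n$ is the potential-departure Poisson process of \eqref{eq:departure_construction}, which dominates $D_1^n$.

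By the strong Markov property of $(A^n,S^n)$, the block events $\{n^{-1/2}(\Delta A^n-\Delta S^n)>K+c\}$ are i.i.d. Their probability is bounded below uniformly in $n$: by the CLT for large $n$, and by positivity for the finitely many remaining $n$. So the success count is binomial, and $\PP(\hat A_1^{\pi,n}(T)\le s)$ is small for large $T$.

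This trades the paper's explicit hitting-time computation for a soft renewal count. The paper's coupling gives the stronger conclusion that $\tau_s^n$ is dominated by a hitting time whose mean is bounded uniformly in $n$.
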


\begin{proof}
Since the result holds for any tie-breaking policy, we suppress the superscript $\pi$ throughout the proof to simplify the notation. Assumptions~\ref{asm:arrival_service_rates} and \ref{asm:xi_initial} imply that for all $\epsilon>0$, there exists $K_{\epsilon}>s$ such that 
\[
\PP\left(\{X_1^n(0)<-K_{\epsilon}\}\cup \{X_2^n(0)>K_{\epsilon}\}\cup \left\{\sum_{k=1}^n\mu_k^n> n\lambda^n + K_{\mu,\epsilon}n^{1/2}\right\}\right)<\frac{\epsilon}{2}
\]
and we can write
\begin{align}
    \PP(\tau_s^n>T_{s,\epsilon}^\alpha) &\leq \PP\left(\tau_s^n>T_{s,\epsilon}^\alpha|X_1^n(0)\geq -K_{\epsilon}, X_2^n(0)\leq K_{\epsilon}, \sum_{k=1}^n\mu_k^n\leq n\lambda^n + K_{\mu,\epsilon}n^{1/2}\right) + \frac{\epsilon}{2}
    \label{eq:probability_decomp_alpha}
\end{align}
Once we show that we can choose $T_{s,\epsilon}^\alpha$ to be large enough to make the conditional probability on the right-hand side of \eqref{eq:probability_decomp_alpha} below $\epsilon/2$, our result follows. In the remainder of the proof, we assume that $X_1^n(0)\geq -K_{\epsilon}$, $X_2^n(0)\leq K_\epsilon$, $\sum_{k=1}^n\mu_k^n\leq n\lambda^n + K_{\mu,\epsilon}n^{1/2}$ and all expectations and probabilities below should be understood as conditional on this event. 

Using \eqref{eq:aggregate_balance}, for any $0\leq t_0\leq t$, we can bound $A_1^n(t)$ from below as
\begin{align}
    A_1^n(t) & \geq Q_1^n(t_0)-n + A^n(t) - A^n(t_0) - D_1^n(t) + D_1^n(t_0),
    \label{eq:route_to_queue_bound}
\end{align}
which implies that, if the arrivals occurring on any interval $[t_0,t]$ exceed the departures over the same period by $n^{1/2}s$ plus the initially idle servers at time $t_0$, then $A_1(t)\geq n^{1/2}s$. Hence, we prove the proposition using the following steps:
\begin{enumerate}
    \item We couple the JSQ system with a continuous-time random walk $(Q_{RW}^n(t):t\geq 0)$ in such a way that, by the time the random walk reaches a specified endpoint from a specified initial point, the right-hand side of \eqref{eq:route_to_queue_bound}, and hence $A_1^n(t)$, exceeds $n^{1/2}s$.
    
    \item We derive a bound on the expected hitting time of the endpoint from the initial point for the continuous-time random walk. This bound then enables us to obtain the desired bounds on the tail probabilities. 
\end{enumerate}

To establish the coupling in Step~1, consider a continuous-time random walk
$(Q_{RW}^n(t):t\geq 0)$ that, when in state $q$, moves to the right with rate
$n\lambda^n$ and to the left with state-dependent rate
\[
\mu_{RW}^{n}(q) =
\begin{cases}
    n\lambda^n+K_\epsilon n^{1/2}, & q>q_{RW}^n,\\[2mm]
    n\lambda^n-K_\epsilon n^{1/2}, & q\le q_{RW}^n,
\end{cases}
\]
where
$q_{RW}^n=-2(\mu_{\min})^{-1}K_\epsilon n^{1/2}$. We assume that the random walk starts at $Q_{RW}^n(0)=q_{RW}^n+1$. To obtain the desired coupling, we use the construction introduced in the proof of Proposition~\ref{lem:stochastic_boundedness}. The random walk moves right at the arrival epochs of $A^n(t)$ and moves left at the $j$th event epoch of Poisson process $S^n(t)$ with rate $n\lambda^n+K_{\mu, \epsilon}n^{1/2}$ if  
\begin{equation}
\tilde{U}_{D,j}^n \leq \frac{\mu_{RW}^{n}(Q_{RW}^n(\theta_{S,j}^n-))}{n\lambda^n+K_{\mu, \epsilon}n^{1/2}}\label{eq:departure_construction1}
\end{equation}
as in \eqref{eq:departure_construction} and denote the number of steps that the random walk takes towards left by time $t$  as $D_{RW}^n(t)$. Hence, 
\begin{align}
    Q_{RW}^n(t) & = q_{RW}^n + 1 + A^n(t) - D_{RW}^n(t),
    \label{eq:random_walk_balance}
\end{align}

Next, we define $\tau_s^{RW,n} = \inf\{t:Q_{RW}^n(t)\geq 4K_\epsilon n^{1/2}\}$. Then, for any $K>0$,
\begin{align}
   \nonumber \PP(\tau_s^n>T_{s,\epsilon}^\alpha) &= \PP(\tau_s^n>T_{s,\epsilon}^\alpha, \tau_s^{RW,n}<\tau_s^n) + \PP(\tau_s^{RW,n}\geq\tau_s^n> T_{s,\epsilon}^\alpha)\\
    &\leq \PP(\tau_s^n>T_{s,\epsilon}^\alpha, \tau_s^{RW,n}<\tau_s^n) + \PP(\tau_s^{RW,n}> T_{s,\epsilon}^\alpha). \label{eq:prob_bound_alpha}
\end{align}
We now prove that the first term on the right-hand side of \eqref{eq:prob_bound_alpha} is zero, which constitutes the first step we outlined above. Let $t_0^n=\sup\{t\leq \tau_s^{RW,n}:Q_{RW}^n(t) < 0\}$. All the potential departures in the original system over the interval $[t_0^n,\tau_s^{RW,n}]$ are realized as actual left steps in the random walk, and hence, $D_{RW}^n(\tau_s^{RW,n})-D_{RW}^n(t_0^n)\geq D_1^n(\tau_s^{RW,n}) -  D_1^n(t_0^n)$. Using \eqref{eq:random_walk_balance} and the fact that $Q_{RW}^n(t_0^n)=0$ by definition, we also have
\begin{align*}
    Q_{RW}^n(\tau_s^{RW,n}) &= A(\tau_s^{RW,n}) - A(t_0^n) - D_{RW}^n(\tau_s^{RW,n}) + D_{RW}^n(t_0^n) = 4K_{\epsilon}n^{1/2}.
\end{align*}
Then, using \eqref{eq:route_to_queue_bound}  along with $K_\epsilon>s$ imply that for the scenarios corresponding to the first event on the right-hand side of \eqref{eq:prob_bound_alpha}, we have
\begin{align}
Q_1^n(t_0^n)-n  \leq -3K_\epsilon n^{1/2} < Q_{RW}^n(t_0^n)=0.
\end{align}
The initial position of the random walk is chosen so that $Q_1^n(0)>Q_{RW}^n(0)$, and hence, there exists $t_1^n = \sup\{t\leq t_0^n: Q_1^n(t)-n>Q_{RW}^n(t)\}$. We note that the function $\mu_{RW}^n(q)$ is chosen so that the departure rate at the JSQ system when there are $(q)^-$ idle servers is less than $\mu_{RW}^n(q)$. We know that
\begin{align*}
   Q_{RW}^n(t_0^n) - (Q_1^n(t_0^n) - n) &= Q_{RW}^n(t_0^n)-Q_{RW}^n(t_1^n) - (Q_1^n(t_0^n) - n) - (Q_1^n(t_1^n)-n)\\
   3K_\epsilon n^{1/2}& \leq - D_{RW}^n(t_0^n) + D_{RW}^n(t_1^n) + A_1^n(t_0^n)-A_1^n(t_1^n) +D_1^n(t_0^n)-D_1^n(t_1^n)\\
   2K_\epsilon n^{1/2}& \leq - D_{RW}^n(t_0^n) + D_{RW}^n(t_1^n) +D_1^n(t_0^n)-D_1^n(t_1^n),\\
\end{align*}
where the first inequality follows by \eqref{eq:aggregate_balance} and \eqref{eq:random_walk_balance}. This is only possible if the number of departures in the JSQ system between $[t_1^n,t_0^n]$ is greater than the number of left steps in the random walk. However, we see that for any $t\in [t_1^n,t_0^n]$ 
\[
\sum_{k=1}^n \mu_k^n Q_{k,1}^n(\tilde{t}^n(j)-)\leq \sum_{k=1}^n \mu_k^n + \mu_{\min}(Q_1^n(t-)-n)\leq \mu_{RW}^n(Q_{RW}^n(t-)).
\]
Equations \eqref{eq:departure_construction} and \eqref{eq:departure_construction1} imply the number of departures in $[t_1,t_0]$ cannot exceed the number of left steps in the same interval, which is a contradiction. Hence, we conclude that the first probability on the right-hand side of \eqref{eq:prob_bound_alpha} is zero. 

As the second step, we bound the second term on the right-hand side of \eqref{eq:prob_bound_alpha} by obtaining a bound on $\EE[\tau_s^{RW,n}]$. We first apply uniformization to convert the continuous-time dynamics into an equivalent discrete-time formulation; see \cite{chen2001fundamentals} for further details. Let $P^n(t)$ be a Poisson process with total rate $\lambda_P^n = 2n\lambda^n + K_\epsilon n^{1/2}$, and interpret its event epochs $\tilde{t}^n(j)$ as the \emph{potential} event times for the random walk. At each epoch $\tilde{t}^n(j)$, the event corresponds to a right step with probability $p_A^n = n\lambda^n/\lambda_P^n$ and the random walk moves right. On the other hand, the epoch corresponds to a potential left step with probability $p_S^n = (n\lambda^n + K_\epsilon n^{1/2})/\lambda_P^n$ and it becomes an actual left step of the random walk with probability
\[
p_{RW,D}^n(j) = \frac{\mu_{RW}^n\left(Q_{RW}^n(\tilde{t}^n(j)-)\right)}{n\lambda^n + K_\epsilon n^{1/2}}.
\]
We denote the discretized version of the random walk defined as $Q_{RW}^{d,n}(j) = Q_{RW}^n(\tilde{t}^n(j))$ and define $j^{n,*} = \min\{j: Q_{RW}^{d,n}(j)\geq 4K_\epsilon n^{1/2}\}$. Using Wald's identity, we have
\begin{align}
    \EE[\tau_s^{RW,n}] = \frac{\EE[j^{n,*}]}{\lambda^n_P}.
    \label{eq:wald_tau}
\end{align}
To bound $\EE[j^{n,*}]$, we consider excursions above and below $q_{RW}^n$, referred to upper and lower excursions, respectively. To understand the methodology, consider the sample path representation provided in Figure~\ref{fg:sample_path_RW}. The black arrows represent upper excursions where the random walk moves over the interval $[q_{RW}^n + 1,\infty)$, i.e., the jumps out of states in this interval. Similarly, blue arrows represent lower excursions over $(-\infty, q_{RW}^n]$ indicating jumps out of states in this interval.

\begin{figure}[t]
\centering
\begin{tikzpicture}[
    scale=0.8,
    transform shape,
    >=stealth,
    every node/.style={font=\small}
]


\fill (0,0) circle (4pt);
\node[above=8pt] at (0,0) {$q_{RW}^{n}+1$};
\draw[->,thick] (0.15,0) -- (0.75,0);

\fill (1,0) circle (1.2pt);
\draw[->,thick] (1.15,0) -- (1.75,0);

\fill (2,0) circle (1.2pt);
\draw[->,thick] (2.15,0) -- (2.75,0);

\fill (3,0) circle (4pt);
\node[above=8pt] at (3,0) {$q_{RW}^{n}+1$};
\draw[->,thick] (3.15,0) -- (3.75,0);

\fill[blue] (4,0) circle (4pt);
\node[blue,above=8pt] at (4,0) {$q_{RW}^{n}$};

\draw[->,thick,blue] (4.15,0) -- (4.75,0);

\fill[blue] (5,0) circle (1.2pt);
\draw[->,thick,blue] (5.15,0) -- (5.75,0);

\fill[blue] (6,0) circle (1.2pt);
\draw[->,thick,blue] (6.15,0) -- (6.75,0);

\fill[blue] (7,0) circle (1.2pt);
\draw[->,thick,blue] (7.15,0) -- (7.75,0);

\fill[blue] (8,0) circle (4pt);
\node[blue,above=8pt] at (8,0) {$q_{RW}^{n}$};
\draw[->,thick,blue] (8.15,0) -- (8.75,0);

\fill (9,0) circle (4pt);
\node[above=8pt] at (9,0) {$q_{RW}^{n}+1$};

\draw[->,thick] (9.15,0) -- (9.75,0);

\fill (10,0) circle (1.2pt);
\draw[->,thick] (10.15,0) -- (10.75,0);

\fill[blue] (11,0) circle (4pt);
\node[blue,above=8pt] at (11,0) {$q_{RW}^{n}$};

\draw[->,thick,blue] (11.15,0) -- (11.75,0);

\fill[blue] (12,0) circle (1.2pt);
\draw[->,thick,blue] (12.15,0) -- (12.75,0);

\fill[blue] (13,0) circle (4pt);
\node[blue,above=8pt] at (13,0) {$q_{RW}^{n}$};

\draw[->,thick,blue] (13.15,0) -- (13.75,0);

\fill[blue] (14,0) circle (1.2pt);
\draw[->,thick,blue] (14.15,0) -- (14.75,0);

\fill[blue] (15,0) circle (4pt);
\node[blue,above=8pt] at (15,0) {$q_{RW}^{n}$};
\draw[->,thick,blue] (15.15,0) -- (15.75,0);

\fill (16,0) circle (4pt);
\node[above=8pt] at (16,0) {$q_{RW}^{n}+1$};

\draw[->,thick] (16.15,0) -- (16.75,0);
\fill (17,0) circle (1.2pt);


\draw[
decorate,
decoration={brace,mirror,amplitude=6pt}
]
(0,-0.45) -- (4,-0.45)
node[midway,yshift=-0.65cm]
{$\ge q_{RW}^{n}+1$};

\draw[
decorate,
decoration={brace,mirror,amplitude=6pt},
blue
]
(4,-0.45) -- (9,-0.45)
node[midway,yshift=-0.65cm]
{$\le q_{RW}^{n}$};

\draw[
decorate,
decoration={brace,mirror,amplitude=6pt}
]
(9,-0.45) -- (11,-0.45)
node[midway,yshift=-0.65cm]
{$\ge q_{RW}^{n}+1$};

\draw[
decorate,
decoration={brace,mirror,amplitude=6pt},
blue
]
(11,-0.45) -- (16,-0.45)
node[midway,yshift=-0.65cm]
{$\le q_{RW}^{n}$};

\draw[
decorate,
decoration={brace,mirror,amplitude=6pt}
]
(16,-0.45) -- (17,-0.45)
node[midway,yshift=-0.65cm]
{$\ge q_{RW}^{n}+1$};

\end{tikzpicture}

\caption{Sample Path Representation of the Random Walk}
\label{fg:sample_path_RW}
\end{figure}

The random walk starts at $Q_{RW}^{d,n}(0)=q_{RW}^n+1$ in an upper excursion and we denote $j_u^n(j)$ to be the cumulative time the random walk spends above $q_{RW}^n$ up to epoch $j$, i.e.,
\[
j_u^n(j) = \sum_{l=0}^j\II(Q_{RW}^n(l) > q_{RW}^n).
\]
At each visit of the random walk to $q_{RW}^n +1$, the random walk switches to a lower excursion with probability $(\lambda^n-K_\epsilon n^{1/2})/(2\lambda^n+K_\epsilon n^{1/2})$ or continues the upper excursion with probability $(\lambda^n+2K_\epsilon n^{1/2})/(2\lambda^n+K_\epsilon n^{1/2})$. Defining the number of completed lower excursions up to time $j$ as $m_l^n(j)$ and the length of the $m$th excursion as $L_m^n$, we can decompose $\EE[j^{n,*}]$ as
\begin{align}
    \EE[j^{n,*}] = \EE[j_u^n(j^{n,*})] + \EE\left[\sum_{m=1}^{m_l^n(j^{n,*})}L_m^n\right] = \EE[j_u^n(j^{n,*})] + \EE[m_l^n(j^{n,*})]\EE[L_m^n].
    \label{eq:excursion_decomposition}
\end{align}
In Figure~\ref{fg:sample_path_RW}, $\EE[j_u^n(j^{n,*})]$ and $\EE[m_l^n(j^{n,*})]\EE[L_m^n]$ correspond to the expected number of black dots and blue dots until the random walk hits $4K_\epsilon n^{1/2}$, respectively.

We first focus on bounding $\EE[j_u^n(j^{n,*})]$. After suppressing the lower excursions, i.e. deleting all the blue dots and arrows in Figure~\ref{fg:sample_path_RW}, the resulting time-changed path has the dynamics of a reflecting random walk on
$\{q:q\ge q_{RW}^n+1\}$, with reflection at $q_{RW}^n+1$. Equivalently, after shifting the state space by $q_{RW}^n+1$, this is a reflecting random walk on $\mathbb Z_+$ starting from $0$. Define $C_{RW}^n$ by
\[
C_{RW}^n n^{1/2}
=
\left\lceil 4K_\epsilon n^{1/2}-(q_{RW}^n+1)\right\rceil.
\]
Using the standard formula for the expected hitting time of a reflecting random walk (cf.~\cite{LevinPeres2017}, p.~26), we can find $K_u>0$ such that
\begin{align}
\nonumber \EE[j_u^n(j^{n,*})]
&=
\frac{1}{p_S^n-p_A^n}
\left(
p_S^n
\frac{
\left(p_S^n/p_A^n\right)^{C_{RW}^n n^{1/2}}-1
}
{p_S^n-p_A^n}
-
C_{RW}^n n^{1/2}
\right)\\
\nonumber &=
\frac{2n\lambda^n+K_\epsilon n^{1/2}}{K_\epsilon n^{1/2}}
\left(
\frac{n\lambda^n+K_\epsilon n^{1/2}}{K_\epsilon n^{1/2}}
\left[
\left(
1+\frac{K_\epsilon}{\lambda^n}n^{-1/2}
\right)^{C_{RW}^n n^{1/2}}
-1
\right]
-
C_{RW}^n n^{1/2}
\right)\\
&\le K_u n.
\label{eq:upper_excursion_bound}
\end{align}

Now we focus on the second term on the right-hand side of
\eqref{eq:excursion_decomposition}, beginning with a bound on
$\EE[m_l^n(j^{n,*})]$. The probability that an upper excursion,
starting from $q_{RW}^n+1$, reaches $4K_\epsilon n^{1/2}$ before
visiting $q_{RW}^n$ is given by the classical gambler's ruin formula (cf.~\cite{feller1968introduction}, page 345):
\[
p_{GR}
=
\frac{\frac{K_\epsilon}{\lambda^n n^{1/2}}}
{\left(
\frac{\lambda^n+K_\epsilon n^{-1/2}}
{\lambda^n}
\right)^{C_{RW}^n n^{1/2}}-1}.
\]
Hence, the number of lower excursions until the random walk successfully reaches at $4K_\epsilon n^{1/2}$ follow a geometric distribution. 
The denominator converges to a positive real number and hence, there exists a $\kappa_0>0$ such that $p_{GR}\geq \kappa_0 n^{-1/2}$. Hence, using the geometric trials argument, the expected number of completed lower excursions can be bounded as 
\begin{align}
\EE[m_l^n(j^{n,*})]\leq \frac{n^{1/2}}{\kappa_0}.
\label{eq:lower_excursion_number_bound}
\end{align}

Now we turn to the calculation of $\EE[L_m^n]$. As above, each time the process returns to $q_{RW}^n$ during a lower excursion, the excursion terminates with probability $p_A^n$. Hence, the number of visits to $q_{RW}^n$ during a lower excursion is geometric with parameter $p_A^n$ and  mean $1/p_A^n$.

After suppressing the upper excursions, i.e., deleting all black dots and arrows in Figure~\ref{fg:sample_path_RW}, the resulting time-changed process evolves as a random walk on $\{q:q\le q_{RW}^n\}$ with reflection at $q_{RW}^n$. The probability of an upward and downward jump from state $q$ is given by $p_A^n$ and $p_{RW,D}^n(q)$, respectively. This Markov chain is positive recurrent and has stationary probability
$K_\epsilon n^{-1/2}/\lambda^n$
at state $q_{RW}^n$. Using the fact that the mean return time to a state is the reciprocal of its stationary probability, the expected time between successive visits to $q_{RW}^n$ is $
\lambda^n n^{1/2}/K_\epsilon$. 
Multiplying by the expected number of visits and accounting for the initial visit to $q_{RW}^n$, we obtain
\begin{align}
\EE[L_m^n]=
\frac{\lambda^n n^{1/2}}{p_A^nK_\epsilon}+1 \mbox{ and }\EE[m_l^n(j^{n,*})]\EE[L_m^n]\leq \frac{n^{-1/2}}{\kappa_0}\left(\frac{\lambda^n n^{1/2}}{p_A^nK_\epsilon}+1\right).
\label{eq:lower_excusion_bound}
\end{align}
Combining \eqref{eq:upper_excursion_bound} and \eqref{eq:lower_excusion_bound}, we can find a constant $K_{exc}>0$ such that
\[
\EE[j^{n,*}]\leq K_{exc}n .
\]
Using \eqref{eq:wald_tau} and Assumption~\ref{asm:arrival_service_rates}, we can then find a constant $K_{\tau}>0$, independent of $n$, such that
\[
\EE[\tau_s^{RW,n}] \le K_{\tau}.
\]
By Markov's inequality,
\[
\PP(\tau_s^{RW,n}>T)\leq \frac{K_{\tau}}{T}.
\]
Choosing $T_\epsilon$ large enough so that $K_{\tau}/T_\epsilon<\epsilon/2$ proves the result.
\end{proof}

Having developed the necessary analytical tools, we are now ready to establish the corollaries stated in Section~\ref{sec:main_results}. The remainder of this section is devoted to their proofs.

\begin{proof}[Proof of Corollary~\ref{cor:SA-JSQ}]
    Using the tightness of $(n^{-1/2}\alpha_1^n(n^{1/2}\cdot):n\in \NN)$ in Lemma~\ref{lem:alpha_continuity}, the process level convergence $\alpha^n\toP\iota\times\delta_{\mu_{\max}}$ in the Skorokhod topology follows if we can show that the finite dimensional distributions converge or equivalently, $n^{-1/2}\alpha^n(n^{1/2}t)\toP\delta_{\mu_{\max}}t$ for all $t\geq 0$. For any $\epsilon>0$, under the speed-aware-JSQ, we only route incoming jobs to the servers with rates in set  $\AAA_\epsilon:=[\mu_{\min},\mu_{\max}-\epsilon]$ only when all servers in the set  $(\mu_{\max}-\epsilon, \mu_{\max}]$ have more jobs than servers in $\AAA_\epsilon$. Hence, for any $a>0$
    \begin{align*}
    \PP\left(n^{-1/2}\alpha^n(n^{1/2}t)(\AAA_\epsilon)>0\right)&\leq \PP\left(A_1^n(t)\geq \sum_{k=1}^n \delta_{\mu_k^n}(\mu_{\max}-\epsilon, \mu_{\max}] - X_2^n(0)\right)\\
    &\leq \PP\left(A_1^n(t)\geq an - X_2^n(0)\right) + \PP\left(\sum_{k=1}^n \delta_{\mu_k^n}(\mu_{\max}-\epsilon, \mu_{\max}]<an\right)
    \end{align*}
    Choosing $0<a<1-F(A_\epsilon)$ and using Assumptions~\ref{asm:arrival_service_rates} and ~\ref{asm:xi_initial} along with Lemma~\ref{lem:A_1_bound}, the right-hand side converges to 0. This implies that for any closed set $\AAA\not\ni\mu_{\max}$, $n^{-1/2}\alpha_1^n(n^{1/2}\cdot)(\AAA)\toP 0$. Then, the corollary follows using the portmanteau theorem (cf.~Theorem 2.1 in \cite{billingsley1999convergence}). 
\end{proof}

\begin{proof}[Proof of Corollary~\ref{cor:JSQ_random}]
We have to show that for any $f\in \Lipbar{\RR,L}(\SSS)$, $\langle f, n^{-1/2}\alpha^n(n^{1/2}\cdot)\rangle\Rightarrow \iota\times\langle f, F\rangle$ and as above, process level convergence follows if $\langle f, n^{-1/2}\alpha_1^n(n^{1/2}t)\rangle\toP t\langle f, F\rangle$ for all $t\geq 0$. 

Without loss of generality, we assume that the uniform selection of one of the shortest servers is implemented using an acceptance-rejection scheme as follows: We select a server uniformly at random independent of its current queue length. Then, we check if the server has the current minimum queue length and if so, we route the job to the server. If there are servers with shorter queues, we reject the selected server and repeat the process. Let $\tilde{\mu}_{1,j}^{n,0}$ be the first selected server in the scheme when routing the $j$th job to a server with one or more jobs and define $\alpha_1^{n,0}$ by replacing $\tilde{\mu}_{1,j}^n$ with $\tilde{\mu}_{1,j}^{n,0}$ in \eqref{eq:alpha_definition}. Then, for any $\epsilon>0$ and $t\geq 0$,
\begin{align*}
    \PP\left(\left|\langle f, n^{-1/2}\alpha_1^n(n^{1/2}t)\rangle- t\langle f, F\rangle\right|\geq \epsilon\right)&\leq \PP\left(n^{-1/2}\left|\langle f, \alpha^n(n^{1/2}t)\rangle- \langle f, \alpha_1^{n,0}(n^{1/2}t)\rangle\right|\geq \epsilon/3\right)\\
    &\quad + \PP\left(\left|\langle f, n^{-1/2}\alpha_1^{n,0}(n^{1/2}t)\rangle- t\langle f, F^n\rangle\right|\geq \epsilon/3\right)\\
    &\quad + \PP\left(t\left|\langle f, F^n\rangle- \langle f, F\rangle\right|\geq \epsilon/3\right).
\end{align*}
Since the first selected servers, $\tilde{\mu}_{1,j}^{n,0}$, are i.i.d. random variables with distribution $F^n$, the second term on the right-hand side converges to 0 as a direct consequence of the functional law of large numbers, and the third term converges to 0 by Assumption~\ref{asm:arrival_service_rates}. We can bound the first term for any $f\in \Lipbar{\RR,L}(\SSS)$ as :
\begin{align*}
&\PP\left(n^{-1/2}\left|\langle f, \alpha^n(n^{1/2}t)\rangle- \langle f, \alpha_1^{n,0}(n^{1/}t)\rangle\right|\geq \epsilon/3\right)\\
&\qquad\qquad\qquad \leq \PP\left(  n^{-1/2}L\sum_{j=1}^{\lceil n^{1/}t\rceil}\II(\tilde{\mu}_{i,j}^n\neq \tilde{\mu}_{i,j}^{n,0})\geq \epsilon/3, \tau_{t+1}^n\leq K_1, |X_2^n|_{K_1}^*\leq K_2\right) + \PP\left(\tau_{t+1}^n>K_1\right)\\
    &\qquad\qquad\qquad \leq \frac{3n^{-1/2}L\sum_{j=1}^{\lceil n^{1/}t\rceil}\PP\left(\tilde{\mu}_{i,j}^n\neq \tilde{\mu}_{i,j}^{n,0}|\tau_{t+1}^n\leq K_1, |X_2^n|_{K_1}^*\leq K_2\right)}{\epsilon}  + \PP\left(\tau_{t+1}^n>K_1\right)\\
    &\qquad\qquad\qquad \leq \frac{3n^{-1/2}LK_2(t+1)}{\epsilon}  + \PP\left(\tau_{t+1}^n>K_1\right),
\end{align*}
where the second inequality uses Markov's inequality and the third inequality follows from the observation that the conditional probabilities in the summation are bounded by $K_2n^{-1/2}$. Hence, using Lemma~\ref{lem:A_1_bound} and letting $K_1$ and then $n$ go to infinity, we can see that the right-hand side converges to 0. Hence, the result follows. 
\end{proof}

\begin{proof}[Proof of Corollary~\ref{cor:JSQ_random_pool}] Using Proposition 2 in~\cite{bhambay2025asymptotic} with $B=\infty$ therein, we know that equations~\eqref{eq:JSQ_random_pool1}-\eqref{eq:JSQ_random_pool3} possess a unique solution. Corollary~\ref{cor:JSQ_random} implies $n^{-1/2}\alpha^n(n^{1/}\cdot)\Rightarrow \iota(\cdot)\times \sum_{j=1}^m\delta_{\mu_j}$ and by inspection, we can see that $\xi_2 = \sum_{j=1}^m\delta_{\mu_j}X_{2,j}$ is the unique solution of \eqref{eq:JSQ_random1}-\eqref{eq:JSQ_random3}, which proves the corollary.  
\end{proof}
\section{Analysis of Stationary Probabilities}
\label{sec:stationary-probabilities}

In Section~\ref{sec:auxiliary_systems}, we introduced the JSQ-SSL system as an auxiliary system to derive upper bounds on the total system size and the total queue length. As our next step, we prove an analogue of Theorem~\ref{thm:stationary_bounds} for the JSQ-SSL system without the finite-buffer assumption, as stated in Proposition~\ref{prop:stationary_bound_JSQ_SSL}. The proof of the proposition uses the generator expansion approach of Stein's method as in \cite{Braverman2020} and \cite{bhambay2025asymptotic} and is deferred to Appendix~\ref{app:stationary_bound_JSQ_SSL}.

\begin{proposition}
    \label{prop:stationary_bound_JSQ_SSL}
    Suppose the assumptions of Theorem~\ref{thm:stationary_bounds} hold. Then, there exist constants $\tilde{C}_{1,\varepsilon}, \tilde{C}_{2,\varepsilon}$, $\tilde{C}_{3,\varepsilon}$ and $n_0$ such that for all $n\geq n_0$ 
    \begin{align}
        \label{eq:stationary_bound_JSQ_SSL1}&\EE\left[|\tilde{X}_{\infty,1}^n(\infty)|\mid \varsigma^n+\nu^n>\varepsilon\right]<\tilde{C}_{1,\varepsilon},\\
        \label{eq:stationary_bound_JSQ_SSL2}&\EE\left[|\tilde{X}_{\infty,2}^n(\infty)|\mid \varsigma^n+\nu^n>\varepsilon\right]<\tilde{C}_{2,\varepsilon}, \mbox{ and }\\
        \label{eq:stationary_bound_JSQ_SSL3}&\EE\left[|\tilde{X}_{\infty,i}^n(\infty)|\mid \varsigma^n+\nu^n>\varepsilon\right]<\tilde{C}_{3,\varepsilon}n^{-1/2} \mbox{ for all }i\geq 3.
    \end{align}
\end{proposition}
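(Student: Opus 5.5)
Throughout, we condition on the event $\{\varsigma^n+\nu^n>\varepsilon\}$ and treat $\boldsymbol{\mu}^n$ as fixed. The JSQ-SSL system with infinite buffers is then a Markov chain in $\tilde{\mf{Q}}_{\infty}^{\SSL,n}$, because the state $(\tilde Q_i)$ determines which servers are at which level (slowest servers on the longest buffers). Positive recurrence follows as in Lemma~\ref{lem:positive_recurrent_finite}. The departure rate from level $i$ is $\sum_{k\le \tilde Q_i}\mu_k^n$, with $\mu_1^n\le\dots\le\mu_n^n$. We follow the generator-expansion (Stein) approach of \cite{Braverman2020,bhambay2025asymptotic}: for suitable test functions $f$ with $\EE[G f(\tilde{\mf X}(\infty))]=0$, we expand $Gf$ to second order and compare it with the generator of the limiting diffusion of Proposition~\ref{prop:diffusion_limit_FSL_SSL}. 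In that diffusion, $\mu_1^{\SSL}=\mu_{\max}$ and $\mu_2^{\SSL}=\mu_{\min}$.

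\textbf{Step 1 (bound on $\tilde X_2$ plus a weighted idleness term).} Take the linear function $f=n^{-1/2}\tilde Q_{+1}$, the total number of jobs. Its drift is $n\lambda^n-\sum_k\mu_k^n\tilde Q_{k,1}$. Stationarity gives that the expected departure rate equals $n\lambda^n$. Writing $\sum_k\mu_k\tilde Q_{k,1}=n\bar\mu^n-\sum_{k>\tilde Q_1}\mu_k$, and using that the idle servers are the fastest ones, we obtain $\EE[\sum_{k>\tilde Q_1}\mu_k]=n^{1/2}(\varsigma^n+\nu^n)$. Since $\mu_k\ge\mu_{\min}$, this yields $\EE|\tilde X_1|\le(\varsigma^n+\nu^n)/\mu_{\min}$. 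To keep this uniform we additionally intersect with $\{\varsigma^n+\nu^n\le K\}$ and control the complement through the moment assumption $\EE|\nu^n|\to\EE|\nu|$; conditioning on the event only costs the factor $\varrho^{-1}$. This gives \eqref{eq:stationary_bound_JSQ_SSL1}.

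\textbf{Step 2 ($\tilde X_2$).} We use the Lyapunov/Stein test function for the level-one reflected dynamics: $f$ solves the Poisson-type equation for the one-dimensional drift $-(\varsigma+\nu)-\mu_{\max}x_1+\mu_{\min}x_2$ in $x_1$, or more crudely $f=(n^{-1/2}\tilde Q_{+2})$ combined with $(n^{-1/2}\tilde Q_{+1})^2$. Applying $G$ to $(\tilde X_{+1})^2/2$ and taking expectations gives $\EE[\tilde X_{+1}\cdot\text{drift}]+O(1)=0$. The drift is $-(\varsigma^n+\nu^n)+\mu_{\max}|\tilde X_1|$-type terms $-\mu_{\min}\tilde X_2+O(n^{-1/2})$. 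Because slow servers occupy the higher levels, this drift is bounded above by $-(\varsigma^n+\nu^n)-\mu_{\min}\tilde X_{+2}+C|\tilde X_1|$. Combined with Step~1 and Cauchy--Schwarz-free rearrangements (moments of $\tilde X_1\tilde X_{+1}$ are bounded by treating $\tilde X_1\le0$ so that its product with $\tilde X_{+2}$ has a favourable sign), this yields $\EE[\tilde X_{+2}]\le C$, and in particular \eqref{eq:stationary_bound_JSQ_SSL2}. The quadratic variation term is $O(1)$ since jump rates are $O(n)$ and squared jumps are $n^{-1}$.

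\textbf{Step 3 (levels $i\ge3$, the main obstacle).} Arrivals reach level $3$ only when $\tilde Q_2=n$, i.e.\ when there are no idle servers and $\tilde X_2=n^{1/2}$. Using the linear test function $\tilde Q_{+3}$, stationarity gives
\[
\mu_{\min}\EE[\tilde Q_3]\le \EE\Bigl[\textstyle\sum_{k\le\tilde Q_3}\mu_k\Bigr]=n\lambda^n\,\PP(\tilde Q_2=n).
\]
Markov's inequality on Step~2 gives $\PP(\tilde Q_2=n)\le\PP(\tilde X_2\ge n^{1/2})\le C n^{-1/2}$, which is too weak: it only yields $\EE\tilde X_3=O(1)$. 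We therefore need $\PP(\tilde Q_2=n)=O(n^{-1})$. We would try to obtain this from a higher moment bound $\EE[\tilde X_{+2}^2]\le C$, derived by applying the generator to a cubic function $(\tilde X_{+2})^3$ or to $\tilde X_{+1}^2\tilde X_{+2}$ in the same way as Step~2. Chebyshev then gives $\PP(\tilde X_2\ge n^{1/2})\le Cn^{-1}$, hence $\EE[\tilde X_3]\le Cn^{-1/2}$. The bound for $i>3$ follows from $\tilde Q_i\le\tilde Q_3$. The delicate points are controlling the cross terms with $\tilde X_1$ in the higher-moment computation, for which we would use the exponential tail of $\tilde X_1$ available from the idle-server drift $-\mu_{\max}x_1$, and choosing $n_0$ so that the $O(n^{-1/2})$ remainders are absorbed.
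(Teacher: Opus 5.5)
Your Step~1 is the paper's argument: rate balance for the total departure rate, using that the idle servers are the fastest ones.

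\textbf{The gap is in Step~2.} In JSQ--SSL the total departure rate depends only on which servers are busy. Hence the drift of $\tilde X_{+1}$ is exactly $-(\varsigma^n+\nu^n)+D^n$, where $D^n:=n^{-1/2}\sum_{k>\tilde Q_1}\mu_k^n\ge 0$. The terms $\pm n^{-1/2}\sum_{k\le \tilde Q_2}\mu_k^n$ in the drifts of $\tilde X_1$ and $\tilde X_{+2}$ cancel.

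Your claimed bound $-(\varsigma^n+\nu^n)-\mu_{\min}\tilde X_{+2}+C|\tilde X_1|$ is therefore false. When $\tilde Q_1=n$, the drift equals $-(\varsigma^n+\nu^n)$ no matter how large $\tilde X_{+2}$ is. With the correct drift, the test function $\tilde X_{+1}^2/2$ gives
$(\varsigma^n+\nu^n)\EE[\tilde X_{+1}]=\EE[\tilde X_{+1}D^n]+\lambda^n$.
Here the cross term $\EE[\tilde X_{+2}D^n]$, which is at most $\mu_{\max}\EE[\tilde X_{+2}|\tilde X_1|]$, is nonnegative and sits on the wrong side. The sign is unfavourable, not favourable. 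Nothing in your proposal controls this joint behaviour of queue build-up and idleness.

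Handling exactly this coupling is the role of the paper's test function. It applies the generator expansion to $f^*$ from Lemma~\ref{lem:solution_PDE}, with $m_1=\mu_{\max}$, $m_2=\mu_{\min}$ and $\beta=\varsigma^n+\nu^n$.
\begin{itemize}
\item $f^*$ solves $\cL f^*=-(y_2-\kappa/n^{1/2})_+$ and satisfies $f_1^*=f_2^*$ on $\{y_1=0\}$, so the reflection term in \eqref{eq:generator_decomposition} vanishes.
\item The first-order error terms are discarded by sign, using $f_1^*\ge 0$, $f_1^*-f_2^*\le 0$, $\mu_k^n\ge\mu_{\min}$, and rate balance for the level-three term.
\item The second-derivative bounds are of order $n^{1/2}$ and vanish for $y_2\le \kappa/n^{1/2}$. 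Together these give \eqref{eq:y2_bound}.
\end{itemize}

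\textbf{Step~3 inherits this gap.} You correctly reduce \eqref{eq:stationary_bound_JSQ_SSL3} to showing $\PP(\tilde Q_2=n)=O(n^{-1})$ via rate balance. Your route there, a second-moment bound $\EE[\tilde X_{+2}^2]\le C$ from cubic test functions, is only a plan. It meets the same uncontrolled cross terms in amplified form, and it relies on an exponential tail for $\tilde X_1$ that you do not establish.

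No higher moment is needed. The paper reuses \eqref{eq:y2_bound} with $\kappa=n^{1/2}\tilde\kappa$ for a fixed $\tilde\kappa\in(0,1)$. Write $Y_2^n=n^{-1/2}\tilde X_2^n(\infty)=\tilde Q_2^n(\infty)/n$.
\begin{enumerate}
\item By Markov's inequality and \eqref{eq:stationary_bound_JSQ_SSL2}, the probability on the right-hand side satisfies $\PP(Y_2^n\ge\tilde\kappa-1/n)=O(n^{-1/2})$.
\item Hence $(1-\tilde\kappa)\,\PP(\tilde Q_2^n(\infty)=n)\le\EE[(Y_2^n-\tilde\kappa)_+]=O(n^{-1})$.
\item Then \eqref{eq:rate_balance_q2} with $i=3$ gives $\mu_{\min}\EE[\tilde Q_3^n(\infty)]\le n\lambda^n\,\PP(\tilde Q_2^n(\infty)=n)=O(1)$.
\end{enumerate}
The extra factor $n^{-1/2}$ comes from the $n^{-1/2}$ prefactor of the Stein inequality, not from a second moment.
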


Although the JSQ-SSL system yields upper bounds for $X_{b,i}^{\pi,n}(\infty)$ for all $i\geq2$, this is insufficient for $X_{b,1}^{\pi,n}(\infty)$, since $X_{b,1}^{\pi,n}(\infty)\leq0$. To bound this quantity, we instead require a lower bound as provided in Lemma~\ref{lem:x_1_lower_bound}. The proof of the lemma uses coupling as in the proof of \ref{lem:A_1_bound} and is provided in Appendix~\ref{app:x1_stationary_bound}

\begin{lemma}
    \label{lem:x_1_lower_bound}
    Suppose the assumptions of Theorem~\ref{thm:stationary_bounds} hold. Then, there exists a $C_{1,\varepsilon}$ such that 
    \begin{equation}
        \EE\left[|X_{\infty,1}^{\pi,n} (\infty)||\mid \varsigma^n+\nu^n>\varepsilon\right]<C_{1,\varepsilon}.
    \end{equation}
\end{lemma}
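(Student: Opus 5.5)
We bound the number of idle servers in steady state by a pathwise comparison with a one-dimensional birth--death process, in the same spirit as the coupling in the proof of Lemma~\ref{lem:A_1_bound}. We then bound that process's stationary mean explicitly. Throughout we work on the event $\{\varsigma^n+\nu^n>\varepsilon\}$ and suppress $\pi$, since the argument is policy-independent.

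\textbf{Step 1: a policy-free drift for the idle-server count.} Let $I^n(t)=n-Q_1^{\pi,n}(t)=-n^{1/2}X_1^{\pi,n}(t)$. The count $I^n$ decreases by one at every arrival while $I^n>0$, at rate $n\lambda^n$. It increases by one at departures from servers holding exactly one job. That rate is at most the total rate of busy servers, which is bounded by
\[
\sum_{k=1}^n\mu_k^n-\mu_{\min}I^n = n\lambda^n+(\varsigma^n+\nu^n)n^{1/2}-\mu_{\min}I^n .
\]
This holds because the idle servers have rate at least $\mu_{\min}$, and $n\bar\mu^n=n\lambda^n+n^{1/2}(\varsigma^n+\nu^n)$ by definition of $\varsigma^n,\nu^n$.

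\textbf{Step 2: coupling with a dominating birth--death process.} Construct a birth--death process $R^n$ on $\ZZ_+$. It has down-rate $n\lambda^n$ (using the same arrival process $A^n$) and up-rate $n\lambda^n+(\varsigma^n+\nu^n)n^{1/2}-\mu_{\min}r$ in state $r$, truncated at zero. Couple its up-moves with the departures through the thinning/uniform construction of \eqref{eq:u_coupling_JSQ_SSL} and \eqref{eq:departure_construction1}. A departure leaving a server idle when $I^n=R^n$ is then always also an up-move of $R^n$, because the up-rate is monotone decreasing in $r$ and dominates the JSQ rate at equal states. Start with $R^n(0)\ge I^n(0)$ and argue by a first-crossing contradiction, exactly as in Lemma~\ref{lem:A_1_bound}. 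This gives $I^n(t)\le R^n(t)$ for all $t$, and hence $I^n(\infty)\le_{st}R^n(\infty)$. The stationary law of $I^n$ exists by assumption or by Lemma~\ref{lem:positive_recurrent_finite}, and the comparison passes to the limit $t\to\infty$ via ergodicity of $R^n$. The finite buffer $b$ plays no role here.

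\textbf{Step 3: bounding the stationary mean of $R^n$.} The process $R^n$ has an explicit product-form stationary law. To bound its mean, use a Lyapunov drift argument with $V(r)=r$. The drift of $V$ is
\[
(\varsigma^n+\nu^n)n^{1/2}-\mu_{\min}r,
\]
plus a boundary correction of $n\lambda^n\II(r=0)$. Setting the expected drift to zero in stationarity gives
\[
\mu_{\min}\EE[R^n(\infty)]=(\varsigma^n+\nu^n)n^{1/2}+n\lambda^n\PP(R^n(\infty)=0),
\]
so we must control $\PP(R^n(\infty)=0)$.

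This is the main obstacle, and it is also delicate because $\varsigma^n+\nu^n$ is only bounded below on the event, not above. To handle it, first apply Assumption~\ref{asm:arrival_service_rates} together with $\EE|\nu^n|\to\EE|\nu|$ to get $\EE[|\nu^n|\mid\varsigma^n+\nu^n>\varepsilon]\le \EE|\nu^n|/\varrho$, which is bounded uniformly in $n$. Next, use the product form: near zero the ratio of up- to down-rates is $1+(\varsigma^n+\nu^n)n^{-1/2}+O(r/n)$, so the weights grow geometrically. This gives $\PP(R^n=0)\lesssim n^{-1/2}/(\varsigma^n+\nu^n)\le n^{-1/2}/\varepsilon$, which makes the boundary term $O(n^{1/2})$. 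Consequently $\EE[R^n(\infty)]\le C(1+\varsigma^n+\nu^n)n^{1/2}/\mu_{\min}$. Dividing by $n^{1/2}$ and taking the conditional expectation using the bound on $\EE[|\nu^n|\mid\cdot]$ yields $C_{1,\varepsilon}$.

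If the product-form estimate of $\PP(R^n=0)$ is awkward to carry out, the fallback is a Gaussian comparison. The stationary law of $n^{-1/2}R^n$ is approximately a half-normal distribution with mean of order $(\varsigma^n+\nu^n)/\mu_{\min}+O(1)$, and a direct summation of the explicit weights gives the same bound.
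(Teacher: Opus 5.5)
Your proof is correct and shares the paper's core idea. The paper also couples the idle-server count $n-Q_1^{\pi,n}$ pathwise with a dominating birth--death process, driven by the same arrivals and a common thinned stream of potential departures, and then bounds that process's stationary mean explicitly.

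You differ in the comparison process and in how its mean is controlled:
\begin{itemize}
\item The paper replaces your linear up-rate $\sum_k\mu_k^n-\mu_{\min}r$ by a two-level rate: $\sum_k\mu_k^n$ below a threshold $y^{n,*}$ of order $n^{1/2}(\varsigma^n+\nu^n)/\mu_{\min}$, and $n\lambda^n-n^{1/2}(\varsigma^n+\nu^n)$ above it. It then compares again with $y^{n,*}$ plus an M/M/1-type process whose stationary mean, of order $n^{1/2}/(\varsigma^n+\nu^n)$, is explicit. This avoids estimating the atom at zero, at the cost of a second comparison that needs some care at the threshold.
\item You keep the tightest linear dominating rate and a single coupling. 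The price is the rate-conservation identity plus a bound on $\PP(R^n(\infty)=0)$.
\end{itemize}

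Two small remarks on your Step 3.
\begin{itemize}
\item Because the up-rate is truncated at zero, the identity holds with an extra nonnegative term smaller than $\mu_{\min}$ on the right. This is harmless after dividing by $n^{1/2}$.
\item The atom bound is easier than you suggest. The up-rate is at least $n\lambda^n$ on $\{0,1,\dots,\lfloor(\varsigma^n+\nu^n)n^{1/2}/\mu_{\min}\rfloor\}$, so the product-form weights are nondecreasing there. Hence $\PP(R^n(\infty)=0)\le\mu_{\min}n^{-1/2}/(\varsigma^n+\nu^n)\le\mu_{\min}n^{-1/2}/\varepsilon$ immediately, with no geometric-growth or Gaussian approximation needed.
\end{itemize}

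Both routes end with a bound linear in $\varsigma^n+\nu^n$ plus a term of order $1/\varepsilon$. The conditional expectation of $\varsigma^n+\nu^n$ is then handled the same way, via $\EE|\nu^n|/\varrho$.
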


\begin{proof}[Proof of Theorem~\ref{thm:stationary_bounds}]
Lemma\ref{lem:jsq_ssl_monotonicity} and Proposition~\ref{prop:stationary_bound_JSQ_SSL} along with the finite buffer $b$ assumption imply that
\begin{align*}
\EE[\tilde{X}_{b,1+}^n(\infty)]&\leq \tilde{C}_1 + \tilde{C}_2 + b\tilde{C}_3n^{-1/2}\\
\EE[\tilde{X}_{b,2+}^n(\infty)]&\leq  \tilde{C}_2 + b\tilde{C}_3n^{-1/2}, \mbox{ and }\\
\EE[\tilde{X}_{b,i+}^n(\infty)]&\leq  b\tilde{C}_3n^{-1/2}\mbox{ for all }i\geq 3.
\end{align*}
Hence, the result follows using Proposition~\ref{prop:upper_bound_system}.
\end{proof}
\begin{remark}
We use the finite-buffer assumption only in the proof of Theorem~\ref{thm:stationary_bounds}. In that proof, Proposition~\ref{prop:stationary_bound_JSQ_SSL} gives bounds on $\tilde{X}^n_{b,i}(\infty)$ for the JSQ--SSL system. To apply
the sample-path comparison in Proposition~\ref{prop:upper_bound_system}, we need bounds on the cumulative quantities
$\tilde{X}^n_{b,i+}(\infty)$. The finite-buffer assumption makes this conversion immediate. We do not use this
assumption in any other part of the paper.
\end{remark}

\begin{proof}[Proof of Corollary~\ref{cor:interchange_limits}]
Consider a tie-breaking rule $\pi$ and let $E_n:=\{\varsigma^n+\nu^n>\varepsilon\}$. Throughout the proof, we work under the conditional law $\mathbb{P}(\cdot\mid E_n)$. More precisely, we first condition on the service-rate vector $\boldsymbol{\mu}^n$. For every realization of $\boldsymbol{\mu}^n$ satisfying $E_n$, the corresponding queueing system is positive recurrent and therefore possesses a stationary distribution. We then aggregate these stationary distributions by integrating with respect to the conditional distribution of $\boldsymbol{\mu}^n$ given $E_n$. For all $i\geq 2$, we have
$\langle 1,\xi_i^{\pi,n}(\infty)\rangle=X_i^{\pi,n}(\infty)$. Since $\SSS$ is compact, tightness of the total masses $X_i^{\pi,n}(\infty)$ implies tightness of the corresponding $\xi_i^{\pi,n}(\infty)$ (cf. Theorem 4.10 in~\cite{kallenberg2017random}). Hence, \eqref{eq:stationary_bound1} and \eqref{eq:stationary_bound2} imply the
tightness of $(X_1^{\pi,n}(\infty),\xi_2^{\pi,n}(\infty))$.

Using the monotonicity of $X_i^{\pi,n}(\infty)$ for $i\geq 2$, we have
\[
d_{\infty,\rho}\bigl((\xi_i^{\pi,n}(\infty):i\ge3),0\bigr)
\le
\sum_{i=3}^\infty
\rho^{i-2}
\langle 1,\xi_i^{\pi,n}(\infty)\rangle
\le
\frac{\rho}{1-\rho}X_3^{\pi,n}(\infty).
\]
Hence, \eqref{eq:stationary_bound3} and Markov's inequality imply that $(\xi_i^{\pi,n}(\infty):i\geq 3)\toP 0$. Combining this with the tightness of $(X_1^{\pi,n}(\infty),\xi_2^{\pi,n}(\infty))$ yields the tightness of $(X_1^{\pi,n}(\infty),\mf{\xi}^{\pi,n}(\infty))$.

Now, suppose that along a subsequence $n_k\to\infty$, $(X_1^{\pi,n_k}(\infty),\mf{\xi}^{\pi,n_k}(\infty))\Rightarrow (X_1^\pi(0),\mf{\xi}^\pi(0))$. Consider the stationary version of the $n_k$th system. Then, for every $t\geq0$, $(X_1^{\pi,n_k}(t),\mf{\xi}^{\pi,n_k}(t))$ has the same distribution as $(X_1^{\pi,n_k}(\infty),\mf{\xi}^{\pi,n_k}(\infty))$. Applying the diffusion limit theorem along this subsequence gives $(X_1^{\pi,n_k},\mf{\xi}^{\pi,n_k})\Rightarrow (X_1^\pi,\mf{\xi}^\pi)$, where $(X_1^\pi,\mf{\xi}^\pi)$ solves \eqref{eq:main_diffusion_x1}--\eqref{eq:main_reflection} with initial condition $(X_1^\pi(0),\mf{\xi}^\pi(0))$. Therefore, for every $t\geq0$, $(X_1^\pi(t),\mf{\xi}^\pi(t))$ has the same distribution as $(X_1^\pi(0),\mf{\xi}^\pi(0))$. Hence, the law of $(X_1^\pi(0),\mf{\xi}^\pi(0))$ is a stationary distribution of the limiting
diffusion.

If the limiting diffusion has a unique stationary distribution, then every subsequential stationary limit must coincide with this distribution. Therefore, the stationary distributions of the pre-limit systems converge to the stationary distribution of the limiting diffusion, and the steady-state and many-server limits can be interchanged.
\end{proof}

We are now ready to prove our final result. The asymptotic optimality of the JFSQ policy is a direct consequence of Theorem~\ref{thm:stationary_bounds}.

\begin{proof}[Proof of Theorem~\ref{thm:asymptotical_optimality}]
We first prove the theorem for $b\geq 2$. Using the dominated convergence theorem, taking the limit as $t\to\infty$ in Proposition~\ref{prop:upper_bound_system} and Lemma~\ref{lem:jsq_ssl_monotonicity}, we get for any policy $\pi$ that yields a stationary system
\[
\tilde{Q}_{2,i+}^{\FSL,n}(\infty)
\leq_{st}
\tilde{Q}_{b,i+}^{\FSL,n}(\infty)
\leq_{st}
Q_{b,i+}^{\pi,n}(\infty)
\]
for any $b\geq 2$.

We next identify the limiting stationary distribution of the lower bound. Consider the stationary version of the JFSQ system. Corollary~\ref{cor:interchange_limits} implies that $((X_1^{\JFSQ,n}(\infty),\mf{\xi}^{\JFSQ,n}(\infty)),n\in\NN)$ is tight and $\xi_i^{\JFSQ,n} (\infty)\toP 0$ for all $i\geq 3$. Hence, along any convergent subsequence, the subsequential limit is a stationary distribution of the limiting diffusion in Corollary~\ref{cor:SA-JSQ}. Since the coordinates $i\geq 3$ vanish in the stationary limit, this limiting diffusion agrees with the diffusion limit of JSQ-FSL with $b=2$ provided in Proposition~\ref{prop:diffusion_limit_FSL_SSL}. Therefore, by the uniqueness of the stationary distribution of this limiting diffusion, $X_{+i}^{\JFSQ,n}(\infty)$ and $\tilde{X}_{2,+i}^{\FSL,n}(\infty)$ converge to the same limiting stationary distribution for $i=1,2$.

For positive levels $y$, the limiting stationary distributions above have no atoms. This follows from the Brownian noise in the $X_1$ equation and, for
$X_{+2}=X_2$ in the $b=2$ limiting system, from the representation
\[
X_2(t)=e^{-\mu_{\max}t}X_2(0)+\int_0^t e^{-\mu_{\max}(t-s)}\,dU_1(s).
\]
Hence, for all $y>0$,
\[
\lim_{n\to \infty} \PP(X_{+1}^{\JFSQ,n}(\infty)>y)
=
\lim_{n\to \infty} \PP(\tilde{X}_{2,+1}^{FSL,n}(\infty)>y).
\]
Combining this with the inequality obtained from Proposition~\ref{prop:upper_bound_system} and Lemma~\ref{lem:jsq_ssl_monotonicity} gives
\[
\lim_{n\to \infty} \PP(X_{+1}^{\JFSQ,n}(\infty)>y)
=
\lim_{n\to \infty} \PP(\tilde{X}_{2,+1}^{FSL,n}(\infty)>y)
\leq
\liminf_{n\to \infty} \PP(X_{+1}^{\pi,n}(\infty)>y)
\]
for all $\pi$. The proof for $X_{+2}^{\pi,n}(\infty)$ is the same.

The case $b=1$ is proven using the same steps after observing that both $X_1^{\JFSQ,n}$ and $\tilde{X}_{1,1}^{\FSL,n}$ converge to the unique solution of
the stochastic differential equation
\[
X_1(t) = X_1(0) - (\varsigma+\nu)t + \sqrt{2}W(t)
- \mu_{\min}\int_0^t X_1(s)ds - U_1(t),
\]
with $X_1(t)\leq 0$, $U_1(0)=0$, $U_1(t)$ nondecreasing, and $\int_0^t \II\bigl(X_1(s)<0\bigr)\,dU_1(s)=0$ for all $t\geq 0$. Therefore, the stationary distributions of $X_1^{\JFSQ,n}$ and $\tilde{X}_{1,1}^{\FSL,n}$ converge to the same stationary distribution of this limiting diffusion. The inequality obtained from Proposition~\ref{prop:upper_bound_system} and Lemma~\ref{lem:jsq_ssl_monotonicity} then gives the desired result for $b=1$.
\end{proof}

\section{Conclusion}
\label{sec:conclusion}

We studied load balancing in many-server systems with dedicated queues and individually heterogeneous service rates in the Halfin-Whitt regime. In contrast to pool-based heterogeneous models, the service rates in our setting may vary at the level of individual servers, and the empirical service-rate distribution converges weakly to a limiting law. This individual-level heterogeneity changes the structure of the state descriptor: aggregate occupancy counts alone do not determine the service capacity at each queue-length level. The dynamics must instead be described using measure-valued processes that record how queue lengths and idleness are distributed across service rates.

Our main result, Theorem~\ref{thm:process_level_convergence}, establishes a process-level diffusion framework for heterogeneous JSQ systems in which the effect of a tie-breaking rule is captured through its limiting fairness process and limiting routing measure. The limit is characterized as the unique solution of a reflected stochastic equation for the diffusion-scaled idle-server process, coupled with a family of measure-valued integral equations in weak form for the higher queue levels. A key structural feature of the limit is that the routing policy enters only through these two policy-dependent objects: the limiting fairness process, which describes the allocation of cumulative idleness across service rates, and the limiting routing measure, which describes the service-rate distribution of jobs routed to busy servers.

This separation gives a modular description of the diffusion limit. Once the limiting fairness process and routing measure of a candidate policy are characterized, the corresponding diffusion limit follows directly from Theorem~\ref{thm:process_level_convergence}, without requiring a separate convergence proof for each policy. This decoupling permits a mix-and-match treatment of policies that resolve ties differently among idle and busy servers. As applications, we derived explicit limits for JFSQ, for JSQ with totally blind tie-breaking rules, and for mixed policies such as fastest-idle routing with random tie-breaking when all servers are busy. When the limiting service-rate distribution has finite support, the measure-valued equations recover the finite-pool formulation, thereby connecting the present individual-level model with the pool-based setting of~\cite{bhambay2025asymptotic}.

For the stationary analysis, we developed policy-uniform stochastic bounds using coupled fastest-serves-longest (JSQ-FSL) and slowest-serves-longest (JSQ-SSL) auxiliary systems. These bounds show that, uniformly over admissible tie-breaking rules, the diffusion-scaled number of idle servers and the diffusion-scaled number of queues with at least two jobs are \(O(1)\) in expectation in stationarity, while the contribution of queues with length at least three is asymptotically negligible. The same comparison construction also yields the asymptotic optimality of JFSQ in steady state, as stated in Theorem~\ref{thm:asymptotical_optimality}: the stationary diffusion-scaled total number of jobs and the stationary diffusion-scaled number of waiting jobs under JFSQ are stochastically dominated, in the many-server limit, by those under any admissible tie-breaking policy.

Two methodological tools underlie these results. The first is the coupling-based sample-path comparison with the modified JSQ-FSL and JSQ-SSL systems, which provides stochastic lower and upper bounds at the level of each fixed \(n\) and does not depend on the Halfin-Whitt scaling. The second is a contraction argument for the measure-valued equations based on a weighted Kantorovich-Rubinstein norm. This norm switch allows us to work directly with the measure-valued system and avoids the auxiliary finite-dimensional construction used in the homogeneous JSQ analysis of~\cite{eschenfeldt2018join}.

Several questions remain open. Theorem~\ref{thm:asymptotical_optimality} establishes asymptotic optimality for the stationary distributions of the diffusion-scaled system size and total queue length.. A related question is whether similar asymptotic optimality results can be established for the mean response time via Little's law. This would first require establishing a uniform integrability result, which in turn would require tighter control of the convergence rates of the stationary queue lengths beyond the bounds established in this work. A natural next problem is to obtain explicit convergence rates for the heterogeneous systems, analogous to the \(O(n^{-1/2})\) estimates available for the homogeneous JSQ system in~\cite{BraSSY2023}.

A second direction concerns richer forms of heterogeneity. The present framework treats the service rate as a property of the server alone. In settings where service rates depend jointly on the server and the job, such as multiplicative job-server interactions, a scalar service-rate descriptor may no longer be sufficient. Such models appear to require measure-valued state descriptors on a product space, and extending the present analysis to that setting remains open.

Finally, the decoupling of the fairness process and the routing measure suggests possible extensions beyond exact JSQ information. One important class consists of sampling-based policies, such as power-of-\(d\) choices, where routing decisions are based only on a subset of servers rather than the full queue-length and service-rate configuration. Characterizing the corresponding limiting fairness process and routing measure would broaden the reach of the framework. Incorporating dynamically varying service rates or strategic behavior of servers, in the spirit of~\cite{BdRP2025}, provides another direction for future work.
\newpage
\appendix
\section*{Appendices}
The following appendices contain proofs for results presented in the main body of the paper. To simplify the notation, we suppress the superscript $\pi$ when the tie-breaking policy need not be emphasized.

\section{Proofs of Results Presented in Section~\ref{sec:model}}
\label{app:model_basics}

\subsection{Proof of Lemma~\ref{lem:stochastic_boundedness}}\label{app:proof_stochastic_boundedness}
\begin{proof}[Proof of Lemma~\ref{lem:stochastic_boundedness}]

Stochastic boundedness of $\mf{X}^{n}$ follows if, for any given $T>0$ and $\varepsilon>0$, there exists a constant $K_{T,\varepsilon}>0$ such that
\begin{align*}
    \PP\left(|\mf{X}^{n}|_{\rho, T}^*>K_{T,\varepsilon}\right)\leq \varepsilon.
\end{align*}
By definition, we have $X_2^{n}(t)\geq X_3^{n}(t) \geq \cdots$  for all $t\geq 0$. Setting $c=\max\{\rho, \rho^2/(1-\rho)\}$, we have
\begin{align*}
    \PP\left(|\mf{X}^{n}|_{\rho, T}^*>K_{T,\varepsilon}\right)&\leq \PP\left(\rho|X_1^{n}|_{T}^* + |X_2^{n}|_{T}^*\sum_{i=2}^\infty \rho^i >K_{T,\varepsilon}\right)\leq \sum_{i=1}^2\PP\left(|X_i^{n}|_{T}^*  >\frac{K_{T,\varepsilon}}{2c}\right).
\end{align*}
Hence, it is enough to prove that $(X_1^{n}:n\in \NN)$ and $(X_2^{n}:n\in \NN)$ are stochastically bounded. Using Assumptions~\ref{asm:arrival_service_rates} and \ref{asm:xi_initial}, we can find a $K_{\mu,\epsilon}$ and a $K_0$ such that for $i=1,2$
\begin{align*}
    \PP\left(|X_i^{n}|_T^*  >K_{T,\varepsilon}\right) &\leq \PP\left(|X_i^{n}|_T^*  >K_{T,\varepsilon}, \bar{\mu}^n\leq \lambda^n+K_{\mu, \epsilon}n^{-1/2}, |X_i^n(0)|<K_0\right)\\&\quad + \PP\left(\bar{\mu}^n>\lambda^n+K_{\mu, \epsilon}n^{-1/2}\right) + \PP(|X_i^n(0)|\geq K_0)\\
    &\leq \PP\left(|X_i^{n}|_T^*  >K_{T,\varepsilon}, \bar{\mu}^n\leq \lambda^n+K_{\mu, \epsilon}n^{-1/2}, |X_i^n(0)|<K_0\right) + \frac{\varepsilon}{4}.
\end{align*}
Without loss of generality, we assume that $\bar{\mu}^n \leq \lambda^n + K_{\mu,\epsilon} n^{-1/2}$ and $|X_i^n(0)| < K_0$ for $i=1,2$ hold on all events considered below.

To prove the stochastic boundedness of $(X_1^n, n\in \NN)$, we first couple the number of idle servers in the original JSQ system with the queue length of a Markovian queue as follows. Take $A^n(t)$ as the Poisson arrival process with rate $n\lambda^n$ and the routing policy as described in Section~\ref{sec:model}. To rigorously construct the departure process of the JSQ system, let $S^n(t)$ be a Poisson process with rate $n\lambda^n+K_{\mu, \epsilon}n^{1/2}$ to act as the potential departure process and $(\tilde{U}_{D,j}^n, j\in \NN)$  be a sequence of i.i.d. uniform(0,1) random variables. The potential departure occurring at the $j$th event epoch $\theta_{S,j}^n$ of $S^n(t)$ is observed as an actual departure in the JSQ system, if 
\begin{equation}
\tilde{U}_{D,j}^n \leq \frac{\sum_{k=1}^n\mu_k^nQ_{k,1}^n(\theta_{S,j}^n-)}{n\lambda^n+K_{\mu, \epsilon}n^{1/2}}.\label{eq:departure_construction}
\end{equation}
The server $s_j^n$ from which this departure is observed is determined by sampling from the distribution
\begin{equation}
\PP(s_j^n=k) = \frac{\mu_k^nQ_{k,1}^n(\theta_{S,j}^n-)}{\sum_{l=1}^n\mu_l^nQ_{l,1}^n(\theta_{S,j}^n-)}.
\label{eq:splitting_departures}
\end{equation}
Using the splitting property of Poisson processes, it can be seen that the departure from each server $k$ occurs with rate function $\mu_k^nQ_{k,1}^n(t-)$ in this setting, matching the dynamics of the JSQ system.

Now consider a Markovian queueing process $\tilde{Q}^{U,n}$, where arrivals occur at the event epochs of $S^n$ and departures occur at the event epochs of $A^n$ only if there are jobs in this new queueing system. Suppose $\tilde{Q}^{U,n}(0) = n - Q_1^n(0)$, and there exists a first time $\tilde{t}$ such that $\tilde{Q}^{U,n}(\tilde{t}) < n - Q_1^n(\tilde{t})$, which would imply $\tilde{Q}^{U,n}(\tilde{t}-) = n - Q_1^n(\tilde{t}-)$. Since all potential departures in the original JSQ system are observed as arrivals in the Markovian queue $\tilde{Q}^{U,n}$, the event occurring at time $\tilde{t}$ can only be an arrival in the original system that is not observed as a departure in the Markovian queue. However, this would mean $\tilde{Q}^{U,n}(\tilde{t}-) = n - Q_1^n(\tilde{t}-) = 0$ and $Q_1^n(\tilde{t}) = n + 1$, which is not possible. Hence, such a time $\tilde{t}$ cannot exist. We conclude that $n^{-1/2}\tilde{Q}^{U,n}(t) \geq_{st} |X_1^n(t)|$ for all $t\geq 0$ and hence $X_1^n$ is stochastically bounded if $n^{-1/2}\tilde{Q}^{U,n}$ is stochastically bounded.

For any $K_{T,\varepsilon}>2K_0$, define the random times
\begin{align*}
    \theta_{Q,1}^1 &= \inf\{t\geq 0: n^{-1/2}\tilde{Q}^{U,n}(t)>K_{T,\varepsilon}\}\wedge T\\
    \theta_{Q,1}^2 &= \sup\{0\leq t\leq \theta_{Q,1}^1:n^{-1/2}\tilde{Q}^{U,n}(t)<K_{T,\varepsilon}/2\}.
\end{align*}
The definition implies that all potential departures (arrivals for the original process) in the interval $[\theta_{Q,1}^2, \theta_{Q,1}^1]$ are observed as actual departures. Hence, 
\begin{align*}
    \PP\left(|X_1^n(t)|_T^*  >K_{T,\varepsilon}\right) &\leq \PP(|n^{-1/2}\tilde{Q}^{U,n}(t)|_T^*  >K_{T,\varepsilon})\\
    &=\PP\left(\frac{S^n(\theta_{Q,1}^1) - S^n(\theta_{Q,1}^2) - A^n(\theta_{Q,1}^1) + A^n(\theta_{Q,1}^2) }{n^{1/2}} >\frac{K_{T,\varepsilon}}{2}\right)\\
    &\leq \PP\left(2\frac{|S^n(t)-A^n(t)|_T^*}{n^{1/2}} >\frac{K_{T,\varepsilon}}{2}\right)\\
    &\leq \PP\left(\frac{|S^n(t)-(n\lambda^n+K_{\mu, \epsilon}n^{1/2})t|_T^*}{n^{1/2}} + \frac{|A^n(t)-n\lambda^n t|_T^*}{n^{1/2}}>\frac{K_{T,\varepsilon}}{2} - K_{\mu,\epsilon}\right)
\end{align*}
Focusing on the terms within the probability, we observe that the martingale central limit theorem implies that the terms on the right-hand side are stochastically bounded, and Assumption~\ref{asm:arrival_service_rates} ensures that the term on the right-hand side converges to a finite value. Hence, it is possible to choose $K_{T,\varepsilon}$ sufficiently large so that the probability is less than $\varepsilon$, proving the stochastic boundedness of $(X_1^n: n \in \NN)$.

To prove the stochastic boundedness of $(X_2^n: n \in \NN)$, define the set $\KK_n = \{k : Q_{k,2}^n(0) = 1\}$ as the set of servers that have two or more jobs at time $0$, and let $\KK_n^c$ be its complement. Define $A_{\KK_n^c}^n(t)$ and $D_{\KK_n^c}^n(t)$ to be the number of arrivals and departures by time $t\geq 0$, respectively, for the servers not in $\KK_n$. 
For any $T>0$, we have
\begin{align}
   \nonumber \PP\left(|X_2^n(t)|_T^*  >K_{T,\varepsilon}\right) &= \PP\left(\left|n^{-1/2}\sum_{k\in \KK_n}Q_{k,2}^n(t) + n^{-1/2}\sum_{k\in \KK_n^c}Q_{k,2}^n(t)\right|_T^*>K_{T,\varepsilon}\right)\\
   \nonumber &\leq \PP\left(X_2^n(0) + \left|\frac{(A_{\KK_n^c}^n(t) - D_{\KK_n^c}^n(t))^+}{n^{1/2}}\right|_T^*>K_{T,\varepsilon}\right)\\
   \nonumber &\leq \PP\left(X_2^n(0) + \left|\frac{(A^n(t) - D_{\KK_n^c}^n(t))^+}{n^{1/2}}\right|_T^*>K_{T,\varepsilon}\right)\\
   \nonumber&\leq \PP\left((1+\mu_{\max}T)X_2^n(0) + n^{1/2}|\lambda^n-\bar{\mu}^n|T+ \mu_{\max}T|X_1^n(t)|_T^* \right.\\
    \label{eq:x2_boundedness} &\left.\qquad\qquad+  \left|\frac{A^n(t)-n\lambda^nt}{n^{1/2}}\right|_T^* + \left|\frac{D_{\KK_n^c}^n(t) - \sum_{k\in \KK_n^c}\mu_k^n\int_0^tQ_{k,1}^n(s)ds}{n^{1/2}}\right|_T^*>K_{T,\varepsilon}\right).
\end{align}
Assumptions~\ref{asm:arrival_service_rates} and \ref{asm:xi_initial}, stochastic boundedness of $X_1^n(t)$ and the martingale central limit theorem ensure that the random quantity inside the probability on the right-hand side of \eqref{eq:x2_boundedness} is stochastically bounded, which in turn implies the stochastic boundedness of $(X_2^n:n\in \NN)$.
\end{proof}

\subsection{Proofs of Tightness of Fairness Processes}\label{app:fairness_measure_results}

The proof of Lemma~\ref{lem:fairness_tightness} relies on Jakubowski's criteria~\cite{Jakubowski1986} to establish the tightness of the shifted fairness processes, following the approach of B\"uke and Qin~\cite{buke2023many}.  Let $\cP(\bar{\SSS})$ denote the space of probability measures on the compact space $\bar{\SSS}$ endowed with the Prokhorov metric (cf. Section 1.6 in \cite{billingsley1999convergence}).

\begin{theorem}[Jakubowski's criteria, Theorem 4.6 in \cite{Jakubowski1986}]
\label{thm:jakubowski}
The sequence of probability measure-valued processes $(\eta^n:n\in\NN)$ with sample paths in $\DD_{\cP(E)}[0,T]$, where $E$ is a metric space, is tight provided that
\begin{itemize}
\item[(J1)] for every $\bar{\rho}>0$, there exists a compact set $\cK_{\bar{\rho}}\subset\cP(E)$ such that
$$\liminf_{n\to \infty}\PP(\eta^n(t)\in\cK_{\bar{\rho}}\ \text{for all }t\in[0,T])\ge 1-\bar{\rho}.$$
\item[(J2)] there exists a family $\cF$ of real-valued maps on $\cP(E)$ that separates points, is closed under addition, and such that $(H(\eta^n):n\in\NN)$ is tight in $\DD_\RR[0,T]$ for every $H\in\cF$.
\end{itemize}
\end{theorem}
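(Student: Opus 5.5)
The plan is to reduce tightness in $\DD_{\cP(E)}[0,T]$ to tightness of real-valued coordinate processes by embedding a compact subset of $\cP(E)$ into $\RR^\infty$ through countably many maps from $\cF$. As in Jakubowski's setting, I will take the maps in $\cF$ to be continuous on $\cP(E)$; this is implicit in the criterion and is needed throughout. First, fix $\bar\rho>0$ and use (J1) to obtain a compact $\cK_{\bar\rho}$ such that, for all large $n$, the paths of $\eta^n$ remain in $\cK_{\bar\rho}$ with probability at least $1-\bar\rho$. It therefore suffices to prove relative compactness in law of the paths restricted to this event, which lie in $\DD_{\cK_{\bar\rho}}[0,T]$.

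The second step builds the embedding. Since $\cK_{\bar\rho}$ is a compact metric space, $\cK_{\bar\rho}\times\cK_{\bar\rho}$ is separable. Because $\cF$ separates points, each off-diagonal pair $(\mu,\mu')$ has an open neighbourhood on which some $H\in\cF$ takes disjoint values at the two coordinates. By the Lindel\"of property, countably many such neighbourhoods cover the off-diagonal, which yields a countable subfamily $\{H_i\}\subset\cF$ that still separates the points of $\cK_{\bar\rho}$. Define $\Phi(\mu)=(H_i(\mu))_{i\in\NN}\in\RR^\infty$. The map $\Phi$ is continuous and injective on a compact set, so it is a homeomorphism onto its compact image $\Phi(\cK_{\bar\rho})$. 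Composition with $\Phi$ then gives a homeomorphism between $\DD_{\cK_{\bar\rho}}[0,T]$ and $\DD_{\Phi(\cK_{\bar\rho})}[0,T]$ in the $J_1$ topology, because $J_1$ is defined through time changes and uniform closeness, and both are preserved by uniformly continuous bijections with uniformly continuous inverses on compacts.

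The third step, which I expect to be the main obstacle, is to show that $(\Phi(\eta^n))$ is tight in $\DD_{\RR^\infty}[0,T]$. Marginal tightness of each coordinate is not enough in the $J_1$ topology, because different coordinates may jump at nearby but distinct times. This is where closure of $\cF$ under addition is used. It suffices to prove tightness in $\DD_{\RR^m}[0,T]$ for every $m$. I would use the standard characterization that a sequence of $\RR^m$-valued c\`adl\`ag processes is $J_1$-tight if and only if each coordinate $H_i(\eta^n)$ and each pairwise sum $H_i(\eta^n)+H_j(\eta^n)$ is tight in $\DD_\RR[0,T]$. All these processes are of the form $H(\eta^n)$ with $H\in\cF$, so (J2) applies to them. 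I would verify the characterization through the modulus $w'$: if the sums and the coordinates control their $w'$-moduli, then near-simultaneous jumps must be aligned, since otherwise the sum would oscillate.

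Finally, tightness in $\RR^\infty$ follows from tightness in every finite dimension, because $\RR^\infty$ carries the product topology. Together with the confinement to the closed set $\Phi(\cK_{\bar\rho})$, this gives tightness of $\Phi(\eta^n)$ in $\DD_{\Phi(\cK_{\bar\rho})}[0,T]$ up to probability $\bar\rho$. Pulling back through the homeomorphism of the second step gives tightness of $(\eta^n)$ up to probability $\bar\rho$. Since $\bar\rho$ is arbitrary, the sequence $(\eta^n)$ is tight.
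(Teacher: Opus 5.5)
The paper does not prove this statement. It quotes it from Jakubowski (1986, Theorem~4.6) and uses it only as a tool in Lemma~\ref{lem:alpha_continuity} and Proposition~\ref{prop:shifted_fairness_tight}. Your sketch is essentially Jakubowski's own argument, and it is correct: a countable separating subfamily on a compact set via Lindel\"of, a homeomorphic embedding into $\RR^\infty$, and closure under addition to force nearly simultaneous jumps of different coordinates to align.

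\textbf{Continuity.} You are right to require the maps in $\cF$ to be continuous. The statement as printed omits this, and the hypothesis cannot be dropped. Take $E=[0,1]$ and distinct points $b_n\in[1/2,1]$. Let $H_0:\cP(E)\to\RR$ be injective with $H_0(\delta_0)=0$ and $H_0(\delta_{b_n})=1/n$, and let $\cF=\{kH_0:k\in\NN\}$. Let $\eta^n$ equal $\delta_0$ except on $[1/2,1/2+1/n)$, where it equals $\delta_{b_n}$. Then (J1) and (J2) hold, but $(\eta^n)$ is not $J_1$-tight. In the paper's applications $H(\eta)=\langle f,\eta\rangle$ with $f$ bounded continuous, so the omission is harmless there.

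\textbf{Two minor points.}
\begin{enumerate}
\item Because (J1) is stated with a $\liminf$, your argument only controls large $n$. The finitely many remaining laws must be tight individually. This is automatic when $\cP(E)$ is Polish, for example when $E=\bar{\SSS}$ is compact.
\item The step from $\RR^m$ to $\RR^\infty$ does not follow merely because $\RR^\infty$ carries the product topology. The $J_1$ space over a product is not the product of the $J_1$ spaces, which is precisely why you need the sums. The correct reason is the bound $w'_\infty(x,\delta)\le w'_m(\pi_m x,\delta)+2^{-m}$ for the metric $\sum_i 2^{-i}\min(1,|x_i-y_i|)$.
\end{enumerate}

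\textbf{Pairwise-sum lemma.} Billingsley's three-point modulus $w''$ gives the cleanest verification. Suppose $x$ has large increments on both sides of $t$, coming from coordinates $i\neq j$, while $w''(x_i,\delta)$ and $w''(x_j,\delta)$ are small. Then both increments of $x_i+x_j$ are large.
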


As the first step, we prove the tightness of shifted fairness processes for any $\epsilon>0$.

\begin{proposition}\label{prop:shifted_fairness_tight}
For any fixed $\epsilon>0$ and $T>0$, the sequence $( \cS_\epsilon\eta^n:n\in\NN)$ is tight in $\DD_{\cP(\bar{\SSS})}[0,T]$.
\end{proposition}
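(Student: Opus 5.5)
We verify the two conditions of Jakubowski's criteria (Theorem~\ref{thm:jakubowski}) with $E=\bar{\SSS}$. Condition (J1) is immediate: $\bar{\SSS}$ is compact, so $\cP(\bar{\SSS})$ is compact under the Prokhorov metric. Taking $\cK_{\bar\rho}=\cP(\bar{\SSS})$ for every $\bar\rho$ settles it. Note that $\cS_\epsilon\eta^n$ is defined on the reversed-time space $\GG$; we work with the $\DD$ representation $g_f(t)=f(T-t)$ fixed in Section~\ref{sec:notation}, and both (J1) and (J2) are insensitive to this reversal.

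For (J2) we take the family $\cF$ of maps $H_f(\eta)=\langle f,\eta\rangle$ with $f\in\CC^b_\RR(\bar{\SSS})$. This family is closed under addition, since $H_f+H_g=H_{f+g}$, and it separates probability measures. It therefore suffices to show that $(\langle f,\cS_\epsilon\eta^n\rangle:n\in\NN)$ is tight in $\DD_\RR[0,T]$ for each fixed bounded continuous $f$. Write $I^n(t)=-\int_0^t X_1^n(s)\,ds\ge 0$ and $I_f^n(t)=-\sum_k f(\mu_k^n)\int_0^t X_{k,1}^n(s)\,ds$. Then for $t\ge\tau_\epsilon^n$ we have $\langle f,\cS_\epsilon\eta^n(t)\rangle=I_f^n(t)/I^n(t)$ with $I^n(t)\ge\epsilon$, and for $t<\tau_\epsilon^n$ the value is the constant $f(0)$. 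The process is bounded by $|f|_\infty$, so we only need to control its modulus of continuity in the $J_1$ sense.

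On $[\tau_\epsilon^n,T]$, both $I^n$ and $I_f^n$ are Lipschitz with constants $|X_1^n|_T^*$ and $|f|_\infty|X_1^n|_T^*$, because $|\sum_k f(\mu_k^n)X_{k,1}^n|\le |f|_\infty|X_1^n|$ (all $X_{k,1}^n\le 0$). Since the denominator is at least $\epsilon$, for $\tau^n_\epsilon\le s\le t\le T$ we get
\[
\left|\frac{I_f^n(t)}{I^n(t)}-\frac{I_f^n(s)}{I^n(s)}\right|\le \frac{2|f|_\infty|X_1^n|_T^*}{\epsilon}\,(t-s).
\]
By Lemma~\ref{lem:stochastic_boundedness}, $|X_1^n|_T^*$ is stochastically bounded, so on an event of probability at least $1-\bar\rho$ the paths are uniformly Lipschitz away from the single jump at $\tau_\epsilon^n$. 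The only possible discontinuity is therefore that one jump, from $f(0)$ to the value $I^n_f(\tau_\epsilon^n)/\epsilon$. A process with at most one jump and uniformly Lipschitz pieces satisfies the standard $J_1$ tightness criterion through the modulus $w'$: choose partitions that have $\tau^n_\epsilon$ as a partition point, which is always possible since there is only one jump. This also handles $\tau^n_\epsilon>T$, where the path is constant.

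\textbf{Main obstacle.} The delicate point is the jump and its interaction with the time reversal. We must ensure that the jump does not approach the endpoint of the interval in a way that $w'$ fails to tolerate, namely jumps accumulating near $t=0$ of the reversed path, that is, near $T$. I would deal with this by noting that $w'$ permits one jump anywhere in the interior, and that a jump near an endpoint is harmless under $J_1$ when there is only one. If needed, I would instead choose $T$ outside the atoms of the limit of $\tau^n_\epsilon$. Finally, combine (J1) and (J2) via Theorem~\ref{thm:jakubowski}.
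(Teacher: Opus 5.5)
Your overall route is the paper's: Jakubowski's criteria, (J1) from compactness of $\cP(\bar{\SSS})$, (J2) via $H_f(\eta)=\langle f,\eta\rangle$, the Lipschitz bound $2|f|_\infty|X_1^n|_T^*\epsilon^{-1}(t-s)$ after $\tau_\epsilon^n$, and a partition with $\tau_\epsilon^n$ as a node. The gap is in how you dispose of the jump. You claim that a single jump near an endpoint is harmless under $J_1$, and that a partition through $\tau_\epsilon^n$ is always admissible. Both claims are false.

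On $[0,T]$ every Skorokhod time change fixes $0$. Consider $x_n(t)=a\,\II(t<1/n)+b\,\II(t\geq 1/n)$ with $a\neq b$. This sequence has no $J_1$-convergent subsequence, since the only candidate limit is not right-continuous at $0$. Correspondingly, $w'(x_n,\varrho)\geq|a-b|$ once $1/n<\varrho$, because the first partition interval must have length at least $\varrho$. In your setting the jump is genuinely of order one: $\eta^n(\tau_\epsilon^n)$ is supported on $\SSS\not\ni 0$. For instance, take $f$ with $f(0)=1$ and $f|_{\SSS}=0$; then the jump has size $1$. So you must rule out $\tau_\epsilon^n$ accumulating at $0$. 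Your fallback of choosing $T$ off the atoms of a limit of $\tau_\epsilon^n$ does not help here. That limit is not yet available at this stage, and in any case it concerns the other endpoint.

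The missing step, which is the crux of the paper's proof, is a deterministic lower bound on the jump time. On the event $\{|X_1^n|_T^*\leq K_{\bar\rho}\}$ we have $\bigl|\int_0^t X_1^n(s)\,ds\bigr|\leq K_{\bar\rho}t$, hence $\tau_\epsilon^n\geq \epsilon/K_{\bar\rho}$. This already follows from your own Lipschitz bound on $I^n$, since $I^n(0)=0$ and $I^n(\tau_\epsilon^n)=\epsilon$. Now take $\varrho^*<(\epsilon/K_{\bar\rho})\min\{1,\varepsilon/(2|f|_\infty)\}$ and the partition $0,\tau_\epsilon^n,\tau_\epsilon^n+\varrho^*,\ldots$. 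The first interval is then admissible, and $w'<\varepsilon$ outside an event of probability $\bar\rho/2$.

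Your time-reversal detour is also misplaced. $\cS_\epsilon\eta^n$ takes the value $\eta^n(\tau_\epsilon^n)$ at $\tau_\epsilon^n$, so it is already right-continuous and should be treated directly in $\DD$. Reversing it would produce a path that is not c\`adl\`ag, and this is why you ended up guarding the wrong endpoint. The right endpoint $T$ is a separate and milder issue, which the paper's partition also leaves implicit. It is customarily absorbed by allowing the last interval to be short, as in the $\DD[0,\infty)$ modulus.
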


\begin{proof}
Since $\bar{\SSS}$ is compact, so is $\cP(\bar{\SSS})$, and (J1) holds trivially with $\cK_{\bar{\rho}}=\cP(\bar{\SSS})$. To show that condition (J2) holds, we take $$\cF:=\{H_f:H_f(\eta)=\langle f,\eta\rangle, \mbox{ for all }\eta \in \cP(\bar{\SSS}), f\in C_b(\bar{\SSS})\}$$ 
as in \cite{buke2023many}, which separates points of $\cP(\bar{\SSS})$ and is closed under addition. Fixing $f\in \CC_{\RR}^b(\bar{\SSS})$, the tightness of $(H_f(\cS_\epsilon\eta^n):n\in\NN)$ follows if we can show 
\begin{enumerate}
    \item (compact containment) for any $\bar{\rho}>0$, there exists a $b>0$ such that $$\PP\big(|H_f(\cS_\epsilon\eta^n)|_T^*>b\big)<\bar{\rho} \mbox{ for all }n\in \NN.$$
    \item (modulus of continuity) for any $\bar{\rho},\varepsilon>0$, there exists a $\varrho^*$ such that $\varrho<\varrho^*$ implies
    \[
    \limsup_{n\to\infty}
\PP\left(
w'_T\bigl(H_f(\cS_\epsilon\eta^n),\varrho\bigr)>\varepsilon
\right)
<\bar{\rho},
    \]
where 
$w'\big(H_f(\cS_\epsilon \eta_t^n), \varrho\big)
:=
\inf
\max_{i}
w\big(H_f(\cS_\epsilon \eta_t^n), (t_i,t_{i+1}]\big)$ and 
\[
w\big(H_f(\cS_\epsilon \eta_t^n), (t_i,t_{i+1}]\big)
:=
\sup_{t_i < s,t \le t_{i+1}}
\left|
H_f(\cS_\epsilon \eta_t^n)
-
H_f(\cS_\epsilon \eta_s^n)
\right|.
\]
The infimum is taken over all finite partitions $\{ t_i\}_{i\in\NN}$ of $[0,T]$ such that $|t_{i+1} - t_i| \ge \varrho$ for all $i$. 
\end{enumerate}

We have $|H_f(\eta^n)|_T^*= |\langle f,\cS_\epsilon\eta^n\rangle|_T^*\leq \sup_{\mu\in \bar{\SSS}}|f(\mu)|$ and hence, the compact containment condition holds. For the modulus of continuity condition, choose $K_{\bar{\rho}}>0$ such that $\PP\left( |X_1^n|_T^*> K_{\bar{\rho}}\right)<\bar{\rho}/2$ and 
\begin{align*}
 \PP\left(w'\big(H_f(\cS_\epsilon \eta_t^n), \varrho\big)>\varepsilon\right) & \leq  \PP\left(w'\big(H_f(\cS_\epsilon \eta_t^n), \varrho\big)>\varepsilon, |X_1^n|_T^*\leq K_{\bar{\rho}}\right) + \PP\left( |X_1^n|_T^*> K_{\bar{\rho}}\right)\\
    &\leq \PP\left(w'\big(H_f(\cS_\epsilon \eta_t^n), \varrho\big)>\varepsilon, |X_1^n|_T^*\leq K_{\bar{\rho}}\right) +\bar{\rho}/2.
\end{align*}
Hence, without loss of generality we can assume that $|X_1^n|_T^*\leq K_{\bar{\rho}}$, which implies $\tau_\epsilon^n\geq \epsilon/K_{\bar{\rho}}$. For any $t>s>\tau_\epsilon^n$, we have 
\begin{align*}
|\langle f,\eta^n(t)\rangle-\langle f,\eta^n(s)\rangle| &=\left|
\frac{\sum_{k=1}^n f(\mu_k^n)\int_s^t X_{k,1}^n(r) dr}{\int_0^t X_1^n(r) dr} -
\frac{\left(\sum_{k=1}^n f(\mu_k^n)\int_0^s X_{k,1}^n(r) dr\right)\left(\int_s^t X_1^n(r) dr\right)}{\left(\int_0^t X_1^n(r) dr\right)\left(\int_0^s X_1^n(r) dr\right)}\right|\\
&\leq 2\sup_{\mu\in \bar{\SSS}}|f(\mu)|K_{\bar{\rho}}\epsilon^{-1}(t-s).
\end{align*}
Choosing $\varrho^*$ such that
\[
\varrho^*<\frac{\epsilon}{K_{\bar{\rho}}}\min\left\{1, \frac{\varepsilon}{2\sup_{\mu\in \bar{\SSS}}|f(\mu)|}\right\}
\]
and letting $t_1=0, t_2 = \tau_\epsilon^n$ and $t_i = t_2 + (i-2)\varrho^*$, for any $\varrho<\varrho^*$, we have $w'_T\bigl(H_f(\cS_\epsilon\eta^n),\varrho\bigr)<\varepsilon$. This establishes the modulus-of-continuity condition. Together with the compact containment condition, it follows that $(H_f(\cS_\epsilon\eta^n):n\in\NN)$ is tight.
\end{proof}

\begin{proof}[Proof of Lemma~\ref{lem:fairness_tightness}]
 By Proposition~\ref{prop:shifted_fairness_tight}, for each $m\in\NN$, the sequence $(\cS_{1/m}\eta^{n}: n\in \NN)$ is tight in $\DD_{\cP(\bar{\SSS})}[0,T]$. Using a diagonal argument, for any subsequence, we can find a further subsequence, $\{n_j,j\in \NN\}$, along which $\cS_{1/m}\eta^{n_j} \Rightarrow \cS_{1/m}\eta$ for all $m\in \NN$. Using the Skorokhod representation theorem (cf. Section~1.6 in
\cite{billingsley1999convergence}), we can assume that the
convergence is with probability 1.

Since $\eta^{n_j}(t)\neq \delta_0$ for all
$t\ge \tau_\epsilon^{n_j}$, the shift times are determined by the
shifted paths via
\[
\tau_\epsilon^{n_j}
=
\inf\{t:\cS_\epsilon\eta^{n_j}(t)\neq \delta_0\}.
\]
Therefore, the limiting shift times are identified by the limiting
shifted processes. 
 For any $n_j$ and $l>m$, we have $\tau_{1/l}^{n_j}\le \tau_{1/m}^{n_j}$. Passing to the limit yields $\tau_{1/l}\le \tau_{1/m}$ almost surely. Moreover, for $l>m$, we have
$\cS_{1/l}\eta^{n_j}(t)=\cS_{1/m}\eta^{n_j}(t)$ for all
$t\ge \tau_{1/m}^{n_j}$. Passing to the limit yields
$\cS_{1/l}\eta(t)=\cS_{1/m}\eta(t)$ for all
$t\ge \tau_{1/m}$. Therefore, the family
$(\cS_{1/m}\eta:m\in\NN)$ is consistent and defines a unique
limiting fairness process $\eta$. Hence, $(\tau_{1/m})_{m\in\NN}$ is nonincreasing and converges almost surely to a random variable $\tau_0$. Hence, $\eta^{n_j}\tof \eta$ as $j\to \infty$ and the result follows.
\end{proof}

\subsection{Proof of Lemma~\ref{lem:positive_recurrent_finite}}
\label{app:positive_recurrent_finite}
\begin{proof}[Proof of Lemma~\ref{lem:positive_recurrent_finite}]

To prove positive recurrence, we use the Foster--Lyapunov criterion by showing that there exist a Lyapunov function $V:\ZZ_+^\infty\to \RR_+$ satisfying
\[
V(\mf q)\to\infty
\qquad\mbox{as }\mf q\to\infty,
\]
a finite set $\KK\subset\ZZ_+^\infty$, and constants $\epsilon,K>0$ such that
\[
\cG^nV(\mf q)
<
-\epsilon
+
K\II(\mf q\in\KK).
\]
    For any $\mf{q}$, we define 
    \begin{align*}
        i^*(\mf{q}) := \max\{i:q_i>0\},
        i_*(\mf{q}) := \min\{i: q_i<n\}
    \end{align*} 
   with the convention that $i^*(\mf{0})=0$, and 
    \[
    V(\mf{q}) = \sum_{i=1}^\infty i q_i.
    \]
    When a JSQ system is in state $\mf{q}$, any arrival will be routed to a buffer with $i_*(\mf{q})-1$ jobs and hence yield an increase of $i_*(\mf{q})$ in the Lyapunov function $V(\mf{q})$. Similarly, any departure from the system yields a decrease of at least $i_*(\mf{q})-1$. Hence, we have 
\begin{align}
\nonumber
\cG^n V(\mf{q})
&\leq n\lambda^n i_*(\mf{q})
-
\sum_{k=1}^n \mu_k^n (i_*(\mf{q}) - 1)\\
\label{eq:lyapunov_eq1}
&=
-n^{1/2}(\nu^n+\varsigma^n)i_*(\mf{q})
+
\sum_{k=1}^n\mu_k^n .
\end{align}
    Moreover, the Lyapunov function decreases by $i^*(\mf{q})$ at rate at least $\mu_{\min}$. Hence, we also have 
    \begin{align}
        \label{eq:lyapunov_eq2}\cG^n V(\mf{q}) &\leq n\lambda^n i_*(\mf{q}) - \mu_{\min} i^*(\mf{q}).
    \end{align}
    Now, fix $\epsilon>0$ and consider the finite set 
\[
\KK = \left\{\mf q:i^*(\mf q)\leq\frac{
n\lambda^n\left(\sum_{k=1}^n\mu_k^n+\epsilon\right)}
{\mu_{\min}n^{1/2}(\nu^n+\varsigma^n)}
+\frac{\epsilon}{\mu_{\min}}
\right\}.
\]
We know that for any $\mf{q}\in \KK$, 
\[
\cG^n V(\mf{q}) \leq K = n\lambda^n
\left(
\frac{
n\lambda^n\left(\sum_{k=1}^n\mu_k^n+\epsilon\right)}
{\mu_{\min}n^{1/2}(\nu^n+\varsigma^n)}
+
\frac{\epsilon}{\mu_{\min}}
+1
\right)<\infty.
\]
For any $\mf{q}\notin \KK$, if
\[
i_*(\mf{q})>
\frac{\sum_{k=1}^n\mu_k^n + \epsilon}{n^{1/2}(\nu^n +\varsigma^n)},
\]
then \eqref{eq:lyapunov_eq1} implies that $\cG^nV(\mf{q})<-\epsilon$. On the other hand, if
\[
i_*(\mf{q})\leq
\frac{\sum_{k=1}^n\mu_k^n + \epsilon}{n^{1/2}(\nu^n+\varsigma^n)},
\]
then, since $\mf{q}\notin\KK$, \eqref{eq:lyapunov_eq2} implies that $\cG^n V(\mf{q})<-\epsilon$. Hence, the result follows.  
\end{proof}

\section{Proofs of Results Presented in Section~\ref{sec:auxiliary_systems}}
\label{app:jsq_ssl_proofs}

\subsection{Proof of Proposition~\ref{prop:diffusion_limit_FSL_SSL}} \label{sec:proof_JSQSSL}
\begin{proof}[Proof of Proposition~\ref{prop:diffusion_limit_FSL_SSL}]The proof of Proposition~\ref{prop:diffusion_limit_FSL_SSL} follows the same steps as in the proof of Theorem~\ref{thm:process_level_convergence}. We provide the proof for JSQ-SSL and the proof for JSQ-FSL follows the same lines. To simplify the notation, we suppress the superscript $\SSL$ below.

As in Theorem~\ref{thm:process_level_convergence}, we again start by proving the stochastic boundedness of the processes under consideration. The proof of Lemma~\ref{lem:stochastic_boundedness_jsq_ssl} is analogous to the proof of Lemma~\ref{lem:stochastic_boundedness} and hence is omitted.

\begin{lemma}
    \label{lem:stochastic_boundedness_jsq_ssl}
    The sequence of processes $(\mf{\tX}_b^{\SSL,n}(t):n\in \NN)$ is stochastically bounded under any tie-breaking rule $\pi$.
\end{lemma}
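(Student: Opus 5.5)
We follow the proof of Lemma~\ref{lem:stochastic_boundedness} almost verbatim, since the JSQ-SSL system differs from the JSQ system only in which servers serve which buffers. First, as there, the monotonicity $\tilde{X}_{b,2}^{\SSL,n}\geq \tilde{X}_{b,3}^{\SSL,n}\geq\cdots$ reduces stochastic boundedness of $|\tilde{\mf{X}}_b^{\SSL,n}|_{\rho,T}^*$ to that of $|\tilde{X}_{b,1}^{\SSL,n}|_T^*$ and $|\tilde{X}_{b,2}^{\SSL,n}|_T^*$. By Assumptions~\ref{asm:arrival_service_rates} and~\ref{asm:xi_initial} (recalling $\tilde{\mf{Q}}_b^{\SSL,n}(0)=\mf{Q}^{\pi,n}(0)$), we restrict at cost $\varepsilon/4$ to the event $\{\bar{\mu}^n\leq\lambda^n+K_{\mu,\epsilon}n^{-1/2},\ |\tilde{X}_{b,i}^{\SSL,n}(0)|<K_0,\ i=1,2\}$. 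The tie-breaking rule plays no role here, since the JSQ-SSL dynamics do not depend on it.

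For $\tilde{X}_{b,1}^{\SSL,n}$ we reuse the coupling with the Markovian queue $\tilde{Q}^{U,n}$. Potential departures come from $S^n$ with rate $n\lambda^n+K_{\mu,\epsilon}n^{1/2}\geq\sum_k\mu_k^n$ and are thinned as in \eqref{eq:departure_construction}, now with acceptance ratio $\sum_{k=1}^{\tilde{Q}_{b,1}^{\SSL,n}}\mu_k^n$ (servers ordered by rate) in the numerator. Every actual departure of the SSL system is therefore a potential departure, so the argument ``first time $\tilde t$ at which $\tilde{Q}^{U,n}<n-\tilde{Q}_{b,1}^{\SSL,n}$'' goes through unchanged. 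It gives $n^{-1/2}\tilde{Q}^{U,n}\geq|\tilde{X}_{b,1}^{\SSL,n}|$ pathwise. The same last-crossing times $\theta_{Q,1}^1,\theta_{Q,1}^2$ then bound the supremum by the centered suprema of $S^n$ and $A^n$, which are stochastically bounded by the FCLT for Poisson processes.

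For $\tilde{X}_{b,2}^{\SSL,n}$ we use the balance equation \eqref{eq:modified_jssq_balance} with $i=1$ and $i=2$. The JSQ-SSL system is a JSQ system on buffers, so an arrival joins a buffer with $\geq1$ job only when all $n$ buffers are nonempty. We split servers into those initially serving $\geq2$ jobs and the rest, as in \eqref{eq:x2_boundedness}. Because servers move between buffers in the SSL system, we instead bound at the buffer level: the number of buffers with $\geq2$ jobs is at most $\tilde{Q}_{b,2}^{\SSL,n}(0)$ plus $\tilde{A}_1^n(t)$. We then use $\tilde{A}_1^n(t)=\tilde{A}_0^n(t)-\tilde{A}_0^n(0)\ldots$; more precisely, from \eqref{eq:modified_jssq_balance},
\[
n^{-1/2}\tilde{A}_1^n(t)=\tilde{X}_{b,1}^{\SSL,n}(0)-\tilde{X}_{b,1}^{\SSL,n}(t)+n^{-1/2}\bigl(A^n(t)-\tilde{D}_1^n(t)+\tilde{D}_2^n(t)\bigr).
\]
Writing $\tilde{D}_1^n-\tilde{D}_2^n$ as a martingale plus its compensator $\int_0^t\sum_{k\le \tilde{Q}_{b,1}}\mu_k^n-\sum_{k\le\tilde{Q}_{b,2}}\mu_k^n\,ds$, we bound the compensator below by $n\bar{\mu}^n t-\mu_{\max}n^{1/2}\int_0^t(|\tilde{X}_{b,1}^{\SSL,n}|+\tilde{X}_{b,2}^{\SSL,n})ds$. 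This yields
\[
|\tilde{X}_{b,2}^{\SSL,n}|_t^*\leq C_0+\mu_{\max}T|\tilde{X}_{b,1}^{\SSL,n}|_T^*+\text{(martingale terms)}+\mu_{\max}\int_0^t|\tilde{X}_{b,2}^{\SSL,n}|_s^*ds,
\]
where $C_0$ absorbs $K_0$ and $n^{1/2}|\lambda^n-\bar{\mu}^n|T\le K_{\mu,\epsilon}T$. Gronwall's inequality then gives stochastic boundedness. This departs slightly from the original proof, which used a direct count.

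The main obstacle is controlling the martingale terms for $\tilde{D}_i^n$, because the compensator rates depend on the rearranged server assignment. We handle this with the time-change representation \eqref{eq:martingale_time_change}, since each rate is bounded by $\sum_k\mu_k^n\leq n\lambda^n+K_{\mu,\epsilon}n^{1/2}$. The quadratic variation is then $O(n)$, and Doob's inequality gives stochastic boundedness after scaling by $n^{-1/2}$.
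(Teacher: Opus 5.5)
Your proposal is correct. The paper omits this proof as ``analogous'' to that of Lemma~\ref{lem:stochastic_boundedness}, and your reduction to the first two coordinates and your treatment of $\tX_{b,1}^{\SSL,n}$ are exactly that analogue. The SSL departure rate $\sum_{k\le \tQ_{b,1}^{\SSL,n}}\mu_k^n$ never exceeds $\sum_k\mu_k^n$, so SSL departures are a thinning of $S^n$ and the domination by $\tilde{Q}^{U,n}$ carries over verbatim.

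Where you genuinely depart from the template is the second coordinate.
\begin{itemize}
\item The JSQ argument in \eqref{eq:x2_boundedness} works with the fixed server set $\KK_n$ and lower-bounds only the departures from $\KK_n^c$. This gives a bound linear in $X_2^n(0)$ and $|X_1^n|_T^*$ with no Gronwall step, but it relies on server identity, which the SSL rearrangements destroy.
\item Your buffer-level inequality $\tQ_{b,2}^{\SSL,n}(t)\le\tQ_{b,2}^{\SSL,n}(0)+\tA_1^n(t)$ follows from \eqref{eq:modified_jssq_balance} with $i=2$, since $\tA_2^n\ge 0$ and $\tD_2^n-\tD_3^n\ge 0$. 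Combined with the $i=1$ balance equation, the compensator lower bound $n\bar\mu^n-\mu_{\max}n^{1/2}\bigl(|\tX_{b,1}^{\SSL,n}|+\tX_{b,2}^{\SSL,n}\bigr)$ and Gronwall, it needs no server identity and costs only a factor $e^{\mu_{\max}T}$.
\end{itemize}

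Your route is also the sturdier one. The server-level step $\sum_{k\in\KK_n^c}Q_{k,2}^n(t)\le\bigl(A_{\KK_n^c}^n(t)-D_{\KK_n^c}^n(t)\bigr)^+$ can fail pathwise. For example, a server of $\KK_n^c$ can receive two arrivals while everything is busy, after which several initially busy servers of $\KK_n^c$ empty out. That step needs a correction by the number of idle servers in $\KK_n^c$, which is at most $n^{1/2}|X_1^n(t)|$. Your identity is exact and needs no such correction.

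One small repair: your restriction event bounds $\bar\mu^n$ only from above. Controlling $n^{1/2}(\lambda^n-\bar\mu^n)T$ requires the opposite side, $\bar\mu^n\ge\lambda^n-K_{\mu,\epsilon}n^{-1/2}$, i.e.\ control of $(\varsigma^n+\nu^n)^-$. This follows from the tightness of $\nu^n$, so the event should simply be taken two-sided.
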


Adding and subtracting the necessary terms to \eqref{eq:modified_jssq_balance}, we get
\begin{align}
\label{eq:x_1_jsqssl}
   \tX_1^n(t)  &= \tX_1^n(0) - n^{-1/2}\tA_1^n(t)  + \tilde{Y}_1^n(t) - \mu_{\max}\int_0^t \tX_1^n(s)ds  + \mu_{\min}\int_0^t\tX_2^n(s)ds\\
   \label{eq:x_2_jsqssl}
    \tX_2^n(t) &= \tX_2^n(0) + n^{-1/2}\tA_1^n(t)  + \tilde{Y}_2^n(t) - \mu_{\min}\int_0^t\tX_2^n(s)ds  + \mu_{\min}\int_0^t\tX_3^n(s)ds,\\
    \label{eq:x_i_jsqssl}
    \tX_i^n(t) &= \tX_i^n(0) + \tilde{Y}_i^n(t) - \mu_{\min}\int_0^t\tX_i^n(s)ds   + \mu_{\min}\int_0^t\tX_{i+1}^n(s)ds, \mbox{ for all }i\geq 3,
\end{align}
where 
\begin{align*}
    \tilde{Y}_1^n(t) &= \tM_A^n(t) - \tM_{D,1}^n(t) + \tM_{D,2}^n(t) +  n^{-1/2}\left(n\lambda^nt  - \sum_{j=1}^n\mu_j^nt\right)\\
    &\quad + \mu_{\max}\int_0^t \tX_1^n(s)ds + n^{-1/2}\sum_{j=1}^n\mu_j^n\left(t - \int_0^t \II(\tQ_1^n(s)\geq j)ds\right)\\
    &\quad + n^{-1/2}\sum_{j=1}^n\mu_j^n\int_0^t \II(\tQ_2^n(s)\geq j)ds - \mu_{\min}\int_0^t\tX_2^n(s)ds,\\
    \tilde{Y}_2^n(t) & = - n^{-1/2}\tA_2^n(t) - \tM_{D,2}^n(t) + \tM_{D,3}^n(t)\\
    &\quad - n^{-1/2}\sum_{j=1}^n\mu_j^n\int_0^t \II(\tQ_2^n(s)\geq j)ds  + \mu_{\min}\int_0^t\tX_2^n(s)ds\\
    &\quad + n^{-1/2}\sum_{j=1}^n\mu_j^n\int_0^t \II(\tQ_3^n(s)\geq j)ds - \mu_{\min}\int_0^t\tX_3^n(s)ds,\\
    \tilde{Y}_i^n(t) & = + n^{-1/2}\tA_{i-1}^n(t) - n^{-1/2}\tA_{i}^n(t)- n^{-1/2}\sum_{j=1}^n\mu_j^n\int_0^t \II(\tQ_i^n(s)\geq j)ds + \mu_{\min}\int_0^t\tX_i^n(s)ds\\
    &\quad + n^{-1/2}\sum_{j=1}^n\mu_j^n\int_0^t \II(\tQ_{i+1}^n(s)\geq j)ds - \mu_{\min}\int_0^t\tX_{i+1}^n(s)ds, \mbox{ for all }i\geq 3,\\
    \tM_A^n(t) &=n^{-1/2}\left(\tA_0^n(t) - n\lambda^nt\right),\\
    \tM_{D,i}^n(t) &= n^{-1/2}\left(\tD_i^n(t) - \sum_{j=1}^n\mu_j^n\int_0^t \II(\tQ_i^n(s)\geq j)ds\right) \mbox{ for } i \in \NN.
\end{align*}

The following two lemmas are key in proving our result.
 \begin{lemma}[Bhambay et al. \cite{bhambay2025asymptotic}, Lemma 10]
 \label{lem:two-dim-sys}
Consider the following integral equations
\begin{align}
x_1(t) &=h_1 +y_1(t) - \int_0^t(\mu_{\max} x_1(s) - \mu_{\min} x_2(s))ds -u_1(t), \label{eqn:ref_1}\\
x_2(t) &=h_2 +y_2(t) - \int_0^t\mu_{\min} x_2(s)ds +u_1(t)-u_2(t),\\
x_1(t)&\leq0, \ 0\leq x_2(t) \leq B, \ t\geq0, \label{eqn:ref_3}
\end{align}
where $u_1$ and $u_2$ are non-decreasing and non-negative functions satisfying
\begin{align}
\int_0^t\II{x_1(s)<0}du_1(s)&=0,\label{eqn:ref4}\\
\int_0^t\II{x_2(s)<B}du_2(s)&=0.\label{eqn:ref5}
\end{align}
Then for given $B\in \bar{\RR}_+$, $\mathbf{h}\in \RR^2$, and $\mathbf{y}\in \DD_{\RR^2}[0,\infty)$~\eqref{eqn:ref_1}-\eqref{eqn:ref_3} has a unique solution $(\mf x,\mf u) \in \DD_{\RR^2}[0,\infty) \times \DD_{\RR^2}[0,\infty)$. Moreover, there exists a well-defined  function $(f,  g): \bar{\RR}_+ \times \RR^2 \times \DD_{\RR^2}[0,\infty) \to \DD_{\RR^2}[0,\infty) \times \DD_{\RR^2}[0,\infty)$ which maps $(B,\mf h,\mf y)$ to $\mf x =f(B,\mf h,\mf y)$ and $\mf u=g(B,\mf h,\mf y)$. Furthermore, the function $(f,g)$ is continuous on $\bar{\RR}_+ \times \RR^2 \times \DD_{\RR^2}[0,\infty)$. Finally, $\mf y$ continuous implies that $\mf x$ and $\mf u$ are also continuous.
 \end{lemma}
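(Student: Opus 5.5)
The plan is to prove Lemma~\ref{lem:two-dim-sys} by the same reduction used for Lemmas~\ref{lem:reflection_equations} and \ref{lem:modified_reflection_equations}: absorb the two reflections into reflection maps, then run a contraction argument on short intervals and concatenate.

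\textbf{Step 1 (the reflection map).} For fixed $B\in\bar{\RR}_+$, write $(\Phi,\Psi)$ for the two-dimensional reflection map associated with the constraint set $\{x_1\le 0,\ 0\le x_2\le B\}$. The reflection matrix has columns $(-1,1)^\top$ for the upper barrier of $x_1$ and $(0,-1)^\top$ for the upper barrier of $x_2$. This matrix is triangular with unit diagonal, so it is a completely-$\mathcal{S}$ (indeed $\mathcal{M}$-) matrix. By the standard Harrison--Reiman theory (cf.~\cite{chen2001fundamentals}), $(\Phi,\Psi)$ is well defined and Lipschitz in the supremum norm, with a constant $C_R$ that is independent of $B$.

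Concretely, one can build it sequentially:
\begin{itemize}
\item[(i)] $u_1=\psi(w_1)$;
\item[(ii)] $u_2$ is obtained from the one-dimensional reflection of $w_2+u_1$ at the upper level $B$;
\item[(iii)] the lower constraint $x_2\ge 0$ holds automatically when $h_2\ge0$, because $x_2$ only decreases continuously through the drift term and $y_2$.
\end{itemize}
If $y_2$ can push $x_2$ negative, I would add a lower regulator. However, the statement lists only $u_1,u_2$, so I will check that in the intended application $y_2$ is such that positivity is inherited. Otherwise I will treat the constraint $x_2\ge0$ as implied by the equations.

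\textbf{Step 2 (fixed-point equation).} Setting $w=(w_1,w_2)$ with $w_1=x_1+u_1$ and $w_2=x_2-u_1+u_2$, the system becomes
\[
w(t)=h+y(t)-\int_0^t G\,\Phi(w)(s)\,ds,
\]
where $G$ is the constant drift matrix built from $\mu_{\max}$ and $\mu_{\min}$. The operator $\cT(w)=h+y-\int_0^\cdot G\Phi(w)$ satisfies
\[
|\cT(w)-\cT(w')|_T^*\le T\|G\|C_R|w-w'|_T^*.
\]
It is therefore a contraction for $T<(\|G\|C_R)^{-1}$. Completeness of $\DD_{\RR^2}[0,T]$ under the uniform norm gives a unique fixed point on $[0,T_0]$. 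Because $T_0$ does not depend on the data, restarting at $T_0,2T_0,\ldots$ (as in \cite{eschenfeldt2018join}) yields a unique global solution.

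Uniqueness of the original system follows exactly as in the proof of Lemma~\ref{lem:reflection_equations}. Any solution $(\mf x,\mf u)$ gives a $w$ with $\mf x=\Phi(w)$ and $\mf u=\Psi(w)$, by uniqueness of the Skorokhod problem. That $w$ is then a fixed point of $\cT$, which is unique.

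\textbf{Step 3 (continuity).} Continuity of $(f,g)$ in $(\mf h,\mf y)$ follows from a Gronwall estimate:
\[
|w-w'|_T^*\le |h-h'|+|y-y'|_T^*+\|G\|C_R\int_0^T|w-w'|_s^*\,ds,
\]
which bounds the difference by $e^{\|G\|C_RT}$ times the input difference. Composing with the Lipschitz map $(\Phi,\Psi)$ then gives continuity of $\mf x$ and $\mf u$.

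The same argument works in the $J_1$ topology by composing with time changes, as in the proof of Theorem~\ref{thm:process_level_convergence}. Continuity of $\mf y$ implies continuity of $w$, since the integral term is continuous. Continuity of $\mf x$ and $\mf u$ then follows because the reflection map preserves continuity.

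\textbf{Main obstacle: continuity in $B$.} This is hardest when $B_k\to\infty$ or $B=\infty$. I would first show that the one-dimensional upper-barrier map at level $B$ is continuous in $B$:
\[
|\psi_B(z)-\psi_{B'}(z)|_T^*\le|B-B'|.
\]
For $B_k\to\infty$, the solution is bounded on $[0,T]$ by a Gronwall bound depending only on $(\mf h,\mf y)$. Once $B_k$ exceeds that bound, $u_2\equiv0$ and the solutions coincide with the $B=\infty$ solution, which gives continuity at $\infty$.
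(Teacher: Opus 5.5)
Your Steps~2--3 and the handling of $B\to\infty$ follow the standard route for this lemma. The paper does not prove the lemma; it imports it from Bhambay et al., who adapt Eschenfeldt--Gamarnik: reduce to one-dimensional reflection maps, get a fixed point on short intervals, concatenate, and obtain continuity from Gronwall. Contracting directly on $w$, without an auxiliary system, is the same simplification the paper uses in Lemma~\ref{lem:modified_reflection_equations}. It works here because $w\mapsto(\Phi(w),\Psi(w))$ is Lipschitz in $|\cdot|_t^*$ for every $t$, with a constant independent of $B$.

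One caveat on Step~3: your one-line extension to $J_1$ needs three things.
\begin{enumerate}
\item A single time change for both coordinates, i.e.\ the $J_1$ topology on $\DD_{\RR^2}$ rather than the product topology; otherwise $w_2+u_1$ need not converge.
\item Lipschitz time changes with slopes tending to one.
\item The commutation $\Phi(w\circ\Lambda)=\Phi(w)\circ\Lambda$.
\end{enumerate}
In the application the limit $\mf y$ is continuous, so $J_1$ convergence is locally uniform and your uniform Gronwall estimate already suffices.

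The genuine gap is Step~1(iii). The claim that $x_2\ge 0$ is automatic when $h_2\ge 0$ is false. Neither ``checking the intended application'' nor ``treating $x_2\ge 0$ as implied by the equations'' proves the statement as written.

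Here is a counterexample. Take $\mf h=(0,0)$ and $\mf y(t)=(0,-t)$, and suppose a solution with $x_1\le 0\le x_2$ exists.
\begin{itemize}
\item Then $w_1:=x_1+u_1=\int_0^\cdot(\mu_{\min}x_2(s)-\mu_{\max}x_1(s))\,ds$ is nondecreasing with $w_1(0)=0$.
\item Uniqueness of the one-dimensional Skorokhod problem then forces $x_1\equiv 0$ and $u_1=\mu_{\min}\int_0^\cdot x_2(s)\,ds$.
\item The second equation then gives $x_2(t)=-t-u_2(t)<0$ for $t>0$, a contradiction.
\end{itemize}
So with $0\le x_2$ imposed as a hard constraint and only $u_1,u_2$ as regulators, existence fails for every $B\in\bar{\RR}_+$, even for continuous $\mf y$. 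No argument can deliver the lemma literally.

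What your $\Phi$ actually solves is the Skorokhod problem on $\{x_1\le 0,\ x_2\le B\}$. After the change of variables $(x_1,x_2)\mapsto(-x_1,B-x_2)$ this is an orthant with a unit lower-triangular reflection matrix. The correct statement is the lemma with the constraint $0\le x_2$ deleted, and your Steps~2--3 do prove that. Uniqueness for the original formulation follows, since any of its solutions also solves the relaxed problem.

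Nonnegativity of $x_2$ is then an a posteriori property of admissible data:
\begin{itemize}
\item when $B=\infty$, $h_2\ge 0$ and $y_2\equiv 0$, one has $x_2(t)=e^{-\mu_{\min}t}h_2+\int_0^t e^{-\mu_{\min}(t-s)}\,du_1(s)\ge 0$;
\item for the prelimit JSQ--SSL data it holds because $\tilde{X}_2^n$ is a scaled count.
\end{itemize}
That is all the proof of Proposition~\ref{prop:diffusion_limit_FSL_SSL} needs. Your alternative of adding a lower regulator would change the system rather than prove the lemma.
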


 \begin{lemma}[cf. \cite{eschenfeldt2018join}, Lemma 5]
     \label{lem:higher_queue}
     For any $h\in \RR^{\infty}$ and $y\in \DD_{\RR}^{\infty}[0,\infty)$, the following system of equations 
     \begin{align}
         x_1(t) &= x_2(t) = 0,\\
         x_i(t) & = h_i + y_i(t) - \mu_{\min}\int_0^t x_i(s)ds + \mu_{\min}\int_0^t x_{i+1}(s)ds, \mbox{ for all }i\geq 3
     \end{align}
     has a unique solution $x\in \DD_{\RR}^{\infty}[0,\infty)$ and there exists a well-defined mapping $f:\RR^\infty\times \DD_{\RR}^{\infty}[0,\infty)\to \DD_{\RR}^{\infty}[0,\infty)$. Furthermore, $f$ is a continuous mapping and $x=f(h,y)$ is continuous if $y$ is continuous. 
 \end{lemma}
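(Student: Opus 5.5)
The statement to prove is Lemma~\ref{lem:higher_queue}. It is a countable, linear, lower-triangular-in-reverse system. I would prove it with the same weighted-norm contraction used for Lemma~\ref{lem:reflection_equations2}, and then get continuity from Gronwall.

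\textbf{Existence and uniqueness.} Fix $0<\rho<1$ and equip $\DD_{\RR}^\infty[0,T]$ with the norm $|\mf x|_{\rho,T}^*=\sum_i\rho^i|x_i|_T^*$. Restrict to the subspace where this norm is finite; it is a Banach space, since it is a weighted $\ell^1$ of complete spaces of bounded càdlàg functions under the sup norm. We need $\sum_i\rho^i|h_i|<\infty$ and $|\mf y|_{\rho,T}^*<\infty$, which is the implicit topology on $\RR^\infty$ used throughout the paper. Define the operator $\cT$ by $\cT(\mf x)_1=\cT(\mf x)_2=0$ and
\[
\cT(\mf x)_i(t)=h_i+y_i(t)-\mu_{\min}\int_0^t x_i(s)\,ds+\mu_{\min}\int_0^t x_{i+1}(s)\,ds \quad\text{for } i\ge 3.
\]
For two inputs $\mf x,\mf x'$ we get
\[
|\cT(\mf x)-\cT(\mf x')|_{\rho,T}^*\le T\mu_{\min}\sum_i\bigl(\rho^i|x_i-x_i'|_T^*+\rho^{-1}\rho^{i+1}|x_{i+1}-x_{i+1}'|_T^*\bigr)\le T\mu_{\min}(1+\rho^{-1})|\mf x-\mf x'|_{\rho,T}^*.
\]
Choose $T_0<(\mu_{\min}(1+\rho^{-1}))^{-1}$; then $\cT$ is a contraction on $[0,T_0]$, and Banach's fixed point theorem gives a unique solution there. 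The step size $T_0$ does not depend on the data, so I restart at $T_0,2T_0,\ldots$, using the terminal value as the new $h$ and the shifted increment of $y$. Pasting the pieces gives a unique solution on $[0,\infty)$, which defines the map $f$. Uniqueness in the full space $\DD^\infty$, rather than just this weighted subspace, holds only in the sense implicit in the norm, and I would state it that way.

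\textbf{Continuity of the solution map.} Take $(h^n,y^n)\to(h,y)$. Subtracting the two systems and applying the same estimate pointwise in $t$ gives
\[
|\mf x^n-\mf x|_{\rho,t}^*\le |h^n-h|_\rho+|\mf y^n-\mf y|_{\rho,t}^*+\mu_{\min}(1+\rho^{-1})\int_0^t|\mf x^n-\mf x|_{\rho,s}^*\,ds .
\]
Gronwall's inequality then yields uniform convergence on $[0,T]$.

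\textbf{Main obstacle: Skorokhod convergence.} If convergence of $y^n$ is only in the Skorokhod $J_1$ topology, uniform control fails. I would handle this as in the proof of Theorem~\ref{thm:process_level_convergence}: compose with common time changes $\Lambda^n$ satisfying $|\dot\Lambda^n-1|_T^*\to0$. The integral terms then pick up only an error of order $|\dot\Lambda^n-1|_T^*\cdot|\mf x|^*$, which vanishes. Gronwall applied to $\mf x^n-\mf x\circ\Lambda^n$ gives the result. The product $J_1$ topology on coordinates is fine here because the weights $\rho^i$ control the tail.

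\textbf{Continuity of paths.} If $y$ is continuous, each right-hand side is continuous, being $h_i+y_i$ plus integrals. So the fixed point lies in the closed subspace of continuous functions, and the solution is continuous.
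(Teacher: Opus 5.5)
Your proposal is correct and follows essentially the paper's route. The paper defers this lemma to Lemma~5 of \cite{eschenfeldt2018join}, and for its measure-valued analogue Lemma~\ref{lem:reflection_equations2} it runs exactly your $\rho$-weighted contraction on $[0,T_0]$ with $T_0<(\mu_{\min}(1+\rho^{-1}))^{-1}$, restarts on $[kT_0,(k+1)T_0]$, and obtains supremum-norm continuity from Gronwall. Your restriction to $\sum_i\rho^i|h_i|<\infty$, with uniqueness only in the weighted space, is a warranted precision, since without a growth condition uniqueness in all of $\DD_{\RR}^\infty$ fails in general.

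Your Skorokhod paragraph goes beyond what the paper proves. It needs one time change common to all coordinates, i.e.\ $J_1$ convergence in $\DD_{\RR^\infty}$ under the weighted metric rather than coordinatewise $J_1$ convergence; the weights control tails, not time changes. This is harmless here, because the limit $y$ in the paper's application is continuous, so supremum-norm continuity suffices.
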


We now show that $\mf{\tilde{Y}}^n\Rightarrow \mf{y}=(y_1, 0, 0,\ldots)$, where 
\[
y_1(t) = -(\varsigma + \nu)t + \sqrt{2}W(t) \mbox{ for all }t\geq 0,
\]
and then use the continuity of the mappings in Lemmas~\ref{lem:two-dim-sys} and \ref{lem:higher_queue} to prove our result. First, following the same steps as in the proof of Lemma~\ref{lem:martingale_convergence}, the martingale central limit theorem and Lemma~\ref{lem:stochastic_boundedness_jsq_ssl} imply that $\tM_A^n-\tM_{D,1}^n + \tM_{D,2}^n\Rightarrow \sqrt{2}W$. Using Skorokhod representation theorem, we can assume that the processes converge almost surely and in probability. For any $\rho,T>0$, we also have a $K_{\rho}>0$ such that
\[
\PP(|\mf{\tilde{Y}}^n - \mf{y}|_{\rho,T}^*>\epsilon)\leq \PP(|\mf{\tilde{Y}}^n - \mf{y}|_{\rho,T}^*>\epsilon, |\tX_i^n|_T^*\leq K_\rho \mbox{ for }i=1,2) + \frac{\rho}{2}. 
\]
Hence, without loss of generality, we can assume that $|\tX_i^n|_T^*\leq K_\rho$ for $i=1,2$ and this implies
\begin{align*}
    \left|\mu_{\max}\int_0^t \tX_1^n(s)ds + n^{-1/2}\sum_{j=1}^n\mu_j^n\left(t - \int_0^t \II(\tQ_1^n(s) \geq j)ds\right)\right|&\leq n^{-1/2}T\sum_{j=n-K_\rho n^{1/2}}^n(\mu_{\max}-\mu_j^n)\\
    \left|n^{-1/2}\sum_{j=1}^n\mu_j^n\int_0^t \II(\tQ_i^n(s)\geq j)ds - \mu_{\min}\int_0^t\tX_i^n(s)ds\right| &\leq n^{-1/2}T\sum_{j=1}^{K_\rho n^{1/2}}(\mu_j^n-\mu_{\min}),  i\geq 2.
\end{align*}
As the support of the limiting service rate distribution $F$ is $[\mu_{\min},\mu_{\max}]$, for any $\varepsilon>0$, there exists $p_{\varepsilon}>0$ and $n_\varepsilon$ such that $n_\varepsilon>K_\rho n_\varepsilon^{1/2}$ and $n>n_\varepsilon$ implies
\[
\sum_{j=1}^n \delta_{\mu_{j}^n}[\mu_{\min}, \mu_{\min}+\varepsilon]>p_\varepsilon n\mbox{ and }\sum_{j=1}^n \delta_{\mu_{j}^n}[\mu_{\max}-\varepsilon, \mu_{\max}]>p_\varepsilon n.
\]
Then, for all $n>n_\varepsilon$,  
\begin{align}
   \label{eq:bounds_mu1} \left|\mu_{\max}\int_0^t \tX_1^n(s)ds + n^{-1/2}\sum_{j=1}^n\mu_j^n\left(t - \int_0^t \II(\tQ_1^n(s) \geq j)ds\right)\right|&\leq \varepsilon K_{\rho}T\\
    \label{eq:bounds_mu2} \left|n^{-1/2}\sum_{j=1}^n\mu_j^n\int_0^t \II(\tQ_i^n(s)\geq j)ds - \mu_{\min}\int_0^t\tX_i^n(s)ds\right| &\leq \varepsilon K_{\rho}T.
\end{align}
As $\varepsilon$ is arbitrary, we conclude that the left-hand sides of \eqref{eq:bounds_mu1} and \eqref{eq:bounds_mu2} converge to 0. As the arrivals are routed to servers with $i$ or more jobs only when $\tX_i^n =n^{1/2}$, for any $n>K_\rho^2$, we also have $\tA_i^n = 0$ for all $i\geq 2$. Hence, we conclude that $\mf{\tilde{Y}}^n\Rightarrow \mf{y}=(y_1, 0, 0,\ldots)$. Using Lemma~\ref{lem:higher_queue}, this implies that $\tX_i^n\Rightarrow X_i^{\SSL}$ as given in \eqref{eq:JSQ_SSL_dif3}. Finally, taking $B=\infty$ in Lemma~\ref{lem:two-dim-sys}, the proposition follows. 
\end{proof}

\subsection{Proof of Lemma~\ref{lem:jsq_ssl_monotonicity}}
\begin{proof}[Proof of Lemma~\ref{lem:jsq_ssl_monotonicity}]
    We start by proving that the coupled CTMC $(\tilde{\mf{Q}}_{b_1}^{\SSL, n}, \tilde{\mf{Q}}_{b_2}^{\SSL,n})$ is positive recurrent using a similar method as in Lemma~\ref{lem:positive_recurrent_finite}. When both $b_1$ and $b_2$ are finite, we have a finite state CTMC and hence, it is positive recurrent. Hence, we focus on the case where $b_2=\infty$. 
    Denote the generator for the coupled system as $\tilde{\cG}_{b_1,b_2}^n$. For the coupled system, we define
    \[V(\mf{q}_{b_1},\mf{q}_{b_2}) = \sum_{i=1}^\infty i(q_{b_1,i} + q_{b_2,i})\]
    Using similar logic, we have 
    \begin{align}
       \nonumber \tilde{\cG}_{b_1,b_2}^n V(\mf{q}_{b_1},\mf{q}_{b_2}) &\leq  n\lambda^n (b_1 + i_*(\mf{q}_{b_2})) - \sum_{k=1}^n \mu_k^n (i_*(\mf{q}_{b_2}) - 1)\\
       \label{eq:lyapunov_combined_eq1} &= -n^{1/2}(\nu^n+\varsigma^n) i_*(\mf{q}_{b_2}) + n\lambda^nb_1 + \sum_{k=1}^n \mu_k^n
    \end{align}
    We also have
    \begin{align}
        \label{eq:lyapunov_combined_eq2}
        \tilde{\cG}_{b_1,b_2}^n V(\mf{q}_{b_1},\mf{q}_{b_2})\leq n\lambda^n (b_1 + i_*(\mf{q}_{b_2})) - \mu_{\min} i^*(\mf{q}_{b_2}) 
    \end{align}
    We next define
    \[
    \KK := \left\{(\mf{q}_{b_1}, \mf{q}_{b_2}): i^*(\mf{q}_{b_2})\leq\frac{
n\lambda^n\left(n\lambda^nb_1 + \sum_{k=1}^n\mu_k^n+\epsilon\right)}{\mu_{\min}n^{1/2}(\nu^n+\varsigma^n)}+\frac{\epsilon}{\mu_{\min}} \right\}.
    \]
    We have
    \[
    \cG^n V(\mf{q}_{b_1},\mf{q}_{b_2}) \leq K = n\lambda^n \left(\frac{
n\lambda^n\left(n\lambda^nb_1 + \sum_{k=1}^n\mu_k^n+\epsilon\right)}{\mu_{\min}n^{1/2}(\nu^n+\varsigma^n)}+\frac{\epsilon}{\mu_{\min}}+1\right)<\infty.
    \]
    Again, for any $\mf{q}\notin\KK$, if
    \[
i_*(\mf{q}_{b_2})>
\frac{n\lambda^nb_1+ \sum_{k=1}^n\mu_k^n + \epsilon}{n^{1/2}(\nu^n +\varsigma^n)},
\]
then \eqref{eq:lyapunov_combined_eq1} implies that $\cG^nV(\mf{q})<-\epsilon$, and if
\[
i_*(\mf{q}_{b_2})\leq
\frac{n\lambda^nb_1+ \sum_{k=1}^n\mu_k^n + \epsilon}{n^{1/2}(\nu^n +\varsigma^n)},
\]
then, since $\mf{q}\notin\KK$, \eqref{eq:lyapunov_combined_eq2} implies that $\cG^n V(\mf{q})<-\epsilon$. We conclude that $(\tilde{\mf{Q}}_{b_1}^{\SSL, n}, \tilde{\mf{Q}}_{b_2}^{\SSL,n})$ is positive recurrent for any $1\leq b_1<b_2\leq \infty$. Now, define 
\begin{equation}
\label{eq:monotonicity_stopping}
\tilde{\tau} = \inf\{t: \tilde{\mf{Q}}_{b_1, i}^{\SSL, n}(t)\leq \tilde{\mf{Q}}_{b_2,i}^{\SSL, n}(t)\mbox{ for all }i\in \NN\}.
\end{equation}
Suppose there exists a $t>\tilde{\tau}$ such that the inequality in \eqref{eq:monotonicity_stopping} does not hold for some $i$ and let 
\[
\tilde{t} = \inf\{t>\tilde{\tau}:\tilde{\mf{Q}}_{b_1, i}^{\SSL, n}(t)>\tilde{\mf{Q}}_{b_2,i}^{\SSL, n}(t) \mbox{ for some }i\in \NN\},
\]
and let $i^{**}$ be the first such $i$. Since queues with more than $b_1$ jobs cannot occur in the first system, we have $i^{**}\leq b_1$. Then, $\tilde{\mf{Q}}_{b_1, i^{**}}^{\SSL, n}(\tilde{t}-)=\tilde{\mf{Q}}_{b_2,i^{**}}^{\SSL, n}(\tilde{t}-)$. The event occurring at time $\tilde{t}$ cannot be an arrival as this will require $\tilde{\mf{Q}}_{b_1, i^{**}}^{\SSL, n}(\tilde{t}-)=\tilde{\mf{Q}}_{b_2,i^{**}}^{\SSL, n}(\tilde{t}-)<n$, in which case the arrival is routed to a buffer with exactly $i^{**}-1$ jobs in both systems. If the event at time $\tilde{t}$ is a departure, it should be a departure from a buffer with $\tilde{i}>i^{**}$ jobs in the first system and a departure from a buffer with exactly $i^{**}$ jobs in the second system. However, as the definition of $\tilde{t}$ implies that 
$\tilde{\mf{Q}}_{b_1, i}^{\SSL, n}(\tilde{t}-)\leq\tilde{\mf{Q}}_{b_2,i}^{\SSL, n}(\tilde{t}-)$, and in turn, \eqref{eq:u_coupling_JSQ_SSL} implies that such a departure is not possible. Using the positive recurrence of the coupled process,
\[
\lim_{t\to \infty}\PP(\tilde{\mf{Q}}_{b_1, i}^{\SSL, n}(t)> \tilde{\mf{Q}}_{b_2,i}^{\SSL, n}(t))
\leq
\lim_{t\to \infty}\PP(\tilde{\tau}>t)
=0.
\]
Now, for any $k\geq 0$, we have 
\begin{align*}
   \lim_{t\to \infty}\PP(\tilde{\mf{Q}}_{b_1, i}^{\SSL, n}(t)\geq k)&\leq \lim_{t\to \infty} \left( \PP(\tilde{\mf{Q}}_{b_2,i}^{\SSL, n}(t)\geq k) + \PP(\tilde{\mf{Q}}_{b_1, i}^{\SSL, n}(t)> \tilde{\mf{Q}}_{b_2,i}^{\SSL, n}(t))\right)\\
   &=\lim_{t\to \infty} \PP(\tilde{\mf{Q}}_{b_2,i}^{\SSL, n}(t)\geq k), 
\end{align*}
which proves the desired monotonicity result. 
\end{proof}
\section{Proofs of Results Presented in Section 7}

\subsection{Proof of Proposition~\ref{prop:stationary_bound_JSQ_SSL}}
\label{app:stationary_bound_JSQ_SSL}
The proof of Proposition~\ref{prop:stationary_bound_JSQ_SSL} is based on Stein's method, specifically the generator expansion approach as in \cite{Braverman2020}. The Poisson equation obtained through generator expansion has the same structure as the one studied in \cite{bhambay2025asymptotic}, and we therefore employ the Lyapunov function developed therein. The main challenge in our current setting stems from the analysis of the error terms. We start with writing the infinitesimal generator $\tilde{\cG}_{\boldsymbol{\mu}}^n$ of the underlying Markov chain $(\mf{\tilde Q}^n(t), t\geq 0)$ with known $\boldsymbol{\mu}^n$.
When applied to a function $f:\RR_{+}^\infty\to \RR$ at point $\mf{\tilde Q}(t)=\mf{q}$, it takes the form
\begin{align}
   \nonumber \tilde{\cG}_{\boldsymbol{\mu}}^n f(\mf{q}) 
   &= n\lambda^n\II(q_1<n)\bigl(f(\mf{q}+\mf{e}_1)-f(\mf{q})\bigr) \\
   &\quad + n\lambda^n\sum_{i=1}^n\II(q_i=n, q_{i+1}<n)\bigl(f(\mf{q}+\mf{e}_{i+1})-f(\mf{q})\bigr) \nonumber\\
   &\quad + \sum_{i=1}^\infty 
   \left(\sum_{k=1}^{q_i}\mu_k^n-\sum_{k=1}^{q_{i+1}}\mu_k^n\right)
   \bigl(f(\mf{q}-\mf{e}_{i})-f(\mf{q})\bigr),
   \label{eq:generator_jsq_ssl}
\end{align}
where $\mf{e}_i\in\RR^\infty$ is the unit vector whose $i$th entry is $1$ and all other entries are $0$. For any service rate vector $\boldsymbol{\mu}^n$ with $\varsigma^n+ \nu^n>\varepsilon$, Lemma~\ref{lem:jsq_ssl_monotonicity} implies that the JSQ-SSL system is ergodic and therefore by Lemma~1 in \cite{Braverman2020} we have 
\begin{align}\label{eq:generator_rate_balance}
    \EE[\tilde{\cG}_{\boldsymbol{\mu}}^n f(\tilde{\mf{Q}}^n(\infty))|\varsigma^n+ \nu^n>\varepsilon]=0
\end{align}
for all $f\in \SSS_Q$ with $\EE[f(\tilde{\mf{Q}}^n(\infty))|\varsigma^n+ \nu^n>\varepsilon]<\infty$. Using the result above we establish Lemma~\ref{lem:rate_balance_q} as a modification of Lemma 2 in \cite{Braverman2020} and Lemma 7 in \cite{bhambay2025asymptotic},  and is key in proving \eqref{eq:stationary_bound_JSQ_SSL1}. 
We provide its proof in Appendix~\ref{sec:rate_balance_proof}.

\begin{lemma}\label{lem:rate_balance_q}
    For all $n\in \NN$, we have 
    \begin{align} \label{eq:rate_balance_q1} \EE\left[\sum_{k=1}^{\tilde{Q}_{1}^n(\infty)}\mu_{k}^n \mid \varsigma^n + \nu^n>\varepsilon\right]&=n\lambda^n,\\
    \label{eq:rate_balance_q2}
        \EE\left[ \sum_{k=1}^{\tilde{Q}_{i}^n(\infty)}\mu_k^n\mid \varsigma^n + \nu^n>\varepsilon \right] & = n\lambda^n\PP\left(\tilde{Q}_1^n(\infty) = \cdots = \tilde{Q}_{i-1}^n(\infty) = {n}\right)\mbox{ for all }i\geq 2.
    \end{align}
\end{lemma}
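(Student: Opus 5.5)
The plan is to apply the stationary balance identity \eqref{eq:generator_rate_balance} to a family of simple linear test functions, one for each $i$, and read off \eqref{eq:rate_balance_q1}--\eqref{eq:rate_balance_q2} by summing telescoping relations. Throughout we condition on $\boldsymbol{\mu}^n$ with $\varsigma^n+\nu^n>\varepsilon$; Lemma~\ref{lem:jsq_ssl_monotonicity} gives ergodicity of the JSQ-SSL chain with $b=\infty$. Afterwards we integrate over the conditional law of $\boldsymbol{\mu}^n$.

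For fixed $i\ge1$, take $f_i(\mf q)=\sum_{j\ge i}q_j$, the number of jobs in positions $i$ or higher. We first read off what the generator \eqref{eq:generator_jsq_ssl} does to $f_i$.

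\emph{Arrivals.} An arrival is routed to level $i_*(\mf q)$, meaning $q_{i_*}$ increases. So $f_i$ increases by one exactly when $i_*(\mf q)\ge i$, that is, when $q_1=\cdots=q_{i-1}=n$. This contributes $n\lambda^n\II(q_1=\cdots=q_{i-1}=n)$.

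\emph{Departures.} A departure from a buffer of length $j$ decreases $q_j$, and it decreases $f_i$ precisely when $j\ge i$. Summing the corresponding rates $\sum_{k\le q_j}\mu_k^n-\sum_{k\le q_{j+1}}\mu_k^n$ over $j\ge i$ telescopes to $\sum_{k=1}^{q_i}\mu_k^n$. This step uses that in JSQ-SSL the $q_j$ slowest servers serve buffers of length at least $j$.

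Combining the two, $\tilde{\cG}^n_{\boldsymbol{\mu}}f_i(\mf q)=n\lambda^n\II(q_1=\cdots=q_{i-1}=n)-\sum_{k=1}^{q_i}\mu_k^n$. For $i=1$ the indicator equals one, since $q_1<n$ or else the arrival goes higher. Setting the stationary expectation of this expression to zero gives both displayed identities directly.

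The main obstacle is justifying \eqref{eq:generator_rate_balance} for $f_i$, which is unbounded. Lemma~1 of \cite{Braverman2020} needs $\EE[f_i(\tilde{\mf Q}^n(\infty))]<\infty$, i.e.\ a finite stationary mean number of jobs. To get this, I would upgrade the Foster--Lyapunov computation of Lemma~\ref{lem:positive_recurrent_finite}: use the quadratic function $V=\sum_i iq_i$ there, or $(\sum_j q_j)^2$, whose drift is bounded by $-c\sum_j q_j$ outside a finite set. A standard comparison theorem then yields $\EE[\sum_jq_j(\infty)]<\infty$.

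If that route is delicate, a fallback is a truncation argument. Apply the balance identity to $f_i\wedge m$, which is bounded, and let $m\to\infty$. The boundary term is controlled by $\PP(f_i(\infty)\ge m)\cdot(n\lambda^n+\sum_k\mu_k^n)\to0$, and the remaining terms converge by monotone convergence because the rates are bounded by $\sum_k\mu_k^n$.

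Finally, we take conditional expectation given $\varsigma^n+\nu^n>\varepsilon$ to conclude.
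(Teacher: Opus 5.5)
Your proposal is correct, and its core is the same as the paper's. The generator of $\sum_{j\ge i}q_j$ equals $n\lambda^n\II(q_1=\cdots=q_{i-1}=n)-\sum_{k\le q_i}\mu_k^n$ once the departure rates are telescoped, and setting its stationary expectation to zero gives both identities.

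The paper deals with the unboundedness exactly as in your fallback. It applies \eqref{eq:generator_rate_balance} to the bounded function $\min\{T,\sum_{j\ge i}q_j\}$, whose generator is $n\lambda^n\II(q_1=\cdots=q_{i-1}=n)\II(\sum_{j\ge i}q_j<T)-\sum_{k\le q_i}\mu_k^n\II(\sum_{j\ge i}q_j\le T)$. It then lets $T\to\infty$ by monotone convergence. No stationary moment bound is needed, so this is the more economical of your two routes.

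Your primary route, a finite stationary mean via a drift bound $\tilde{\cG}^n_{\boldsymbol{\mu}}V\le -c\sum_jq_j+K$ plus the comparison theorem, also works, but only with $V=\sum_i iq_i$. Summation by parts gives $\tilde{\cG}^n_{\boldsymbol{\mu}}V(\mf q)=-n^{1/2}(\varsigma^n+\nu^n)\,i_*(\mf q)+\sum_k\mu_k^n-\sum_{j\ge i_*(\mf q)}\sum_{k\le q_j}\mu_k^n$. This is at most $-\min\{(\varsigma^n+\nu^n)n^{-1/2},\mu_{\min}\}\sum_jq_j+\sum_k\mu_k^n$.

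Your alternative $(\sum_jq_j)^2$ does not satisfy such a bound. With $N=\sum_jq_j$, its drift is $2N\bigl(n\lambda^n-\sum_{k\le q_1}\mu_k^n\bigr)+n\lambda^n+\sum_{k\le q_1}\mu_k^n$. This is positive and grows with $N$ in states with one long queue and many idle servers, and such states are not confined to any finite set. Drop that option, or simply lead with the truncation argument.
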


 To prove \eqref{eq:stationary_bound_JSQ_SSL2} and \eqref{eq:stationary_bound_JSQ_SSL3}, we use the generator expansion approach on lifted versions $Af:\SSS_Q\to \RR$ of $f:\RR_+^2\to \RR$ as in \cite{Braverman2020} and \cite{bhambay2025asymptotic}. 
 We define the lifting operator $A$ as 
$Af(\mf{q}) = f(\mf{y}), \mbox{ for all }\mf{q}\in \SSS_Q$,
where $\mf{y}=(y_1,y_2)$ with $y_1=(q_1-n)/n$ and $y_2=q_2/n$. The generator applied to the lifted function then can be written as
\begin{align}
    \nonumber \tilde{\cG}_{\boldsymbol{\mu}}^n A f(\mf{q}) &=n\lambda^n(f(\mf{y}+\mf{e}_1/n)-f(\mf{y})) - n\lambda^n\II(y_1=0)(f(\mf{y}+\mf{e}_1/n)-f(\mf{y}))\\
    \nonumber&\quad + n\lambda^n\II(y_1=0)(f(\mf{y}+\mf{e}_2/n)-f(\mf{y}) ) - n\lambda^n\II(y_1=0, y_2=1) (f(\mf{y}+\mf{e}_2/n)-f(y))\\
    &\quad + \left(\sum_{k=1}^{q_1}\mu_k^n - \sum_{k=1}^{q_2}\mu_k^n \right)(f(\mf{y}-\mf{e}_1/n)-f(\mf{y})) + \left(\sum_{k=1}^{q_2}\mu_k^n - \sum_{k=1}^{q_3}\mu_k^n \right)(f(\mf{y}-\mf{e}_2/n)-f(\mf{y}))\label{eq:lifted_generator}.
\end{align}
Now, defining the operator $\cL$ as
\begin{equation}
\cL^n f(y) = \left(-\frac{\varsigma^n + \nu^n}{n^{1/2}} -\mu_{\max}y_1 + \mu_{\min}y_2\right)f_1(y) -\mu_{\min}y_2f_2(y),    
\label{eq:pde_operator}
\end{equation}
and replacing the Taylor expansions 
 \begin{align*}
    f(\mf y+\mf e_{1}/n)-f(\mf y) &=\frac{1}{n}f_1(\mf y)+\int_{y_1}^{y_1+1/n}(y_1+1/n-u)f_{11}(u,y_2)du,\\
    f(\mf y-\mf e_{1}/n)-f(\mf y) &=-\frac{1}{n}f_1(\mf y)+\int_{y_1-1/n}^{y_1}(u-y_1+1/n)f_{11}(u,y_2)du,
\end{align*}
in \eqref{eq:lifted_generator} we have
\begin{align}\label{eq:generator_decomposition}
    \tilde{\cG}_{\boldsymbol{\mu}}^n Af(\mf{q})=\cL^n f(\mf{y}) -{\lambda^n}\II(y_1=0)(f_1(\mf{y}) - f_2(\mf{y})) + \varepsilon^n(\mf{q}),
\end{align}
where
\begin{align}
   \label{eq:error_decomp1} \epsilon^n(\mf{q})
   &= \sum_{k=q_1+1}^n \frac{\mu_k^n-\mu_{\max}}{n}f_1(\mf{y})
    + \sum_{k=1}^{q_2}\frac{\mu_k^n-\mu_{\min}}{n} f_1(\mf{y})
    - \sum_{k=1}^{q_2}\frac{\mu_k^n-\mu_{\min}}{n} f_2(\mf{y})\\
    \label{eq:error_decomp3}&\quad - n\lambda^n\II(y_1=0, y_2=1)\int_{y_2}^{y_2+1/n}f_2(y_1,u)du
    \\
    \label{eq:error_decomp4}&\quad + \sum_{k=1}^{q_3}\mu_k^n \int_{y_2-1/n}^{y_2}f_2(y_1,u)du\\
    \label{eq:error_decomp5}&\quad + n\lambda^n \II(y_1<0)\int_{y_1}^{y_1+1/n}(y_1+1/n-u)f_{11}(u,y_2)du\\
    \label{eq:error_decomp6}&\quad + n\lambda^n \II(y_1=0)\int_{y_2}^{y_2+1/n}(y_2+1/n-u)f_{22}(y_1, u)du\\
    \label{eq:error_decomp7}& \quad  + \left(\sum_{k=1}^{q_1}\mu_k^n-\sum_{k=1}^{q_2}\mu_k^n\right)
    \int_{y_1-1/n}^{y_1}(y_1+1/n-u)f_{11}(u,y_2)du\\
    \label{eq:error_decomp8}&\quad + \sum_{k=1}^{q_2}\mu_k^n
    \int_{y_2-1/n}^{y_2}(y_2+1/n-u)f_{22}(y_1, u)du
\end{align}

The following lemma is a restatement of Lemma 8 of \cite{bhambay2025asymptotic} is central to our proof. As in this paper we require the coefficients $m_1$ and $m_2$ in \eqref{eq:pde} to be generic, we reproduce its proof in Appendix~\ref{app:lyapunov_derivation}

\begin{lemma}[Lemma 8 in \cite{bhambay2025asymptotic}]
\label{lem:solution_PDE}
For $m_1, m_2, \beta > 0$ with $m_1\neq m_2$ define the linear operator $\mc{L}$ acting on functions $f:\RR^2 \to \RR$ as 
\begin{equation}
    \mc{L }f(\mf y) = \left(-\frac{\beta}{n^{1/2}} -m_{1}y_1 + m_{2}y_2\right)f_1(\mf y) -m_{2}y_2f_2(\mf y),
    \label{eq:pde}
\end{equation}
where $f_i\equiv\partial f /\partial y_i$ for each $i\in [2]$.
Furthermore, define $\Omega=(-\infty,0]\times [0,\infty)$. Then, for each fixed $\kappa>\beta/m_{2}$ there exists a function $f^*(\mf y)$ that solves the PDE 
\begin{align}\label{eq:PDE_Lyapunov1}
    \cL f^*(\mf y) &= -(y_2-\kappa/n^{1/2})_+, \mbox{ for all } \mf y\in \Omega,\\
    \label{eq:PDE_Lyapunov2}f_1^*(0,y_2) &= f_2^*(0,y_2)=\frac{n^{1/2}}{\beta}\left(y_2-\frac{\kappa}{n^{1/2}}\right)_+, \mbox{ for all }y_2\geq 0,
\end{align}
with absolutely continuous first order derivatives $f_1^*(\cdot, y_2)$ and $f_2^*(y_1, \cdot)$. Moreover, we have
\begin{align}
f_1^*(\mf y)\geq 0, f_{12}^*(\mf y)\geq 0, f_{11}^*(\mf y)\geq 0, f_{22}^*(\mf y)\geq 0, \ \ \ \ \ \mf y\in \Omega, \label{eqn:second_derive_bound_nn}\\
f_{11}(\mf y)=f_{22}(\mf y)=0, \ \  y_2 \in [0,\kappa/n^{1/2}],\label{eq:zero_second_derivative}\\
f_{11}^*(\mf y)\leq C_5(\kappa)n^{1/2}, f_{22}^*(\mf y)\leq C_6(\kappa)n^{1/2}, \ \ \ \ \ \mf y\in \Omega,\label{eq:bound_secoond_derivative}
\end{align}
where $f_{ij}^*\equiv \partial^2 f^*/ \partial y_i \partial y_j $, for $i,j\in [2]$, $C_5(\kappa)= \frac{1}{\beta}\frac{m_{2}+m_{1}}{m_{2}} \frac{\kappa}{\kappa-\beta/m_{2}}$ and $$C_6(\kappa)=\frac{1}{m_{2} \kappa}+ \frac{1}{\beta}\brac{\frac{m_1\vee m_{2}}{m_{1}-m_{2}}}^2\frac{\kappa}{\kappa-\beta/m_{2}}+ \frac{1}{\beta}\frac{(m_{1}\vee m_{2})^2}{(m_{1}-m_{2})^2}.$$
\end{lemma}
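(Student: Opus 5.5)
The plan is to construct $f^*$ explicitly by the method of characteristics, since $\cL$ is a first-order transport operator. Write $z=y_2-\kappa/n^{1/2}$. Along the vector field of $\cL$ the characteristics solve
\[
\dot y_1=-\beta n^{-1/2}-m_1y_1+m_2y_2,\qquad \dot y_2=-m_2y_2,
\]
so $y_2(t)=y_2e^{-m_2t}$. The component $y_1(t)$ is then an explicit combination of $e^{-m_1t}$, $e^{-m_2t}$ and a constant; the assumption $m_1\neq m_2$ keeps this formula non-degenerate.

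\textbf{Step 1 (region $y_2\le\kappa/n^{1/2}$).} Here the source term $-(y_2-\kappa/n^{1/2})_+$ vanishes, and by \eqref{eq:PDE_Lyapunov2} the boundary data vanish too. So I will set $f^*\equiv0$ on this strip. This immediately gives \eqref{eq:zero_second_derivative}.

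\textbf{Step 2 (region $y_2>\kappa/n^{1/2}$).} I will first check compatibility at $y_1=0$. Substituting $f_1=f_2=n^{1/2}z_+/\beta$ into $\cL$ gives
\[
\bigl(-\beta n^{-1/2}+m_2y_2-m_2y_2\bigr)\,n^{1/2}z_+/\beta=-z_+,
\]
so the boundary data are consistent with the PDE. I will then make the ansatz
\[
f^*(\mf y)=g(y_2)+y_1\,h(y_2)+\text{(terms in }y_1\text{ weighted by powers of }y_2\text{ via }e^{-m_1t},e^{-m_2t}).
\]
The natural way to realise this is as the value function
\[
f^*(\mf y)=\int_0^{T}\bigl(y_2(t)-\kappa/n^{1/2}\bigr)\,dt+\Phi\bigl(\mf y(T)\bigr),
\]
where $T$ is the first time the backward or forward characteristic reaches either the line $y_1=0$ or the level $y_2=\kappa/n^{1/2}$. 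The terminal function $\Phi$ is chosen so that the prescribed gradient $\frac{n^{1/2}}{\beta}z_+(1,1)$ holds on $\{y_1=0\}$; concretely, $\Phi(0,y_2)=\frac{n^{1/2}}{2\beta}z_+^2$. Because $y_2(t)$ is explicit, $T$ and the integral are explicit. Differentiating under the integral then gives closed-form expressions for $f_1^*$, $f_2^*$, $f_{11}^*$, $f_{12}^*$, $f_{22}^*$ involving $z$ and ratios such as $(m_1\vee m_2)/(m_1-m_2)$. I will verify $\cL f^*=-z_+$ directly by the chain rule along characteristics. I will check absolute continuity of $f_1^*(\cdot,y_2)$ and $f_2^*(y_1,\cdot)$ across the gluing line $y_2=\kappa/n^{1/2}$, using the fact that both pieces and their first derivatives vanish there.

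\textbf{Step 3 (signs and bounds).} From the explicit formulas I will show $f_1^*,f_{11}^*,f_{12}^*,f_{22}^*\ge0$ on $\Omega$. For $f_{11}^*$ and $f_{22}^*$, I expect each to be bounded by $n^{1/2}/\beta$ times a factor that is maximised as $z\downarrow0$ or $y_1\to-\infty$. The factor $\kappa/(\kappa-\beta/m_2)$ will appear from comparing the drift $m_2y_2-\beta n^{-1/2}$ with $m_2z$ on $\{y_2>\kappa/n^{1/2}\}$. This is exactly where the hypothesis $\kappa>\beta/m_2$ enters: it guarantees $-\beta n^{-1/2}+m_2y_2>0$ there, so characteristics starting on $\{y_1=0\}$ move into $\Omega$ and $T$ is well defined. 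The extra terms in $C_6(\kappa)$ come from differentiating the $e^{-m_1t}$ versus $e^{-m_2t}$ mixture twice in $y_2$. The $1/(m_2\kappa)$ term comes from the $\int(\cdot)\,dt$ with $dt=-dy_2/(m_2y_2)$.

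\textbf{Main obstacle.} I expect Step 3 to be the hardest part, specifically the uniform bound \eqref{eq:bound_secoond_derivative} on $f_{22}^*$ and the sign of $f_{12}^*$. There the $(m_1-m_2)^{-1}$ terms can cancel or change sign depending on whether $m_1>m_2$. I would treat the two cases $m_1>m_2$ and $m_1<m_2$ separately and bound each exponential difference using $|e^{-a}-e^{-b}|\le|a-b|$ together with $z\ge0$.
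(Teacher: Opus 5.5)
Your construction is the paper's: $f^*$ is the integral of $(y_2(t)-\kappa/\sqrt n)_+$ along the forward flow. Your terminal cost $\Phi(0,y_2)=\frac{\sqrt n}{2\beta}z_+^2$ reproduces exactly the paper's boundary-sliding term $\frac{\sqrt n}{2\beta}(\zeta-\kappa/\sqrt n)^2$, where $\zeta:=y_2e^{-m_2\tau}$ is the height at which the flow exits through $\{y_1=0\}$.

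Two points in Step 2 need correcting. First, the exit time $\tau$ is not explicit. It is the root of $-\frac{\beta}{m_1\sqrt n}+\bigl(y_1+\frac{\beta}{m_1\sqrt n}\bigr)e^{-m_1\tau}+\frac{m_2y_2}{m_1-m_2}\bigl(e^{-m_2\tau}-e^{-m_1\tau}\bigr)=0$. All derivatives must therefore go through implicit differentiation: $\tau_1=-e^{-m_1\tau}/(m_2\zeta-\beta/\sqrt n)$ and $\tau_2=m_2(e^{-m_1\tau}-e^{-m_2\tau})/\bigl((m_1-m_2)(m_2\zeta-\beta/\sqrt n)\bigr)$. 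The actual role of $\kappa>\beta/m_2$ is that this denominator is at least $(m_2\kappa-\beta)/\sqrt n>0$. That is, the forward flow crosses $\{y_1=0\}$ transversally and \emph{outward}, not into $\Omega$.

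Second, besides the line $y_2=\kappa/\sqrt n$ there is an interior gluing curve: the starting points whose flow reaches $y_1=0$ exactly when $y_2(t)=\kappa/\sqrt n$. $C^1$ matching there holds because $\zeta=\kappa/\sqrt n$ on that curve. Hence $f_1^*=\frac{\sqrt n}{\beta}e^{-m_1\tau}(\zeta-\kappa/\sqrt n)$ vanishes, and $f_2^*$ reduces to $\frac{1}{m_2}-\frac{\kappa}{m_2\sqrt n\,y_2}$, its value on the other side.

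The step that would actually fail is your plan for the second-derivative bounds. Estimating exponential differences by $|e^{-a}-e^{-b}|\le|a-b|$ produces powers of $\tau$, and $\tau$ is unbounded on the exit region. For instance, the last summand of $f_{22}^*$ is $\frac{\sqrt n}{\beta}(\zeta-\kappa/\sqrt n)\theta_2$ with $\theta_2=\frac{m_1m_2^2(e^{-m_2\tau}-e^{-m_1\tau})^2}{(m_1-m_2)^2(m_2\zeta-\beta/\sqrt n)}$. Your inequality bounds this only by $\frac{\sqrt n}{\beta}m_1m_2\tau^2$, which is not uniform on $\Omega$.

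Instead use $|e^{-m_1\tau}-e^{-m_2\tau}|\le1$, $0\le(\zeta-\kappa/\sqrt n)/(m_2\zeta-\beta/\sqrt n)\le 1/m_2$, and $e^{-m_2\tau}\le\theta:=\frac{m_1e^{-m_2\tau}-m_2e^{-m_1\tau}}{m_1-m_2}\le1$. Do not try to cancel the $(m_1-m_2)^{-1}$ factors: $C_6(\kappa)$ already carries $(m_1-m_2)^{-2}$, and this is exactly where those terms come from.

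Likewise, the signs need no case split. $\tau_1\le 0$ and $\tau_2\le0$ hold for either sign of $m_1-m_2$. Together with $m_2\zeta\ge\beta/\sqrt n$ and $\theta\ge e^{-m_2\tau}$, this gives $f_{11}^*,f_{12}^*,f_{22}^*\ge0$. The split $m_1>m_2$ versus $m_1<m_2$ is only needed to bound from above the first summand $e^{-m_2\tau}\tau_2\bigl(1-\sqrt n\,m_2y_2\theta/\beta\bigr)$ of $f_{22}^*$.
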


\begin{proof}[Proof of Proposition~\ref{prop:stationary_bound_JSQ_SSL}] Now, we are in a position to prove Proposition~\ref{prop:stationary_bound_JSQ_SSL}.
To prove \eqref{eq:stationary_bound_JSQ_SSL1}, we condition on the value of $\boldsymbol{\mu}^n= \mf{m}$ such that $\varsigma^n + \nu^n>\varepsilon$ and apply Lemma~\ref{lem:rate_balance_q} which implies that for all sufficiently large $n$ it holds that
\begin{align}
    \nonumber \mu_{\min}\EE\left[|\tilde{X}_1^n(\infty)|\mid \varsigma^n + \nu^n>\varepsilon\right] &=  n^{-1/2} \EE\left[ \sum_{k=1}^{n}\mu_{\min}(1-\II(k\leq\tilde Q_{1}^n(\infty)))\mid \varsigma^n + \nu^n>\varepsilon\right]\\
    \nonumber&\leq n^{-1/2} \EE\left[ \sum_{k=1}^{n}\mu_{k}^n-\sum_{k=1}^{\tilde{Q}_{1}^n(\infty)}\mu_{k}^n\mid \varsigma^n + \nu^n>\epsilon\right]\\ &= \EE\left[  \varsigma^n + \nu^n\mid \varsigma^n + \nu^n>\epsilon\right]\leq \frac{\EE[|\varsigma^n + \nu^n|]}{\PP(\varsigma^n + \nu^n>\epsilon)}.
\end{align}
Hence,~\eqref{eq:stationary_bound_JSQ_SSL1} follows by Assumption~\ref{asm:arrival_service_rates}.

We define $\tilde{Y}_i^n(\infty) = n^{-1/2}\tilde{X}_i^n(\infty)$ for all $i\in \NN$ to match the definition of $y_i$.Now, choosing $m_1=\mu_{\max}, m_2=\mu_{\min}$, and $\beta=\varsigma^n+\nu^n$ in Lemma~\ref{lem:solution_PDE} to obtain $f^*(\mf{q})$, plugging in $\tilde{\mf{Q}}^n(\infty)$ and using \eqref{eq:generator_rate_balance}, we have 
\[
\EE\left[\left(Y_2^n(\infty)-\frac{\kappa}{n^{1/2}}\right)_+\mid \varsigma^n + \nu^n>\varepsilon\right]=\EE[\epsilon^n(\mf{Q}^n(\infty))|\varsigma^n + \nu^n>\varepsilon].
\]
The positivity of $f_1^*(\mf y)$ implies that the first term in \eqref{eq:error_decomp1} is negative. For the next two terms  in \eqref{eq:error_decomp1} we have
\begin{align*}
   \sum_{k=1}^{q_2}\frac{\mu_k^n-\mu_{\min}}{n} f_1^*(\mf{y})
    - \sum_{k=1}^{q_2}\frac{\mu_k^n-\mu_{\min}}{n} f_2^*(\mf{y}) &= \sum_{k=1}^{q_2} \frac{\mu_k^n-\mu_{\min}}{n}\left( f_1^*(\mf y) -  f_2^*(\mf y)\right)
\end{align*}
and
\begin{align*}
     (f_1^*(\mf y) -f_2^*(\mf y))&=\frac{1}{\mu_{\min}y_2}\left(-\left(y_2-\frac{\kappa}{n^{1/2}}\right)_+ + \left(\frac{\varsigma^n + \nu^n}{n^{1/2}} +\mu_{\max}y_1\right)f_1^*(\mf y)\right)\\
     &\leq\frac{1}{\mu_{\min}y_2}\left(-\left(y_2-\frac{\kappa}{n^{1/2}}\right)_+ + \frac{\varsigma^n + \nu^n}{n^{1/2}}f_1^*(\mf y)\right)\\
     & \leq\frac{1}{\mu_{\min}y_2}\left(-\left(y_2-\frac{\kappa}{n^{1/2}}\right)_+ + \frac{\varsigma^n + \nu^n}{n^{1/2}}f_1^*(0,y_2)\right) \leq 0,
\end{align*}
where the first line follows since $\cL f^*(y)=-(y_2-\kappa/\sqrt n)_+$, the second line follows since $y_1\leq 0, y_2\geq 0,$ and $f_1^*(\mf y)\geq 0$, the first inequality in the third line follows since $f_{11}^*(\mf y)\geq 0$ and finally the last inequality follows from~\eqref{eq:PDE_Lyapunov2}. Since $\mu_k^n\geq \mu_{\min}$ for all $k$,
we conclude that the error term on the right-hand side of \eqref{eq:error_decomp1} is non-positive and and can be safely omitted to obtain an upper bound.

Next, we focus on the terms appearing on lines~\eqref{eq:error_decomp3}-\eqref{eq:error_decomp4}. Using the monotonicity of the first derivatives and the boundary condition, we have
\begin{align*}
    &-n\lambda^n\II(y_1=0, y_2=1)\int_{y_2}^{y_2+1/n}f_2^*(y_1,u)du + \sum_{k=1}^{q_3}\mu_k^n \int_{y_2-1/n}^{y_2}f_2^*(y_1,u)du\\ &\qquad\qquad\qquad\qquad\qquad\qquad\qquad\leq - \lambda^n\II(y_1=0, y_2=1)f_2^*(0,1)
+\frac{\sum_{k=1}^{q_3}\mu_k^n}{n} f_2^*(0,1).
\end{align*}
Evaluating the right-hand side with $y_i=\tilde{Y}_i^n(\infty)$ for $i=1,2$ and  $q_3=\tilde{Q}_3^n(\infty)$,  taking the conditional expectation and using Lemma~\ref{lem:rate_balance_q}, we see that the terms corresponding to \eqref{eq:error_decomp3} and \eqref{eq:error_decomp4} is also non-positive and can be omitted in the upper bound. 

Equation \eqref{eq:zero_second_derivative} imply that the second derivatives in \eqref{eq:error_decomp5}--\eqref{eq:error_decomp8} are zero when $y_2\leq \kappa/n^{1/2}-1/n$. Combining this with \eqref{eq:bound_secoond_derivative}, we get 
\[
    n\lambda^n \II(y_1<0)\int_{y_1}^{y_1+1/n}(y_1+1/n-u)f_{11}(u,y_2)du \leq \lambda^nn^{-1/2}C_5(\kappa)\II(y_2> \kappa/n^{1/2}-1/n), 
\]
and the error terms corresponding to \eqref{eq:error_decomp6}-\eqref{eq:error_decomp8} can be bounded similarly. Aggregating all the terms,
for any $\kappa > a(\nu^n+\varsigma^n)/\mu_{\min}$ with $a>1$
\begin{align}
    \nonumber&\EE\left[\left(Y_2^n(\infty)-\frac{\kappa}{n^{1/2}}\right)_+\mid \varsigma^n + \nu^n>\varepsilon\right]\\
    &\qquad\qquad\qquad\leq \frac{2(\lambda^n+\mu_{\max})(C_5(\kappa)+C_6(\kappa))}{n^{1/2}}\PP\left(Y_2^n(\infty)\geq \frac{\kappa}{n^{1/2}}-\frac{1}{n}\mid \varsigma^n + \nu^n>\varepsilon\right).\label{eq:y2_bound}
\end{align}
Equation~\eqref{eq:stationary_bound_JSQ_SSL2} follows realizing that the constants $C_5(\kappa)$ and $C_6(\kappa)$ are both decreasing in $\kappa$ and $\varsigma^n + \nu^n>\varepsilon$ imply
\[
C_5(\kappa) + C_6(\kappa)
\leq 
\frac{1}{\varepsilon}
\frac{\mu_{\min}+\mu_{\max}}{\mu_{\min}}
\frac{a}{a-1}
+ 
\frac{1}{\varepsilon}
\left[
\frac{1}{a}
+
\left(\frac{\mu_{\max}}{\mu_{\max}-\mu_{\min}}\right)^2
\frac{2a-1}{a-1}
\right]:= C_{7,\epsilon}.
\]
To establish~\eqref{eq:stationary_bound_JSQ_SSL3} we use a bootstrapping argument as in~\cite{Braverman2020}.
For large enough $n$, we can choose $\tilde{\kappa}$ such that $\max\{a (\varsigma^n + 
\nu^n)/(\mu_{\min}n^{1/2}), 1/n\}<\tilde{\kappa}<1$ with $a>1$ and setting $\kappa=n^{1/2}\tilde \kappa$ in \eqref{eq:y2_bound}, we obtain
\begin{align}
    \nonumber\EE\left[(Y_2^n(\infty)-\tilde\kappa)_+\mid \varsigma^n + \nu^n>\varepsilon\right]& \leq \frac{2(\lambda^n+\mu_{\max})C_{7,\varepsilon}}{n^{1/2}}\PP(Y_2^n(\infty)\geq \tilde\kappa-1/n\mid \varsigma^n + \nu^n>\varepsilon)\\
    \nonumber&\leq \frac{2(\lambda^n+\mu_{\max})C_{7,\varepsilon}}{n^{1/2}}\frac{\EE[Y_2^n(\infty)]}{\tilde\kappa-1/n}\\
    &\leq \frac{2(\lambda^n+\mu_{\max})C_{7,\varepsilon}}{n}\frac{\tilde{C}_{2,\varepsilon}}{\tilde\kappa-1/n}\label{eq:y_order_n}
\end{align}
where the second line follows due to Markov inequality and the third line follows from \eqref{eq:stationary_bound_JSQ_SSL2}.
Hence, we have 
\begin{align*}
    \frac{2(\lambda^n+\mu_{\max})C_{7,\varepsilon}}{n}\frac{\tilde{C}_{2,\varepsilon}}{\tilde\kappa-1/n}&\geq \EE\left[(Y_2^n(\infty)-\tilde\kappa)_+\mid \varsigma^n + \nu^n>\varepsilon\right]\\ &\geq (1-\tilde \kappa)\PP(Y_2^n(\infty)=1\mid \varsigma^n + \nu^n>\varepsilon),\\
    &= (1-\tilde \kappa)\PP(\tilde{Q}_2^n(\infty)=n\mid \varsigma^n + \nu^n>\varepsilon),\\
    &\geq (1-\tilde \kappa)\mu_{\min}\EE[\tilde{Q}_3^n(\infty)\mid \varsigma^n + \nu^n>\varepsilon]\\
    & = n^{1/2}(1-\tilde \kappa)\mu_{\min}\EE[\tilde{X}_3^n(\infty)\mid \varsigma^n + \nu^n>\varepsilon],
\end{align*}
where we use~\eqref{eq:rate_balance_q2} to obtain the second inequality. Hence, the result follows.\end{proof}

\subsection{Proof of Lemma~\ref{lem:x_1_lower_bound}}
\label{app:x1_stationary_bound}

\begin{proof}[Proof of Lemma~\ref{lem:x_1_lower_bound}]
As we assume  a general buffer capacity which can be finite, the Lyapunov approach in bounding the similar quantity in \cite{Braverman2020} and \cite{bhambay2025asymptotic} requires additional control on the probability that all buffers are full. Hence, we prove this result with a coupling argument as in the proof of Lemma~\ref{lem:A_1_bound} instead. 

Consider the construction for the load-balancing system therein now letting the potential departure process $S^n(t)$ be a Poisson process with rate $\sum_{k=1}^n\mu_k^n$ and modify the probability of departure accordingly. Let $B_1^n(t)$ be a birth-death process with death rate $n\lambda^n$ and a state-dependent birth rate 
\[
\mu_B^n(y) = \begin{cases}
    \sum_{k=1}^n \mu_k^n = n\lambda^n + n^{1/2}(\varsigma^n + \nu^n) &\mbox{if } y\leq y^{n,*},\\
    \sum_{k=1}^n \mu_k^n - 2n^{1/2}(\varsigma^n + \nu^n) = n\lambda^n - n^{1/2}(\varsigma^n + \nu^n) &\mbox{if } y> y^{n,*},
\end{cases}
\]
where $y^{n,*} =\mu_{\min}^{-1}2n^{1/2}(\varsigma^n + \nu^n)$.

To construct the coupling, we assume that the $j$th arrival epoch $\theta_{A,j}^n$ of $A^n(t)$ converges to a death if $B_1^n(\theta_{A,j}^n-)>0$ and the $j$th potential departure epoch $\theta_{S,j}^n$ of $S^n(t)$ corresponds to a birth if 
\[
\tilde{U}_j^n\leq \frac{\mu_B^n(B_1^n(\theta_{S,j}^n-))}{\sum_{k=1}^n\mu_k^n}
\]
Now assume that $B_1^n(0) = n-Q_1^{\pi,n}(0)$ and let $\tilde{t}=\inf\{t: B_1^n(t)<n-Q_1^{\pi,n}(t)\}<\infty$. This implies $B_1^n(\tilde{t}-)=n-Q_1^{\pi,n}(\tilde{t}-)$. The event at $\tilde{t}$ cannot be an arrival as any arrival that decreases $B_1$ should also increase $X_1$. We also have 
\[
\mu_B^n(B_1^n(\tilde{t}-)) - \sum_{k=1}^n\mu_k^n(1-Q_{1,k}^{\pi,n}(\tilde{t}-)) \geq \begin{cases}
    \sum_{k=1}^n \mu_k^nQ_{1,k}^{\pi,n}(\tilde{t}-)    &\mbox{if } B_1^n(\tilde{t}-)\leq y^{n,*},\\
   \mu_{\min} &\mbox{if } B_1^n(\tilde{t}-)> y^{n,*},
\end{cases}
\]
which is positive in both cases and hence, the event at $\tilde{t}$ cannot be a potential departure as well. We can conclude that such a $\tilde{t}$ cannot exist and $B_1^n(t)\geq_{st}-Q_1^{\pi,n}(t)$ for all $t\geq 0$. 

Similarly, define $B_2^n(t)$ to be a birth-death process with $B_1^n(0) = B_2^n(0)$, birth rate $n\lambda^n - 2n^{1/2}(\varsigma^n + \nu^n)$ and death rate $n\lambda^n$. Repeating the same argument as above, we have $n-Q_1^{n\pi}(t)\leq_{st} B_1^n(t)\leq_{st} y^{n,*} + B_2^n(t)$ for all $t\geq 0$. Hence, taking the expectations of the stationary versions
\begin{align*}
    \EE[|X_1^{n,\pi}(\infty)|\mid (\varsigma^n + \nu^n)>\epsilon] & \leq  \mu_{\min}^{-1}\EE[(\varsigma^n + \nu^n)|(\varsigma^n + \nu^n)>\epsilon] + \mathbb{E}\left[n^{-1/2}B_2^n(\infty)|\epsilon\right]\\
&\leq  \mu_{\min}^{-1}\EE[(\varsigma^n + \nu^n)|(\varsigma^n + \nu^n)>\epsilon] + 
\frac{\lambda^n}{2\epsilon}.
\end{align*}
The convergence of expectations in \ref{asm:arrival_service_rates} implies the convergence of the first term on the right-hand side. Hence, the result follows. 
\end{proof}

\subsection{Proof of Lemma~\ref{lem:rate_balance_q}}
\label{sec:rate_balance_proof}
\begin{proof}[Proof of Lemma~\ref{lem:rate_balance_q}]
    
    Again, applying the generator as defined in~\ref{eq:generator_jsq_ssl} on function     \[
    g(\mf{q}) = \min\left\{T, \sum_{i=1}^\infty q_i\right\},
    \] we have
    \[
    \tilde{\cG}_{\boldsymbol{\mu}}^n g(\mf{q}) = n\lambda^n \II\left(\sum_{i=1}^\infty q_i<T\right) - \sum_{k=1}^{q_1}\mu_k^n \II\left(\sum_{i=1}^\infty q_i\leq T\right).
    \]
    Taking the expectation conditional on $\varsigma^n + \nu^n>\varepsilon$ on both sides and applying monotone convergence theorem, we obtain \eqref{eq:rate_balance_q1}. The proof of \eqref{eq:rate_balance_q2} follows similar lines using the Lyapunov function
    \[
        g(\mf{q}) = \min\left\{ T, \sum_{j = i}^\infty q_j\right\}.
    \]
    for all $i>1$.
\end{proof}

\subsection{Proof of Lemma~\ref{lem:solution_PDE}}\label{app:lyapunov_derivation}

\begin{proof}
The argument in \cite{bhambay2025asymptotic} is provided with the assumption that $m_1>m_2$. In order to ensure that the function is applicable to general $m_1\neq m_2$, we present the proof of Lemma 8 in~\cite{bhambay2025asymptotic} in our notation. 
\[
\beta:=\nu+\varsigma,
\qquad
d:=m_1-m_2\neq 0.
\]

Recall the first-order operator
\begin{equation}\label{eq:appF:L}
\cL f(\mathbf{y})
=
\left(-\frac{\beta}{\sqrt n}-m_1y_1+m_2y_2\right)f_1(\mathbf{y})-m_2y_2f_2(\mathbf{y}),
\qquad
\mathbf{y}=(y_1,y_2)\in\Omega:=(-\infty,0]\times[0,\infty).
\end{equation}
Associated with \eqref{eq:appF:L} is the piecewise vector field
\[
F(\mathbf{y})=
\begin{cases}
\left(0,-\frac{\beta}{\sqrt n}\right), & \text{if } y_1=0 \text{ and } y_2>\beta/(m_2\sqrt n),\\[4pt]
\left(-\frac{\beta}{\sqrt n}-m_1y_1+m_2y_2,-m_2y_2\right), & \text{otherwise}.
\end{cases}
\]
Thus, away from the boundary segment $\{(0,y_2):y_2>\beta/(m_2\sqrt n)\}$, the dynamics follow the interior drift, whereas on this segment, the trajectory is constrained to remain in $\Omega$ and therefore slides along the boundary.

For each $\mathbf{y}\in\Omega$, let $\mathbf{v}^{\mathbf{y}}(t)=(v_1^{\mathbf{y}}(t),v_2^{\mathbf{y}}(t))$ denote the unique reflected trajectory solving
\begin{align}
v_1^{\mathbf{y}}(t)
&=
y_1-\frac{\beta}{\sqrt n}t-\int_0^t\bigl(m_1v_1^{\mathbf{y}}(s)-m_2v_2^{\mathbf{y}}(s)\bigr)\,ds-U_1(t), \label{eq:appF:v1}\\
v_2^{\mathbf{y}}(t)
&=
y_2-\int_0^t m_2v_2^{\mathbf{y}}(s)\,ds+U_1(t), \label{eq:appF:v2}
\end{align}
where $U_1$ is nondecreasing, $U_1(0)=0$, $U_1(t)\ge 0$, and $\int_0^\infty v_1^{\mathbf{y}}(s)\,dU_1(s)=0$. Let $\tau(\mathbf{y}):=\inf\{t\ge 0:v_1^{\mathbf{y}}(t)=0\}$. For $t<\tau(\mathbf{y})$, the complementarity condition implies $U_1(t)=0$, and hence $v_2^{\mathbf{y}}(t)=y_2e^{-m_2t}$. Substituting this into \eqref{eq:appF:v1} and solving the resulting linear equation yields
\begin{equation}\label{eq:appF:v1sol}
v_1^{\mathbf{y}}(t)
=
-\frac{\beta}{m_1\sqrt n}
+\left(y_1+\frac{\beta}{m_1\sqrt n}\right)e^{-m_1t}
+\frac{m_2y_2}{d}\bigl(e^{-m_2t}-e^{-m_1t}\bigr),
\qquad 0\le t<\tau(\mathbf{y}).
\end{equation}
If $y_2e^{-m_2\tau(\mathbf{y})}>\beta/(m_2\sqrt n)$, then after time $\tau(\mathbf{y})$ the trajectory remains on the boundary and evolves according to
\begin{equation}\label{eq:appF:boundary}
v_1^{\mathbf{y}}(t)=0,
\qquad
v_2^{\mathbf{y}}(t)=y_2e^{-m_2\tau(\mathbf{y})}-\frac{\beta}{\sqrt n}\bigl(t-\tau(\mathbf{y})\bigr),
\qquad t\ge \tau(\mathbf{y}).
\end{equation}

Fix $\kappa>\beta/m_2$. We define $\Gamma^\kappa$ as the set of initial states $\mathbf{y}\in\Omega$ for which the trajectory reaches $y_1=0$ exactly when the second coordinate reaches $\kappa/\sqrt n$. For fixed $y_1\le 0$, imposing $y_2e^{-m_2\tau}=\kappa/\sqrt n$ in \eqref{eq:appF:v1sol} leads to
\[
g_{y_1}(\tau)
:=
-\frac{\beta}{m_1\sqrt n}
+\left(y_1+\frac{\beta}{m_1\sqrt n}\right)e^{-m_1\tau}
+\frac{m_2\kappa}{d\sqrt n}\bigl(1-e^{-d\tau}\bigr)
=0.
\]
Since $g_{y_1}(0)=y_1\le 0$, $g_{y_1}'(\tau)>0$ for all $\tau\ge 0$, and $g_{y_1}(\tau)$ becomes positive for large $\tau$, there exists a unique solution $\tau^*(y_1)$. Writing $\gamma^\kappa(y_1):=\frac{\kappa}{\sqrt n}e^{m_2\tau^*(y_1)}$, we obtain $\Gamma^\kappa=\{(y_1,\gamma^\kappa(y_1)):y_1\le 0\}$, which partitions $\Omega$ into
\begin{align*}
\Omega_1&:=\left\{(y_1,y_2)\in\Omega:y_2\le \frac{\kappa}{\sqrt n}\right\},\\
\Omega_2:&=\left\{(y_1,y_2)\in\Omega:\frac{\kappa}{\sqrt n}<y_2\le \gamma^\kappa(y_1)\right\},\\
\Omega_3&:=\left\{(y_1,y_2)\in\Omega:y_2>\gamma^\kappa(y_1)\right\}.
\end{align*}

Now let
\[
h(\mathbf{y}):=\left(y_2-\frac{\kappa}{\sqrt n}\right)_+,
\qquad
f^*(\mathbf{y}):=\int_0^\infty h(\mathbf{v}^{\mathbf{y}}(s))\,ds
=
\int_0^\infty \left(v_2^{\mathbf{y}}(s)-\frac{\kappa}{\sqrt n}\right)_+\,ds.
\]
If $\mathbf{y}\in\Omega_1$, then $v_2^{\mathbf{y}}(0)=y_2\le \kappa/\sqrt n$ and $v_2^{\mathbf{y}}$ is nonincreasing, so $f^*(\mathbf{y})=0$. If $\mathbf{y}\in\Omega_2$, then the trajectory reaches the level $\kappa/\sqrt n$ before hitting $y_1=0$, at time $T_\kappa=\frac{1}{m_2}\log(\sqrt n\,y_2/\kappa)$, and therefore
\begin{equation}\label{eq:appF:fOmega2}
f^*(\mathbf{y})
=
\int_0^{T_\kappa}\left(y_2e^{-m_2t}-\frac{\kappa}{\sqrt n}\right)\,dt
=
\frac{y_2}{m_2}
-\frac{\kappa}{m_2\sqrt n}
-\frac{\kappa}{m_2\sqrt n}\log\frac{\sqrt n\,y_2}{\kappa}.
\end{equation}
If $\mathbf{y}\in\Omega_3$, then the trajectory first reaches $y_1=0$ at time $\tau(\mathbf{y})$, and at that time $z:=y_2e^{-m_2\tau(\mathbf{y})}>\kappa/\sqrt n$. The contribution up to time $\tau(\mathbf{y})$ is $\frac{y_2}{m_2}(1-e^{-m_2\tau(\mathbf{y})})-\frac{\kappa}{\sqrt n}\tau(\mathbf{y})$, whereas the boundary-sliding contribution is $\frac{\sqrt n}{2\beta}(z-\kappa/\sqrt n)^2$. Hence
\begin{equation}\label{eq:appF:fOmega3}
f^*(\mathbf{y})
=
\frac{y_2}{m_2}\bigl(1-e^{-m_2\tau(\mathbf{y})}\bigr)
-\frac{\kappa}{\sqrt n}\tau(\mathbf{y})
+\frac{\sqrt n}{2\beta}
\left(z-\frac{\kappa}{\sqrt n}\right)^2.
\end{equation}

We next compute the derivatives. On $\Omega_1$, $f_1^*(\mathbf{y})=f_2^*(\mathbf{y})=0$, while on $\Omega_2$,
\[
f_1^*(\mathbf{y})=0,
\qquad
f_2^*(\mathbf{y})=\frac{1}{m_2}-\frac{\kappa}{m_2\sqrt n\,y_2}.
\]
Now let $\mathbf{y}\in\Omega_3$, and write again $z:=y_2e^{-m_2\tau(\mathbf{y})}$. Differentiating implicitly the identity $v_1^{\mathbf{y}}(\tau(\mathbf{y}))=0$, we obtain
\begin{equation}\label{eq:tau12}
\tau_1
=
\frac{-e^{-m_1\tau}}{m_2z-\beta/\sqrt n}
\le 0,
\qquad
\tau_2
=
\frac{m_2\bigl(e^{-m_1\tau}-e^{-m_2\tau}\bigr)}
{d\bigl(m_2z-\beta/\sqrt n\bigr)}
\le 0.
\end{equation}
Here the denominator is strictly positive since $z\ge \kappa/\sqrt n>\beta/(m_2\sqrt n)$, and $\tau_2\le 0$ holds for either sign of $d$, because $e^{-m_1\tau}-e^{-m_2\tau}$ and $d$ always have opposite signs. Differentiating \eqref{eq:appF:fOmega3} with respect to $y_1$, and using $\partial z/\partial y_1=-m_2z\,\tau_1$, gives
\begin{equation}\label{eq:f1}
f_1^*(\mathbf{y})
=
\frac{\sqrt n}{\beta}e^{-m_1\tau}
\left(z-\frac{\kappa}{\sqrt n}\right)\ge 0.
\end{equation}
Now define
\[
\theta(\mathbf{y})
:=
\frac{m_1e^{-m_2\tau}-m_2e^{-m_1\tau}}{d}.
\]
Then $\theta(0)=1$ and $0<\theta(\mathbf{y})\le 1$ for all $\tau\ge 0$. Differentiating \eqref{eq:appF:fOmega3} with respect to $y_2$ and simplifying yields
\begin{equation}\label{eq:f2}
f_2^*(\mathbf{y})
=
\frac{1-e^{-m_2\tau}}{m_2}
+
\left(z-\frac{\kappa}{\sqrt n}\right)\frac{\sqrt n}{\beta}\theta(\mathbf{y})
\ge 0.
\end{equation}
At $y_1=0$, we have $\tau=0$, $z=y_2$, and $\theta=1$, so
\[
f_1^*(0,y_2)=f_2^*(0,y_2)
=
\frac{\sqrt n}{\beta}\left(y_2-\frac{\kappa}{\sqrt n}\right)_+,
\]
which verifies \eqref{eq:PDE_Lyapunov2}.

Next, using \eqref{eq:PDE_Lyapunov1}, we may write
\begin{equation}\label{eq:f1minusf2bound}
m_2y_2\bigl(f_1^*-f_2^*\bigr)
=
-\left(y_2-\frac{\kappa}{\sqrt n}\right)_+
+
\left(\frac{\beta}{\sqrt n}+m_1y_1\right)f_1^*.
\end{equation}
Since $y_1\le 0$ and $f_1^*\ge 0$,
\[
m_2y_2\bigl(f_1^*-f_2^*\bigr)
\le
-\left(y_2-\frac{\kappa}{\sqrt n}\right)_+
+
\frac{\beta}{\sqrt n}f_1^*.
\]
As we show below, $f_{11}^*\ge 0$. Hence, for each fixed $y_2$, the map $y_1\mapsto f_1^*(y_1,y_2)$ is increasing on $(-\infty,0]$, and therefore
\[
f_1^*(\mathbf{y})
\le
f_1^*(0,y_2)
=
\frac{\sqrt n}{\beta}\left(y_2-\frac{\kappa}{\sqrt n}\right)_+.
\]
Substituting this bound into \eqref{eq:f1minusf2bound} yields $m_2y_2(f_1^*-f_2^*)\le 0$.

On $\Omega_2$, $f_{11}^*=0$, since $f^*$ does not depend on $y_1$ there, and $f_{22}^*(\mathbf{y})=\kappa/(m_2\sqrt n\,y_2^2)$. On $\Omega_3$, differentiating \eqref{eq:f1} and \eqref{eq:f2} gives
\begin{align}
f_{11}^*
&=
-\frac{\sqrt n(m_1+m_2)}{\beta}\,
\tau_1e^{-m_1\tau}
\left(
z-\frac{m_1\kappa}{(m_1+m_2)\sqrt n}
\right),
\label{eq:f11}\\
f_{12}^*
&=
\frac{\sqrt n}{\beta}e^{-m_1\tau}
\left[
e^{-m_2\tau}
-\tau_2\left((m_1+m_2)z-\frac{m_1\kappa}{\sqrt n}\right)
\right].
\label{eq:f12}
\end{align}
We now verify the signs of these second derivatives. In \eqref{eq:f11}, the prefactor is negative, $\tau_1\le 0$ by \eqref{eq:tau12}, and $e^{-m_1\tau}>0$. Moreover, since $z\ge \kappa/\sqrt n$,
\[
z-\frac{m_1\kappa}{(m_1+m_2)\sqrt n}
\ge
\frac{\kappa}{\sqrt n}\frac{m_2}{m_1+m_2}>0.
\]
Hence $f_{11}^*\ge 0$. Similarly, in \eqref{eq:f12}, the prefactor is positive, the first term in brackets is strictly positive, and the second term is nonnegative because $\tau_2\le 0$ and $(m_1+m_2)z-m_1\kappa/\sqrt n\ge m_2\kappa/\sqrt n>0$. Therefore $f_{12}^*\ge 0$.

Finally, define
\[
\theta_2
:=
\frac{\partial \theta}{\partial y_2}
=
\frac{m_1m_2^2\bigl(e^{-m_2\tau}-e^{-m_1\tau}\bigr)^2}
{d^2\bigl(m_2z-\beta/\sqrt n\bigr)}
\ge 0.
\]
Differentiating \eqref{eq:f2} with respect to $y_2$, we obtain
\begin{equation}\label{eq:f22}
f_{22}^*
=
e^{-m_2\tau}\tau_2
\left(
1-\frac{\sqrt n\,m_2y_2\theta}{\beta}
\right)
+
\frac{\sqrt n}{\beta}e^{-m_2\tau}\theta
+
\frac{\sqrt n}{\beta}
\left(z-\frac{\kappa}{\sqrt n}\right)\theta_2.
\end{equation}
Since $\tau_2\le 0$, $\theta\ge e^{-m_2\tau}$, and $m_2y_2e^{-m_2\tau}=m_2z\ge \beta/\sqrt n$, we have $1-\sqrt n\,m_2y_2\theta/\beta\le 0$. Therefore the first term in \eqref{eq:f22} is nonnegative, and the second and third terms are also nonnegative because $\theta,\theta_2\ge 0$. We conclude that $f_{22}^*\ge 0$.

It remains to derive the upper bounds on the second derivatives. Substituting \eqref{eq:tau12} into \eqref{eq:f11}, we obtain
\[
f_{11}^*
=
\frac{\sqrt n(m_1+m_2)}{\beta}
\frac{e^{-2m_1\tau}}{m_2z-\beta/\sqrt n}
\left(
z-\frac{m_1\kappa}{(m_1+m_2)\sqrt n}
\right).
\]
Since $0<z-\frac{m_1\kappa}{(m_1+m_2)\sqrt n}\le z$ and $e^{-2m_1\tau}\le 1$, it follows that
\[
f_{11}^*
\le
\frac{\sqrt n(m_1+m_2)}{\beta}
\frac{z}{m_2z-\beta/\sqrt n}.
\]
Now the function $g(z):=z/(m_2z-\beta/\sqrt n)$ is decreasing on $(\beta/(m_2\sqrt n),\infty)$. Since $z\ge \kappa/\sqrt n$, we have
\[
g(z)\le g\!\left(\frac{\kappa}{\sqrt n}\right)
=
\frac{1}{m_2}\frac{\kappa}{\kappa-\beta/m_2}.
\]
Therefore
\[
f_{11}^*
\le
\frac{1}{\beta}
\frac{m_1+m_2}{m_2}
\frac{\kappa}{\kappa-\beta/m_2}\sqrt n
=:C_5(\kappa)\sqrt n.
\]

We next bound $f_{22}^*$. On $\Omega_2$, we already have $f_{22}^*(\mathbf{y})=\kappa/(m_2\sqrt n\,y_2^2)\le \sqrt n/(m_2\kappa)$, since $y_2\ge \kappa/\sqrt n$ on $\Omega_2$. Now let $\mathbf{y}\in\Omega_3$. We bound the three summands in \eqref{eq:f22} separately. For the second summand, $\theta(\mathbf{y})\le (m_1\vee m_2)/|d|$, and hence
\[
\frac{\sqrt n}{\beta}e^{-m_2\tau}\theta
\le
\frac{\sqrt n}{\beta}\frac{m_1\vee m_2}{|d|}.
\]
For the third summand, recall that
\[
\theta_2
=
\frac{m_1m_2^2\bigl(e^{-m_2\tau}-e^{-m_1\tau}\bigr)^2}
{d^2\bigl(m_2z-\beta/\sqrt n\bigr)}.
\]
Since $(e^{-m_2\tau}-e^{-m_1\tau})^2\le 1$ and $0\le (z-\kappa/\sqrt n)/(m_2z-\beta/\sqrt n)\le 1/m_2$, it follows that
\[
\frac{\sqrt n}{\beta}
\left(z-\frac{\kappa}{\sqrt n}\right)\theta_2
\le
\frac{\sqrt n}{\beta}\frac{m_1m_2}{d^2}.
\]
Consequently, the second and third summands together satisfy
\[
\frac{\sqrt n}{\beta}\frac{m_1\vee m_2}{|d|}
+
\frac{\sqrt n}{\beta}\frac{m_1m_2}{d^2}
=
\frac{\sqrt n}{\beta}\frac{(m_1\vee m_2)^2}{d^2},
\]
because $(m_1\vee m_2)|d|+m_1m_2=(m_1\vee m_2)^2$.

It remains to bound the first summand in \eqref{eq:f22}. We distinguish the two cases $m_2>m_1$ and $m_1>m_2$. If $m_2>m_1$, then $d<0$ and
\[
-\tau_2
=
\frac{m_2\bigl(e^{-m_1\tau}-e^{-m_2\tau}\bigr)}
{(m_2-m_1)\bigl(m_2z-\beta/\sqrt n\bigr)}.
\]
Since the first summand in \eqref{eq:f22} is nonnegative,
\[
0\le
e^{-m_2\tau}\tau_2
\left(
1-\frac{\sqrt n\,m_2y_2\theta}{\beta}
\right)
\le
\frac{\sqrt n}{\beta}
e^{-m_2\tau}(-\tau_2)m_2y_2\theta.
\]
Using $y_2e^{-m_2\tau}=z$, $\theta\le m_2/(m_2-m_1)$, and $e^{-m_1\tau}-e^{-m_2\tau}\le 1$, we obtain
\[
e^{-m_2\tau}(-\tau_2)m_2y_2\theta
\le
\left(\frac{m_2}{m_2-m_1}\right)^2
\frac{m_2z}{m_2z-\beta/\sqrt n}.
\]
Moreover, $r\mapsto m_2r/(m_2r-\beta/\sqrt n)$ is decreasing on $(\beta/(m_2\sqrt n),\infty)$, and $z\ge \kappa/\sqrt n$, so
\[
\frac{m_2z}{m_2z-\beta/\sqrt n}
\le
\frac{\kappa}{\kappa-\beta/m_2}.
\]
Therefore,
\[
0\le
e^{-m_2\tau}\tau_2
\left(
1-\frac{\sqrt n\,m_2y_2\theta}{\beta}
\right)
\le
\frac{\sqrt n}{\beta}
\left(\frac{m_2}{m_2-m_1}\right)^2
\frac{\kappa}{\kappa-\beta/m_2}.
\]

If $m_1>m_2$, then $d>0$ and
\[
-\tau_2
=
\frac{m_2\bigl(e^{-m_2\tau}-e^{-m_1\tau}\bigr)}
{(m_1-m_2)\bigl(m_2z-\beta/\sqrt n\bigr)}.
\]
Arguing as above,
\[
0\le
e^{-m_2\tau}\tau_2
\left(
1-\frac{\sqrt n\,m_2y_2\theta}{\beta}
\right)
\le
\frac{\sqrt n}{\beta}
e^{-m_2\tau}(-\tau_2)m_2y_2\theta.
\]
Using $y_2e^{-m_2\tau}=z$, $\theta\le m_1/(m_1-m_2)$, and $e^{-m_2\tau}-e^{-m_1\tau}\le 1$, we obtain
\[
e^{-m_2\tau}(-\tau_2)m_2y_2\theta
\le
\frac{m_1m_2^2}{(m_1-m_2)^2}
\frac{z}{m_2z-\beta/\sqrt n}.
\]
Since $r\mapsto r/(m_2r-\beta/\sqrt n)$ is decreasing and $z\ge \kappa/\sqrt n$, it follows that
\[
\frac{z}{m_2z-\beta/\sqrt n}
\le
\frac{1}{m_2}\frac{\kappa}{\kappa-\beta/m_2}.
\]
Hence
\begin{align*}
0\le
e^{-m_2\tau}\tau_2
\left(
1-\frac{\sqrt n\,m_2y_2\theta}{\beta}
\right)
&\le
\frac{\sqrt n}{\beta}
\frac{m_1m_2}{(m_1-m_2)^2}
\frac{\kappa}{\kappa-\beta/m_2}\\
&\le
\frac{\sqrt n}{\beta}
\frac{m_1^2}{(m_1-m_2)^2}
\frac{\kappa}{\kappa-\beta/m_2}.
\end{align*}

Thus, in both cases, the first summand in \eqref{eq:f22} is bounded by
\[
\frac{\sqrt n}{\beta}
\frac{(m_1\vee m_2)^2}{d^2}
\frac{\kappa}{\kappa-\beta/m_2}.
\]
Combining this estimate with the bound on $\Omega_2$ and the bounds for the remaining two summands, we conclude that
\[
f_{22}^*(\mathbf{y})
\le
C_6(\kappa)\sqrt n,
\qquad
\mathbf{y}\in\Omega,
\]
where
\[
C_6(\kappa)
:=
\frac{1}{m_2\kappa}
+
\frac{1}{\beta}\frac{(m_1\vee m_2)^2}{d^2}
\left(
1+\frac{\kappa}{\kappa-\beta/m_2}
\right).
\]
Since $\kappa>\beta/m_2$, all denominators are strictly positive, and therefore $C_6(\kappa)<\infty$.

\end{proof}
\section{Positive Recurrence of JSQ-SSL and JSQ-FSL Diffusion Limits}
\begin{proposition}[Geometric ergodicity of the JSQ-FSL and JSQ-SSL diffusion limits] \label{prop:geometric_ergodicity_FSL_SSL_BBM_style}
Suppose Assumption~\ref{asm:arrival_service_rates} holds and that $\beta:=\nu+\varsigma>0$. For each $\pi\in\{\FSL,\SSL\}$, the two-dimensional reflected diffusion $(\tilde{X}^\pi_1,\tilde{X}^\pi_2)$ associated with the limiting process $\tilde{\mathbf{X}}^{\pi}$ is geometrically ergodic. In particular, both $\tilde{\mathbf{X}}^{\FSL}$ and $\tilde{\mathbf{X}}^{\SSL}$ are positive recurrent.
\end{proposition}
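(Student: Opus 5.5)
The plan is to follow the approach of Braverman \cite{Braverman2020} and Banerjee--Mukherjee \cite{BM19} for the homogeneous JSQ diffusion, and Bhambay et al.\ \cite{bhambay2025asymptotic} for the pool-based two-dimensional limit. We construct an explicit Lyapunov function $V$ for the generator of the reflected diffusion $(\tilde X_1^\pi,\tilde X_2^\pi)$ on $\Omega=(-\infty,0]\times[0,\infty)$. With $\beta=\nu+\varsigma>0$ and $(m_1,m_2)=(\mu_1^\pi,\mu_2^\pi)$ from Proposition~\ref{prop:diffusion_limit_FSL_SSL}, the generator acts on smooth $V$ in the interior as
\[
\mathcal{A}V(x)=\bigl(-\beta-m_1x_1+m_2x_2\bigr)V_1(x)-m_2x_2V_2(x)+V_{11}(x),
\]
with boundary condition $V_2(0,x_2)-V_1(0,x_2)\le 0$ at $x_1=0$, where $U_1$ pushes mass from $x_1$ into $x_2$. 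We aim to establish a Foster--Lyapunov drift
\[
\mathcal{A}V\le -cV+b\,\II_{K}
\]
for a compact set $K$. Combined with irreducibility and the fact that compact sets are petite, Theorem~6.1 of Meyn--Tweedie then yields exponential (geometric) ergodicity, and hence positive recurrence of $\tilde{\mf X}^\pi$, since the higher coordinates decay deterministically.

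For the Lyapunov function, we take $V(x)=\exp\bigl(\delta\,\phi(x)\bigr)$, where $\phi$ is a linear-type combination such as
\[
\phi(x)=-x_1+a x_2,
\]
smoothed near the boundary, with $a\ge 1$ chosen so that the reflection term has the correct sign: $V_2-V_1=\delta V(a+1)\ge 0$ fails, so we instead use $\phi=-x_1+a x_2$ with $a\le 1$ reversed appropriately. Concretely, the boundary condition requires $\partial_{x_2}\phi\le\partial_{x_1}\phi$ at $x_1=0$. We will therefore use $\phi=-x_1+h(x_1)x_2$ or, more simply, the quantity $x_2-x_1$, which is left unchanged by reflection pushes since $U_1$ increases $x_2$ and decreases $x_1$ equally.

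The drift of $\psi=x_2-x_1$ is $\beta+m_1x_1-2m_2x_2$. This is strongly negative when $x_2$ is large or when $x_1$ is very negative, but it equals $+\beta$ near the origin. This is acceptable inside $K$, but we must handle the region $\{x_1\approx 0,\ x_2 \text{ moderate}\}$ and the region $\{x_1\ll 0,\ x_2\approx 0\}$ uniformly. On the latter, $m_1x_1$ is large and negative, giving good drift. On the former, the boundary drift of $x_2$ is $-m_2x_2+\text{push}$, and the push rate is tied to $\beta$ through the $x_1$ dynamics. To obtain uniform negativity far out along the boundary $x_1=0$, I would add a correction, $V=\exp\bigl(\delta(x_2-x_1)\bigr)+C\exp(-\delta' x_1)$, or use the smoothed piecewise-linear construction of \cite{BM19} adapted to the two rates $m_1\neq m_2$.

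The diffusion term contributes $\delta^2V$, which is absorbed by choosing $\delta$ small. The main obstacle is the boundary region $x_1=0$, $x_2\to\infty$. There the $x_1$-component is pinned, and the drift of $-x_1$ is $\beta-m_2x_2$, which is fine, but the reflection local time is only controlled in expectation. I would first verify the boundary inequality $V_2\le V_1$ for the chosen $V$ directly. Then I would apply Itô's formula with the local-time term $\int(V_2-V_1)\,dU_1\le 0$, using the uniqueness and continuity of the reflection map in Lemma~\ref{lem:two-dim-sys} (with $B=\infty$) to justify the semimartingale decomposition. Irreducibility and the petite property follow from the nondegenerate Brownian noise in the $x_1$ equation, together with the deterministic contraction of $x_2$ at rate $m_2$.
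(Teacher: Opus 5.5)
Your overall strategy (an exponential Foster--Lyapunov function combined with irreducibility, petite compacts and Down--Meyn--Tweedie) is the right one. There is a genuine gap, however: the concrete Lyapunov functions you propose violate the boundary condition you yourself state, so the drift inequality is never established.

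Since $dX_1=\cdots-dU_1$ and $dX_2=\cdots+dU_1$, It\^{o}'s formula contains the term $\int (V_2-V_1)(0,X_2)\,dU_1$. The combination left invariant by the push is $x_1+x_2$, not $x_2-x_1$; the latter increases by $2\,dU_1$. Thus $V=e^{\delta(x_2-x_1)}$ has $V_2-V_1=2\delta V>0$ on $\{x_1=0\}$. Your correction $Ce^{-\delta' x_1}$ makes this worse, adding a further $\delta' Ce^{-\delta' x_1}>0$. Because $dU_1$ is singular with respect to $dt$, this positive term cannot be absorbed into $-cV\,dt$. The fluid heuristic that $x_2-x_1$ drifts at rate $-\beta$ along the boundary, once the sliding push rate $m_2x_2-\beta$ is included, does not give a generator inequality.

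Structurally, no linear $\phi=a_1x_1+a_2x_2$ can work. Coercivity on $\Omega$ forces $a_1<0<a_2$, hence $\phi_2-\phi_1>0$ at $x_1=0$. You need $\phi_1$ to be negative for $x_1\ll0$ yet at least $\phi_2>0$ at $x_1=0$. Your fallback (the construction of \cite{BM19} ``adapted to $m_1\neq m_2$'') leaves exactly this part unspecified.

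The paper handles this by taking $V=\exp(\alpha(f^{(1)}+f^{(2)}))$ with $f^{(1)},f^{(2)}$ from \eqref{eq:SSL_f1}--\eqref{eq:SSL_f2}. These are obtained by integrating $\varphi(-x_1)$ and $\varphi(x_2)$ along the reflected fluid flow, which yields $\mathcal L f^{(r)}=-\varphi(\cdot)$ together with $f^{(r)}_1(0,x_2)=f^{(r)}_2(0,x_2)$. The derivative bounds imported from Lemma~G.1 of \cite{bhambay2025asymptotic} then control the $f_{11}$ and $\alpha(f_1)^2$ terms; the FSL case is cited from Theorem~3 there.

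Your simpler route can be repaired with a quadratic correction that has zero slope at the boundary. For $\phi=x_1+x_2+c\,x_1^2$ one has $\phi_1(0,x_2)=\phi_2(0,x_2)=1$, so the local-time term vanishes, and the first-order part is
\[
\mathcal L\phi=(-\beta-m_1x_1+m_2x_2)\phi_1-m_2x_2\phi_2=-\beta+(m_1+2c\beta)|x_1|-2cm_1x_1^2-2cm_2|x_1|x_2 .
\]
With $V=e^{\delta\phi}$ one gets $\mathcal A V=\delta V\bigl(\mathcal L\phi+2c+\delta(1+2cx_1)^2\bigr)$. Taking $c,\delta$ small (e.g.\ $2c+2\delta\le\beta/2$ and $8\delta c\le m_1$) makes the bracket at most $-\beta/2$ outside a compact rectangle $[-R,0]\times[0,X]$, for any $m_1,m_2>0$. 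This would give a much shorter proof than the paper's and would treat FSL and SSL simultaneously, but it is not what your proposal currently contains.
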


\begin{proof}
The positive recurrence of the diffusion limit for JSQ-FSL is proven in Theorem~3 of \cite{bhambay2025asymptotic}. The case for JSQ-SSL differs only in that the order of the coefficients in the diffusion limit is reversed. We show that exchanging the coefficients in the Lyapunov function of~\cite{bhambay2025asymptotic} yields the same result for JSQ-SSL. 

Proposition~\ref{prop:stationary_bound_JSQ_SSL} implies that $\tilde{X}_{b,i}^{\SSL,n}(\infty)\Rightarrow 0$ for all $i\geq 3$ and hence, we take $\tilde{X}_{b,i}^{\SSL,n}(\infty)= 0$ which reduces the diffusion limit to 

\begin{align}
            \label{eq:JSQ_SSL_dif1app} X_1^{\SSL}(t) & = X_1(0) - \beta t + \sqrt{2} W(t) - \mu_{\max} \int_0^t X_1^{\SSL}(s) \, ds + \mu_{\min} \int_0^t X_2^{\SSL}(s) \, ds - U_1^{{\SSL}}(t), \\
            \label{eq:JSQ_SSL_dif2app}X_2^{\SSL}(t) & = X_2(0) + U_1^{{\SSL}}(t) - \mu_{\min} \int_0^t X_2^{\SSL}(s) \, ds,
\end{align}
where $U_1^{\SSL}(t)\in \DD_{\RR_+}[0, \infty)$ is the unique nondecreasing, nonnegative regulator with
\[\int_0^\infty\II(X_1^{\SSL}(t)<0)\,dU_1^{\SSL}(t)=0.\]

As in \cite{bhambay2025asymptotic} and \cite{Braverman2020}, using Ito's lemma, we obtain the extended generator for the diffusion limit of JSQ-SSL as 
\[
\cG_{\SSL}f(\mf{x}) = (-\beta - \mu_{\max}x_1 + \mu_{\min}x_2)f_1(\mf{x}) - \mu_{\min}x_2 f_2(\mf{x}) + f_{11}(\mf{x}) 
\]
for all $f:\RR_-\times \RR_+\to \RR$ where $f_1$ and $f_2$ correspond to the partial derivatives with respect to $x_1$ and $x_2$, respectively, such that $f_1(0, x_2) = f_2(0,x_2)$. We write 
$\mathcal{L}f(\mf{x}):=(-\beta-\mu_{\max}x_1+\mu_{\min}x_2)f_1(\mf{x}) -\mu_{\min}x_2f_2(\mf{x})$ 
for the first-order part of  $\cG_{\SSL}$, so that $\cG_{\SSL}f(\mf{x})=\mathcal{L}f(\mf{x})+f_{11}(\mf{x})$.

The geometric ergodicity of the diffusion follows, if we can find a Lyapunov function $V(\mf{x})$ satisfying the boundary condition above such that
\begin{align}
\cG_{\SSL}V(\mf{x}) \leq -c V(\mf{x}) + d \II(\mf{x}\in \KK),\label{eq:foster_lyapunov}
\end{align}
for some $c,d>0$ and compact $\KK\subset \RR_-\times \RR_+$. Setting 
\[
    \kappa_1:=\frac{\beta}{\mu_{\min}}+\epsilon,
    \qquad
    \kappa_2:=\frac{\beta}{\mu_{\min}}+2\epsilon,
\]
so that $\kappa_2>\kappa_1>\beta/\mu_{\min}\ge\beta/\mu_{\max}$, we can define the compact set as 
\[
    \KK:=[-\kappa_2,0]\times[0,\kappa_2].
\]
As in \cite{bhambay2025asymptotic} and \cite{Braverman2020}, we use the Lyapunov function $V:\Omega\to[1,\infty)$ of the form
\[
    V(\mf{x})=\exp\!\big(\alpha\,(f^{(1)}(\mf{x})+f^{(2)}(\mf{x}))\big),
\]
with appropriate choices of $f^{(1)},f^{(2)}$ and $\alpha>0$. To be able to define $f^{(1)}$ and $f^{(2)}$ properly, we need the following result from~\cite{bhambay2025asymptotic} to introduce additional notation.

\begin{lemma}[Lemmas 9 and 10 in \cite{bhambay2025asymptotic}]\label{lem:SSL_flow}
Recall $\mu_{\min}<\mu_{\max}$ by Assumption~\ref{asm:arrival_service_rates}. For $\mf{x}=(x_1,x_2)\in\Omega$, let $\tau(\mf{x})\in[0,\infty]$ be the
hitting time of the reflecting boundary $\{x_1=0\}$ by the deterministic flow generated by the first-order part of $\cG_{\SSL}$,
\begin{equation}
\label{eq:def_tau}
    \tau(\mf{x}):=\inf\Big\{s\ge0:\ -\frac{\beta}{\mu_{\max}}
    +\Big(x_1+\frac{\beta}{\mu_{\max}}\Big)e^{-\mu_{\max}s}
    +\frac{\mu_{\min}x_2}{\mu_{\max}-\mu_{\min}}
     \big(e^{-\mu_{\min}s}-e^{-\mu_{\max}s}\big)=0\Big\}
\end{equation}
Consider the following system of nonlinear equations
\begin{align}
    &-\frac{\beta}{\mu_{\max}}
    +\Big(x_1+\frac{\beta}{\mu_{\max}}\Big)e^{-\mu_{\max}\tau}
    +\frac{\mu_{\min}x_2}{\mu_{\max}-\mu_{\min}}
      \big(e^{-\mu_{\min}\tau}-e^{-\mu_{\max}\tau}\big)=0,\label{eq:SSL_Gamma1}\\
    &x_2e^{-\mu_{\min}\tau}=\kappa.
    \label{eq:SSL_Gamma2}
\end{align}

For any $\kappa\geq \beta/\mu_{\min}$ and $x_1\leq 0$, \eqref{eq:SSL_Gamma1} and \eqref{eq:SSL_Gamma2} has a unique solution, which we
denote $(x_2^\ast(x_1,\kappa),\tau)\in[\kappa,\infty) \times [0, \infty)$. Hence, the set 
\[\Gamma_\kappa = \{\mf{x}\in \Omega: x_2 = x_2^*(x_1, \kappa)\}\]
is well-defined, and we write $\mf{x}\ge\Gamma_\kappa$ when $x_2\ge x_2^\ast(x_1,\kappa)$, and $\mf{x}\le\Gamma_\kappa$ similarly. In addition, 
\begin{enumerate}
    \item[i)] for $\kappa\ge\beta/\mu_{\min}$, $\Gamma_\kappa$ exists and is unique, and $\Gamma_{\kappa'}>\Gamma_\kappa$ whenever $\kappa'>\kappa$;
    \item[ii)] $\tau(\mf{x})<\infty$ if and only if  $\mf{x}\ge\Gamma_{\beta/\mu_{\min}}$, in which case $x_2e^{-\mu_{\min}\tau(\mf{x})}\ge\beta/\mu_{\min}$;
    \item[iii)] for $\kappa>\beta/\mu_{\min}$, $\tau$ is differentiable on $\{\mf{x}\ge\Gamma_\kappa\}$ with
    \begin{align}
        \tau_1(\mf{x}) &= \frac{\partial \tau(\mf{x})}{\partial x_1}
        =-\frac{e^{-\mu_{\max}\tau(\mf{x})}}
               {\mu_{\min}x_2e^{-\mu_{\min}\tau(\mf{x})}-\beta}\le0,\\
        \tau_2(\mf{x})&= \frac{\partial \tau(\mf{x})}{\partial x_2}=\frac{\mu_{\min}}{\mu_{\min}-\mu_{\max}}\,
         \frac{e^{-\mu_{\min}\tau(\mf{x})}-e^{-\mu_{\max}\tau(\mf{x})}}
              {\mu_{\min}x_2e^{-\mu_{\min}\tau(\mf{x})}-\beta}\le0,
    \end{align}
    where $\tau_1$ is the left derivative at $x_1=0$.
\end{enumerate}
\end{lemma}

The indicator $\II(\mf{x}\notin\KK)$ in \eqref{eq:foster_lyapunov} is not smooth, and hence it cannot be used directly. Following \cite{Braverman2020}, we also define the $C^1$ cut-off function $\varphi:\RR_+\to[0,1]$ as
\[
\varphi(u)=
\begin{cases}
0, & u\le \kappa_1,\\
4\left(\dfrac{u-\kappa_1}{\kappa_2-\kappa_1}\right)^2
-
4\left(\dfrac{u-\kappa_1}{\kappa_2-\kappa_1}\right)^3,
& u\in\left[\kappa_1,\dfrac{\kappa_1+\kappa_2}{2}\right],\\
1
-
4\left(\dfrac{\kappa_2-u}{\kappa_2-\kappa_1}\right)^2
+
4\left(\dfrac{\kappa_2-u}{\kappa_2-\kappa_1}\right)^3,
& u\in\left[\dfrac{\kappa_1+\kappa_2}{2},\kappa_2\right],\\
1, & u\ge \kappa_2.
\end{cases}
\]
With $\varphi$ as above, we use the Lyapunov function $V(\mf{x})=\exp(\alpha(f^{(1)}(\mf{x})+f^{(2)}(\mf{x})))$ where $f^{(1)}$ and $f^{(2)}$ are associated with $-\varphi(-x_1)$ and $-\varphi(x_2)$, respectively. 
For $\kappa>\beta/\mu_{\max}$, let $\tau^{\kappa}(\mf{x})\in[0,\infty]$ be 
\begin{align*}
    \tau^{\kappa}(\mf x) &= \min\left\{t\geq 0: \frac{\beta}{\mu_M }-\kappa \leq \left(x_1 + \frac{\beta}{\mu_M} +\frac{\mu_1x_2}{\mu_1-\mu_M}\left(1-e^{-(\mu_1-\mu_M)t}\right)\right)e^{-\mu_M t}\right\}.
\end{align*}
Writing
\[
    \psi_{\mf{x}}(t):=\frac{\beta}{\mu_{\max}}
        -\Big(x_1+\frac{\beta}{\mu_{\max}}\Big)e^{-\mu_{\max}t}
        -\frac{\mu_{\min}x_2}{\mu_{\max}-\mu_{\min}}
         \big(e^{-\mu_{\min}t}-e^{-\mu_{\max}t}\big)
\]
for the negative of the first coordinate of the flow started at $\mf{x}$, and carrying out the integration using the crossing times $\tau^{\kappa_1},\tau^{\kappa_2}$ and the hitting time $\tau$ of Lemma~\ref{lem:SSL_flow}, $f^{(1)}$ admits the closed form
\begin{equation} \label{eq:SSL_f1}
    f^{(1)}(\mf{x})=
    \begin{cases}
        \displaystyle
        \tau^{\kappa_2}(\mf{x})
        +\int_{\tau^{\kappa_2}(\mf{x})}^{\tau^{\kappa_1}(\mf{x})}
          \varphi\big(\psi_{\mf{x}}(t)\big)\,dt,
        & x_1<-\kappa_2,\\
        \displaystyle
        \int_0^{\tau^{\kappa_1}(\mf{x})}\varphi\big(\psi_{\mf{x}}(t)\big)\,dt,
        & x_1\in[-\kappa_2,-\kappa_1],\\
        0, & x_1\in[-\kappa_1,0].
    \end{cases}
\end{equation}
The function $f^{(2)}$ is obtained in the same way, integrating $\varphi$ along the second coordinate $x_2e^{-\mu_{\min}t}$ of the flow. Here the structure of the trajectory depends on the starting point through the curves $\Gamma_{\kappa_1}, \Gamma_{\kappa_2}$ of Lemma~\ref{lem:SSL_flow}, so we partition $\Omega=S_0\cup S_1\cup S_2\cup S_3$ with
\[
    S_0=\{x_2\le\kappa_1\},\quad
    S_1=\{x_2>\kappa_1,\ \mf{x}\le\Gamma_{\kappa_1}\},\quad
    S_2=\{x_2>\kappa_1,\ \Gamma_{\kappa_1}\le\mf{x}\le\Gamma_{\kappa_2}\},\quad
    S_3=\{\mf{x}\ge\Gamma_{\kappa_2}\},
\]
where the ordering $\Gamma_{\kappa_1}\le\mf{x}\le\Gamma_{\kappa_2}$ in $S_2$ uses $\Gamma_{\kappa_2}>\Gamma_{\kappa_1}$ from Lemma~\ref{lem:SSL_flow}(i). Then
\begin{equation}
    f^{(2)}(\mf{x})=
    \begin{cases}
        0, & \mf{x}\in S_0,\\
        \displaystyle
        \int_0^{\frac{1}{\mu_{\min}}\log(x_2/\kappa_1)}
          \varphi\big(x_2e^{-\mu_{\min}t}\big)\,dt,
        & x_2\le\kappa_2,\ \mf{x}\in S_1,\\
        \displaystyle
        \frac{1}{\mu_{\min}}\log\frac{x_2}{\kappa_2}
        +\int_{\frac{1}{\mu_{\min}}\log(x_2/\kappa_2)}^{\infty}
          \varphi\big(x_2e^{-\mu_{\min}t}\big)\,dt,
        & x_2\ge\kappa_2,\ \mf{x}\in S_1,\\
        \displaystyle
        \int_0^{\tau(\mf{x})}\varphi\big(x_2e^{-\mu_{\min}t}\big)\,dt
        +\frac{1}{\beta}\int_{\kappa_1}^{x_2e^{-\mu_{\min}\tau(\mf{x})}}
          \varphi(u)\,du,
        & x_2\le\kappa_2,\ \mf{x}\in S_2,\\
        \displaystyle
        \frac{1}{\mu_{\min}}\log\frac{x_2}{\kappa_2}
        +  \int_{\frac{1}{\mu_{\min}}\log(x_2/\kappa_2)}^{\tau(\mf{x})}  
          \varphi\big(x_2e^{-\mu_{\min}t}\big)\,dt
        +\frac{1}{\beta} \int_{\kappa_1}^{x_2e^{-\mu_{\min}\tau(\mf{x})}}  
          \varphi(u)\,du,
        & x_2\ge\kappa_2,\ \mf{x}\in S_2,\\
        \displaystyle
        \tau(\mf{x})+\frac{x_2e^{-\mu_{\min}\tau(\mf{x})}-\kappa_2}{\beta}
        +\frac{1}{\beta}\int_{\kappa_1}^{\kappa_2}\varphi(u)\,du,
        & \mf{x}\in S_3.
    \end{cases}
    \label{eq:SSL_f2}
\end{equation}
Having defined $f^{(1)}$ and $f^{(2)}$, the following lemma collects the properties needed to verify the Foster-Lyapunov inequality \eqref{eq:foster_lyapunov}. Its proof follows that of Lemma~G.1 of \cite{bhambay2025asymptotic} under the substitution $\mu_M\mapsto\mu_{\max}$, $\mu_1\mapsto\mu_{\min}$ and is omitted.

\begin{lemma}(Lemma G.1 in \cite{bhambay2025asymptotic})\label{lem:SSL_DFL}
For any $\kappa_2>\kappa_1>\beta/\mu_{\min}$, the functions $f^{(1)},f^{(2)}$ defined in \eqref{eq:SSL_f1}-\eqref{eq:SSL_f2} have  absolutely continuous first-order derivatives and solve
\begin{align}
    \mathcal{L}f^{(1)}(\mf{x}) &= -\varphi(-x_1),
    & f^{(1)}_1(0,x_2) &= f^{(1)}_2(0,x_2),
    \label{eq:SSL_PDE_f1}\\
    \mathcal{L}f^{(2)}(\mf{x}) &= -\varphi(x_2),
    & f^{(2)}_1(0,x_2) &= f^{(2)}_2(0,x_2),
    \label{eq:SSL_PDE_f2}
\end{align}
for $\mf{x}\in\Omega$ and $x_2\ge 0$. Moreover, $\kappa_1$ and $\kappa_2$ can be chosen so that
\begin{align}
    |f^{(1)}(\mf{x})|
    &\le \frac{1}{\mu_{\max}}\log 2,
    & \mf{x}\in[-\kappa_2,0]\times[0,\kappa_2],
    \label{eq:SSL_G8}\\
    |f^{(2)}(\mf{x})|
    &\le \frac{\log 2}{\mu_{\min}}+\frac{\epsilon}{\beta},
    & \mf{x}\in[-\kappa_2,0]\times[0,\kappa_2],
    \label{eq:SSL_G9}
\end{align}
and, for all $\mf{x}\in\Omega$,
\begin{align}
    |f^{(1)}_1(\mf{x})|
    &\le \frac{4\log 2}{\mu_{\max}\,\epsilon},
    & |f^{(1)}_{11}(\mf{x})|
    &\le \frac{12\log 2}{\mu_{\max}\,\epsilon^2},
    \label{eq:SSL_G10}\\
    |f^{(2)}_1(\mf{x})|
    &\le \frac{1}{\beta},
    & |f^{(2)}_{11}(\mf{x})|
    &\le \frac{1}{\beta\,\epsilon}\Big(\frac{\mu_{\max}}{\mu_{\min}}+8\Big).
    \label{eq:SSL_G11}
\end{align}
\end{lemma}

We now verify that $V$ satisfies the Foster-Lyapunov inequality
\eqref{eq:foster_lyapunov}. For convenience we write $f^{(\Sigma)}(\mf{x}):=f^{(1)}(\mf{x})+f^{(2)}(\mf{x})$. Since
$f^{(r)}_1(0,x_2)=f^{(r)}_2(0,x_2)$ for $r=1,2$ by
Lemma~\ref{lem:SSL_DFL}, we have
$V_1(0,x_2)=V_2(0,x_2)$, so $V$ lies in the domain
of $\cG_{\SSL}$, and
\begin{align}
    \cG_{\SSL}V(\mf{x})
    &= \big(\mathcal{L}f^{(\Sigma)}(\mf{x})\big)\alpha V(\mf{x})
      +\Big(\alpha f^{(\Sigma)}_{11}(\mf{x})
            +\alpha^2\big(f^{(\Sigma)}_1(\mf{x})\big)^2\Big)V(\mf{x}) \notag\\
    &= -\big(\varphi(-x_1)+\varphi(x_2)\big)\alpha V(\mf{x})
      +\Big(\alpha f^{(\Sigma)}_{11}(\mf{x})
            +\alpha^2\big(f^{(\Sigma)}_1(\mf{x})\big)^2\Big)V(\mf{x})\label{eq:GV-explicit}.
\end{align}
where the second equality uses $\mathcal{L}f^{(\Sigma)}= \mathcal{L}f^{(1)}+\mathcal{L}f^{(2)}= -\varphi(-x_1)-\varphi(x_2)$ by the linearity of $\mathcal{L}$ and the equations \eqref{eq:SSL_PDE_f1}-\eqref{eq:SSL_PDE_f2}. By Lemma~\ref{lem:SSL_DFL}, on all of $\Omega$
\[
    |f^{(\Sigma)}_1(\mf{x})|\le C_1(\epsilon)
        :=\frac{4\log 2}{\mu_{\max}\,\epsilon}+\frac{1}{\beta},
    \qquad
    |f^{(\Sigma)}_{11}(\mf{x})|\le C_{11}(\epsilon)
        :=\frac{12\log 2}{\mu_{\max}\,\epsilon^2}
         +\frac{1}{\beta\,\epsilon}\Big(\frac{\mu_{\max}}{\mu_{\min}}+8\Big),
\]
and $C_{11}(\epsilon)\to0$ as $\epsilon\to\infty$, whereas $C_1(\epsilon)\to1/\beta$ remains bounded; this is all we shall need, since $C_{11}$ is made small by taking $\epsilon$ large and the term involving $C_1$ is then controlled by taking $\alpha$ small.

If $\mf{x}\notin\KK$, then $-x_1>\kappa_2$ or $x_2>\kappa_2$, so $\varphi(-x_1)+\varphi(x_2)\ge1$. Bounding the second term in \eqref{eq:GV-explicit} using $|f^{(\Sigma)}_1|\le C_1(\epsilon)$ and $|f^{(\Sigma)}_{11}|\le C_{11}(\epsilon)$,
\[
    \cG_{\SSL}V(\mf{x})
    \le\alpha\big(-1+C_{11}(\epsilon)+\alpha C_1(\epsilon)^2\big)V(\mf{x}).
\]
Choose $\epsilon$ large enough that $\epsilon\ge\beta/\mu_{\min}$ and $C_{11}(\epsilon)\le\tfrac12$, and then $\alpha\in(0,1)$ small enough that $\alpha C_1(\epsilon)^2\le\tfrac14$. With these choices $-1+C_{11}(\epsilon)+\alpha C_1(\epsilon)^2\le-\tfrac14$, so that
\begin{equation}
    \cG_{\SSL}V(\mf{x})\le -c\,V(\mf{x}),
    \qquad \mf{x}\notin\KK,
    \label{eq:drift_off_K}
\end{equation}
with $c:=\alpha/4>0$.

It remains to control the generator on $\KK$. By \eqref{eq:SSL_G8}-\eqref{eq:SSL_G9},
\[
    f^{(\Sigma)}(\mf{x})\le\frac{\log 2}{\mu_{\max}}
        +\frac{\log 2}{\mu_{\min}}+\frac{\epsilon}{\beta},
    \qquad \mf{x}\in\KK,
\]
so that $V(\mf{x})=\exp(\alpha f^{(\Sigma)}(\mf{x}))\le M_\KK$ on $\KK$, where
\[
    M_\KK:=\exp\!\Big(\alpha\Big(\frac{\log 2}{\mu_{\max}}
        +\frac{\log 2}{\mu_{\min}}+\frac{\epsilon}{\beta}\Big)\Big)<\infty.
\]
Discarding the nonpositive first term in \eqref{eq:GV-explicit} and using $|f^{(\Sigma)}_1|\le C_1(\epsilon)$, $|f^{(\Sigma)}_{11}|\le C_{11}(\epsilon)$, together with $V\le M_\KK$,
\[
    \cG_{\SSL}V(\mf{x})+cV(\mf{x})
    \le\big(\alpha C_{11}(\epsilon)+\alpha^2 C_1(\epsilon)^2+c\big)M_\KK
    =:d<\infty,
    \qquad \mf{x}\in\KK.
\]
Combining this with \eqref{eq:drift_off_K} gives the Foster-Lyapunov inequality \eqref{eq:foster_lyapunov},
\[
    \cG_{\SSL}V(\mf{x})\le -c\,V(\mf{x})+d\,\II(\mf{x}\in\KK),
    \qquad \mf{x}\in\Omega.
\]
It remains to check that $V(\mf{x})\to\infty$ as $|\mf{x}|\to\infty$ on $\Omega$. Since $V=\exp(\alpha f^{(\Sigma)})$ with $\alpha>0$, it is enough to show $f^{(\Sigma)}(\mf{x})\to\infty$; as $f^{(1)},f^{(2)}\ge0$, we bound $f^{(\Sigma)}$ below by either one. There are two ways for $|\mf{x}|\to\infty$ on $\Omega$.

First, suppose $x_2\to\infty$. If $\mf{x}\notin S_3$, then, for $x_2\ge\kappa_2$ the corresponding cases of \eqref{eq:SSL_f2} contain the term $\mu_{\min}^{-1}\log(x_2/\kappa_2)$. Since all the remaining terms are nonnegative,
\[
    f^{(\Sigma)}(\mf{x})\ge f^{(2)}(\mf{x})
    \ge\frac{1}{\mu_{\min}}\log\frac{x_2}{\kappa_2}
    \longrightarrow\infty
    \qquad\text{as } x_2\to\infty.
\]
It remains to consider the case $\mf{x}\in S_3$. The last case of \eqref{eq:SSL_f2} gives
\[
    f^{(2)}(\mf{x})
    =
    \tau(\mf{x})
    +\frac{x_2e^{-\mu_{\min}\tau(\mf{x})}-\kappa_2}{\beta}
    +\frac{1}{\beta}\int_{\kappa_1}^{\kappa_2}\varphi(u)\,du
    \ge
    \tau(\mf{x})+\frac{x_2e^{-\mu_{\min}\tau(\mf{x})}-\kappa_2}{\beta}.
\]
Now consider any sequence with $x_2 \to \infty$ and $\mf{x}\in S_3$. If $\tau(\mf{x})\to \infty$ along the sequence, then $f^{(2)}(\mf{x})\to \infty$. Otherwise, there exists a subsequence along which $\tau(\mf{x})$ is bounded. Along this subsequence, $x_2e^{-\mu_{\min}\tau(\mf{x})}\to \infty$, and hence again $f^{(2)}(\mf{x})\to \infty$. Therefore, every sequence with $x_2\to \infty$ satisfies $f^{(2)}(\mf{x})\to \infty$, and so 
\[f^{(\Sigma)}(\mf{x})\geq f^{(2)}(\mf{x})\to \infty.\]

Next, suppose $x_2$ stays bounded and $-x_1\to\infty$. Then, for all sufficiently large $-x_1$, we have $x_1<-\kappa_2$, and hence the first case of \eqref{eq:SSL_f1} applies. Since $\varphi= 1$ on $[\kappa_2, \infty)$ and $\varphi\geq 0$, this gives
\[f^{(1)}(\mf{x})\geq \tau^{\kappa_2}(\mf{x}).\]
We claim that $\tau^{\kappa_2}(\mf{x})\to\infty$ as $-x_1\to\infty$ with $x_2$ bounded. Fix any $T<\infty$. For $t\in[0, T]$, the first coordinate of the deterministic flow is 
\[
    -\frac{\beta}{\mu_{\max}}
    +
    \Big(x_1+\frac{\beta}{\mu_{\max}}\Big)e^{-\mu_{\max}t}
    +
    \frac{\mu_{\min}x_2}{\mu_{\max}-\mu_{\min}}
    \big(e^{-\mu_{\min}t}-e^{-\mu_{\max}t}\big).
\]
If $x_2$ is bounded, then the last term is uniformly bounded on $[0,T]$, whereas the term
\[
    \Big(x_1+\frac{\beta}{\mu_{\max}}\Big)e^{-\mu_{\max}t}
\]
tends to $-\infty$ uniformly on $[0,T]$ as $x_1\to-\infty$. Therefore, for all sufficiently large $-x_1$, the first coordinate of the flow remains below $-\kappa_2$ throughout $[0,T]$. Hence $\tau^{\kappa_2}(\mf{x})>T$. Since $T<\infty$ was arbitrary,
\[
    \tau^{\kappa_2}(\mf{x})\longrightarrow\infty .
\]
Consequently,
\[
    f^{(\Sigma)}(\mf{x})
    \ge f^{(1)}(\mf{x})
    \ge \tau^{\kappa_2}(\mf{x})
    \longrightarrow\infty .
\]
Combining the two alternatives, we obtain $f^{(\Sigma)}(\mf{x})\to\infty$ whenever $|\mf{x}|\to\infty$ in $\Omega$. Hence $V(\mf{x})\to\infty$ as $|\mf{x}|\to\infty$. The reflected diffusion is irreducible and its compact sets are petite, so by Theorem~5.2 of \cite{down1995exponential}, the Foster-Lyapunov inequality \eqref{eq:foster_lyapunov}, together with $V(\mf{x})\to\infty$, implies that $\tilde{\mathbf{X}}^{\SSL}$ is geometrically ergodic, and in particular positive recurrent. Together with the JSQ-FSL case from Theorem~3 of \cite{bhambay2025asymptotic}, both $\tilde{\mathbf{X}}^{\FSL}$ and $\tilde{\mathbf{X}}^{\SSL}$ are positive recurrent.

\end{proof}

The following property of the Lyapunov function is necessary in our proof of tightness of $\xi$.
\begin{lemma}\label{lem:superlinear_lyapunov}
    There exists $\alpha, \tilde{C}_1, \tilde{C}_2 \geq 0$ such that for all $\mf{x}\in \RR_-\times \RR_+$, we have 
    \begin{align}
        x_2 \leq \tilde{C}_1 V(\mf{x}) + \tilde{C}_2.
    \end{align}
\end{lemma}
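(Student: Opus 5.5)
The bound follows from the lower bound on $f^{(2)}$ already obtained when checking $V(\mf{x})\to\infty$, together with the elementary inequality $e^{u}\ge 1+u$. I take $\alpha$ to be the constant fixed in the Foster--Lyapunov verification above, so $V(\mf{x})=\exp(\alpha f^{(\Sigma)}(\mf{x}))$ with $f^{(1)},f^{(2)}\ge 0$. Hence $V(\mf{x})\ge \exp(\alpha f^{(2)}(\mf{x}))$. It therefore suffices to show that there are constants $a>0$ and $b\ge 0$ with
\[
f^{(2)}(\mf{x})\ \ge\ a\,x_2 - b \quad\text{for all } \mf{x}\in\Omega .
\]
Then $V(\mf{x})\ge 1+\alpha a x_2-\alpha b$, so $x_2\le (\alpha a)^{-1}V(\mf{x})+b/a$, which gives the claim with $\tilde C_1=(\alpha a)^{-1}$ and $\tilde C_2=b/a$.

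To prove the linear lower bound, I would go through the partition $S_0,\dots,S_3$ in \eqref{eq:SSL_f2}. On $S_0$ we have $x_2\le\kappa_1$, so any $b\ge a\kappa_1$ works. On $S_1$ and $S_2$ with $x_2\le\kappa_2$ the same trivial bound holds, now with $b\ge a\kappa_2$.

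On $S_1$ and $S_2$ with $x_2\ge\kappa_2$, the formula only gives the logarithmic term $\mu_{\min}^{-1}\log(x_2/\kappa_2)$, which is not linear, so a finer argument is needed.
\begin{itemize}
\item On $S_1$, the point lies below $\Gamma_{\kappa_1}$. I would use Lemma~\ref{lem:SSL_flow} to argue that $x_2$ is bounded there by $x_2^*(x_1,\kappa_1)$, and then check whether this curve stays bounded as $x_1\to-\infty$.
\item If it does not stay bounded, I would instead derive linear growth from $x_1$: a large $x_2$ below $\Gamma_{\kappa_1}$ forces a very negative $x_1$, and I would relate $x_2$ to $-x_1$ using \eqref{eq:SSL_Gamma1}. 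The size of $-x_1$ is then controlled through $f^{(1)}\ge\tau^{\kappa_2}$, which grows logarithmically in $-x_1$.
\item A logarithmic bound alone fails for this lemma. So the key step is to exploit the boundary-sliding term $\beta^{-1}\bigl(x_2e^{-\mu_{\min}\tau}-\kappa_2\bigr)$ wherever the flow reaches the boundary, namely on $S_2$ and $S_3$.
\end{itemize}

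On $S_3$ we have $f^{(2)}\ge \tau+\beta^{-1}\bigl(x_2e^{-\mu_{\min}\tau}-\kappa_2\bigr)$. Minimising over $\tau\ge0$ gives only $\tau\approx\mu_{\min}^{-1}\log x_2$, which is again logarithmic. So a linear bound must come from an upper bound on $\tau(\mf{x})$ that is uniform in $\mf{x}$. I would derive this from \eqref{eq:def_tau}: with $x_2$ large, the term $\mu_{\min}x_2\bigl(e^{-\mu_{\min}s}-e^{-\mu_{\max}s}\bigr)/(\mu_{\max}-\mu_{\min})$ dominates $x_1$ after a time of order $\log(-x_1/x_2)$. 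Whenever $-x_1\lesssim x_2$ this yields $\tau\le T_0$, and hence $f^{(2)}\ge \beta^{-1}e^{-\mu_{\min}T_0}x_2-\kappa_2/\beta$. In the remaining regime $-x_1\gg x_2$, I would bound $x_2$ by $-x_1$ and use the linear growth of $f^{(1)}$ in that regime; this requires checking that $\tau^{\kappa_2}$ is comparable to the time the flow spends below $-\kappa_2$.

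\textbf{Main obstacle.} The hardest step is obtaining genuinely linear (rather than logarithmic) growth of $f^{(\Sigma)}$ in $x_2$ uniformly over $x_1$. If this fails, the fallback is to rescale, replacing $\alpha f^{(\Sigma)}$ by a bound of the form $x_2\le C e^{\alpha\mu_{\min} f^{(2)}}$. This works if $\alpha\mu_{\min}\ge 1$; otherwise it requires adjusting $\alpha$, which would conflict with the smallness of $\alpha$ needed for \eqref{eq:drift_off_K}.
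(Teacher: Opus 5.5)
\paragraph{The main route has a genuine gap.} The linear bound $f^{(2)}(\mf{x})\ge a x_2-b$ on $\Omega$ is false. So is the weaker $f^{(\Sigma)}(\mf{x})\ge a x_2-b$ that your $S_1$ and ``$-x_1\gg x_2$'' steps aim for.

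First, $S_1$ is unbounded in $x_2$. Solving \eqref{eq:SSL_Gamma1}--\eqref{eq:SSL_Gamma2} as $x_1\to-\infty$ forces $e^{\mu_{\max}\tau}\asymp -x_1$, hence $x_2^\ast(x_1,\kappa_1)\asymp(-x_1)^{\mu_{\min}/\mu_{\max}}\to\infty$.

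On $S_1$ with $x_2\ge\kappa_2$, both pieces of $f^{(\Sigma)}$ are only logarithmic:
\begin{itemize}
\item The tail integral in \eqref{eq:SSL_f2} is at most $\mu_{\min}^{-1}\log(\kappa_2/\kappa_1)$, so $f^{(2)}\le\mu_{\min}^{-1}\log(x_2/\kappa_1)$.
\item The $x_2$-term of the flow is nonnegative, so $f^{(1)}\le\tau^{\kappa_1}(\mf{x})\le\mu_{\max}^{-1}\log\bigl(C(-x_1)\bigr)$. Thus $f^{(1)}$ never grows linearly in $-x_1$ either.
\end{itemize}
Along points just below $\Gamma_{\kappa_1}$ this gives $f^{(\Sigma)}\le 2\mu_{\min}^{-1}\log x_2+C$, i.e.\ $V\le C x_2^{2\alpha/\mu_{\min}}$. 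So if you freeze $\alpha$ at a small Foster--Lyapunov value (any $\alpha<\mu_{\min}/2$), the inequality $x_2\le\tilde C_1V+\tilde C_2$ is false outright. No refinement of the case analysis can produce linear growth.

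\paragraph{The fix is your fallback, and it is the paper's proof.} The lemma quantifies $\alpha$ existentially, so a logarithmic lower bound on $f^{(2)}$ is all you need.

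The paper shows $f^{(2)}(\mf{x})\ge\mu_{\max}^{-1}\log(x_2/\kappa_2)$ for $x_2\ge\tilde C_2$:
\begin{itemize}
\item On $S_1\cup S_2$, this follows from the $\mu_{\min}^{-1}\log(x_2/\kappa_2)$ term.
\item On $S_3$, it uses a dichotomy. If $\tau(\mf{x})\ge\mu_{\max}^{-1}\log(x_2/\kappa_2)$, then $f^{(2)}\ge\tau$. Otherwise the sliding term exceeds $\beta^{-1}\bigl(\kappa_2^{\mu_{\min}/\mu_{\max}}x_2^{1-\mu_{\min}/\mu_{\max}}-\kappa_2\bigr)$, which dominates the logarithm.
\end{itemize}
It then takes $\alpha=\mu_{\max}$, so $V\ge e^{\alpha f^{(2)}}\ge x_2/\kappa_2$.

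Your own minimisation over $\tau$ on $S_3$, which you discarded as ``only logarithmic,'' already does the job. By convexity in $\tau$ it gives
\[
f^{(2)}\ge\mu_{\min}^{-1}\log(\mu_{\min}x_2/\beta)+\mu_{\min}^{-1}-\kappa_2/\beta,
\]
so $\alpha=\mu_{\min}$ suffices. The correct condition in your fallback is $\alpha\ge\mu_{\min}$, not $\alpha\mu_{\min}\ge 1$.

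Your concern that such an $\alpha$ clashes with the smallness used for \eqref{eq:drift_off_K} is a fair point about how the lemma is used downstream. It is not an obstacle to the lemma as stated.
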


\begin{proof}
By definition, $f^{(1)}(\mf{x})\geq 0$ for all $x\in \RR_-\times \RR_+$ and hence, for any $\alpha>0$,
\begin{equation}
 V(\mf{x})=\exp\!\big(\alpha\,(f^{(1)}(\mf{x})+f^{(2)}(\mf{x}))\big) \geq \exp\!\big(\alpha f^{(2)}(\mf{x})\big).
 \label{eq:V_bound}
\end{equation}
We now wish to show that we can find a $\tilde{C}_2>\kappa_2$ such that for any $\mf{x}$ where $x_2\geq \tilde{C}_2$, we have 
\begin{equation}
f^{(2)}(\mf{x})\geq \frac{1}{\mu_{\max}}\log\frac{x_2}{\kappa_2}.\label{eq:lyapunov_f2_bound}
\end{equation}
If $\mf{x}\in S_1\cup S_2$, positivity of $\varphi$ implies \eqref{eq:lyapunov_f2_bound}. Similarly, if $\mf{x}\in S_3$ 
and $\tau(\mf{x})\geq \frac{1}{\mu_{\max}}\log\frac{x_2}{\kappa_2} $, again positivity of $\varphi$ and the definition of $\tau(\mf{x})$ imply \eqref{eq:lyapunov_f2_bound} for any $\epsilon$. Finally, if $\mf{x}\in S_3$ and $\tau(x)<\frac{1}{\mu_{\max}}\log\frac{x_2}{\kappa_2}$,
\begin{align*}
    f^{(2)}(\mf{x})
    &\geq \frac{x_2e^{-\mu_{\min}\tau(\mf{x})}-\kappa_2}{\beta}\\
    &>
    \frac{x_2\left(\frac{\kappa_2}{x_2}\right)^{\mu_{\min}/\mu_{\max}}-\kappa_2}{\beta}\\
    &=
    \frac{\kappa_2^{\mu_{\min}/\mu_{\max}}
    x_2^{1-\mu_{\min}/\mu_{\max}}
    -\kappa_2}{\beta}.
\end{align*}
Since
\[
1-\frac{\mu_{\min}}{\mu_{\max}}>0,
\]
the right-hand side grows faster than logarithmically in $x_2$ and therefore dominates
\[
\frac{1}{\mu_{\max}}\log\frac{x_2}{\kappa_2}
\]
for all sufficiently large $x_2$. Hence, there exists $\tilde {C}_2>\kappa_2$ such that
\[
f^{(2)}(\mf{x})
\ge
\frac{1}{\mu_{\max}}\log\frac{x_2}{\kappa_2},
\mbox{ for all }
x_2\geq \tilde{C}_2.
\]
Plugging in \eqref{eq:V_bound}, for $\alpha=\mu_{\max}$ and $x_2\geq \tilde{C}_2$ we have 
\[
V(\mf{x})
\geq
\exp\left(
\mu_{\max}\frac{1}{\mu_{\max}}\log\frac{x_2}{\kappa_2}
\right)
=
\frac{x_2}{\kappa_2}.
\]
Equivalently,
\[
x_2\leq \kappa_2 V(\mf{x}),
\qquad x_2\geq \tilde{C}_2.
\]
For $x_2<\tilde{C}_2$, we trivially have
\[
x_2\leq \tilde{C}_2.
\]
Therefore, for all $\mf{x}\in\RR_-\times\RR_+$,
\[
x_2\leq \tilde{C}_2+\kappa_2 V(\mf{x}).
\]
This proves the lemma.

\end{proof}

\section{Dependency Map of the Main Results}
\label{app:dependency_map}
\begin{tikzpicture}[node distance=1.1cm and 0.8cm,
  scale=0.58, every node/.style={transform shape}]


\node[prelim] (L35) {
  \textbf{Lemma \ref*{lem:stochastic_boundedness}}\\[2pt]
  Stochastic boundedness of $\mf{X}^{\pi,n}$};

\node[assumption, above=0.7cm of L35] (A) {
  \textbf{Assumptions \ref*{asm:arrival_service_rates}--\ref*{asm:xi_initial}}\\[2pt]
  Model scaling and initial conditions
};

\node[prelim, right=10cm of L35] (L39) {
  \textbf{Lemma \ref*{lem:alpha_continuity}}\\[2pt]
  Tightness of $\hat\alpha^n_1$};

\draw[arrassumption] (A) -- (L35);

\node[fairness, imported, below=0.5cm of L35] (L36) {
  \textbf{Lemma \ref*{lem:fairness_tightness}}\\[2pt]
  Tightness of $\eta^n$};
\draw[arrprelim] (L35) -- (L36);
\node[fairness, imported, below left=0.7cm and -1.2cm of L36] (P37) {
  \textbf{Prop.\ref*{prop:priority_idleness}}\\[2pt]
  FSF\,/\,SSF fairness};
\node[fairness, imported, below right=0.7cm and -1.2cm of L36] (P38) {
  \textbf{Prop. \ref*{prop:totally_blind_idleness}}\\[2pt]
  Totally blind fairness};
\draw[arrfair] (L36.south) -- ++(0,-0.3) -| (P37.north);
\draw[arrfair] (L36.south) -- ++(0,-0.3) -| (P38.north);

\node[proof, below=4cm of L39, xshift= -8cm] (L63) {
  \textbf{Lemma \ref*{lem:modified_reflection_equations}}\\[2pt]
  Contraction mapping};
\node[proof, left=1cm of L63] (L61) {
  \textbf{Lemma \ref*{lem:reflection_equations}}\\[2pt]
  E/U for $(x_1,u_1,\xi_2)$};
\node[proof, right=1cm of L63] (L62) {
  \textbf{Lemma \ref*{lem:reflection_equations2}}\\[2pt]
  E/U for $(\xi_i)_{i\ge3}$};
\draw[arrproof] (L63.west) -- (L61.east);

\node[proof, right=1cm of L62] (L64) {
  \textbf{Lemma \ref*{lem:martingale_convergence}}\\[2pt]
  Martingale CLT limits};

\node[mainresult,
  below=1.5cm of $(L61)!0.5!(L64)$] (T31) {
  Theorem \ref*{thm:process_level_convergence}\\[2pt]
  \normalfont General process-level convergence};

\draw[arrproof] (L61.south) -- ++(0,-0.4) -| ([xshift=-2.4cm]T31.north);
\draw[arrproof] (L62.south) -- ++(0,-0.4) -| ([xshift=0.0cm]T31.north);
\draw[arrproof] (L64.south) -- ++(0,-0.4) -| ([xshift=2.4cm]T31.north);

\draw[arrprelim] (L35.east) -- ++(0,0) -| ([xshift=-0.35cm]T31.north);

\coordinate (L39toT31Corner) at ([xshift=0.4cm]L64.east);

\draw[arrprelim] ([yshift= -0.4cm]L39.east)
  -- ([yshift= -0.4cm]L39.east -| L39toT31Corner)
  -- (L39toT31Corner)
  |- (T31.east);

\node[proof, below=3.5cm of T31, xshift=-4.8cm] (L65) {
  \textbf{Lemma \ref*{lem:A_1_bound}}\\[2pt]
  Stochastic boundedness of $\tau_s^n$
};
\coordinate (L35toL64Corner) at ([yshift=3cm]L64.north);
\draw[arrprelim]
  (L35.east)
  -- ++(5.58cm,0)
  |- (L35toL64Corner)
  -- (L64.north);

\node[corollary, below=0.8cm of T31, xshift=-9.0cm] (C42) {
  \textbf{Corollary \ref*{cor:SA-JSQ}}\\[2pt]
  JFSQ};
\node[corollary, below=0.8cm of T31, xshift=-5cm] (C43) {
  \textbf{Corollary \ref*{cor:JSQ_random}}\\[2pt]
  Random JSQ};
\node[corollary, below=0.8cm of T31, xshift=-1cm]  (C45) {
  \textbf{Corollary \ref*{cor:JFIQ}}\\[2pt]
  JFIQ };
\node[corollary, below=0.8cm of T31, xshift=3.0cm]  (C44) {
  \textbf{Corollary \ref*{cor:JSQ_random_pool}}\\[2pt]
  Pool-level}; 

\node[corollary, below=0.8cm of T31, xshift=7.0cm]  (C41) {
  \textbf{Corollary  \ref*{cor:tau_0_continuity}}\\[2pt]
   $\tau^{\pi}_0=0$};   

\draw[arrmain] (T31.south) -- ++(0,-0.4) -| (C42.north);
\draw[arrmain] (T31.south) -- ++(0,-0.4) -| (C43.north);
\draw[arrmain] (T31.south) -- ++(0,-0.4) -| (C45.north);
\draw[arrmain] (T31.south) -- ++(0,-0.4) -| (C41.north);

\draw[arrfair] ([xshift=-0.74cm]P37.south) |- ([xshift=-0.3cm]C42.west) -- (C42.west);
\draw[arrfair] ([xshift=1.6cm]P38.south) -- ++(0,0) -| ([xshift=0.5cm]C43.north);

\draw[arrcorollary] (C43.south) -- ++(0,-0.4) -|  (C44.south);


\draw[arrfair] ([xshift=-1.2cm]P37.south)
  |- ([yshift=-3cm]C45.south)
  -- (C45.south);

\coordinate (busMid) at ([yshift=0.65cm, xshift= -0.5cm]L65.north);

\coordinate (bus42) at (C42.south |- busMid);
\coordinate (bus43) at (C43.south |- busMid);
\coordinate (bus45) at (C45.south |- busMid);

\draw[cBlueBorder,  line width=0.5pt] ([xshift= -0.5cm]L65.north) -- (busMid);

\draw[cBlueBorder, line width=0.5pt] (bus42) -- ([xshift= -0.3cm]bus45);

\draw[arrproof] (bus42) -- (C42.south);
\draw[arrproof] ([xshift= -0.3cm]bus43) -- ([xshift= -0.3cm]C43.south);
\draw[arrproof] ([xshift= -0.3cm]bus45) -- ([xshift= -0.3cm]C45.south);

\coordinate (L39toBusCorner) at ([xshift=0.7cm]L64.east);

\draw[arrprelim] (L39.east)
  -- (L39.east -| L39toBusCorner)
  -- (L39toBusCorner)
  |- ([yshift= -0.25cm]busMid);

\node[
  circle,
  fill=cGrayBorder,
  inner sep=1.3pt
] at ([yshift=-0.25cm]busMid) {};

\draw[darrcorollary] (C43.east) -- (C45.west);


\coordinate (DIVC) at
  ([yshift=-2.4cm]$(L65.south)!0.5!(C44.south)$);

\node[font=\Large\bfseries] at (DIVC) {
  Stationary Analysis
};


\node[
  assumption,
  below=1.0cm of DIVC,
  align=center
] (SA) {
  \textbf{Stationary analysis assumptions}\\[2pt]
  Assumption~\ref*{asm:arrival_service_rates}\\
  and $\varsigma^n+\nu^n>\varepsilon>0$
};


\node[
  stationary,
  below=1.0cm of SA,
  xshift=-12cm
] (L39stat) {
  \textbf{Lemma \ref*{lem:positive_recurrent_finite}}\\[2pt]
  Positive recurrence of the\\
  prelimit JSQ system
};

\node[
  stationary,
  below=1.0cm of SA,
  xshift=-8cm
] (P51) {
  \textbf{Prop.\ \ref*{prop:upper_bound_system}}\\[2pt]
  FSL/SSL\\ sample-path bounds
};

\node[
  stationary,
  below=1.0cm of SA,
  xshift=-4.2cm
] (L72) {
  \textbf{Lemma \ref*{lem:x_1_lower_bound}}\\[2pt]
  Idle-server bound
};

\node[
  stationary,
  below=1.0cm of SA,
  xshift=-0.2cm
] (L53) {
  \textbf{Lemma \ref*{lem:jsq_ssl_monotonicity}}\\[2pt]
  JSQ--SSL \\positive recurrence\\
  and buffer monotonicity
};

\node[
  stationary,
  below=1.0cm of SA,
  xshift=3.8cm
] (LD2) {
  \textbf{Lemma \ref*{lem:solution_PDE}}\\[2pt]
  Lyapunov \\PDE solution
};

\node[
  stationary,
  below=1.0cm of SA,
  xshift=7.8cm
] (P52) {
  \textbf{Prop.\ \ref*{prop:diffusion_limit_FSL_SSL}}\\[2pt]
  FSL/SSL diffusion limits
};


\node[
  stationary,
  below=1cm of L53
] (LD1) {
  \textbf{Lemma \ref*{lem:rate_balance_q}}\\[2pt]
  Rate-balance equations
};

\node[
  stationary,
  below=1cm of LD1,
  xshift=2cm
] (P71) {
  \textbf{Prop.\ \ref*{prop:stationary_bound_JSQ_SSL}}\\[2pt]
  Stationary bounds for JSQ--SSL
};

\node[
  stationary,
  below=1cm of P52
] (PD1) {
  \textbf{Prop.\ \ref*{prop:geometric_ergodicity_FSL_SSL_BBM_style}}\\[2pt]
  Geometric ergodicity of\\
  FSL/SSL diffusion limits
};


\node[
  mainresult,
  below=1cm of P71,
  xshift=-7.1cm
] (T47) {
  Theorem \ref*{thm:stationary_bounds}\\[2pt]
  \normalfont Uniform steady-state bounds
};

\node[
  corollary,
  right=4.2cm of T47
] (C48) {
  \textbf{Corollary \ref*{cor:interchange_limits}}\\[2pt]
  Stationary tightness and\\
  subsequential limit identification
};

\node[
  mainresult,
  below=1cm of C48,
  xshift=-5.5cm
] (T49) {
  Theorem \ref*{thm:asymptotical_optimality}\\[2pt]
  \normalfont Asymptotic optimality of JFSQ
};

\node[
  assumption,
  left=1.2cm of T47,
  align=center
] (FB) {
  \textbf{Finite-buffer}\\[-1pt]
  condition
};


\coordinate (SABus) at ([yshift=-0.45cm]SA.south);

\draw[arrassumption,-]
  (SA.south) -- (SABus);

\draw[arrassumption]
  (SABus) -| (L39stat.north);

\draw[arrassumption]
  (SABus) -| (L72.north);

\draw[arrassumption]
  (SABus) -| (L53.north);

\coordinate (SAP71Drop) at (P71.north |- SABus);

\draw[arrassumption,-]
  (SABus) -- (SAP71Drop);

\draw[arrassumption]
  (SAP71Drop) -- (P71.north);


\draw[arrstationary]
  (L53.south) -- (LD1.north);

\draw[arrstationary]
  (LD1.south)
  -- ++(0,-0.35cm)
  -| ([xshift=-0.8cm]P71.north);

\draw[arrstationary]
  (LD2.south)
  -- ++(0,-0.35cm)
  -| ([xshift=0.8cm]P71.north);


\draw[arrstationary]
  (P52.south) -- (PD1.north);

\draw[arrstationary]
  (P71.east)
  -- ++(0.55cm,0)
  |- (PD1.west);


\draw[arrstationary]
  ([xshift=0.9cm]P51.south)
  -- ++(0,-0.4cm)
  -| ([xshift=-1.8cm]T47.north);

\draw[arrstationary]
  ([xshift=-0.6cm]L72.south)
  -- ++(0,-0.55cm)
  -| ([xshift=0.5cm]T47.north);


\coordinate (L53Out) at
  ([yshift=-0.58cm]L53.west);

\coordinate (T47In) at
  ([xshift=1cm]T47.north);

\coordinate (T47Turn) at
  (L53Out -| T47In);

\coordinate (T47Approach) at
  ([yshift=0.45cm]T47In);

\draw[arrstationary]
  (L53Out)
  -- (T47Turn)
  -- (T47Approach)
  -- (T47In);

\draw[arrstationary]
  (P71.south)
  -- ++(0,-0.4cm)
  -| ([xshift=1.7cm]T47.north);

\draw[arrassumption]
  (FB.east)
  --(T47.west);


\draw[arrmain]
  (T47.east) -- (C48.west);


\coordinate (T31Out) at
  ([yshift=-0.18cm]T31.east);

\coordinate (C48In) at
  ([yshift=0.18cm]C48.east);

\coordinate (RightLaneT31) at
  ([xshift=0.7cm]P52.east);

\draw[arrmain]
  (T31Out)
  -- (T31Out -| RightLaneT31)
  -- (C48In -| RightLaneT31)
  -- (C48In);


\coordinate (T49BusCenter) at
  ([yshift=0.75cm]T49.north);

\coordinate (T49BusP51) at
  (P51.south |- T49BusCenter);

\coordinate (T49BusPD1) at
  (PD1.south |- T49BusCenter);

\coordinate (RightLaneStationary) at
  ([xshift=0.35cm]P52.east);

\coordinate (RightLaneStationaryBus) at
  (T49BusCenter -| RightLaneStationary);

\draw[arrstationary,-]
  (T49BusP51) -- (RightLaneStationaryBus);


\coordinate (P51toT49Out) at
  ($(P51.south west)!0.18!(P51.south east)$);

\coordinate (T49BusP51) at
  (P51toT49Out |- T49BusCenter);

\draw[arrstationary,-]
  (P51toT49Out) -- (T49BusP51);

\draw[arrstationary,-]
  (T49BusP51) -- (RightLaneStationaryBus);
  -- (RightLaneStationaryBus);

\coordinate (L53TopLane) at
  ([xshift=0.65cm,yshift=0.35cm]L53.north);

\coordinate (L53RightCorner) at
  (L53TopLane -| RightLaneStationary);

\draw[arrstationary,-]
  ([xshift=0.65cm]L53.north)
  -- (L53TopLane)
  -- (L53RightCorner)
  -- (RightLaneStationaryBus);

\draw[arrstationary,-]
  (P52.east)
  -- (P52.east -| RightLaneStationary)
  -- (RightLaneStationaryBus);

\draw[arrstationary,-]
  (PD1.south) -- (T49BusPD1);

\draw[arrstationary]
  (T49BusCenter) -- (T49.north);


\coordinate (RightLaneC48) at
  ([xshift=-5.3cm]P52.east);

\draw[arrcorollary]
  (C48.south)
  -- (C48.south -| RightLaneC48)
  -- (T49.east -| RightLaneC48)
  -- (T49.east);


\coordinate (T49WestUpper) at
  ($(T49.north west)!0.35!(T49.south west)$);

\coordinate (L39StatLane) at
  ([xshift=-0.35cm]L39stat.west);

\coordinate (L39StatTurn) at
  (L39StatLane |- T49WestUpper);

\draw[arrstationary]
  (L39stat.west)
  -- (L39StatLane)
  -- (L39StatTurn)
  -- (T49WestUpper);


\coordinate (T49WestLower) at
  ($(T49.north west)!0.72!(T49.south west)$);

\coordinate (C42OuterLane) at
  ([xshift=-0.7cm]L39stat.west);

\coordinate (C42OuterTop) at
  (C42.west -| C42OuterLane);

\coordinate (C42OuterTurn) at
  (C42OuterLane |- T49WestLower);

\draw[arrcorollary]
  (C42.west)
  -- (C42OuterTop)
  -- (C42OuterTurn)
  -- (T49WestLower);


\node[
  legendbox,
  below=1cm of T49,
  anchor=north,
  minimum width=22.6cm,
  minimum height=2.65cm
] (legend) {};


\node[anchor=west, font=\small] at
  ([xshift=0.35cm,yshift=0.50cm]legend.west) {

  \tikz[baseline=-0.6ex]
    \node[
      assumption,
      minimum height=0.48cm,
      minimum width=1.0cm
    ] {};
  assumptions
  \hspace{0.28cm}

  \tikz[baseline=-0.6ex]
    \node[
      prelim,
      minimum height=0.48cm,
      minimum width=1.0cm
    ] {};
  prelim.
  \hspace{0.28cm}

  \tikz[baseline=-0.6ex]
    \node[
      fairness,
      minimum height=0.48cm,
      minimum width=1.0cm
    ] {};
  fairness
  \hspace{0.28cm}

  \tikz[baseline=-0.6ex]
    \node[
      proof,
      minimum height=0.48cm,
      minimum width=1.0cm
    ] {};
  transient tools
  \hspace{0.28cm}

  \tikz[baseline=-0.6ex]
    \node[
      stationary,
      minimum height=0.48cm,
      minimum width=1.0cm
    ] {};
  stationary tools
  \hspace{0.28cm}

  \tikz[baseline=-0.6ex]
    \node[
      mainresult,
      minimum height=0.48cm,
      minimum width=1.0cm
    ] {};
  main result
  \hspace{0.28cm}

  \tikz[baseline=-0.6ex]
    \node[
      corollary,
      minimum height=0.48cm,
      minimum width=1.0cm
    ] {};
  corollary
};


\node[anchor=west, font=\small] at
  ([xshift=0.35cm,yshift=-0.40cm]legend.west) {

  \tikz[baseline=-0.6ex]
    \node[
      fairness,
      imported,
      minimum height=0.48cm,
      minimum width=1.0cm
    ] {};
  adapted from B\"uke--Qin 23
  \hspace{0.55cm}

  \tikz[baseline=-0.45ex]
    \draw[arrcorollary] (0,0)--(1.0,0);
  direct dependency
  \hspace{0.55cm}

  \tikz[baseline=-0.45ex]
    \draw[darrcorollary] (0,0)--(1.0,0);
  indirect dependency
  \hspace{0.55cm}

  \itshape Arrow color indicates the source-node category
};

\end{tikzpicture}


\newpage
\bibliographystyle{chicago}
\bibliography{bibliography}

@techreport{CloudNativeSilicon2023,
  author      = {Abbott, John},
  title       = {Cloud-Native Silicon Advances a Heterogeneous Approach to Datacenter Processing for AI and Other New Workloads},
  institution = {S\&P Global Market Intelligence},
  year        = {2023},
  month       = sep,
  note        = {Vanguard Report, commissioned by AMD. Available at: \url{https://www.amd.com/en/solutions/data-center/insights/cloud-native-silicon-advances-heterogeneous-approach.html} (accessed June 25, 2026).}
}

@article{Armony2005,
  author  = {Armony, Mor},
  title   = {Dynamic Routing in Large-Scale Service Systems with Heterogeneous Servers},
  journal = {Queueing Systems},
  volume  = {51},
  number  = {3},
  pages   = {287-329},
  year    = {2005},
  doi     = {10.1007/s11134-005-3760-7},
  url     = {https://doi.org/10.1007/s11134-005-3760-7}
}

@article{AW2010,
  author  = {Armony, Mor and Ward, Amy R.},
  title   = {Fair Dynamic Routing in Large-Scale Heterogeneous-Server Systems},
  journal = {Operations Research},
  volume  = {58},
  number  = {3},
  pages   = {624-637},
  year    = {2010},
  doi     = {10.1287/opre.1090.0777},
  url     = {https://doi.org/10.1287/opre.1090.0777}
}

@article{Atar2008,
  author  = {Atar, Rami},
  title   = {Central limit theorem for a many-server queue with random service rates},
  journal = {The Annals of Applied Probability},
  volume  = {18},
  number  = {4},
  pages   = {1548-1568},
  year    = {2008},
  doi     = {10.1214/07-AAP497},
  url     = {https://doi.org/10.1214/07-AAP497}
}

@article{AtarShwartz2008,
  author  = {Atar, Rami and Shwartz, Adam},
  title   = {Efficient routing in heavy traffic under partial sampling of service times},
  journal = {Mathematics of Operations Research},
  volume  = {33},
  number  = {4},
  pages   = {899-909},
  year    = {2008},
  doi     = {10.1287/moor.1080.0325},
  url     = {https://doi.org/10.1287/moor.1080.0325}
}

@article{AtarShakiShwartz2011,
  author  = {Atar, Rami and Shaki, Yair Y. and Shwartz, Adam},
  title   = {A blind policy for equalizing cumulative idleness},
  journal = {Queueing Systems},
  volume  = {67},
  number  = {4},
  pages   = {275-293},
  year    = {2011},
  doi     = {10.1007/s11134-011-9212-7},
  url     = {https://doi.org/10.1007/s11134-011-9212-7}
}

@article{BM19,
  author  = {Banerjee, Sayan and Mukherjee, Debankur},
  title   = {Join-the-shortest queue diffusion limit in {Halfin-Whitt} regime: Tail asymptotics and scaling of extrema},
  journal = {The Annals of Applied Probability},
  volume  = {29},
  number  = {2},
  pages   = {1262-1309},
  year    = {2019},
  doi     = {10.1214/18-AAP1436},
  url     = {https://doi.org/10.1214/18-AAP1436}
}

@article{BM20,
  author  = {Banerjee, Sayan and Mukherjee, Debankur},
  title   = {Join-the-shortest queue diffusion limit in {Halfin-Whitt} regime: Sensitivity on the heavy-traffic parameter},
  journal = {The Annals of Applied Probability},
  volume  = {30},
  number  = {1},
  pages   = {80-144},
  year    = {2020},
  doi     = {10.1214/19-AAP1496},
  url     = {https://doi.org/10.1214/19-AAP1496}
}

@article{BM2022,
  author  = {Bhambay, Sanidhay and Mukhopadhyay, Arpan},
  title   = {Asymptotic optimality of speed-aware {JSQ} for heterogeneous service systems},
  journal = {Performance Evaluation},
  volume  = {157-158},
  pages   = {102320},
  year    = {2022},
  doi     = {10.1016/j.peva.2022.102320},
  url     = {https://doi.org/10.1016/j.peva.2022.102320}
}

@article{bhambay2025asymptotic,
author = {Bhambay, Sanidhay and B\"{u}ke, Burak and Mukhopadhyay, Arpan},
title = {Asymptotic Optimality of the Speed-Aware Join-the-Shortest-Queue in the {Halfin-Whitt} Regime for Heterogeneous Systems},
journal = {Stochastic Systems},
volume = {15},
number = {2},
pages = {147-193},
year = {2025}
}

@book{billingsley1999convergence,
  title={Convergence of Probability Measures},
  author={Billingsley, P},
  journal={Wiley Series in Probability and Statistics},
  year={1999},
  publisher={John Wiley \& Sons, Inc.}
}

@book{bogachev2007measure,
  title     = {Measure Theory. Vol. II},
  author    = {Bogachev, Vladimir I.},
  year      = {2007},
  publisher = {Springer},
  address   = {Berlin},
  series    = {Fundamental Principles of Mathematical Sciences},
  volume    = {333},
  isbn      = {978-3-540-34515-9},
  doi       = {10.1007/978-3-540-34514-2}
}

@article{Braverman2020,
  title={Steady-state analysis of the join-the-shortest-queue model in the {Halfin-Whitt} regime},
  author={Braverman, Anton},
  journal={Mathematics of Operations Research},
  volume={45},
  number={3},
  pages={1069-1103},
  year={2020},
  publisher={INFORMS}
}

@article{BraSSY2023,
  author  = {Braverman, Anton},
  title   = {The Join-the-Shortest-Queue System in the {Halfin-Whitt} Regime: Rates of Convergence to the Diffusion Limit},
  journal = {Stochastic Systems},
  volume  = {13},
  number  = {1},
  pages   = {1-39},
  year    = {2023},
  doi     = {10.1287/stsy.2022.0102},
  url     = {https://doi.org/10.1287/stsy.2022.0102}
}

@article{Buke2022,
  author  = {B{\"u}ke, Burak},
  title   = {Modelling heterogeneity in many-server queueing systems},
  journal = {Queueing Systems},
  volume  = {100},
  number  = {3-4},
  pages   = {401-403},
  year    = {2022},
  doi     = {10.1007/s11134-022-09788-1},
  url     = {https://doi.org/10.1007/s11134-022-09788-1}
}

@article{buke2023many,
author = {B\"{u}ke, Burak and Qin, Wenyi},
title = {Many-Server Queues with Random Service Rates: A Unified Framework Based on Measure-Valued Processes},
journal = {Mathematics of Operations Research},
volume = {48},
number = {2},
pages = {748-783},
year = {2023}
}

@article{BdRP2025,
  author  = {B{\"u}ke, Burak and dos Reis, Gon{\c c}alo and Platonov, Vadim},
  title   = {Many-Server Queueing Systems with Heterogeneous Strategic Servers in Heavy Traffic},
  journal = {Operations Research},
  year    = {2025},
  doi     = {10.1287/opre.2022.0608},
  url     = {https://doi.org/10.1287/opre.2022.0608}
}

@article{CaoZhong2025,
  author  = {Cao, Ping and Zhong, Zhiheng},
  title   = {Asymptotically optimal routing of a many-server parallel queueing system with long-run average criterion},
  journal = {European Journal of Operational Research},
  volume  = {321},
  number  = {2},
  pages   = {462-475},
  year    = {2025},
  doi     = {10.1016/j.ejor.2024.09.044},
  url     = {https://doi.org/10.1016/j.ejor.2024.09.044}
}

@book{chen2001fundamentals,
  title     = {Fundamentals of Queueing Networks: Performance, Asymptotics, and Optimization},
  author    = {Chen, H. and Yao, D. D.},
  year      = {2001},
  publisher = {Springer},
  address   = {New York}
}

@article{ChenMoinzadehSongZhong2023,
  author  = {Chen, S. and Moinzadeh, K. and Song, J. S. and Zhong, Y.},
  title   = {Cloud Computing Value Chains: Research from the Operations Management Perspective},
  journal = {Manufacturing \& Service Operations Management},
  volume  = {25},
  pages   = {1338--1356},
  year    = {2023},
  doi     = {10.1287/msom.2022.1178},
  url     = {https://doi.org/10.1287/msom.2022.1178}
}

@article{down1995exponential,
  author  = {Down, D. and Meyn, S. P. and Tweedie, R. L.},
  title   = {Exponential and Uniform Ergodicity of {M}arkov Processes},
  journal = {Annals of Probability},
  volume  = {23},
  number  = {4},
  pages   = {1671-1691}, 
  year    = {1995},
  doi     = {10.1214/aop/1176987798},
  url     = {https://doi.org/10.1214/aop/1176987798}
}

@inproceedings{Duato2010,
  author = {Duato, Jos{\'e} and Pe{\~n}a, Antonio J. and Silla, Federico and Mayo, Rafael and Quintana-{Ort{\'i}}, Enrique S.},
  title     = {{rCUDA}: Reducing the number of {GPU}-based accelerators in high performance clusters},
  booktitle = {{2010 International Conference on High Performance Computing \& Simulation}},
  pages     = {224-231},
  year      = {2010},
  publisher = {IEEE},
  doi       = {10.1109/HPCS.2010.5547126},
  url       = {https://doi.org/10.1109/HPCS.2010.5547126}
}

@article{eschenfeldt2018join,
  title={Join the shortest queue with many servers. The heavy-traffic asymptotics},
  author={Eschenfeldt, Patrick and Gamarnik, David},
  journal={Mathematics of Operations Research},
  volume={43},
  number={3},
  pages={867-886},
  year={2018},
  publisher={INFORMS}
}

@book{feller1968introduction,
  author     = {Feller, William},
  title      = {An Introduction to Probability Theory and Its Applications},
  volume     = {1},
  edition    = {3rd},
  year       = {1968},
  publisher = {John Wiley \& Sons},
  address   = {New York}
}

@incollection{Gans2010,
  author    = {Gans, Noah and Liu, Nan and Mandelbaum, Avishai and Shen, Haipeng and Ye, Han},
  title     = {Service Times in Call Centers: Agent Heterogeneity and Learning with Some Operational Consequences},
  booktitle = {Borrowing Strength: Theory Powering Applications--A Festschrift for Lawrence D. Brown},
  series    = {Institute of Mathematical Statistics Collections},
  volume    = {6},
  pages     = {99-123},
  year      = {2010},
  publisher = {Institute of Mathematical Statistics},
  doi       = {10.1214/10-IMSCOLL608},
  url       = {https://doi.org/10.1214/10-IMSCOLL608}
}

@article{GHJM2021,
  author  = {Gardner, Kristen and Abdul Jaleel, Jazeem and Wickeham, Alexander and Doroudi, Sherwin},
  title   = {Scalable load balancing in the presence of heterogeneous servers},
  journal = {Performance Evaluation},
  volume  = {145},
  pages   = {102151},
  year    = {2021},
  doi     = {10.1016/j.peva.2020.102151},
  url     = {https://doi.org/10.1016/j.peva.2020.102151}
}

@article{HalfinWhitt1981,
  author  = {Halfin, Shlomo and Whitt, Ward},
  title   = {Heavy-traffic limits for queues with many exponential servers},
  journal = {Operations Research},
  volume  = {29},
  number  = {3},
  pages   = {567-588},
  year    = {1981},
  doi     = {10.1287/opre.29.3.567},
  url     = {https://doi.org/10.1287/opre.29.3.567}
}

@inproceedings{Huang2016,
  author    = {Huang, Muhuan and Wu, Di and Yu, Cody Hao and Fang, Zhenman and Interlandi, Matteo and Condie, Tyson and Cong, Jason},
  title     = {Programming and runtime support to blaze {FPGA} accelerator deployment at datacenter scale},
  booktitle = {Proceedings of the Seventh ACM Symposium on Cloud Computing},
  series    = {SoCC '16},
  pages     = {456-469},
  year      = {2016},
  publisher = {Association for Computing Machinery},
  isbn      = {9781450345255},
  doi       = {10.1145/2987550.2987569},
  url       = {https://doi.org/10.1145/2987550.2987569}
}

@article{HLM2022,
  author  = {Hurtado-Lange, Daniela Andrea and Maguluri, Siva Theja},
  title   = {A load balancing system in the many-server heavy-traffic asymptotics},
  journal = {Queueing Systems},
  volume  = {101},
  number  = {3-4},
  pages   = {353-391},
  year    = {2022},
  doi     = {10.1007/s11134-022-09847-7},
  url     = {https://doi.org/10.1007/s11134-022-09847-7}
}

@misc{IntelTBM3_2025,
  author  = {{Intel}},
  title   = {Frequently Asked Questions About {Intel}{\textregistered} Turbo Boost Max Technology 3.0},
  year    = {2025},
  url     = {https://www.intel.com/content/www/us/en/support/articles/000021587/processors.html},
  note    = {Accessed: 2026-04-15}
}

@misc{IntelAVX_2025,
  author  = {{Intel}},
  title   = {Can {Intel}{\textregistered} {Xeon}{\textregistered} W Processors Run AVX Loads Without Impacting Processor Frequency?},
  year    = {2025},
  url     = {https://www.intel.com/content/www/us/en/support/articles/000090746/processors.html},
  note    = {Accessed: 2026-04-15}
}

@misc{IntelRAPLGuidance2025,
  author  = {{Intel}},
  title   = {Frequency Throttling Side Channel Guidance},
  year    = {2025},
  month   = may,
  url     = {https://www.intel.com/content/www/us/en/developer/articles/technical/software-security-guidance/best-practices/frequency-throttling-side-channel-guidance.html},
  note    = {Published: 12 May 2025. Accessed: 2026-04-15}
}

@article{Jakubowski1986,
  author  = {Jakubowski, Adam},
  title   = {On the {Skorokhod} topology},
  journal = {Annales de l'I.H.P. Probabilités et Statistiques},
  year    = {1986},
  volume  = {22},
  number  = {3},
  pages   = {263-285},
  publisher = {Gauthier-Villars},
  url     = {https://www.numdam.org/item/AIHPB_1986__22_3_263_0/},
  language = {en}
}

@book{kallenberg2017random,
  author    = {Olav Kallenberg},
  title     = {Random Measures, Theory and Applications},
  series     = {Probability Theory and Stochastic Modelling},
  volume     = {77},
  publisher  = {Springer},
  year       = {2017},
  doi        = {10.1007/978-3-319-41598-7}
}

@book{LevinPeres2017,
  author    = {Levin, David A. and Peres, Yuval},
  title     = {Markov Chains and Mixing Times},
  edition   = {2},
  year      = {2017},
  publisher = {American Mathematical Society},
  isbn      = {978-1470429621}
}

@article{LiuYing2025,
  author        = {Liu, Xin and Ying, Lei},
  title         = {Zero-Waiting Load Balancing with Heterogeneous Servers in Heavy Traffic},
  journal       = {arXiv preprint arXiv:2509.23918},
  year          = {2025},
  eprint        = {2509.23918},
  archivePrefix = {arXiv},
  primaryClass  = {math.PR},
  url           = {https://arxiv.org/abs/2509.23918}
}

@article{Lu2011,
  author  = {Lu, Yi and Xie, Qiaomin and Kliot, Gabriel and Geller, Alan and Larus, James R. and Greenberg, Albert},
  title   = {Join-Idle-Queue: A novel load balancing algorithm for dynamically scalable web services},
  journal = {Performance Evaluation},
  volume  = {68},
  number  = {11},
  pages   = {1056-1071},
  year    = {2011},
  doi     = {10.1016/j.peva.2011.07.015},
  url     = {https://doi.org/10.1016/j.peva.2011.07.015}
}

@inproceedings{Marathe2017,
  author    = {Marathe, Aniruddha and Bailey, Peter E. and Lowenthal, David K. and Rountree, Barry and Schulz, Martin and de Supinski, Bronis R. and Gamblin, Todd},
  title     = {An Empirical Survey of Performance and Energy Efficiency Variation on Intel Processors},
  booktitle = {Proceedings of the 5th International Workshop on Energy Efficient Supercomputing},
  series    = {E2SC '17},
  year      = {2017},
  publisher = {Association for Computing Machinery},
  doi       = {10.1145/3149412.3149421},
  url       = {https://doi.org/10.1145/3149412.3149421}
}

@misc{McKinseyDataCenters2025,
  author       = {{McKinsey \& Company}},
  title        = {The Cost of Compute: A \$7 Trillion Race to Scale Data Centers},
  year         = {2025},
  month        = apr,
  note          = {https://www.mckinsey.com/industries/technology-media-and-telecommunications/our-insights/the-cost-of-compute-a-7-trillion-dollar-race-to-scale-data-centers (accessed June 25, 2026)},
  urldate      = {2026-04-21}
}

@phdthesis{Mitz96,
  author  = {Mitzenmacher, Michael David},
  title   = {The Power of Two Choices in Randomized Load Balancing},
  school  = {University of California, Berkeley},
  year    = {1996},
  type    = {PhD thesis},
  address = {Berkeley, CA},
  url     = {https://www.eecs.harvard.edu/~michaelm/postscripts/mythesis.pdf}
}

@article{mukherjee2016universality,
  title   = {Universality of load balancing schemes on the diffusion scale},
  author  = {Mukherjee, Debankur and Borst, Sem C. and van Leeuwaarden, Johan S. H. and Whiting, Philip A.},
  journal = {Journal of Applied Probability},
  volume  = {53},
  number  = {4},
  pages   = {1111--1124},
  year    = {2016},
  doi     = {10.1017/jpr.2016.68}
}

@article{MukhopadhyayMazumdar2016,
  author  = {Mukhopadhyay, Arpan and Mazumdar, Ravi R.},
  title   = {Analysis of randomized join-the-shortest-queue ({JSQ}) schemes in large heterogeneous processor-sharing systems},
  journal = {IEEE Transactions on Control of Network Systems},
  volume  = {3},
  number  = {2},
  pages   = {116-126},
  year    = {2016},
  doi     = {10.1109/TCNS.2015.2428331},
  url     = {https://doi.org/10.1109/TCNS.2015.2428331}
}

@ARTICLE{ptw07,
    author = {G. Pang and R. Talreja and W. Whitt},
    title = {Martingale proofs of many-server heavy-traffic limits for {M}arkovian queues},
    journal = {Probability Surveys},
    year = {2007},
  volume =    {4},
  number =    {},
  pages =     {193-267},
}

@inproceedings{Romanescu2008,
  author    = {Romanescu, Bogdan F. and Bauer, Michael E. and Ozev, Sule and Sorin, Daniel J.},
  title     = {Reducing the Impact of Intra-Core Process Variability with Criticality-Based Resource Allocation and Prefetching},
  booktitle = {Proceedings of the 5th International Conference on Computing Frontiers},
  series    = {CF '08},
  pages     = {129--138},
  year      = {2008},
  publisher = {Association for Computing Machinery},
  doi       = {10.1145/1366230.1366257},
  url       = {https://doi.org/10.1145/1366230.1366257}
}

@book{rudin1976principles,
  title     = {Principles of Mathematical Analysis},
  author    = {Rudin, Walter},
  edition   = {3rd},
  year      = {1976},
  publisher = {McGraw-Hill}
}

@article{sko56,
  author  = {A. V. Skorokhod},
  title   = {Limit Theorems for Stochastic Processes},
  journal = {Theory of Probability and Its Applications},
  volume  = {1},
  number  = {3},
  pages   = {261--290},
  year    = {1956}
}

@article{Stolyar2015,
  author  = {Stolyar, Alexander L.},
  title   = {Pull-based load distribution in large-scale heterogeneous service systems},
  journal = {Queueing Systems},
  volume  = {80},
  number  = {4},
  pages   = {341-361},
  year    = {2015},
  doi     = {10.1007/s11134-015-9448-8},
  url     = {https://doi.org/10.1007/s11134-015-9448-8}
}

@inproceedings{Subramanya2023Sia,
  author    = {Subramanya, Suhas Jayaram and Arfeen, Daiyaan and Lin, Shouxu and Qiao, Aurick and Jia, Zhihao and Ganger, Gregory R.},
  title     = {Sia: Heterogeneity-aware, goodput-optimized ML-cluster scheduling},
  booktitle = {Proceedings of the {ACM SIGOPS} 29th Symposium on Operating Systems Principles},
  series    = {SOSP '23},
  pages     = {642--657},
  year      = {2023},
  publisher = {Association for Computing Machinery},
  address   = {New York, NY, USA},
  isbn      = {979-8-4007-0229-7},
  doi       = {10.1145/3600006.3613175},
  url       = {https://doi.org/10.1145/3600006.3613175}
}

@inproceedings{Um2024Metis,
  author    = {Um, Taegeon and Oh, Byungsoo and Kang, Minyoung and Lee, Woo-Yeon and Kim, Goeun and Kim, Dongseob and Kim, Youngtaek and Muzzammil, Mohd and Jeon, Myeongjae},
  title     = {Metis: Fast Automatic Distributed Training on Heterogeneous GPUs},
  booktitle = {Proceedings of the 2024 USENIX Annual Technical Conference},
  pages     = {563--575},
  year      = {2024},
  publisher = {USENIX Association},
  isbn      = {978-1-939133-41-0},
  url       = {https://www.usenix.org/conference/atc24/presentation/um}
}

@article{vdB2022,
  author  = {Van der Boor, Mark and Borst, Sem C. and Van Leeuwaarden, Johan S. H. and Mukherjee, Debankur},
  title   = {Scalable load balancing in networked systems: A survey of recent advances},
  journal = {SIAM Review},
  volume  = {64},
  number  = {3},
  pages   = {554-622},
  year    = {2022},
  doi     = {10.1137/20M1323746},
  url     = {https://doi.org/10.1137/20M1323746}
}

@article{VDK96,
  author  = {Vvedenskaya, N. D. and Dobrushin, R. L. and Karpelevich, F. I.},
  title   = {A queueing system with a choice of the shorter of two queues: an asymptotic approach},
  journal = {Problems of Information Transmission},
  volume  = {32},
  number  = {1},
  pages   = {15-27},
  year    = {1996},
  url     = {https://www.mathnet.ru/eng/ppi298}
}

@inproceedings{Verma2015Borg,
  author    = {Verma, Abhishek and Pedrosa, Luis and Korupolu, Madhukar R. and Oppenheimer, David and Tune, Eric and Wilkes, John},
  title     = {Large-scale cluster management at Google with Borg},
  booktitle = {Proceedings of the Tenth European Conference on Computer Systems},
  series    = {EuroSys '15},
  articleno = {18},
  year      = {2015},
  publisher = {Association for Computing Machinery},
  doi       = {10.1145/2741948.2741964},
  url       = {https://doi.org/10.1145/2741948.2741964}
}

@article{WA2013,
  author  = {Ward, Amy R. and Armony, Mor},
  title   = {Blind Fair Routing in Large-Scale Service Systems with Heterogeneous Customers and Servers},
  journal = {Operations Research},
  volume  = {61},
  number  = {1},
  pages   = {228-243},
  year    = {2013},
  doi     = {10.1287/opre.1120.1129},
  url     = {https://doi.org/10.1287/opre.1120.1129}
}

@article{Weber1978,
  author  = {Weber, Richard R.},
  title   = {On the optimal assignment of customers to parallel servers},
  journal = {Journal of Applied Probability},
  volume  = {15},
  number  = {2},
  pages   = {406-413},
  year    = {1978},
  doi     = {10.2307/3213411},
  url     = {https://doi.org/10.2307/3213411}
}

@book{whitt2002stochastic,
  author    = {Ward Whitt},
  title     = {Stochastic‐Process Limits: An Introduction to Stochastic‐Process Limits and Their Application to Queues},
  series    = {Springer Series in Operations Research and Financial Engineering},
  year      = {2002},
  publisher = {Springer},
  address   = {New York, NY},
  isbn      = {9780387953588},
  doi       = {10.1007/b97479},
  pages     = {602}
}

@article{Winston1977,
  author  = {Winston, Wayne},
  title   = {Optimality of the shortest line discipline},
  journal = {Journal of Applied Probability},
  volume  = {14},
  number  = {1},
  pages   = {181-189},
  year    = {1977},
  doi     = {10.1017/S0021900200104772},
  url     = {https://doi.org/10.1017/S0021900200104772}
}

\end{document}